\def\r{\rightarrow}
\newcommand{\E}{\mathbb{E}}
\renewcommand{\P}{\mathbb{P}}
\renewcommand{\L}{\mathbb{L}}
\newcommand{\NN}{\mathbb{N}}
\newcommand{\RR}{\mathbb{R}}
\newcommand{\EE}{\mathbb{E}}
\newcommand{\PP}{\mathbb{P}}
\def\II{\mathbb{I}}
\newcommand{\N}{\mathbb{N}}
\newcommand{\R}{\mathbb{R}}
\newcommand{\C}{\mathbb{C}}
\newcommand{\X}{\mathbb{X}}
\renewcommand{\dim}{\mathop{\rm dim}}
\renewcommand{\ker}{\mathop{\rm Ker}}
\renewcommand{\r}{\mathop{\rightarrow}}
\newcommand{\cB}{\mathcal B}
\newcommand{\cC}{\mathcal C}
\newcommand{\cD}{\mathcal D}
\newcommand{\cL}{\mathcal L}
\newcommand{\cX}{\mathcal X}
\newtheorem{rem}{Remark}
\newtheorem{atheo}{Theorem}[section]
\newtheorem{adefi}[atheo]{Definition}
\newtheorem{alem}[atheo]{Lemma}
\newtheorem{arem}[atheo]{Remark}
\newtheorem{acor}[atheo]{Corollary}
\newtheorem{apro}[atheo]{Proposition}
\newtheorem{aex}[atheo]{Example}
\newtheorem{acond}[atheo]{Hypothesis}
\DeclareMathOperator{\Leb}{Leb}
\begin{document}

\title[Multiplicative ergodicity of Laplace transforms]{Multiplicative ergodicity of Laplace transforms for additive functional of Markov chains
with application to age-dependent branching process.}
\author{Lo\"{\i}c Herv\'e}
\address{INSA de Rennes, F-35708, France; IRMAR CNRS-UMR 6625, F-35000, France; Universit\'e Europ\'eenne de Bretagne, France.}
\email{ Loic.Herve@insa-rennes.fr}
\author{Sana Louhichi}
\address{Universit\'e Grenoble Alpes. B\^atiment IMAG, 700 avenue centrale. 38400 Saint Martin d'H\`eres, France.}
\email{Sana.Louhichi@imag.fr }
\author{Fran\c{c}oise P\`ene}
\address{Universit\'e de Brest and Institut Universitaire de France,
UMR CNRS 6205, Laboratoire de Math\'ematique de Bretagne Atlantique,
6 avenue Le Gorgeu, 29238 Brest cedex, France.}
\email{francoise.pene@univ-brest.fr}
\keywords{Markov processes, quasicompacity, operator, perturbation, ergodicity, Laplace transform, {{branching process, age-dependent process, Malthusian parameter.}}}
\subjclass[2010]{Primary: 60J05, {{60J85}}.}

\maketitle
\bibliographystyle{plain}
\vspace*{-8mm}
\begin{abstract}
We study the exponential growth of bifurcating
processes with ancestral dependence. We suppose here that  the lifetimes of the cells are dependent random variables, that the numbers of new cells are random and dependent. Lifetimes and new  cells's numbers are also assumed to be dependent.
We illustrate our results by examples, including some Markov models.
Our approach is related to the behaviour of the Laplace transform
of nonnegative additive functional of Markov chains and require weak moment assumption (no exponential moment is needed).
\end{abstract}
\date{\today}
\maketitle
\bibliographystyle{plain}
\tableofcontents




\section{Introduction}
Mathematical models for the growth of populations have been widely studied and applied in many fields especially animal, cell biology (mitosis), plant and forestry sciences.  Branching process is a mathematical model often used  to model reproduction (see {{for instance \cite{BH}, \cite{H} and \cite{KA}}). It is described as follows. A single ancestor object (that may be a particle, a neutron, a cosmic ray,  a cell and so on) is born at time $0$. It lives for a random time. At the moment of death, the object produces a random number of progenies.
Each of the first generation progeny behaves, independently of each other and of the ancestor: the objects do not interfere with one another.
Many authors were interested by generalizing this classical model by trying to describe the interference of objects between them (see, for instance, \cite{LouhichiYcart15} and the references therein).

In the spirit of the branching processes and for the sake of generalization, we consider in this paper a model of reproduction with a random number of children and with random life duration. Our approach, to develop this mathematical model, is to associate to each object $v$ a parameter $x_v$, this parameter may depend on its energy, its position or on other non-observed factors and it can be seen as the characteristics of the given object. When $v$ runs over the set of all objects, we assume that the set of parameters $(x_v)_v$ is a realisation of a random process $(X_v)_v$.
We do not specify either  the type of interactions between the objects nor the dependence conditions on the sequence $(X_v)_v$.
As will be described later in this paper, our results are first announced  in the case where $(X_v)_v$ is a stationary process on  some paths of objects:  the parameters processes have the same law-behavior for all objects belonging to the same generation.  The classical independent identically distributed (i.i.d. for short) case is a particular case of stationarity. Next we will use, as was done by many authors, Markov processes as mathematical models for  this parameter process. We discuss, in particular, the linear autoregressive process.

We are interested in this paper by the mitosis model and from now an object will be a cell.
This mitosis model starts with one single initial cell. After a random time, this initial cell is divided into a random number of cells and the process continue. For technical reasons we will suppose that the random number of children is always larger than $2$.
We summarize our model, used throughout the paper, as follows,
\begin{itemize}
\item to each cell $v$, is associated a parameter $x_v\in \mathbb X$
(with $(\X,\cX)$ a measurable space) which determines its lifetime $\xi(x_v)$
and the number of new cells $\kappa(x_v)$ in which the cell splits at the end of its lifetime (where $\xi$ and $\kappa$ are two measurable functions with values in
$[0,+\infty)$ and in $\mathbb Z_+$ respectively);
\item
for each line $(v_n)_{n\ge 0}$ of cells, the parameters along this line
are given by copies (non necessarily independent) of
a  process $(X_n)_{n\ge 0}$ with values in $\mathbb X$;
\item  $\kappa(x)\geq 2$ for any  $x$,
i.e. each cell gives birth to more than two children.
\end{itemize}

For every $t>0$, let us consider the Belleman-Harris age-dependent branching process $(N_t)_{t\geq 0}$, that is $N_t$ is the number of cells alive at time $t$ (see \cite{BH} and \cite{H} for more about).  The exponential growth behavior of $N_t$ as $t$ tends to infinity
 was studied in the book of Harris \cite{H} in the case where $(X_n)_n$ is a sequence of i.i.d. rv's, that is when the lifetimes are modeled by a sequence of
i.i.d. random variables independent of the random numbers of the news cells which are also assumed to be i.i.d. The growth rate $\nu_0$ (also called the Malthusian parameter) was defined, in this context, as the positive root of the equation,
 \begin{equation}\label{(H)}
 \mathbb E[\kappa(X_1)]\, \EE\left[e^{-\nu_0 \xi(X_1)}\right]=1,
\end{equation}
 as soon as the distribution of $\xi(X_1)$ is not lattice (cf.
\cite[Theorem 17.1]{H}).
Louhichi and Ycart \cite{LouhichiYcart15} extend some results of Harris to the case where the lifetimes are a sequence of dependent random
variables and when  each cell is divided, after a random lifetime, into two cells: $(X_n)_n$ is a stationary process and $\kappa(x)=2$ for any $x$. Under those assumptions the Malthusian parameter $\nu_1$ is expressed in terms of the Laplace transform of
the random variable $S_n$
\begin{equation}\label{def-Sn}
S_n:=\sum_{k=0}^n\xi(X_k)
\end{equation}
which models the birth date of the $(n+1)$-th individual of a same line. More precisely,
\begin{equation}\label{(LY)}
\nu_1=\inf\left\{\gamma>0,\,\, \sum_{n\ge 0}2^n\mathbb E\left[e^{-\gamma S_n}\right]<\infty\right\}.\end{equation}

In this paper we are interested by the general case where the lifetimes and the numbers of new cells are dependent random variables.
In this case, the growth rate of $N_t$ is given by:
\begin{equation}\label{nuDEF}
\nu:=\inf\left\{\gamma>0,\,\, \sum_{n\ge 0}
g_n(\gamma)<\infty\right\}\, ,
\end{equation}
where $g_n(\gamma)$ is expressed in terms of a perturbed Laplace transform of $S_n$,
$$
g_n(\gamma):=\mathbb E\left[\left(\prod_{j=0}^{n-1}\kappa(X_j)\right)(\kappa(X_n)-1)  e^{-\gamma S_n}\right].
$$
One task of the paper, is to give exact evaluations of $\mathbb E(N_t)$ and $\mathbb E(N_tN_{t+\tau})$, for any $t,\tau\geq 0$, under general and minimal conditions on the parameter process $(X_n)$ as described above. Those calculations are the main ingredients to get the convergence almost surely of $e^{-\nu t}N_t$ to a non-negative random variable $W$. A second task of the paper is to discuss the conditions yielding the previous results and to give some Markovian models for which the growth parameter $\nu$ is finite.

This paper is organized as follows. In section \ref{first},
we  give with proofs an exact evaluation of $\mathbb E (N_t)$ (see Proposition \ref{pro:ENt}). An immediate consequence of this calculation,  when $\nu$ is supposed finite, is the convergence in mean of $e^{-\nu t}\mathbb E (N_t)$ to some constant $C_{\nu}$ given by
$$
C_\nu:=\lim_{\gamma\rightarrow 0} \frac{\gamma}{\gamma+\nu}\sum_{n\ge 0}
g_n(\gamma+\nu),
$$
see Corollary \ref{cor1} for a precise statement. As noticed in Subsection \ref{sub22}, from a multiplicative ergodicity behavior of $g_n(\gamma)$, see Definition \ref{def-mult-erg}, one can deduce that $\nu$ and $C_{\nu}$ are both finite. For this, we study this multiplicative ergodicity property in the context of  additive functional of Markov chains with Markov kernel $P$ and initial law $\mu$.
 In Subsection \ref{sub22}, we
state our spectral assumptions, after noting that $$\forall n\ge 1,\quad g_{n}(\gamma) = \mu\left(\kappa\, e^{-\gamma \xi}\,
P_\gamma^{n-1} \left(Ph_{\kappa,\gamma}\right)\right), $$ where $P_\gamma$ is a Laplace-type kernel associated with $P$, $\xi$ and $\kappa$ (see Formula (\ref{formuleFourier}) for more details about the notations).
Those spectral assumptions are  on $P_{\gamma}$ supposed to continuously act on some suitable  Banach space $\cB$ and on its spectral radius $r(\gamma)$ (see Hypotheses 2.5, 2.7 and 2.7*).  Those assumptions are needed in order to obtain a  spectral multiplicative ergodicity property for the iterated operator $P^n_{\gamma}$. This spectral property is the main tool to prove the multiplicative ergodicity behavior of $g_n(\gamma)$ and then to deduce that $\nu$ is finite and is given by
 $$
 \nu=\inf\{\gamma>0,\,\, r(\gamma)<1\},
 $$
 (see Theorem \ref{generalspectraltheorem1}). The existence and the finiteness of $C_{\nu}$ are discussed in Thereom \ref{generalspectraltheorem2}.
 Thereom \ref{generalspectraltheorem2} needs reinforcing assumptions by considering a longer chain of Banach spaces.
 We discuss in Subsection \ref{mark-ex} two Markovian examples satisfying Theorems's \ref{generalspectraltheorem1} and \ref{generalspectraltheorem2} assumptions, see Theorem \ref{pro-Knudsen} and Theorem \ref{exemp-AR} for respectively a toy model involving some Knudsen gas (see \cite{BoattoGolse} for other results on Knudsen gases) and for linear autoregressive processes, both under weak integrability assumptions on the observable  $\xi$ (the lifetime).

Our approach for Markov models is based on the method of perturbation of operators. This method, introduced by Nagaev \cite{nag1,nag2} and by Le Page and Guivarc'h \cite{lep82,GuivarchHardy} to prove a wide class of limit theorems (central limit
theorem, local limit theorem, large and moderate deviations principles), has known an impressive development in the past decades (e.g.~see \cite{broi,hulo} and the references therein).
The price to pay for our weak moment assumptions is that, in general, the classical perturbation method does not apply in our context to the family of Laplace operators and that we have to consider several Banach spaces instead of a single one (see Remark~\ref{rem-method} for details). This is allowed by the Keller and Liverani perturbation theorem \cite{KelLiv99,Bal00}(e.g.~see \cite{HerPen10} and the references therein). The fact that we work with several spaces (due to our weak moment assumptions) complicates our study compared to the classical
approach.

In Section \ref{second},
we study the  behavior of  $\EE(N_tN_{t+\tau})$, for any $t>0,$ $\tau\geq 0$,  in the very general setting of dependence (cf.~Proposition \ref{pro1}).
Proposition \ref{pro1} is obtained under a suitable behavior of the sequence $(\kappa(X_i))_{1\leq i\leq n}$, the birthtimes $S_n$ and
$S_m^{(k)}$ of two cells belonging respectively to the $n$-th and $m$-th generation and having the last common ancestor in the $k$-th generation. More precisely, Proposition \ref{pro1} gives the following formula,
\begin{eqnarray*}
&& \EE(N_tN_{t+\tau}) =\E(N_{t})+\E(N_{t+\tau})-1+ \sum_{n=1}^{\infty} \sum_{m=1}^{\infty}\sum_{k=0}^{\min(n,m)-1}\EE\left(A_{n,m,k}\II_{S_{n-1}\leq t}\II_{S_{m-1}^{(k)} \leq t+\tau}\right),
\end{eqnarray*}
 for suitable integer sequence $A_{n,m,k}$ expressed only by the numbers of children as is defined  in  Subsection \ref{subsection3.1}.
 We study in Corollary  \ref{QM},  the mean quadratic and the almost sure convergence of $e^{-\nu t}N_t$ as $t$ tends to infinity.
The purpose of Subsection \ref{three} is to discuss Corollary's  \ref{QM} assumptions, yielding the almost sure convergence of $e^{-\nu t}N_t$ as $t$ tends to infinity, in the particular case where lifetimes and new cells numbers are independent and new cells numbers are  modelled by a sequence of iid random variables. A main step to check  in this particular case is to establish (see Lemma \ref{lemnux}),
\begin{eqnarray}\label{limPPPP}
&& \left| e^{-\nu t}\sum_{n\geq 0}\kappa_1^{n}(\kappa_1-1) \PP(\sum_{i=1}^{n+1}\xi(X_i)\leq t|X_0=x)-C_0(x)\right|\leq \frac{\Psi_0(x)}{2\pi}e^{-\delta t}.
\end{eqnarray}
The bound (\ref{limPPPP}) is the main tool to obtain (see Proposition \ref{PPPPP} for more details),
$$
\EE(N_t)= e^{\nu t}\EE(\kappa(X_0))\EE(e^{-\nu \xi(X_0)}{\tilde C}_0(X_0))[1+ O(e^{-\epsilon_1 t})],\,\,as\,\,t\rightarrow \infty,\,\,{\mbox{for some}}\,\, \epsilon_1>0.
$$
 Under additional assumptions, the bound (\ref{limPPPP}) allows also to obtain (see  Proposition \ref{PP}),
$$
\EE(N_tN_{t+\tau})=  e^{\nu (2t+\tau)}\kappa_2\sum_{k=0}^{\infty}\kappa_1^{k}\EE({\tilde C}^2_0(X_k)e^{-2\nu S_{k}})[1+ae^{-\epsilon_1 t} ],\,\,as\,\,t\rightarrow \infty,
$$
where $\kappa_2=\EE(\kappa(X_1)(\kappa(X_1)-1))$ and where $a, \epsilon_1$ are positive constants independent of $t$ and $\tau$. Lemma \ref{lemnux} gives sufficient conditions ensuring (\ref{limPPPP}). Theorem \ref{theo1} studies the convergence in mean quadratic and almost surely of $e^{-\nu t}N_t$ to a random variable $W$ and gives the expressions of the first and of the second moment of this limit $W$. In Subsection \ref{four} we discuss mainly the conditions of Lemma \ref{lemnux} (and then sufficient conditions for the bound (\ref{limPPPP})).

The rest of the sections are devoted to the proofs of Theorems \ref{generalspectraltheorem1} and \ref{generalspectraltheorem2} dedicated to the multiplicative ergodicity property for additive Markov chains. In Section \ref{results}, we discuss the tools and the assumptions needed to those two main theorems. We study in particular the monotonicity, the positivity and the derivative of the spectral radius $r(\gamma)$ of the Laplace kernel $P_{\gamma}$.
In Section \ref{Knudsengas}, we apply our general method to a toy model of Knudsen gas in the particular case where $\kappa\equiv 2$. The proofs concerning the less elementary case of linear autoregressive models
are given in Section \ref{proofAR}. Theorems \ref{generalspectraltheorem1} and \ref{generalspectraltheorem2} are completely proved in Section \ref{proofoperator}. Section \ref{proof-reduc-BL} gives the proof of Proposition~\ref{pro-reduc-BL} needed to check Hypothesis 2.5 which is one of the main assumptions of Theorems \ref{generalspectraltheorem1} and \ref{generalspectraltheorem2}. Finally, Section \ref{counterexample} gives an example of a kernel $P_{\gamma}$
for which its spectral radius is greater than one and hence the growth rate $\nu$ does not exist.

\section{Behavior of the first moment, multiplicative ergodicity, examples} \label{first}
\subsection{First moment of $N_t$} \label{Gen-res}
The following proposition evaluates, for any $t>0$, the expectation of $N_t$  when it exists  in terms of
the lifetimes $(\xi(X_i))_{i\geq 0}$ and of the numbers of new cells $(\kappa(X_i))_{i\geq 0}$.
\begin{apro}\label{pro:ENt}
Let $t> 0$ be fixed. If
$\sum_{n\ge 0}\mathbb E\left[\left(\prod_{j=0}^n\kappa(X_j)\right)
  \mathbf 1_{\{S_n\le t\}}\right]
  <\infty$, then $\EE(N_t)<\infty$ and
$$ \mathbb E(N_t)=1+\sum_{n\ge 0}\mathbb E\left[\left(\prod_{j=0}^{n-1}\kappa(X_j)\right)\left(\kappa(X_n)-1\right)
   \mathbf 1_{\{S_n\le t\}}\right]$$
(with the usual convention $\prod_{j=0}^{-1}\kappa(X_j)=1$).
\end{apro}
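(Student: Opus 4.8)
The plan is to establish the identity first as a pathwise statement on the genealogical tree, and then to pass to expectations via Tonelli together with a many-to-one formula. \textbf{Step 1 (a conservation identity).} Index the cells by finite words, the ancestor being $\emptyset$ and the children of $v$ being $v1,\dots,v\kappa(x_v)$. Along the ancestral line from the root to a generation-$n$ cell $v$, the $n+1$ parameters form a copy $(X^v_0,\dots,X^v_n)$ of $(X_0,\dots,X_n)$, so $v$ is born at $\sum_{j=0}^{n-1}\xi(X^v_j)$ and dies (splits) at $D_v:=\sum_{j=0}^{n}\xi(X^v_j)$, which is distributed as $S_n$. For fixed $t$ and each $n$, let $A_n(t)$ be the number of generation-$n$ cells alive at $t$, $C_n(t):=\#\{v\in\text{gen }n:\ D_v\le t\}$, and $\widetilde C_n(t):=\sum_{v\in\text{gen }n,\ D_v\le t}\kappa(x_v)$. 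Then $A_0(t)+C_0(t)=1$, and for $n\ge1$ every generation-$n$ cell born by $t$ has a parent that split by $t$, so $A_n(t)+C_n(t)=\widetilde C_{n-1}(t)$; telescoping,
\begin{equation*}
\sum_{n=0}^{M}A_n(t)=1+\sum_{n=0}^{M-1}\bigl(\widetilde C_n(t)-C_n(t)\bigr)-C_M(t),
\end{equation*}
and since $C_M(t)\ge0$, letting $M\to\infty$ gives $N_t\le 1+\sum_{n\ge0}(\widetilde C_n(t)-C_n(t))$ in $[0,\infty]$, where $\widetilde C_n(t)-C_n(t)=\sum_{v\in\text{gen }n,\ D_v\le t}(\kappa(x_v)-1)\ge0$.

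\textbf{Step 2 (many-to-one).} For every nonnegative measurable $h:\X^{\,n+1}\to[0,\infty)$,
\begin{equation*}
\E\Bigl[\sum_{v\in\text{gen }n}h(X^v_0,\dots,X^v_n)\Bigr]=\E\Bigl[\Bigl(\prod_{j=0}^{n-1}\kappa(X_j)\Bigr)h(X_0,\dots,X_n)\Bigr].
\end{equation*}
Indeed, sum over all words $w=(i_1,\dots,i_n)\in(\Z_{>0})^n$ the quantity $\E[\mathbf 1_{\{w\text{ present}\}}\,h(X^w_0,\dots,X^w_n)]$; the event $\{w\text{ present}\}=\bigcap_{k=1}^{n}\{i_k\le\kappa(X^w_{k-1})\}$ and $h$ both depend only on $(X^w_0,\dots,X^w_n)\stackrel{d}{=}(X_0,\dots,X_n)$, so Tonelli lets one bring $\sum_w$ inside the expectation, and $\sum_{i_k\ge1}\mathbf 1_{\{i_k\le\kappa(X_{k-1})\}}=\kappa(X_{k-1})$ produces the factor $\prod_{j=0}^{n-1}\kappa(X_j)$.

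\textbf{Step 3 (conclusion) and main obstacle.} Apply Step 2 with $h(x_0,\dots,x_n)=(\kappa(x_n)-1)\,\mathbf 1_{\{\sum_{j=0}^{n}\xi(x_j)\le t\}}$, nonnegative because $\kappa\ge2$; this gives $\E[\widetilde C_n(t)-C_n(t)]=\E[(\prod_{j=0}^{n-1}\kappa(X_j))(\kappa(X_n)-1)\mathbf 1_{\{S_n\le t\}}]\le\E[(\prod_{j=0}^{n}\kappa(X_j))\mathbf 1_{\{S_n\le t\}}]$. Summing over $n$ and invoking the hypothesis, $\sum_{n\ge0}\E[\widetilde C_n(t)-C_n(t)]<\infty$, hence $\sum_{n\ge0}(\widetilde C_n(t)-C_n(t))<\infty$ a.s.; so $\widetilde C_n(t)-C_n(t)\to0$, and since $\kappa\ge2$ forces $\widetilde C_n(t)-C_n(t)\ge C_n(t)$, we get $C_M(t)=0$ eventually. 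Returning to the telescoping identity of Step 1 and letting $M\to\infty$ upgrades the inequality to $N_t=1+\sum_{n\ge0}(\widetilde C_n(t)-C_n(t))$ a.s.; taking expectations (Tonelli, all terms nonnegative) yields $\E(N_t)=1+\sum_{n\ge0}\E[(\prod_{j=0}^{n-1}\kappa(X_j))(\kappa(X_n)-1)\mathbf 1_{\{S_n\le t\}}]<\infty$, as claimed. The only genuinely delicate point is Step 1: the tree may a priori be infinite up to time $t$, so the conservation law must be handled first as an inequality in $[0,\infty]$ and turned into an equality only once the finiteness from the hypothesis is in hand; one also needs the probability model in which $(X_v)_v$ is defined over all potential cells with the law of $(X_n)_{n\ge0}$ along each ancestral line — this is exactly the "copies of $(X_n)$ along each line" assumption — while the rest is routine bookkeeping and Tonelli.
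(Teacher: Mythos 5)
Your proof is correct, and it reaches the result by a modestly different route than the paper's. The paper computes $\E[\Sigma_n(t)]$ (the number of generation-$n$ cells alive at $t$) directly via an iterated conditioning argument: it conditions successively on $X_0$, then $(X_0,X_{0,1})$, and so on, using the ``copies along each line'' assumption to collapse the sum over children into a factor $\prod_{j}\kappa(X_j)$, arriving at $\E[\Sigma_n(t)]=\E\bigl[(\prod_{j=0}^{n-1}\kappa(X_j))(\mathbf 1_{\{S_{n-1}\le t\}}-\mathbf 1_{\{S_n\le t\}})\bigr]$; it then rearranges the resulting series of expectations (splitting into two sums, reindexing, recombining), using the hypothesis to justify the manipulation. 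You instead establish a pathwise telescoping identity $N_t=1+\sum_{n\ge 0}(\widetilde C_n(t)-C_n(t))$, being careful to first obtain it as an inequality in $[0,\infty]$ and only upgrade it to an equality once the hypothesis forces the tail $C_M(t)\to 0$ a.s.; and you package the conditioning step as an explicit many-to-one lemma. The computational heart is identical (your many-to-one formula is exactly what the paper's chain of conditional expectations produces), but your organization buys something: the telescoping and the inequality-to-equality upgrade happen at the level of random variables, so Tonelli is invoked just once at the end and there is no separate rearrangement-of-series of expectations to justify, and the many-to-one statement is a reusable lemma rather than an inline calculation. One small point worth flagging: in Step~2 the event $\{w\ \text{present}\}=\bigcap_{k=1}^n\{i_k\le\kappa(X^w_{k-1})\}$ presupposes that $(X^w_j)_j$ is defined for every potential word $w$, not just the realized cells; you do acknowledge this at the end, and with that set-up (the standard Ulam--Harris labeling implicit in the paper's model) your Tonelli and relabeling argument is exactly right.
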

\begin{proof}
For every $n\ge 0$, we write $\Sigma_n(t)$ for the number of cells of generation $n$
alive at time $t$.
Let us write $D_0$ and $T_0$ for respectively the number of children and
the lifetime of the initial cell.
For every $k\in\{1,\cdots,D_0\}$, we write $x_{0,k}$ for the parameter
(resp. $D_{0,k}$ and $T_{0,k}$ for the
number of children and the lifetime) of the $k$-th child of the initial cell.
More generally, we write  $x_{0,k_1,\cdots,k_n}$ for the parameter
(resp. $D_{0,k_1,\cdots,k_n}$ and $T_{0,k_1,\cdots,k_n}$ for the
number of children and the lifetime) of the cell of the $n$-th generation which is the
$k_n$-th child of the $k_{n-1}$-th child of the ... of the $k_1$-th child of the initial cell.
Observe that
$\mathbb E[\Sigma_0(t)]=\mathbb P(\xi(X_0)> t)$
and that, for every $n\ge 1$,
\begin{eqnarray*}
&\ &\mathbb E[\Sigma_n(t)]=\mathbb E\left[\sum_{k_1=1}^{D_0}\sum_{k_2=1}^{D_{0,k_1}}\cdots
       \sum_{k_n=1}^{D_{0,k_1,\cdots,k_{n-1}}}
         \mathbf 1_{\{ T_0+T_{0,k_1}+\cdots+T_{0,k_1,\cdots,k_{n-1}}\le t<
             T_0+T_{0,k_1}+\cdots+T_{0,k_1,\cdots,k_{n}} \}}\right]\\
&=&\mathbb E\left[D_0\mathbb E\left[\left.\sum_{k_2=1}^{D_{0,1}}\cdots
       \sum_{k_n=1}^{D_{0,1,k_2,\cdots,k_{n-1}}}
         \mathbf 1_{\{ T_0+T_{0,1}+\cdots+T_{0,1,k_2,\cdots,k_{n-1}}\le t<
             T_0+T_{0,1}+\cdots+T_{0,1,k_2,\cdots,k_{n}} \}}\right|X_0\right]\right]\\
&=&\mathbb E\left[D_0D_{0,1}\mathbb E\left[\sum_{k_3=1}^{D_{0,1,1}}\left.\cdots
       \sum_{k_n=1}^{D_{0,1,1,k_3,\cdots,k_{n-1}}}
         \mathbf 1_{\{ T_0+T_{0,1}+\cdots+T_{0,1,1,k_3,\cdots,k_{n-1}}\le t<
             T_0+T_{0,1}+\cdots+T_{0,1,1,k_3,\cdots,k_{n}} \}}\right|X_0,X_{0,1}\right]\right]\, .
\end{eqnarray*}
Hence, for every $n\ge 1$,
\begin{eqnarray*}
\mathbb E[\Sigma_n(t)]
&=&\mathbb E\left[D_0D_{0,1}\cdots D_{0,1^{n-1}}
         \mathbf 1_{\{ T_0+T_{0,1}+\cdots+T_{0,1^{n-1}}\le t<
             T_0+T_{0,1}+\cdots+T_{0,1^n} \}}\right]\\
&=&\mathbb E\left[\left(\prod_{j=0}^{n-1}\kappa(X_j)\right)
         \left(\mathbf 1_{\{ S_{n-1}  \le t\}}-
            \mathbf 1_{\{S_{n}\le t\}}\right)\right].
\end{eqnarray*}
Since $\sum_{n\ge 0}\mathbb E\left[\left(\prod_{j=0}^n\kappa(X_j)\right)
  \mathbf 1_{\{S_n\le t\}}\right]
  <\infty$
and $N_t= \mathbf 1_{T_0>t}+ \sum_{n=1}^{\infty} \Sigma_n(t)$ a.~s., we obtain
\begin{eqnarray*}
\mathbb E[N_t]
&=& \mathbb P(\xi(X_0)> t)+ \mathbb E\left[\kappa(X_0)\mathbf 1_{\xi(X_0)\leq t}\right]+\sum_{n\ge 1}\mathbb E\left[\left(\prod_{j=0}^{n-1}\kappa(X_j)\right)(\kappa(X_n)-1)
         \mathbf 1_{\{ S_n  \le t\}}\right]   \\
 &=&   1+   \mathbb E\left[(\kappa(X_0)-1)\mathbf 1_{\xi(X_0)\leq t}\right] + \sum_{n\ge 1}\mathbb E\left[\left(\prod_{j=0}^{n-1}\kappa(X_j)\right)(\kappa(X_n)-1)
 \mathbf 1_{\{ S_n  \le t\}}\right].
\end{eqnarray*}
\end{proof}

It follows from  Proposition \ref{pro:ENt} that $\mathbb E[N_t]<\infty$ if $\nu$ defined by \eqref{nuDEF} is finite
(using the fact that $\mathbf 1_{\{S_n\le t\}}\le e^{\gamma t}
   e^{-\gamma S_n}$). Now using Proposition \ref{pro:ENt} and arguing exactly as for the proof of Theorem 2.1 in \cite{LouhichiYcart15}, we obtain the following exponential behavior in mean of $\EE(N_t)$ in a very general setting of dependence
with the use of the function $G$ given by
\begin{equation}\label{nu-gn}
G(\gamma) := \sum_{n\ge 0}
g_n(\gamma),\quad \mbox{recall that}\quad
g_n(\gamma)=\mathbb E\left[\left(\prod_{j=0}^{n-1}\kappa(X_j)\right)(\kappa(X_n)-1)  e^{-\gamma S_n}\right].
\end{equation}
%
\begin{acor}\label{cor1}
Assume that
$\nu<\infty$ and that the following limit exists
\begin{equation}\label{CDEF}
C_\nu:=\lim_{\gamma\rightarrow 0} \frac{\gamma}{\gamma+\nu}G(\nu+\gamma)\, ,
\end{equation}
\begin{equation}\label{CVMOYESP}
\mbox{then,}\quad\quad\quad\quad\quad\lim_{t\rightarrow \infty}\frac{1}{t}\int_0^te^{-\nu s}\EE(N_s)ds= C_\nu.
\end{equation}
\end{acor}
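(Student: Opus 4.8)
The plan is to pass to Laplace transforms in the time variable and then invoke Karamata's Tauberian theorem. Write $f(s):=e^{-\nu s}\,\mathbb E(N_s)$ and $F(t):=\int_0^t f(s)\,ds$. Since $\nu<\infty$ we have $\mathbb E(N_t)<\infty$ for every $t$ (as noted just after Proposition~\ref{pro:ENt}), and Proposition~\ref{pro:ENt} gives, for each $t>0$,
$$\mathbb E(N_t)=1+\sum_{n\ge 0}\mathbb E\!\left[\Bigl(\prod_{j=0}^{n-1}\kappa(X_j)\Bigr)\bigl(\kappa(X_n)-1\bigr)\,\mathbf 1_{\{S_n\le t\}}\right].$$
Because $\kappa\ge2$, all the coefficients appearing here are nonnegative, so $t\mapsto\mathbb E(N_t)$ is nondecreasing, $f\ge0$, and $F$ is a well-defined nondecreasing function on $[0,\infty)$ with $F(0)=0$. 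First I would compute, for $\gamma>0$, the Laplace transform $\widehat f(\gamma):=\int_0^\infty e^{-\gamma t}f(t)\,dt=\int_0^\infty e^{-(\nu+\gamma)t}\,\mathbb E(N_t)\,dt$: inserting the series, interchanging $\int_0^\infty dt$, $\sum_{n\ge0}$ and $\mathbb E$ by Tonelli (all terms nonnegative), and using $\int_{S_n}^\infty e^{-(\nu+\gamma)t}\,dt=(\nu+\gamma)^{-1}e^{-(\nu+\gamma)S_n}$ together with \eqref{nu-gn}, one obtains
$$\widehat f(\gamma)=\frac{1}{\nu+\gamma}\bigl(1+G(\nu+\gamma)\bigr),$$
which is finite for each $\gamma>0$ because $\nu=\inf\{\gamma>0:G(\gamma)<\infty\}<\infty$ and $G$ is nonincreasing, so $G(\nu+\gamma)<\infty$.

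Next, multiplying by $\gamma$ gives $\gamma\,\widehat f(\gamma)=\dfrac{\gamma}{\nu+\gamma}+\dfrac{\gamma}{\nu+\gamma}\,G(\nu+\gamma)$. Note that the existence of the finite limit $C_\nu$ already forces $\nu>0$: since $\kappa\ge2$ we have $g_n(0)\ge2^n$ for all $n$, hence $G(0^+)=+\infty$ by monotone convergence, so $\nu=0$ would give $C_\nu=\lim_{\gamma\downarrow0}G(\gamma)=+\infty$. Therefore, letting $\gamma\downarrow0$, the term $\gamma/(\nu+\gamma)\to0$ while $\frac{\gamma}{\nu+\gamma}\,G(\nu+\gamma)\to C_\nu$ by assumption \eqref{CDEF}, so that $\gamma\,\widehat f(\gamma)\to C_\nu$. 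Since $\widehat f$ is precisely the Laplace--Stieltjes transform $\int_{[0,\infty)}e^{-\gamma t}\,dF(t)$ of the nondecreasing function $F$, Karamata's Tauberian theorem (applied with exponent $\rho=1$) gives $F(t)/t\to C_\nu$ as $t\to\infty$, that is
$$\lim_{t\to\infty}\frac1t\int_0^t e^{-\nu s}\,\mathbb E(N_s)\,ds=C_\nu,$$
which is \eqref{CVMOYESP}. (The borderline case $C_\nu=0$ is handled directly: if $F(t_n)\ge\delta t_n$ along some $t_n\to\infty$, then $\gamma\,\widehat f(\gamma)\ge\delta e^{-1}$ at $\gamma=1/t_n$, contradicting $\gamma\,\widehat f(\gamma)\to0$.)

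The Abelian computation of $\widehat f$ is routine once Proposition~\ref{pro:ENt} is available, so the whole substance of the argument lies in the Tauberian passage; I do not expect a genuine obstacle there, because monotonicity of $F$ --- automatic, $F$ being the integral of the nonnegative $f$ --- is exactly the side condition Karamata's theorem needs, and no further hypothesis is required for the \emph{averaged} statement \eqref{CVMOYESP}. What should be emphasized is that this route yields only the Cesàro/integral-averaged limit, not the pointwise convergence of $e^{-\nu t}\mathbb E(N_t)$ to $C_\nu$: that sharper, renewal-type conclusion will require the quantitative spectral analysis of the Laplace kernel $P_\gamma$ (and non-lattice type assumptions) developed later in the paper.
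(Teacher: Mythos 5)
Your proof is correct and follows what is certainly the intended route. The paper itself does not spell out the argument---it simply invokes the analogue of Theorem 2.1 of \cite{LouhichiYcart15}---but the structure of the statement (asymptotics of $\frac{\gamma}{\gamma+\nu}G(\nu+\gamma)$ as $\gamma\downarrow 0$ implying a Cesàro-averaged limit as $t\to\infty$) is precisely the Abelian/Tauberian pairing, and Karamata's theorem is the tool that furnishes it. Your Abelian computation $\widehat f(\gamma)=\frac{1+G(\nu+\gamma)}{\nu+\gamma}$ via Tonelli is exactly right (using $\kappa\ge 2$ to justify nonnegativity and $\nu<\infty$ plus monotonicity of $G$ for finiteness at $\gamma>0$), your observation that a finite $C_\nu$ forces $\nu>0$ (since $g_n(0)\ge 2^n$) is a useful and correct remark that the paper leaves implicit, and the monotonicity of $F=\int_0^\cdot f$ is the Tauberian side condition Karamata needs, as you note. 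The separate treatment of the degenerate case $C_\nu=0$, where the slowly-varying function in Karamata's theorem would vanish, is appropriate and the direct contradiction argument you give is sound. Your closing remark that this yields only the Cesàro limit, not pointwise convergence of $e^{-\nu t}\E(N_t)$, matches the paper's later need for the spectral machinery.
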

\subsection{Multiplicative ergodicity, application to Markov chains}\label{sub22}
%
{{We adapt the notion of  "multiplicative ergodicity", as introduced in \cite{KM03} and \cite{KM05}, to our context.}}
\begin{adefi} \label{def-mult-erg}
Let $\gamma_1>0$.
We say that $(S_n,\kappa(X_n))_n$ is {\bf multiplicatively ergodic} on $J=[0,\gamma_1)$ if
there exist two continuous maps $A$ and $\rho$ from $J$ to $(0,+\infty)$ such that,
for every compact subset $K$ of $(0,\gamma_1)$, there exist $M_K>0$ and $\theta_K\in(0,1)$
such that, for every $n\ge 1$,
\begin{equation} \label{formule-mult-erg}
 \forall \gamma\in K,\quad |g_n(\gamma)-A(\gamma) (\rho(\gamma))^n|\le M_K (\rho(\gamma)\theta_K)^n.
\end{equation}
When $\kappa(\cdot)$ is constant, we will simply say that $(S_n)_n$ is  multiplicatively ergodic on $J$.
\end{adefi}
\begin{arem}\label{multergodP1P2}
Assume that $(S_n,\kappa(X_n))_n$
is multiplicatively ergodic on  $J=[0,\gamma_1)$. Then
\begin{itemize}
	\item For every $\gamma\in J$ we have:
	$G(\gamma) = \sum_{n\ge 0}
g_n(\gamma) <\infty \, \Longleftrightarrow\,  \rho(\gamma)<1.$
  \item For every compact subset $K$ of $J$, we obtain from the definition of $\nu$ in (\ref{nuDEF}) that
$$\forall \gamma\in K\cap(\nu,+\infty),
\quad
 \left|G(\gamma)-\frac{A(\gamma)}{1- \rho(\gamma)}\right|\le\frac{M_K}{1- \rho(\gamma)\theta_K}.$$
  \item $\nu<\gamma_1$ means that
\begin{equation}\label{P1ter}
\nu=\inf\{\gamma\in J\ :\ \rho(\gamma)<1\}<\gamma_1.
\end{equation}
  \item If moreover $\rho$ is differentiable at $\nu$ with
$\rho(\nu)=1$ and $\rho'(\nu)\ne 0$, then
\eqref{CDEF} follows with $C_\nu=-\frac {A(\nu)}{\nu \rho'(\nu)}$.
Actually, to obtain \eqref{P1ter}, we can relax the
continuity assumptions on $A$ and $\rho$ on $J=[0,\gamma_1)$. For \eqref{CDEF}, we just need the continuity of $A$ and the differentiability of $\rho$ at $\nu$ (with $\rho'(\nu)\ne 0$).
\end{itemize}
\end{arem}
We investigate now
the geometric ergodicity property in the following context
of additive functional of Markov chains.
We assume throughout this section that $X=(X_n)_n$ is a Markov chain on $(\X,\cX)$
with Markov kernel $P(x,dy)$, invariant probability $\pi$,
and initial probability $\mu$ (i.e.~$\mu$ is the distribution of $X_0$).
Assume moreover that, for every $n\geq 1$, the random variable $\prod_{j=0}^{n}\kappa(X_j)$ is integrable.
We set $h_{\kappa,\gamma} := \big(\kappa - 1\big)\, e^{-\gamma\xi}$. Let $\gamma\in(0,+\infty)$. For $n\geq 1$,
\begin{eqnarray*}
g_n(\gamma)&=&\mathbb E\left[\left(\prod_{j=0}^{n-1}\kappa(X_j)e^{-\gamma\xi(X_j)}\right)
   h_{\kappa,\gamma}(X_n)\right]\\
&=&\mathbb E\left[\left(\prod_{j=0}^{n-1}\kappa(X_j)e^{-\gamma\xi(X_j)}\right)
   (Ph_{\kappa,\gamma})(X_{n-1})\right]\, ,
\end{eqnarray*}
with $(Ph)(x) :=\int_{\mathbb X}h(y)\, P(x,dy)$.
If $n\ge 2$, we continue and obtain
$$ g_n(\gamma)=\mathbb E\left[\left(\prod_{j=0}^{n-2}\kappa(X_j)e^{-\gamma\xi(X_j)}\right)
   (P_\gamma(Ph_{\kappa,\gamma}))(X_{n-2})\right]\, ,
$$
with $P_\gamma h:=P(h\kappa e^{-\gamma\xi})$.
An easy induction gives
\begin{equation}\label{formuleFourier}
\forall n\ge 1,\quad g_{n}(\gamma) = \mu\left(\kappa\, e^{-\gamma \xi}\,
P_\gamma^{n-1} \left(Ph_{\kappa,\gamma}\right)\right).
\end{equation}
We also define $P_{\infty}h:=P(h\kappa\mathbf 1_{\{\xi=0\}})$.

Before stating our spectral assumptions on $P_\gamma$,  we need to introduce some standard notations. When $(\cB,\|\cdot\|_{\cB})$ and $(\cB_1,\|\cdot\|_{\cB_1})$ are two Banach spaces, the space of continuous $\mathbb C$-linear operators from $\cB$ to $\cB_1$ will be written $\mathcal L(\cB,\cB_1)$. We simply write $\mathcal L(\cB)$ for $\mathcal L(\cB,\cB)$, and the topological dual space of $\cB$ is denoted by $(\cB^*,\|\cdot\|_{\cB^*})$. When, for some $\gamma\in[0,+\infty]$, the kernel $P_\gamma$  continuously acts on $\cB$, the spectral radius of $P_{\gamma|\cB}$ and its essential spectral radius are denoted by $r(\gamma)$ and $r_{ess}(P_\gamma)$ respectively, that is:
$$r(\gamma) := r(P_{\gamma|\cB}) = \lim_n\|(P_{\gamma|\cB})^n\|_{\cB}^{1/n} \quad \text{and} \quad
r_{ess}(P_\gamma):=\lim_n \inf_{F\in\cL(\cB)\ \mbox{\scriptsize compact}}\|(P_\gamma)^n-F\|^{1/n}_{\cB}.$$
\begin{acond}\label{hypcompl}
Let $\mathcal B$ be a Banach space composed of functions on $\X$ (or of classes of such functions modulo the $\pi-$almost sure equality) such that $\cB\subset \mathbb L^1(\pi)$. Let $J$ be a subinterval of $[0,+\infty]$. We assume that, for every $\gamma\in J$, $P_\gamma$ continuously acts on $\cB$ and that
\begin{enumerate}[(i)]
\item $r(\gamma):=r(P_{\gamma|\cB})>0$, and $P_\gamma$ is quasi-compact on $\cB$ (i.e.~$r_{ess}\big(P_{\gamma|\cB}\big)< r(\gamma)$)
\item $r(\gamma)$ is the only eigenvalue of modulus $r(\gamma)$ for $P_\gamma$, and $r(\gamma)$ is a first order pole of $P_\gamma$ with moreover $\dim\ker(P_\gamma-r(\gamma) I) = \dim\ker(P_\gamma-r(\gamma) I)^2 = 1$.
\end{enumerate}
\end{acond}
Theorem~\ref{cor-th1} will ensure that, under Hypothesis \ref{hypcompl}, for every $\gamma\in J$ there exists a rank-one non-negative projector $\Pi_\gamma\in\cL(\cB)$ (i.e.~the eigenprojector associated with the eigenvalue $r(\gamma)$), and some constants $\theta_\gamma\in(0,1)$ and $M_\gamma\in(0,+\infty)$ such that
\begin{equation} \label{sup-vit-ponctuel}
\forall\gamma\in J,\quad\forall f\in\cB,\quad
\big\|P_\gamma^n f - r(\gamma)^n \Pi_\gamma f\big\|_{\cB} \leq M_\gamma\, \big(\theta_\gamma\, r(\gamma)\big)^n \|f\|_{\cB}.
\end{equation}
If moreover $\mu(\kappa e^{-\gamma\xi}\cdot)\in\cB^*$,
$P(h_{\kappa,\gamma})\in\mathcal B$,
$B(\gamma) := \mu(\kappa\, e^{-\gamma \xi}\Pi_\gamma(Ph_{\kappa,\gamma}))$ is positive, then we deduce \eqref{formule-mult-erg} with $A=A/r$ and $\rho=r$ from \eqref{formuleFourier} in the specific case $K=\{\gamma\}$.
\begin{arem} \label{rem-method}
To establish the multiplicative ergodicity of Definition~\ref{def-mult-erg}, further regularity properties are needed. Due to (\ref{sup-vit-ponctuel}), a natural way is to apply the perturbation theory of linear operators. Unfortunately, the classical operator perturbation method
{{\cite{nag1,nag2,GuivarchHardy,GuivarchLepage}}}.
does not apply to our context. Indeed, because we do not assume any exponential moment condition on $\xi$ (contrarily to the papers mentioned in Introduction), the map $\gamma\mapsto P_\gamma$ is (in general) {\bf not continuous} from $(0,+\infty)$ to $\mathcal L(\mathcal B)$. For instance, for linear autoregressive models (Theorem \ref{thmAR}), we will work with Banach spaces
$\mathcal B_a=\mathcal C_{V^a}$ linked to some weighted-supremum Banach spaces.
For the Knudsen gas (Theorem \ref{pro-Knudsen}), we will work with $\mathcal B_a=\mathbb L^{a}(\pi)$. In these two cases, the map $\gamma\mapsto P_\gamma$ is
not continuous in general from $(0,+\infty)$ to $\mathcal L(\mathcal B_{a})$, but only
from $(0,+\infty)$  to $\mathcal L(\mathcal B_a,\mathcal B_b)$ for $a<b$ for the linear autoregressive models (and for $b<a$ for the Knudsen gas). This is the reason why we use the Keller-Liverani perturbation theorem \cite{KelLiv99}. The price to pay is to consider "a chain of Banach spaces" instead of a single one.
\end{arem}
We introduce two sets of assumptions (Hypotheses \ref{hypKL} and \ref{hypKL}*): both of them will be relevant for  our examples in Sections~\ref{Knudsengas} and \ref{proofAR} (Knudsen gas and linear autoregressive model). Below the notation $\mathcal B_0\hookrightarrow \mathcal B_1$ means that $\cB_0$ is continuously injected in $\mathcal B_1$.

\begin{acond} \label{hypKL}
Let $\cB_0$ and $\cB_1$ be two Banach spaces, let $J$ be
a subinterval of $[0,+\infty]$. We will say that $(P_\gamma,J,\cB_0,\cB_1)$ satisfies Hypothesis~\ref{hypKL} if
\begin{itemize}
\item $\cB_0 \hookrightarrow \cB_1$,
\item for every $\gamma\in J$, $P_\gamma\in\cL(\cB_0)\cap\cL(\cB_1)$,
\item the map $\gamma\mapsto P_\gamma$ is continuous from $J$ to
$\cL(\cB_0,\cB_1)$,
\item there exist $c_0>0$, $\delta_0>0$, $M>0$ such that
\begin{subequations}
\begin{equation} \label{cond-r-ess-direct}
\forall \gamma\in J,\quad r_{ess}\big(P_{\gamma|\cB_0}\big)\le\delta_0
\end{equation}
\begin{equation} \label{D-F-direct}
\forall \gamma\in J,\ \forall n\geq 1,\ \forall f\in\cB_0,\quad
\|P_\gamma^n f\|_{\cB_0}\le c_0\big(\delta_0^n\| f\|_{\cB_0}+M^n\| f\|_{\cB_1}\big)
\end{equation}
\end{subequations}
\end{itemize}
\end{acond}

\noindent{\bf Hypothesis~\ref{hypKL}*.}
{\it $(P_\gamma,J,\cB_0,\cB_1)$ satisfies all the conditions of Hypothesis~\ref{hypKL}, except for (\ref{cond-r-ess-direct}) and (\ref{D-F-direct}) which are replaced by the following ones:
\begin{subequations}
\begin{equation} \label{cond-r-ess-dual}
\forall \gamma\in J,\quad r_{ess}\big((P_{\gamma}^*)_{|\cB_1^*}\big)\le\delta_0
\end{equation}
\begin{equation} \label{D-F-dual}
\forall \gamma\in J,\ \forall n\geq 1,\ \forall f^*\in\cB_1^*,\quad
\|(P_\gamma^*)^n f^*\|_{\cB_1^*}\le c_0(\delta_0^n\| f^*\|_{\cB_1^*} + M^n\| f^*\|_{\cB_0^*})
\end{equation}
\end{subequations}
}

\noindent Hypothesis~\ref{hypKL}* can be seen as a dual version of
Hypothesis~\ref{hypKL}, but it is worth noticing that the conditions (\ref{cond-r-ess-dual})-(\ref{D-F-dual}) cannot be deduced from (\ref{cond-r-ess-direct})-(\ref{D-F-direct}) (and conversely). Under Hypothesis \ref{hypKL} or \ref{hypKL}* we define the following set:
\begin{equation} \label{J0}
J_0:=\{\gamma\in J : r(\gamma)>\delta_0\}.
\end{equation}
\begin{atheo}\label{cor-th1}\label{generalspectraltheorem1}
Let $\cB_0\hookrightarrow\cB_3\hookrightarrow \L^1(\pi)$ be two Banach spaces such that $\mathbf 1_\X\in\cB_0$, let $J$ be a subinterval of $[0,+\infty]$. Assume that $(P_\gamma,J,\cB_0,\cB_3)$ satisfies Hypothesis \ref{hypKL} or \ref{hypKL}* and that
\begin{itemize}
	\item Hypothesis \ref{hypcompl} holds on $J_0$ and $\cB :=\cB_0$ under Hypothesis \ref{hypKL}
	\item Hypothesis \ref{hypcompl} holds on $J_0$ and $\cB :=\cB_3$ under Hypothesis \ref{hypKL}*.
\end{itemize}
Then
\begin{equation} \label{thmkellerliverani1}
\forall \gamma_0\in J,\quad \limsup_{\gamma\rightarrow \gamma_0}r(\gamma)\le \max(\delta_0,r(\gamma_0))\, ,
\end{equation}
and the function $\gamma\mapsto r(\gamma) := r(P_{\gamma|\cB})$ is continuous on $J_0$. Moreover there exists a map $\gamma\mapsto \Pi_\gamma$ from $J_0$ to $\cL(\cB)$ which is continuous from $J_0$ to
$\mathcal L(\cB_0,\cB_3)$ such that, for every compact subset $K$ of $J_0$, there exist $\theta_K\in(0,1)$ and $M_K\in(0,+\infty)$ such that
\begin{equation} \label{sup-vit}
\forall \gamma\in K,\ \forall f\in\cB,\quad
\big\|P_\gamma^n f - r(\gamma)^n \Pi_\gamma f\big\|_{\cB} \leq M_K\, \big(\theta_K\, r(\gamma)\big)^n \|f\|_{\cB}.
\end{equation}
Consequently, under the previous assumptions, the following assertions hold:
\begin{enumerate}[(i)]
	\item If the maps $\gamma\mapsto P h_{\kappa,\gamma}$ and $\gamma\mapsto\mu(\kappa e^{-\gamma\xi}\cdot)$ are continuous from $J_0$ to $\mathcal B_0$ and to $\cB_3^*$ respectively, and if
\begin{equation} \label{def-B}
\forall \gamma\in J_0,\quad B(\gamma) := \mu\left(\kappa\, e^{-\gamma \xi}\Pi_\gamma(Ph_{\kappa,\gamma})\right)>0,
\end{equation}
then $(S_n,\kappa(X_n))_n$ is multiplicatively ergodic on $J_0$ with $A(\gamma):=\frac{B(\gamma)}{r(\gamma)}$ and $\rho(\gamma)=r(\gamma)$.
  \item If moreover $\displaystyle\inf_{\gamma\in J_0} r(\gamma) <1<\sup_{\gamma\in J_0} r(\gamma)$,
then $\nu$ is finite and
\begin{equation}\label{P1bis}
\nu=\inf\{\gamma>0\, :\, r(\gamma)<1\}.
\end{equation}
\end{enumerate}
\end{atheo}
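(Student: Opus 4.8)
The plan is to obtain all the spectral conclusions from the Keller--Liverani perturbation theorem \cite{KelLiv99}, fed with Hypothesis~\ref{hypKL} (or its dual \ref{hypKL}*) for the uniform-in-$\gamma$ information on $(P_\gamma)_{\gamma\in J}$ and with Hypothesis~\ref{hypcompl} for the fibrewise rank-one structure on $J_0$. Under Hypothesis~\ref{hypKL} I would apply the theorem to $P_{\gamma|\cB_0}$ with strong/weak norms $(\|\cdot\|_{\cB_0},\|\cdot\|_{\cB_3})$; under Hypothesis~\ref{hypKL}* I would apply it instead to the adjoints $(P_\gamma^*)_{|\cB_3^*}$ with norms $(\|\cdot\|_{\cB_3^*},\|\cdot\|_{\cB_0^*})$ --- dualizing reverses the embedding, so \eqref{cond-r-ess-dual}--\eqref{D-F-dual} are exactly the Lasota--Yorke hypotheses needed there --- and then transpose back, the spectral projector of $P_\gamma^*$ on $\cB_3^*$ being the Banach adjoint of a spectral projector $\Pi_\gamma$ of $P_\gamma$ on $\cB_3$, and $r(\gamma)=r(P_{\gamma|\cB_3})=r(P^*_{\gamma|\cB_3^*})$. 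In both cases the theorem supplies, for each $\gamma_0\in J$ and each $z$ outside $\sigma(P_{\gamma_0})$ with $|z|>\delta_0$, the convergence $(z-P_\gamma)^{-1}\to(z-P_{\gamma_0})^{-1}$ in $\cL(\cB_0,\cB_3)$ uniformly on compact $z$-sets avoiding $\sigma(P_{\gamma_0})$, together with the spectral stability statement: no spectrum escapes the disc $\{|z|\le\delta_0\}$ as $\gamma\to\gamma_0$, and each eigenvalue of $P_{\gamma_0}$ of modulus $>\delta_0$ persists with its total algebraic multiplicity.

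\noindent\textbf{Continuity of $r$ and the projector.} From the ``no escape'' part, for $\eta>0$ small and $\gamma$ near $\gamma_0$ the spectrum $\sigma(P_\gamma)$ lies in $\{|z|\le\delta_0+\eta\}$ union an $\eta$-neighbourhood of $\sigma(P_{\gamma_0})\cap\{|z|>\delta_0\}$; letting $\eta\downarrow0$ gives \eqref{thmkellerliverani1}. On $J_0$, where $r(\gamma_0)>\delta_0$, this means $r$ is upper semicontinuous at $\gamma_0$, while persistence of the eigenvalue $r(\gamma_0)$ forces $\liminf_{\gamma\to\gamma_0}r(\gamma)\ge r(\gamma_0)$; hence $r$ is continuous on $J_0$ and $J_0$ is open in $J$. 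Fixing now $\gamma_0\in J_0$ and letting $\cB$ be $\cB_0$ (under \ref{hypKL}) or $\cB_3$ (under \ref{hypKL}*), Hypothesis~\ref{hypcompl} makes $r(\gamma_0)$ an isolated, algebraically simple dominant eigenvalue of $P_{\gamma_0|\cB}$; encircling it alone by a small circle and using the resolvent convergence gives, for $\gamma$ near $\gamma_0$, a spectral projector $\Pi_\gamma$ of the same rank one, continuous in $\gamma$ into $\cL(\cB_0,\cB_3)$, attached to a simple eigenvalue $\lambda(\gamma)$. Since $\lambda(\gamma)\to r(\gamma_0)$, $r(\gamma)\to r(\gamma_0)$, and the total algebraic multiplicity near $r(\gamma_0)$ equals $1$, necessarily $\lambda(\gamma)=r(\gamma)$, so $\Pi_\gamma$ is the $r(\gamma)$-eigenprojector, and $\Pi_\gamma\ge0$ because $P_\gamma$ is a positive operator on the cone of $\cB$ with spectral radius $r(\gamma)$ (Perron--Frobenius). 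Writing $P_\gamma=r(\gamma)\Pi_\gamma+N_\gamma$, the remaining spectrum has radius $s(\gamma)<r(\gamma)$ on $J_0$ with $\limsup_{\gamma\to\gamma_0}s(\gamma)\le\max(\delta_0,s(\gamma_0))<r(\gamma_0)$, so $\sup_K(s/r)<1$ on any compact $K\subset J_0$; combining this with the uniform-on-$K$ resolvent and projector bounds of \cite{KelLiv99} in the usual power estimate yields $\theta_K\in(0,1)$ and $M_K<\infty$ with $\|N_\gamma^n f\|_\cB\le M_K(\theta_K r(\gamma))^n\|f\|_\cB$, which is \eqref{sup-vit}; the case $K=\{\gamma\}$, using only Hypothesis~\ref{hypcompl}, is \eqref{sup-vit-ponctuel}. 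I expect this merging step --- turning Keller--Liverani convergence, which holds only in the coarse norm $\cL(\cB_0,\cB_3)$ and with essential-radius control, into a genuinely rank-one $\Pi_\gamma$ and a \emph{uniform} strong-norm spectral gap, and carrying out the dual version under \ref{hypKL}* --- to be the main difficulty; the remaining steps are routine.

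\noindent\textbf{Multiplicative ergodicity, item (i).} I would insert the decomposition into \eqref{formuleFourier}: for $n\ge1$ and $\gamma\in J_0$, $g_n(\gamma)=\mu(\kappa e^{-\gamma\xi}P_\gamma^{n-1}(Ph_{\kappa,\gamma}))$, and since $Ph_{\kappa,\gamma}\in\cB_0\subset\cB$ and $\mu(\kappa e^{-\gamma\xi}\cdot)\in\cB_3^*\subset\cB^*$, applying \eqref{sup-vit} to $f=Ph_{\kappa,\gamma}$ gives $|g_n(\gamma)-r(\gamma)^{n-1}B(\gamma)|\le\|\mu(\kappa e^{-\gamma\xi}\cdot)\|_{\cB^*}\,M_K\,\|Ph_{\kappa,\gamma}\|_\cB\,(\theta_K r(\gamma))^{n-1}$ on a compact $K\subset J_0$, with $B(\gamma)$ as in \eqref{def-B}. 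The two norm prefactors are bounded on $K$ by the assumed continuity of $\gamma\mapsto Ph_{\kappa,\gamma}$ into $\cB_0$ and of $\gamma\mapsto\mu(\kappa e^{-\gamma\xi}\cdot)$ into $\cB_3^*$, and $r\ge\inf_K r>\delta_0>0$ on $K$, so the right-hand side is $\le M_K'(\theta_K r(\gamma))^n$; writing $r(\gamma)^{n-1}B(\gamma)=A(\gamma)r(\gamma)^n$ with $A:=B/r$ --- continuous and $(0,+\infty)$-valued on $J_0$ by \eqref{def-B} and the continuity of $B$, itself a consequence of the continuity of $\gamma\mapsto Ph_{\kappa,\gamma}$, $\gamma\mapsto\Pi_\gamma$ and $\gamma\mapsto\mu(\kappa e^{-\gamma\xi}\cdot)$ --- this is exactly \eqref{formule-mult-erg} with $\rho=r$, i.e. multiplicative ergodicity of $(S_n,\kappa(X_n))_n$ on $J_0$.

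\noindent\textbf{Finiteness of $\nu$, item (ii).} Each $g_n$ is nonnegative and nonincreasing in $\gamma$ (because $S_n\ge0$ and $\kappa\ge2$), so $G=\sum_n g_n$ is nonincreasing on $(0,+\infty)$ and, by \eqref{nuDEF}, $\{\gamma>0:G(\gamma)<\infty\}$ is an interval with left endpoint $\nu$; by item (i) and Remark~\ref{multergodP1P2}, $G(\gamma)<\infty\Leftrightarrow r(\gamma)<1$ for $\gamma\in J_0$. Using the monotonicity of $r$ (Section~\ref{results}): the hypothesis $\sup_{J_0}r>1$ yields $\gamma_1\in J_0$ with $r(\gamma_1)>1$, hence $G\equiv+\infty$ on $(0,\gamma_1]$ and $\nu\ge\gamma_1>0$, while $\inf_{J_0}r<1$ yields $\gamma_2\in J_0$ with $r(\gamma_2)<1$, hence $G(\gamma_2)<\infty$ and $\nu\le\gamma_2<\infty$; since $r$ is nonincreasing one also gets $[\gamma_1,\gamma_2]\subset J_0$, on which $\{G<\infty\}$ and $\{r<1\}$ coincide, so by continuity and monotonicity of $r$ their common left endpoint is $\nu=\inf\{\gamma>0:r(\gamma)<1\}$, which is finite; this is \eqref{P1bis}.
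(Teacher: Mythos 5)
Your proposal follows essentially the same route as the paper's proof. You apply the Keller--Liverani theorem to $(P_\gamma)$ on $(\cB_0,\cB_3)$ under Hypothesis~2.7 and to the adjoints $(P_\gamma^*)$ on $(\cB_3^*,\cB_0^*)$ under Hypothesis~2.7*, obtain the spectral stability bound \eqref{thmkellerliverani1}, continuity of $r$ on $J_0$, define $\Pi_\gamma$ and $N_\gamma^n$ by contour integrals around $r(\gamma_0)$ and around the rest of the spectrum respectively, and use the uniform resolvent bounds to obtain the constants $\theta_K$ and $M_K$ uniformly over compact $K\subset J_0$; items (i) and (ii) then follow from \eqref{formuleFourier}, \eqref{sup-vit} and the finiteness criterion for $G$. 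This is exactly what the paper does (Section~7), with the same decomposition into the two dual cases, the same contour-integral construction, and the same use of Remark~2.2 for item~(ii). One small observation: for item~(ii) you invoke monotonicity of $r$ from Section~4; strictly speaking, neither Propositions~\ref{LEMME0} nor \ref{decroissanceAB} is among Theorem~\ref{generalspectraltheorem1}'s stated hypotheses, and the paper's own route for \eqref{P1bis} (via Corollary~\ref{cor-th1v1}) likewise assumes $r$ non-increasing, so this is a shared implicit assumption rather than a gap you introduced; your alternative observation that finiteness of $\nu$ already follows directly from $\inf_{J_0} r<1<\sup_{J_0} r$ is valid and slightly cleaner.
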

Formula \eqref{sup-vit} can be interpreted as a
{\bf spectral multiplicative ergodicity property}. To prove the existence of $C_\nu$,
we reinforce our assumptions by considering a longer chain of Banach spaces.
\begin{atheo} \label{cor-th2}\label{generalspectraltheorem2}
Assume $\pi(\xi>0)>0$. Let $\cB_0\hookrightarrow\cB_1\hookrightarrow\cB_2\hookrightarrow \cB_3 \hookrightarrow \L^1(\pi)$ be Banach spaces containing $\mathbf 1_{\mathbb X}$ and let $J$ be a subinterval of $[0,+\infty]$. Assume that one of the two following conditions holds
\vspace*{-2mm}
\begin{enumerate}[(a)]
\item Either: for $i=0,1,2$, $(P_\gamma,J,\cB_i,\cB_{i+1})$ satisfies Hypothesis \ref{hypKL}, and Hypothesis~\ref{hypcompl} holds with
$(
J_0
,\cB_{i})$  ; in this case we set $\cB:=\cB_0$.
\item Or: for $i=0,1,2$, $(P_\gamma,J,\cB_i,\cB_{i+1})$ satisfies Hypothesis \ref{hypKL}*, and Hypothesis \ref{hypcompl} holds with
$(J_0,\cB_{i+1})$ ; in this case we set $\cB:=\cB_3$.
\end{enumerate}
Assume moreover that the map $\gamma\mapsto P_\gamma$ is continuous from $J$ to $\cL(\cB_i,\cB_{i+1})$ for $i\in\{0,2\}$ and $C^1$ from $J$ to $\cL(\cB_1,\cB_2)$ with derivative $P'_\gamma f=P_\gamma(-\xi f)$ ($f\mapsto \xi f$ being in $\cL(\cB_1,\cB_2)$).
Then (\ref{sup-vit}) holds with $C^1$-smooth maps $\gamma\mapsto r(\gamma) := r(P_{\gamma|\cB})$ and $\gamma\mapsto\Pi_\gamma$ from $J_0$ into $\R$ and into $\cL(\cB_0,\cB_3)$ respectively.
Consequently, under the previous assumptions, the assertions $(i)$-$(ii)$ in Theorem~\ref{generalspectraltheorem1} can be specified and completed as follows:
\begin{enumerate}[(i')]
	\item If the additional assumptions in Assertion~$(i)$ of Theorem~\ref{generalspectraltheorem1} hold with the present spaces $\cB_0$ and $\cB_3$, then the functions $A(\cdot)$ and $\rho(\cdot):=r(\cdot)$ are $C^1$-smooth on $J_0$.
	\item If moreover  $\inf_{\gamma\in J_0} r(\gamma) <1<\sup_{\gamma\in J_0} r(\gamma)$
and if $r'(\nu)\neq 0$, then the constant $C_\nu$ of \eqref{CDEF} is well defined and finite, and Property~\eqref{CVMOYESP} holds true.
\end{enumerate}
\end{atheo}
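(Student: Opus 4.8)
By Theorem~\ref{generalspectraltheorem1} (applicable here, with $\cB=\cB_0$ under (a) and $\cB=\cB_3$ under (b)) the estimate~\eqref{sup-vit} already holds and the maps $\gamma\mapsto r(\gamma)$, $\gamma\mapsto\Pi_\gamma$ are continuous on $J_0$; so the only genuinely new assertion is that these maps are in fact $C^1$ ($r$ into $\R$, $\Pi$ into $\cL(\cB_0,\cB_3)$). Once that is done, $(i')$ and $(ii')$ drop out of Remark~\ref{multergodP1P2} and Corollary~\ref{cor1}. My plan is thus: (1) differentiate the Keller--Liverani perturbed spectral data along the chain $\cB_0\hookrightarrow\cB_1\hookrightarrow\cB_2\hookrightarrow\cB_3$; (2) deduce the $C^1$-smoothness of $r$ and then of $A=B/r$; (3) feed this into the abstract corollaries. (The standing assumption $\pi(\xi>0)>0$ keeps the perturbation $\gamma\mapsto P_\gamma$ non-degenerate, which is what makes the analysis of $r$ in Section~\ref{results} — monotonicity, sign of $r'$ — non-trivial.)

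For~(1): fix $\gamma_0\in J_0$. Using the continuity of $r$ from Theorem~\ref{generalspectraltheorem1}, the uniform bound $r_{ess}\le\delta_0<r(\gamma_0)$, and the Keller--Liverani description of the spectral picture outside the disc of radius $\delta_0$, choose a small oriented circle $\Gamma$ around $r(\gamma_0)$ and a neighborhood $U$ of $\gamma_0$ in $J_0$ such that, for every $\gamma\in U$, the interior of $\Gamma$ meets $\sigma(P_{\gamma|\cB})$ only at the simple eigenvalue $r(\gamma)$. Then $\Pi_\gamma=\frac1{2i\pi}\oint_\Gamma(zI-P_\gamma)^{-1}\,dz$ and, $r(\gamma)$ being a first-order pole (Hypothesis~\ref{hypcompl}(ii)), also $r(\gamma)\Pi_\gamma=\frac1{2i\pi}\oint_\Gamma z\,(zI-P_\gamma)^{-1}\,dz$. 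From the resolvent identity $(zI-P_\gamma)^{-1}-(zI-P_{\gamma_0})^{-1}=(zI-P_\gamma)^{-1}(P_\gamma-P_{\gamma_0})(zI-P_{\gamma_0})^{-1}$, dividing by $\gamma-\gamma_0$ and letting $\gamma\to\gamma_0$, I would factor the integrand through $\cL(\cB_0,\cB_3)$ as follows: $(zI-P_{\gamma_0})^{-1}$ is bounded on $\cB_0$ uniformly for $z\in\Gamma$ (Keller--Liverani on the pair $(\cB_0,\cB_1)$), then comes the embedding $\cB_0\hookrightarrow\cB_1$, then $(P_\gamma-P_{\gamma_0})/(\gamma-\gamma_0)\to P_{\gamma_0}'$ in $\cL(\cB_1,\cB_2)$ with $P_{\gamma_0}'f=P_{\gamma_0}(-\xi f)$ (the $C^1$-hypothesis on the middle pair), then the embedding $\cB_2\hookrightarrow\cB_3$, and finally $(zI-P_\gamma)^{-1}$ bounded on $\cB_2$ uniformly for $z\in\Gamma$ and $\gamma\in U$ (Keller--Liverani on the pair $(\cB_2,\cB_3)$). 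Passing to the limit under the contour integral yields $\dot\Pi_{\gamma_0}=\frac1{2i\pi}\oint_\Gamma(zI-P_{\gamma_0})^{-1}P_{\gamma_0}'(zI-P_{\gamma_0})^{-1}\,dz\in\cL(\cB_0,\cB_3)$, and the same estimate shows $\gamma_0\mapsto\dot\Pi_{\gamma_0}$ is continuous; hence $\gamma\mapsto\Pi_\gamma$ is $C^1$ from $J_0$ into $\cL(\cB_0,\cB_3)$, and likewise $\gamma\mapsto r(\gamma)\Pi_\gamma$. Under~(b) one runs the same argument for the adjoints $P_\gamma^*$ on the dual chain $\cB_3^*\hookrightarrow\cB_2^*\hookrightarrow\cB_1^*\hookrightarrow\cB_0^*$ and transposes back. \textbf{This differentiation step is where I expect the real work to lie}: one must justify differentiating under the contour integral, extract the uniform-in-$(z,\gamma)$ resolvent bounds on $\Gamma$ from the Keller--Liverani machinery~\cite{KelLiv99,HerPen10}, and — the delicate point — check that no regularity is lost beyond the single shift $\cB_0\to\cB_3$, the four-space chain being tailored precisely so that the input resolvent on $\cB_0$, the differentiation of $P_\gamma$ on $(\cB_1,\cB_2)$, and the output resolvent on $\cB_2$ fit together.

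For~(2): choose $f_0\in\cB_0$ and $\varphi\in\cB_3^*$ with $\varphi(\Pi_{\gamma_0}f_0)\ne0$ (possible since $\Pi_{\gamma_0}\ne0$); then for $\gamma$ near $\gamma_0$ one has $r(\gamma)=\varphi\big((r(\gamma)\Pi_\gamma)f_0\big)\big/\varphi(\Pi_\gamma f_0)$, a quotient of two $C^1$ real functions of $\gamma$ with non-vanishing denominator, so $r\in C^1(J_0)$; this gives the claimed form of~\eqref{sup-vit}. For~$(i')$: under the additional assumptions of Theorem~\ref{generalspectraltheorem1}$(i)$ with the present $\cB_0,\cB_3$, the quantity $B(\gamma)=\mu\big(\kappa\,e^{-\gamma\xi}\,\Pi_\gamma(Ph_{\kappa,\gamma})\big)$ is a product of the now-$C^1$ map $\gamma\mapsto\Pi_\gamma$ with the maps $\gamma\mapsto Ph_{\kappa,\gamma}$ and $\gamma\mapsto\mu(\kappa e^{-\gamma\xi}\,\cdot\,)$; the latter two are $C^1$ with derivatives obtained by bringing down a factor $-\xi$ (bounded from $\cB_1$ to $\cB_2$), from the same hypotheses on $\gamma\mapsto P_\gamma$. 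Hence $B$, and so $A=B/r$ (note $r>0$ on $J_0$ since $\delta_0>0$), is $C^1$ on $J_0$.

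For~(3), i.e.~$(ii')$: continuity of $r$ on the interval $J_0$ together with $\inf_{J_0}r<1<\sup_{J_0}r$ gives, by the intermediate value theorem, a point of $J_0$ at which $r=1$; Theorem~\ref{generalspectraltheorem1}$(ii)$ (Formula~\eqref{P1bis}) identifies $\nu=\inf\{\gamma>0:r(\gamma)<1\}$, and continuity then forces $r(\nu)=1$. Since $r$ is $C^1$ with $r'(\nu)\ne0$ and $A$ is continuous at $\nu$, the last item of Remark~\ref{multergodP1P2} applies: the limit~\eqref{CDEF} exists and is finite, equal to $C_\nu=-A(\nu)/\big(\nu\,r'(\nu)\big)$, and Corollary~\ref{cor1} then yields~\eqref{CVMOYESP}. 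This completes the plan.
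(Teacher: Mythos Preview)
Your plan is correct and matches the paper's own proof essentially step for step: contour-integral representation of $\Pi_\gamma$ and $r(\gamma)\Pi_\gamma$, differentiation of the resolvent through the chain $\cB_0\hookrightarrow\cB_1\hookrightarrow\cB_2\hookrightarrow\cB_3$ using Keller--Liverani bounds, extraction of $r(\gamma)$ as a scalar quotient, and then the appeal to Remark~\ref{multergodP1P2} and Corollary~\ref{cor1} for $(i')$--$(ii')$. The only cosmetic difference is that the paper (Remark~\ref{KL-explication}) iterates the resolvent identity once more to write an explicit second-order remainder $\vartheta_z(\gamma)=R_z(\gamma_0)[P_\gamma-P_{\gamma_0}]R_z(\gamma_0)[P_\gamma-P_{\gamma_0}]R_z(\gamma)$ and kills it via $\|P_\gamma-P_{\gamma_0}\|_{\cB_0,\cB_1}\to0$, whereas you rely instead on the Keller--Liverani continuity $R_z(\gamma)\to R_z(\gamma_0)$ in $\cL(\cB_2,\cB_3)$; both routes are valid and use the same ingredients.
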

Note that the two above results require, not only to check the spectral property (\ref{cond-r-ess-direct}) (or (\ref{cond-r-ess-dual})) and the
Doeblin-Fortet inequalities \eqref{D-F-direct} (or \eqref{D-F-dual}), but also to prove \eqref{def-B} and moreover $r'(\nu)>0$ in Theorem~\ref{generalspectraltheorem2}. This will be discussed in section \ref{results}.

\subsection{Markovian examples} \label{mark-ex}
Here we present two examples which will be derived from the
general results of the previous section
(see Sections \ref{Knudsengas} and \ref{proofAR} for the proofs).
The first one is a toy model on which our general spectral assumptions
are easily checked. In this model, at each step, either we follow
a Markov chain $Z=(Z_n)_n$ (with probability $(1-\alpha)$) or
we generate an independent random variable with distribution
the invariant probability measure of $Z$ (with probability $\alpha$). {{ See \cite {BoattoGolse} for more about this model}}.
\begin{aex}[Knudsen gas] \label{exemp-Knud}
Let $\X:=\R^d$, let $\pi$ be some Borel probability measure on $\X$, and let $U$ a Markov operator with stationary probability $\pi$. We fix $\alpha\in(1/2,1)$.
Let $X=(X_n)_n$ be a Markov chain with transition kernel $P:= \alpha \pi + (1-\alpha)\, U$.
\end{aex}
\begin{atheo}
\label{pro-Knudsen}
Assume that $X=(X_n)_n$ is a Knudsen gas with initial distribution admitting a density with respect to $\pi$, which belongs to $\mathcal L^p(\pi)$ for some $p>1$. Moreover assume that $\kappa\equiv 2$ and that $\pi(\xi>0)=1$. Then $\nu$ defined by \eqref{nuDEF} is finite. If, moreover, $\pi(\xi^\tau)<\infty$ for some $\tau\in(1,p/(p-1))$, then
\eqref{CDEF} is well defined and Property~\eqref{CVMOYESP} holds with $C_\nu\in(0,+\infty)$.
\end{atheo}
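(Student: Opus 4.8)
The plan is to verify the hypotheses of Theorems~\ref{generalspectraltheorem1} and \ref{generalspectraltheorem2} for the Knudsen gas, working in the scale of Banach spaces $\cB_a := \L^a(\pi)$. Since $\kappa\equiv 2$ and $\pi(\xi>0)=1$, we have $P_\infty h = P(2h\,\mathbf 1_{\{\xi=0\}}) = 0$, so the relevant interval is $J=[0,+\infty)$ (or a closed subinterval), and $P_\gamma h = 2\,P(h\,e^{-\gamma\xi})$. The first step is to analyse the structure of $P_\gamma$ coming from $P = \alpha\pi + (1-\alpha)U$: writing $P_\gamma = 2\alpha\,\pi(e^{-\gamma\xi}\,\cdot)\,\mathbf 1_\X + 2(1-\alpha)\,U(e^{-\gamma\xi}\,\cdot)$, the first summand is a rank-one operator and the second has $\L^a(\pi)$-operator norm at most $2(1-\alpha)$ (since $U$ is a contraction on every $\L^a(\pi)$ and $e^{-\gamma\xi}\le 1$). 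This immediately gives the essential-spectral-radius bound $r_{ess}(P_{\gamma|\cB_a})\le \delta_0 := 2(1-\alpha) < 1$ (using $\alpha>1/2$), and a Doeblin--Fortet inequality of the form \eqref{D-F-direct} follows by splitting off the rank-one part; one checks the dual versions \eqref{cond-r-ess-dual}--\eqref{D-F-dual} symmetrically since $U^*$ is also a contraction on the $\L^{a'}(\pi)$ scale. Continuity (and $C^1$-smoothness on the middle space) of $\gamma\mapsto P_\gamma$ from $\L^a(\pi)$ to $\L^b(\pi)$ for $b<a$ follows from dominated convergence together with H\"older's inequality applied to $|e^{-\gamma\xi}-e^{-\gamma_0\xi}|$, and the derivative identity $P'_\gamma h = P_\gamma(-\xi h)$ requires $\xi\in\L^r(\pi)$ with $r$ controlling the loss of integrability, which is exactly where the hypothesis $\pi(\xi^\tau)<\infty$ with $\tau\in(1,p/(p-1))$ enters. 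The initial law $\mu$ has density in $\L^p(\pi)$, so $\mu(\kappa e^{-\gamma\xi}\,\cdot) = 2\,\mu(e^{-\gamma\xi}\,\cdot)$ defines an element of $\cB_a^* = \L^{a/(a-1)}(\pi)$ for $a$ chosen with $a/(a-1)\le p$, and continuity of $\gamma\mapsto\mu(\kappa e^{-\gamma\xi}\,\cdot)$ is again a dominated-convergence argument.

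The second step is to verify Hypothesis~\ref{hypcompl} on $J_0 = \{\gamma : r(\gamma) > \delta_0\}$ and the positivity \eqref{def-B}. Quasi-compactness is already in hand; the point is that $r(\gamma)$ is a simple, isolated, dominant eigenvalue. Here I would invoke positivity: $P_\gamma$ is a positive operator on $\L^a(\pi)$, and since $P = \alpha\pi + (1-\alpha)U$ has the "$\alpha\pi$" part, $P_\gamma$ satisfies a strong positivity/irreducibility condition — for any nonzero nonnegative $h$, $P_\gamma h \ge 2\alpha\,\pi(e^{-\gamma\xi}h)\,\mathbf 1_\X > 0$ pointwise $\pi$-a.e. — so a Krein--Rutman / Perron--Frobenius type argument on the quasi-compact positive operator gives that $r(\gamma)>0$ is a first-order pole with one-dimensional (and algebraically simple) eigenspace, and that no other eigenvalue has modulus $r(\gamma)$. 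This yields \eqref{sup-vit}, hence by assertion (i) of Theorem~\ref{generalspectraltheorem1} the multiplicative ergodicity on $J_0$, provided $B(\gamma)>0$; but positivity of the eigenprojector $\Pi_\gamma$ (its range is spanned by a strictly positive eigenfunction, by the same Perron--Frobenius argument, and $\mu$, $\kappa e^{-\gamma\xi}$, $Ph_{\kappa,\gamma}=P(e^{-\gamma\xi})$ are all nonnegative and not identically zero) forces $B(\gamma)>0$. For assertion (ii) I must check $\inf_{J_0}r<1<\sup_{J_0}r$: at $\gamma=0$ we have $P_0 = 2P$, which has spectral radius $2$ on $\L^a(\pi)$ (as $P\mathbf 1_\X=\mathbf 1_\X$, so $r(0)=2>1$), while as $\gamma\to\infty$, $r(\gamma)\to r_\infty \le \delta_0 = 2(1-\alpha)<1$ by \eqref{thmkellerliverani1} applied at $\gamma_0=+\infty$ together with $P_\infty=0$ (so $r(+\infty)=0$); hence $J_0$ is a genuine interval straddling the level $1$, and $\nu = \inf\{\gamma : r(\gamma)<1\}$ is finite by Theorem~\ref{generalspectraltheorem1}(ii).

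The third step, giving the $C_\nu$ conclusion, invokes Theorem~\ref{generalspectraltheorem2} with the four-space chain $\L^{a_0}(\pi)\hookrightarrow\L^{a_1}(\pi)\hookrightarrow\L^{a_2}(\pi)\hookrightarrow\L^{a_3}(\pi)$ for a suitable decreasing sequence of exponents $a_0>a_1>a_2>a_3>1$ with $a_0/(a_0-1)\le p$, chosen so that $\xi\in\L^{r}(\pi)$ with $r = a_1 a_2/(a_1-a_2)$ (which is possible precisely when $\tau>1$ can be taken close enough to $1$, compatibly with $\tau < p/(p-1)$; this is the role of the constraint $\tau\in(1,p/(p-1))$). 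Then $\gamma\mapsto P_\gamma$ is $C^1$ from $\L^{a_1}(\pi)$ to $\L^{a_2}(\pi)$ with $P'_\gamma f = P_\gamma(-\xi f)$, $\xi\cdot \in \cL(\L^{a_1}(\pi),\L^{a_2}(\pi))$ by H\"older, so all hypotheses of Theorem~\ref{generalspectraltheorem2}(b) hold (the dual Doeblin--Fortet version, given the Knudsen gas contracts on the dual scale $b<a$ as noted in Remark~\ref{rem-method}); consequently $r(\cdot)$ and $A(\cdot)$ are $C^1$ on $J_0$. It remains to verify $r'(\nu)\neq 0$ — in fact $r'(\nu)<0$ — which will be the main technical obstacle. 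I would argue it via the perturbation formula $r'(\gamma) = \frac{\langle \ell_\gamma,\, P'_\gamma v_\gamma\rangle}{\langle\ell_\gamma, v_\gamma\rangle} = -\frac{\langle\ell_\gamma,\, P_\gamma(\xi v_\gamma)\rangle}{\langle\ell_\gamma, v_\gamma\rangle}$, where $v_\gamma>0$ spans $\ker(P_\gamma - r(\gamma)I)$ and $\ell_\gamma>0$ spans the corresponding left-eigenspace; since $v_\gamma$ is strictly positive, $\ell_\gamma$ is a strictly positive functional, $\xi\ge 0$ with $\pi(\xi>0)=1$, and $P_\gamma$ preserves positivity nontrivially, the numerator is strictly positive, giving $r'(\gamma)<0$ on all of $J_0$ and in particular at $\nu$. (This also re-proves monotonicity of $r$, consistent with the discussion promised in Section~\ref{results}.) With $r'(\nu)<0$, Theorem~\ref{generalspectraltheorem2}(ii') applies and yields that $C_\nu$ of \eqref{CDEF} is well defined, equals $-A(\nu)/(\nu r'(\nu)) = B(\nu)/(\nu|r'(\nu)|) \in (0,+\infty)$ (using $r(\nu)=1$ and $B(\nu)>0$), and \eqref{CVMOYESP} holds.

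The step I expect to be genuinely delicate is the bookkeeping of the integrability exponents: one must simultaneously fit $\mu$'s density into a dual space ($a_0/(a_0-1)\le p$), leave enough room for four strict inclusions $a_0>a_1>a_2>a_3>1$, and have $\xi$-multiplication bounded between the two middle spaces ($\xi\in\L^{a_1a_2/(a_1-a_2)}(\pi)$, which one arranges from $\pi(\xi^\tau)<\infty$). Verifying that the open condition $\tau\in(1,p/(p-1))$ leaves a nonempty set of admissible exponent-tuples is the crux, and the strict inequality $r'(\nu)\ne 0$ is the other point where one cannot be cavalier; everything else is the by-now-standard Keller--Liverani machinery applied to a particularly transparent rank-one-plus-contraction operator.
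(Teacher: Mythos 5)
Your overall route matches the paper's: decompose $P_\gamma=2\alpha\,\pi(e^{-\gamma\xi}\cdot)\mathbf 1_\X+2(1-\alpha)U(e^{-\gamma\xi}\cdot)$ as rank-one plus a $2(1-\alpha)$-contraction on the $\L^a(\pi)$ scale, get $r_{ess}\le 2(1-\alpha)<1$ and Doeblin--Fortet, obtain continuity of $\gamma\mapsto P_\gamma$ only after dropping to a weaker $\L^b$, use the strong positivity coming from the rank-one part to run Perron--Frobenius and to get $B(\gamma)>0$, observe $r(0)=2$, $r(+\infty)=0$ to locate $\nu$, and invoke a four-space Keller--Liverani chain plus $r'(\nu)<0$ to get $C_\nu$; this is exactly what the paper does in Section~\ref{Knudsengas}. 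However, you mislabel which half of Theorem~\ref{generalspectraltheorem2} you are invoking: for the Knudsen gas the contraction and the essential-spectral-radius bound are established \emph{directly} on $\L^{a_i}(\pi)$ (Lemma~\ref{prop1}(i),(iii)), so one uses Hypothesis~\ref{hypKL} and case (a) with the decreasing exponent chain $\L^{a_0}\hookrightarrow\cdots\hookrightarrow\L^{a_3}$; the dual case (b)/Hypothesis~\ref{hypKL}* is what the paper reserves for the autoregressive model with $\cC_b\hookrightarrow\cC_V$. Having $b<a$ is not the same as being in the dual framework, and Remark~\ref{rem-method} does not say otherwise.

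The more serious gap is the exponent bookkeeping, which you single out as the crux but then resolve in the wrong direction. To make $f\mapsto\xi f$ bounded from $\L^{a_1}$ to $\L^{a_2}$ you need (by H\"older) $\xi\in\L^{r}(\pi)$ with $r=\frac{a_1a_2}{a_1-a_2}$, and since $r>a_2>a_3$ and the density of $\mu$ forces $a_3\ge p/(p-1)$ (so that $\mu(e^{-\gamma\xi}\cdot)\in\cB_3^*=\L^p$), the chain closes only if $\tau\ge r>p/(p-1)$. That is, one needs a moment of order \emph{strictly greater} than $p/(p-1)$, i.e.~$p>\tau/(\tau-1)$ as in the refined statement Theorem~\ref{pro-Knudsenv1} and as carried out in its proof ($a_3:=p/(p-1)$, $a_3<a_2<\tau$, $a_2<\tau a_1/(\tau+a_1)$, $a_0>a_1$). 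Your claim that one wants $\tau$ ``close enough to $1$, compatibly with $\tau<p/(p-1)$'' is backwards: the condition $\pi(\xi^\tau)<\infty$ strengthens as $\tau$ grows, and taking $\tau$ near $1$ makes the H\"older step fail. You have also slightly misplaced which dual space must accommodate $\mu$ (it is $\cB_3^*$, the dual of the largest space, not $\cB_0^*$). It is true that the short statement of Theorem~\ref{pro-Knudsen} writes $\tau\in(1,p/(p-1))$, but this contradicts Theorem~\ref{pro-Knudsenv1} and its proof; had you actually verified that an admissible tuple $(a_0,a_1,a_2,a_3)$ exists, you would have found this, instead of reproducing the sign error. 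As stated your argument does not establish the $C_\nu$ part.
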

\begin{aex}[Linear autoregressive model] \label{exemp-AR}
$\X:=\R$ and
$X_n = \alpha X_{n-1} + \vartheta_n$ for $n\ge 1$,
where $X_0$ is a real-valued random variable, $\alpha\in(-1,1)$, and $(\vartheta_n)_{n\ge 1}$ is a sequence of  i.i.d.~real-valued random variables independent of $X_0$.   Let $r_0>0$. We assume that $\vartheta_1$ has a continuous Lebesgue probability density function $p>0$ on $\X$
satisfying the following condition:
for all $x_0\in \R$, there exist a neighbourhood $V_{x_0}$ of $x_0$ and a non-negative function $q_{x_0}(\cdot)$ such that $y\mapsto (1+|y|)^{r_0}\, q_{x_0}(y)$ is Lebesgue-integrable and such that :
\begin{equation} \label{dom-nu}
\forall y\in\R,\ \forall v\in V_{x_0},\ p(y+v) \leq q_{x_0}(y).
\end{equation}
Note that $\vartheta_1$ admits a moment of order $r_0$.
\end{aex}
Recall that $\xi:\mathbb R\rightarrow[0,+\infty)$ is said to be coercive if $\lim_{|x|\rightarrow +\infty}\xi(x)=+\infty$, i.e.~if, for every $\beta$, the set  $[\xi\le\beta]$ is bounded.
\begin{atheo}[Linear autoregressive model]\label{thmAR}
Assume that $X=(X_n)_n$ is a linear autoregressive model satisfying the above assumption and that the distribution of $X_0$ is either the stationary probability measure $\pi$ or $\delta_x$ for some $x\in\mathbb R$. Let $N_0$ be a positive integer.
Assume that
$\kappa$ is bounded, that
$\xi$ is coercive, that the Lebesgue measure of the set $[\xi=0]$ is zero,
and that $\sup_{x\in\mathbb R}\frac{\xi(x)}{(1+|x|)^{r_0}}<\infty$.

Then, $\nu$ given by \eqref{nuDEF} is well defined (and is independent of the choice of the distribution of $X_0$ as above). If moreover there exists $\tau>0$ such that  $\sup_{x\in\mathbb R}\frac{\xi(x)^{1+\tau}}
{(1+|x|)^{r_0}}<\infty$, then the constant $C_\nu$
given by \eqref{CDEF}
is well defined in $(0,+\infty)$ and Property~\eqref{CVMOYESP} holds.
\end{atheo}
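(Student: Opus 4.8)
The plan is to realise the autoregressive kernel in a scale of weighted-supremum spaces and to deduce the theorem from Theorems~\ref{generalspectraltheorem1} and~\ref{generalspectraltheorem2}. I would fix a coercive weight $V\colon\R\to[1,+\infty)$ adapted both to the recursion and to the growth of $\xi$ — concretely $V(x):=1+|x|^{r_0}$, so that $\xi\le C\,V$ and, under the reinforced assumption, $\xi\le C\,V^{1/(1+\tau)}$ — and, writing $\cC_W$ for the space of continuous $f$ on $\R$ with $\|f\|_W:=\sup_{x\in\R}|f(x)|\,W(x)^{-1}<\infty$, I set $\cB_a:=\cC_{V^a}$; these form a continuous scale $\cB_a\hookrightarrow\cB_b$ for $a\le b$, each containing $\mathbf 1_\X$. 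For the first assertion I would apply Theorem~\ref{generalspectraltheorem1} on a suitable interval $J\subset[0,+\infty]$ (including $\gamma=\infty$, where $P_\infty=P(\,\cdot\,\kappa\mathbf 1_{[\xi=0]})=0$ because $\Leb([\xi=0])=0$) with the two-space chain $\cB_0:=\cC_{\mathbf 1_\X}\hookrightarrow\cB_3:=\cC_V$; here $\cB_3\hookrightarrow\L^1(\pi)$ since $\pi$ — the law of $\sum_{k\ge0}\alpha^k\vartheta_{1-k}$ — is absolutely continuous and inherits a moment of order $r_0$. For the second assertion the extra hypothesis $\sup_x\xi(x)^{1+\tau}/(1+|x|)^{r_0}<\infty$ is exactly what lets me insert two intermediate spaces $\cB_0\hookrightarrow\cB_1\hookrightarrow\cB_2\hookrightarrow\cB_3$ with exponents $0<a_0<a_1<a_2<a_3\le1$ chosen so that $f\mapsto\xi f$ lies in $\cL(\cB_1,\cB_2)$ (which, in this normalisation, forces $a_2-a_1\ge1/(1+\tau)$), and then to apply Theorem~\ref{generalspectraltheorem2}(a) on a bounded $J$ containing $\nu$; the needed hypothesis $\pi(\xi>0)>0$ holds because $\pi$ is absolutely continuous and $\Leb([\xi=0])=0$.

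The bulk of the work is verifying Hypotheses~\ref{hypKL} and~\ref{hypcompl} with these spaces. From $|\alpha|<1$ and $\E|\vartheta_1|^{r_0}<\infty$ one obtains drift inequalities $P\,V^a\le\rho_a V^a+C_a$ with $\rho_a<1$ for $0<a\le1$ (a Young-type inequality absorbs the cross term when $ar_0>1$), and — the key point — coercivity of $\xi$ forces $P_\gamma V^a(x)/V^a(x)\to0$ as $|x|\to+\infty$, uniformly for $\gamma$ bounded below, so that $P_\gamma V^a\le\delta_0\,V^a+C\,\mathbf 1_{\{|x|\le R\}}$ with $\delta_0<1$; together with the domination~\eqref{dom-nu} and the continuity of $p$ (which, through an Ascoli--Arzel\`a argument, make the ``compact part'' of $P_\gamma$ genuinely compact) this yields the essential-radius bound~\eqref{cond-r-ess-direct} and the Doeblin--Fortet inequalities~\eqref{D-F-direct} on each $\cB_{a_i}$. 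Continuity of $\gamma\mapsto P_\gamma$ from $\cB_{a_i}$ to $\cB_{a_{i+1}}$ — and its $C^1$-character with $P'_\gamma f=P_\gamma(-\xi f)$ on the middle pair — follows from $|e^{-\gamma\xi}-e^{-\gamma'\xi}|\le|\gamma-\gamma'|\,\xi$, the bound $\xi\le C\,V^\beta$ ($\beta=1$, resp.\ $\beta=1/(1+\tau)$) and the drift; it is precisely the loss of the factor $V^\beta$ that rules out a single Banach space and forces consecutive exponents to differ by at least $\beta$, the parameter $\tau$ giving just enough room for a four-term chain. For the two-space chain one also checks continuity of $\gamma\mapsto P_\gamma$ at $\gamma=\infty$ (the exponent $0$ of $\cB_0=\cC_{\mathbf 1_\X}$ being used here), and finally that $\gamma\mapsto Ph_{\kappa,\gamma}\in\cB_0$ and $\gamma\mapsto\mu(\kappa e^{-\gamma\xi}\,\cdot)\in\cB_3^*$ are continuous, by dominated convergence, both for $\mu=\pi$ (using $\pi(V)<\infty$) and for $\mu=\delta_x$ (a weighted point evaluation). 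I expect the delicate point to be exactly this bookkeeping: choosing one coercive weight and one consistent family of exponents for which the drift estimates, the Doeblin--Fortet inequalities with $\delta_0<1$ (so that $J_0$ contains a neighbourhood of $\nu$), the essential-radius bound and the inter-space continuity/$C^1$-regularity of $\gamma\mapsto P_\gamma$ all hold at once, and extracting the compact part of $P_\gamma$ from~\eqref{dom-nu}.

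It remains to verify Hypothesis~\ref{hypcompl} and the positivity~\eqref{def-B}. On $J_0=\{\gamma:r(\gamma)>\delta_0\}$ the above gives $r_{ess}(P_{\gamma|\cB})\le\delta_0<r(\gamma)$, hence quasi-compactness; and since $p$ is continuous and strictly positive, $P_\gamma$ has the strictly positive continuous density $(x,y)\mapsto\kappa(y)\,e^{-\gamma\xi(y)}\,p(y-\alpha x)$, hence is a positive, irreducible and aperiodic operator, so that a Krein--Rutman / Perron--Frobenius argument shows $r(\gamma)$ to be a first-order pole, the only eigenvalue of modulus $r(\gamma)$, with one-dimensional eigenspace and generalised eigenspace, carrying a strictly positive eigenfunction $v_\gamma$ and a strictly positive eigenfunctional $\ell_\gamma$; thus $\Pi_\gamma=\ell_\gamma(\,\cdot\,)\,v_\gamma$ is a non-negative rank-one projector. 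Then $B(\gamma)=\ell_\gamma\big(Ph_{\kappa,\gamma}\big)\,\mu\big(\kappa\,e^{-\gamma\xi}\,v_\gamma\big)>0$, because $Ph_{\kappa,\gamma}=P\big((\kappa-1)e^{-\gamma\xi}\big)>0$ everywhere ($\kappa-1\ge1$, $p>0$), $v_\gamma>0$, $\kappa\ge2$ and $\ell_\gamma,\mu$ are positive. Hence Theorem~\ref{generalspectraltheorem1}(i) gives the multiplicative ergodicity of $(S_n,\kappa(X_n))_n$ on $J_0$ with $\rho=r$.

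Finally I would locate $\nu$ and $C_\nu$. By Section~\ref{results}, $r$ is positive and non-increasing on $J$, so $J_0$ is an interval; moreover $P_0\mathbf 1_\X=P\kappa\ge2\,\mathbf 1_\X$ forces $r(0)\ge2$, while $r(\infty)=0$ together with~\eqref{thmkellerliverani1} at $\gamma_0=\infty$ gives $\limsup_{\gamma\to+\infty}r(\gamma)\le\delta_0<1$. Thus $\inf_{J_0}r<1<\sup_{J_0}r$ and Theorem~\ref{generalspectraltheorem1}(ii) yields $\nu=\inf\{\gamma>0:r(\gamma)<1\}\in(0,+\infty)$, which does not depend on the choice $\mu\in\{\pi,\delta_x\}$ since $r$ does not. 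Under the reinforced integrability assumption, Theorem~\ref{generalspectraltheorem2} applies, $r$ is $C^1$ on $J_0$, and the strict monotonicity of $r$ at $\nu$ (again from $\Leb([\xi=0])=0$ with $\pi$ absolutely continuous, cf.\ Section~\ref{results}) gives $r'(\nu)<0$, in particular nonzero; then Theorem~\ref{generalspectraltheorem2}(ii') shows that $C_\nu$ of~\eqref{CDEF} is well defined and finite and that~\eqref{CVMOYESP} holds, and combining with Remark~\ref{multergodP1P2} one gets $C_\nu=-A(\nu)/(\nu\,r'(\nu))=-B(\nu)/(\nu\,r'(\nu))\in(0,+\infty)$.
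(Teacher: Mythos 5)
Your high-level plan — realise $P_\gamma$ on a scale of weighted-supremum spaces built from $V(x)=(1+|x|)^{r_0}$, establish multiplicative ergodicity via Theorem~\ref{generalspectraltheorem1}, and then pass to a four-term chain with exponents differing by at least $1/(1+\tau)$ to apply Theorem~\ref{generalspectraltheorem2} — is the right one and matches the paper's strategy in Section~\ref{proofAR}. But there is a genuine gap at the heart of your verification of Hypothesis~\ref{hypKL}. You announce that the drift $P_\gamma V^a\le\delta_0 V^a + C\,\mathbf 1_{\{|x|\le R\}}$ ``yields the essential-radius bound~\eqref{cond-r-ess-direct} and the Doeblin--Fortet inequalities~\eqref{D-F-direct}'', and then apply the \emph{primal} alternatives of Theorems~\ref{generalspectraltheorem1} and \ref{generalspectraltheorem2} (part (a), Hypothesis~\ref{hypKL}). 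That step does not work as stated: for weighted-supremum spaces the drift inequality only controls $|P_\gamma^n f|$ by $\|f\|_{V^a}\,P_\gamma^n V^a \le \|f\|_{V^a}(\delta_0^n V^a + L')$, which gives back the \emph{strong} norm of $f$ on the right and produces nothing resembling the primal Doeblin--Fortet bound $\|P_\gamma^n f\|_{\cB_0}\le c_0(\delta_0^n\|f\|_{\cB_0}+M^n\|f\|_{\cB_1})$. The paper gets around this precisely by dualising: the drift $P_\gamma V\le\varepsilon V + D\mathbf 1_\X$ is shown (Lemma~\ref{lem-D-F-P-gamma} and Corollary~\ref{cor-QC-gamma}, following \cite{FerHerLed13}) to produce the \emph{dual} Doeblin--Fortet inequality $\|P_\gamma^* f^*\|_V\le\varepsilon\|f^*\|_V + D\|f^*\|_\infty$ on $\cC_V^*$, i.e.~Hypothesis~\ref{hypKL}* with $(\cB_0,\cB_1)=(\cC_b,\cC_V)$, and the whole chain $\cC_b\hookrightarrow\cC_{V^{a_1}}\hookrightarrow\cC_{V^{a_2}}\hookrightarrow\cC_V$ is then fed into Theorem~\ref{generalspectraltheorem2}(b), with Hypothesis~\ref{hypcompl} checked on the \emph{larger} spaces $\cB_{i+1}$. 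Your compactness observation (Ascoli plus $e^{-\gamma\xi}\to 0$ at infinity) is correct and corresponds to Lemma~\ref{C-V-0}, but compactness of $P_\gamma:\cC_b\to\cC_V$ feeds into the dual essential-radius bound~\eqref{cond-r-ess-dual} via Hennion's theorem; it does not by itself justify the primal inequality~\eqref{D-F-direct}, so you would have to re-derive the Keller--Liverani input from scratch to make the primal route go through.

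A second, smaller deviation: you define $\cC_{V^a}$ as all continuous $f$ with $\sup|f|/V^a<\infty$, whereas the paper's $\cC_{V^a}$ additionally requires $\lim_{|x|\to\infty}f(x)/V^a(x)$ to exist. This is not cosmetic: the limit functional $\ell_V$ and the decomposition of nonnegative duals (Lemma~\ref{dec-fl}, Riesz representation on $\cC_0$) are what allow the paper to verify Hypothesis~\ref{(B)} and to run Proposition~\ref{pro-reduc-BL} on $\cC_{V^a}$, replacing the Krein--Rutman appeal you invoke with a concrete argument for a non-compact $P_\gamma$. Also worth noting: in the paper $J_0$ is shown to be all of $(0,+\infty)$ via Lemma~\ref{rnonnul}, and $\lim_{\gamma\to 0_+}r(\gamma)\ge 2$ is established via the auxiliary $\kappa\equiv 2$ kernel (Lemma~\ref{lem-kappa-2}) rather than your one-line $P_0\mathbf 1_\X\ge 2\,\mathbf 1_\X$ bound (which, however, does work and is arguably cleaner). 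Your closing steps — $r(\infty)=0$ from $\Leb([\xi=0])=0$, strict negativity of $r'$ from $\pi$ being absolutely continuous, and the formula for $C_\nu$ — align with the paper.
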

Recall that $\int_\mathbb R |x|^{r_0}\, d\pi(x)<\infty$ under the assumptions of Theorem \ref{thmAR}
(see \cite{diaco,duf}). Hence  $\sup_{x\in\mathbb R}\frac{\xi(x)^{1+\tau}}
{(1+|x|)^{r_0}}<\infty$ implies that $\int_\mathbb R|\xi|^{1+\tau}\, d\pi<\infty$.

\section{Behavior of the second moment and almost sure convergence}\label{second}
\subsection{Second moment and applications}\label{subsection3.1}
We make some stationarity assumption.
\begin{acond}\label{spatialstat}
For each $k\in \N$, there exists
a process $X^{(k)}=(X_n^{(k)})_{n\geq 0}$ such that
\begin{equation} \label{Xk}
\left\{
\begin{array}{ccc}
(X^{(k)}_n)_{0\leq n\leq k}= (X_n)_{0\leq n\leq k}& \,\, {\mbox{a.s.}} \\
(X_n^{(k)})_{n\geq 0} = (X_n)_{n\geq 0} & \,\, {\mbox{ in law}},
\end{array}
\right.
\end{equation}
and such that, for every couple of lines of cells $((v_n)_n,(w_n)_n)$ coinciding up to the $k$-th generation but not at the $(k+1)$-th
generation, the corresponding sequence of parameters have the same distribution as
$(X,X^{(k)})$.
\end{acond}
Now define, for any integers $n \geq 1, m\geq 1$ and $\min(n,m)-1\ge k\ge 0$ the random variables $A_{n,m,k}$ as follows:
\begin{eqnarray*}
A_{n,m,k}
&=& \left(\prod_{i=0}^{n-2}\kappa(X_i)\right)
\left(\prod_{j=\min(k+1,n-1)}^{m-2}\kappa(X^{(k)}_j)\right)\\
&\ &\ \ \ \ \ \ \ \ \ \ \
\left(\prod_{j\in\{k\}\setminus\{n-1,m-1\}}(\kappa(X_j)-1)\right) \left(\kappa(X_{n-1})-1\right)\left(\kappa(X^{(k)}_{m-1})-1\right),
\end{eqnarray*}
with the usual convention $\prod_{i=k+1}^{\ell}\cdots=1$
if $\ell\le k$.
Define also $S_{n}^{(k)}:=\sum_{j=0}^n\xi(X^{(k)}_j)$.
The main result of this section is the following proposition.
\begin{apro}\label{pro1}
Assume that Hypothesis \ref{spatialstat} holds. Let $t>0$ and $\tau\geq 0$ be fixed. If $\E(N_{t+\tau})<\infty$, then
\begin{equation}\label{Eqpro1}
 \EE[N_tN_{t+\tau}]
 =\E[N_{t}]+\E[N_{t+\tau}]-1+ \sum_{n=1}^{\infty} \sum_{m=1}^{\infty}\sum_{k=0}^{\min(n,m)-1}\EE\left[A_{n,m,k}
\mathbf 1_{\{S_{n-1}\leq t,S_{m-1}^{(k)} \leq t+\tau\}}\right]. \end{equation}
\end{apro}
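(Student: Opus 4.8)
The plan is to compute $\EE[N_tN_{t+\tau}]$ by decomposing the product $N_tN_{t+\tau}$ into contributions indexed by pairs of cells, one alive at time $t$ and one alive at time $t+\tau$. Write $N_t = \mathbf 1_{T_0>t} + \sum_{n\ge 1}\Sigma_n(t)$ as in the proof of Proposition \ref{pro:ENt}, where $\Sigma_n(t)$ counts the cells of generation $n$ alive at time $t$; here $\Sigma_n(t)=\sum_{(v_n)}\mathbf 1_{\{B(v_{n-1})\le t<B(v_n)\}}$, the sum running over lines of cells up to generation $n$ and $B(\cdot)$ denoting birthdate (so $B(v_{n-1})=S_{n-1}$ along that line). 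Expanding the product $N_tN_{t+\tau}$ gives a diagonal-type term coming from the case where the two cells lie on the same line (including the degenerate cases where one of them is the initial cell), plus the genuinely bilinear term $\bigl(\sum_{n\ge1}\Sigma_n(t)\bigr)\bigl(\sum_{m\ge1}\Sigma_m(t+\tau)\bigr)$. The first task is to show the ``same-line'' contributions collapse to $\E[N_t]+\E[N_{t+\tau}]-1$; the second is to identify the bilinear term with the triple sum.

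First I would treat the bilinear term. A cell $v$ of generation $n$ and a cell $w$ of generation $m$ have a well-defined last common ancestor of some generation $k$ with $0\le k\le\min(n,m)-1$ (the case of two distinct cells on the same line is exactly $k=\min(n,m)-1$ with one an ancestor of the other — this will need to be folded in carefully, and is the reason the same-line term is only $\E[N_t]+\E[N_{t+\tau}]-1$ rather than something larger). So I would write
\[
\Bigl(\sum_{n\ge1}\Sigma_n(t)\Bigr)\Bigl(\sum_{m\ge1}\Sigma_m(t+\tau)\Bigr)
=\sum_{n\ge1}\sum_{m\ge1}\sum_{k=0}^{\min(n,m)-1}\ \sum_{(v,w):\,\mathrm{lca}=k}\mathbf 1_{\{B(v_{n-1})\le t<B(v_n)\}}\,\mathbf 1_{\{B(w_{m-1})\le t+\tau<B(w_m)\}},
\]
where the innermost sum ranges over pairs of lines sharing their first $k$ cells but differing at generation $k+1$. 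Taking expectations and using Hypothesis \ref{spatialstat}, the pair of parameter sequences along $(v)$ and $(w)$ is distributed as $(X,X^{(k)})$: the first $k$ cells are common, $v$ continues with $(X_j)_{k\le j\le n-1}$ and $w$ with $(X^{(k)}_j)_{k\le j\le m-1}$. The combinatorial factor obtained after summing over the children indices — just as in the telescoping computation in the proof of Proposition \ref{pro:ENt}, now performed along two branches that split at generation $k$ — is precisely $A_{n,m,k}$: the factors $\prod_{i=0}^{n-2}\kappa(X_i)$ and $\prod_{j}\kappa(X^{(k)}_j)$ count the branchings needed to reach $v_{n-1}$ and $w_{m-1}$, the factor $\kappa(X_j)-1$ at the splitting generation $k$ (when $k$ is neither $n-1$ nor $m-1$) records that the two children chosen at generation $k$ must be distinct, and the two terminal factors $(\kappa(X_{n-1})-1)$, $(\kappa(X^{(k)}_{m-1})-1)$ come from the telescoping $\mathbf 1_{\{B(v_{n-1})\le t\}}-\mathbf 1_{\{B(v_n)\le t\}}$ summed against the last layer of children (using $\mathbf 1_{\{B(v_{n-1})\le t<B(v_n)\}}$ over $\kappa(X_{n-1})$ children, one of which is the branch already counted). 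This is where the bookkeeping is delicate and is the step I expect to be the main obstacle: getting the exact form of $A_{n,m,k}$, in particular the set $\{k\}\setminus\{n-1,m-1\}$ in the product of $(\kappa-1)$ factors and the truncation $\min(k+1,n-1)$ in the $X^{(k)}$-product, requires carefully distinguishing the sub-cases $k<n-1$ vs. $k=n-1$ (and symmetrically for $m$), i.e.\ whether or not one of the two cells is an ancestor of the other.

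Finally I would handle convergence/integrability: the rearrangements above involve nonnegative terms only, so Tonelli justifies all the interchanges of sum and expectation unconditionally, and the finiteness of the right-hand side (hence of $\EE[N_tN_{t+\tau}]$) follows from the hypothesis $\E[N_{t+\tau}]<\infty$ together with $A_{n,m,k}\ge 0$ and the bound $A_{n,m,k}\mathbf 1_{\{S_{n-1}\le t,\,S^{(k)}_{m-1}\le t+\tau\}}$ controlled, after summing over $m,k$, by a product of two terms of the type $\bigl(\prod_{j=0}^{n-1}\kappa(X_j)\bigr)\mathbf 1_{\{S_{n-1}\le t\}}$ appearing in Proposition \ref{pro:ENt} applied at times $t$ and $t+\tau$ (using that $N_{t+\tau}<\infty$ in mean forces the relevant series to converge). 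Combining the same-line contribution, the bilinear contribution, and noting that the ``$+1$'' corrections from the degenerate terms ($v$ or $w$ equal to the initial cell) reorganize exactly into $\E[N_t]+\E[N_{t+\tau}]-1$, yields \eqref{Eqpro1}.
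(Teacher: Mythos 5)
Your overall strategy matches the paper's — decompose by the generation of the last common ancestor, use Hypothesis~\ref{spatialstat} to handle the joint law of the two branches, and recover the $(\kappa-1)$ factors by telescoping — but the decomposition you start from is the wrong one, and the two intermediate claims it rests on are false as stated. You expand
\[
N_tN_{t+\tau}=\Bigl(\mathbf 1_{\{T_0>t\}}+\sum_{n\ge1}\Sigma_n(t)\Bigr)\Bigl(\mathbf 1_{\{T_0>t+\tau\}}+\sum_{m\ge1}\Sigma_m(t+\tau)\Bigr)
\]
and claim that the cross terms involving $\mathbf 1_{\{T_0>\cdot\}}$ give $\E[N_t]+\E[N_{t+\tau}]-1$ while the bilinear piece $\bigl(\sum_n\Sigma_n(t)\bigr)\bigl(\sum_m\Sigma_m(t+\tau)\bigr)$ produces the triple sum. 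Since $\sum_n\Sigma_n(s)=N_s-\mathbf 1_{\{T_0>s\}}$, that bilinear piece has expectation $\E\bigl[(N_t-\mathbf 1_{\{T_0>t\}})(N_{t+\tau}-\mathbf 1_{\{T_0>t+\tau\}})\bigr]$, whereas the triple sum on the right of \eqref{Eqpro1} equals $\E[(N_t-1)(N_{t+\tau}-1)]$; correspondingly the remaining ``diagonal'' part has expectation $\E\bigl[\mathbf 1_{\{T_0>t\}}N_{t+\tau}+\mathbf 1_{\{T_0>t+\tau\}}N_t-\mathbf 1_{\{T_0>t+\tau\}}\bigr]$, which is not $\E[N_t]+\E[N_{t+\tau}]-1$ in general. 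So neither of your two sub-tasks can be carried out as stated.

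The missing step is to apply the Abel/telescoping rewrite to each factor $N_s$ \emph{before} expanding the product. Using $\E[N_{t+\tau}]<\infty$ (hence $\E[N_t]<\infty$) to justify the a.s.\ rearrangement of the double sum in $\mathbf 1_{\{T_0>s\}}+\sum_n\Sigma_n(s)$, the paper first establishes
\[
N_s=1+\sum_{n\ge1}\tilde\Sigma_n(s),\qquad
\tilde\Sigma_n(s):=\sum_{k_1,\dots,k_{n-1}}\bigl(D_{0,k_1,\dots,k_{n-1}}-1\bigr)\,\mathbf 1_{\{T_0+T_{0,k_1}+\cdots+T_{0,k_1,\dots,k_{n-1}}\le s\}},
\]
so that $N_tN_{t+\tau}=N_t+N_{t+\tau}-1+\sum_{n,m\ge1}\tilde\Sigma_n(t)\tilde\Sigma_m(t+\tau)$ holds identically. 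The term $\E[N_t]+\E[N_{t+\tau}]-1$ thus comes from the two constant $1$'s (the initial cell's contribution), not from same-line pairs; same-line pairs of cells of generation $\ge 1$ remain inside the double sum and are precisely the $k=\min(n,m)-1$ part of the triple sum. You clearly know this telescoping (it is where you correctly locate the factors $\kappa(X_{n-1})-1$ and $\kappa(X_{m-1}^{(k)})-1$), but it has to be performed on $N_t$ and $N_{t+\tau}$ separately first. Once that is done, the rest of your plan — the lca decomposition of $\tilde\Sigma_n(t)\tilde\Sigma_m(t+\tau)$, the use of Hypothesis~\ref{spatialstat}, and the bookkeeping yielding $A_{n,m,k}$ with the $\{k\}\setminus\{n-1,m-1\}$ and $\min(k+1,n-1)$ subtleties — coincides with the paper's.
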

\begin{proof}
We have, using the notations of the proof of Proposition \ref{pro:ENt},
$N_t=\mathbf 1_{\{T_0>t\}}+ \sum_{n=1}^{\infty} \Sigma_n(t) \quad a.s.$,
with
$$\Sigma_n(t)= \sum_{k_1=1}^{D_0}\sum_{k_2=1}^{D_{0,k_1}}\cdots
       \sum_{k_n=1}^{D_{0,k_1,\cdots,k_{n-1}}}
         \mathbf 1_{\{ T_0+T_{0,k_1}+\cdots+T_{0,k_1,\cdots,k_{n-1}}\le t\}}- \mathbf 1_{\{
             T_0+T_{0,k_1}+\cdots+T_{0,k_1,\cdots,k_{n}}\le t \}}.$$
Now due to $\E(N_{t})<\infty$ (see the proof of Proposition \ref{pro:ENt}),
$$\sum_{k_1=1}^{D_0}\sum_{k_2=1}^{D_{0,k_1}}\cdots
       \sum_{k_n=1}^{D_{0,k_1,\cdots,k_{n-1}}}
         \mathbf 1_{\{ T_0+T_{0,k_1}+\cdots+T_{0,k_1,\cdots,k_{n-1}}\le t\}}<\infty\quad a.s..$$
Therefore
$N_t=1+ \sum_{n=1}^{\infty} {\tilde{\Sigma}}_n(t)\quad a.s.\, ,$
with
$$\tilde\Sigma_n(t)= \sum_{k_1=1}^{D_0}\sum_{k_2=1}^{D_{0,k_1}}\cdots
       \sum_{k_{n-1}=1}^{D_{0,k_1,\cdots,k_{n-2}}}
         (D_{0,k_1,\cdots,k_{n-1}}-1)
         \mathbf 1_{\{ T_0+T_{0,k_1}+\cdots+T_{0,k_1,\cdots,k_{n-1}}\le t \}}.$$
Consequently,
\begin{eqnarray}
&& N_tN_{t+\tau}= 1+ \sum_{n=1}^{\infty} {\tilde{\Sigma}}_n(t)+ \sum_{n=1}^{\infty} {\tilde{\Sigma}}_n(t+\tau)+ \sum_{n=1}^{\infty} \sum_{m=1}^{\infty}{\tilde{\Sigma}}_n(t){\tilde{\Sigma}}_m(t+\tau) {\nonumber}\\
&&= N_t+ N_{t+\tau}-1+ \sum_{n=1}^{\infty} \sum_{m=1}^{\infty}{\tilde{\Sigma}}_n(t){\tilde{\Sigma}}_m(t+\tau)\, .
{\nonumber}
\end{eqnarray}
For any positive integers $n,m$ and $t>0,\tau\geq 0$, we have,
$$
\tilde\Sigma_n(t)\tilde\Sigma_m(t+\tau)=
\sum_{k= 0}^{\min(n-1,m-1)}\sum_{(\boldsymbol  \ell,\boldsymbol {\tilde  \ell})\in E_{n,m,k}} (D_{\boldsymbol\ell}-1)(D_{\boldsymbol{\tilde\ell}}-1)
         \mathbf 1_{\{ \mathcal S_{n-1}(\boldsymbol  \ell)\le t ,\mathcal S_{m-1}(\boldsymbol  {\tilde \ell})\le t+\tau \}}
$$
where $E_{n,m,k}$ is the set of {{$(\mathbf \ell,\mathbf{\tilde \ell})$,}}
with $\boldsymbol \ell=(0,\ell_1,...,\ell_{n-1})\in(\mathbf N^*)^{n}$
and $\boldsymbol {\tilde \ell}=(0,\tilde \ell_1,...,\tilde \ell_{m-1})\in(\mathbf N^*)^{m}$ having the same coordinates up to time $k$, i.e. such that {{$\min\{j=0,...,\min(n,m)\ :\ \ell_j\neq \tilde\ell_j\}=k+1$,}}
with the notation
$\mathcal S_{n-1}(\boldsymbol \ell):=T_0+T_{0,\ell_1}+\cdots+T_{0,\ell_1,\cdots,\ell_{n-1}}$.
We conclude by proceding exactly as in the proof of Proposition \ref{pro:ENt}.
\end{proof}

As it was done in \cite{H} in the case of independence
(cf. Lemma 19.1 and Theorem 21.1 there), Proposition \ref{pro1} is the main ingredient for the proofs of the quadratic mean  and of the almost sure convergence of $e^{-\nu t}N_t$, for the growth rate $\nu$ already defined in (\ref{nuDEF}).
\begin{acor}\label{QM}
Assume Hypothesis \ref{spatialstat}, that
$\nu<\infty$, that
$
\limsup_{t\rightarrow \infty}e^{-\nu t}\EE[N_t]<\infty
$
and that
there exists $K>0$ such that
\begin{equation}\label{Erreurttau}
\lim_{t\rightarrow \infty}\sup_{\tau\ge 0}\left|e^{-\nu(2t+\tau)}\sum_{n=1}^{\infty} \sum_{m=1}^{\infty}\sum_{k=0}^{\min(n,m)-1}\EE\left[A_{n,m,k}
\mathbf 1_{\{S_{n-1}\leq t,S_{m-1}^{(k)} \leq t+\tau\}}\right]-K\right|=0.
\end{equation}
Then
there exists a square integrable random variable $W$ such that $(e^{-\nu t}N_t)_{t\geq 0}$ converges in quadratic mean to $W$ as $t$ tends to infinity.

If moreover the convergence in \eqref{Erreurttau} is exponentially fast
and if $W>0$, then
 $(e^{-\nu t}N_t)_{t\geq 0}$ converges almost surely to $W$ as $t$ tends to infinity.
\end{acor}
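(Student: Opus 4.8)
The plan is to follow the classical $L^2$ scheme of \cite[Theorem~21.1]{H}, feeding it with Proposition~\ref{pro1}. First I would rewrite $e^{-\nu(2t+\tau)}\,\E[N_tN_{t+\tau}]$ by means of \eqref{Eqpro1}. The terms coming from $\E[N_t]+\E[N_{t+\tau}]-1$ are negligible uniformly in $\tau\ge 0$: writing for instance $e^{-\nu(2t+\tau)}\E[N_t]=e^{-\nu(t+\tau)}\big(e^{-\nu t}\E[N_t]\big)$, the second factor is bounded for $t$ large by the hypothesis $\limsup_t e^{-\nu t}\E[N_t]<\infty$ while $e^{-\nu(t+\tau)}\le e^{-\nu t}\to 0$; the $\E[N_{t+\tau}]$ and the constant terms are handled the same way. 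Combined with \eqref{Erreurttau} this yields
\[
\lim_{t\to\infty}\ \sup_{\tau\ge 0}\Big|e^{-\nu(2t+\tau)}\,\E[N_tN_{t+\tau}]-K\Big|=0 ,
\]
the rate being exponential in $t$ (uniformly in $\tau$) as soon as the rate in \eqref{Erreurttau} is, since the first-moment remainders above decay like $e^{-\nu t}$.

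For the quadratic-mean convergence I would first take $\tau=0$, getting $\E[(e^{-\nu t}N_t)^2]\to K$, so that $(e^{-\nu t}N_t)_t$ is bounded in $L^2$. Then, for $0\le t\le t'$ and $\tau=t'-t$, expanding the square gives
\[
\E\!\left[\big(e^{-\nu t}N_t-e^{-\nu t'}N_{t'}\big)^2\right]
= \E[(e^{-\nu t}N_t)^2]-2\,e^{-\nu(2t+\tau)}\E[N_tN_{t+\tau}]+\E[(e^{-\nu t'}N_{t'})^2],
\]
and by the first step each of the three terms tends to $K$ as $t\to\infty$, uniformly in $\tau\ge 0$; hence the left-hand side tends to $0$. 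So $(e^{-\nu t}N_t)_t$ is Cauchy in $L^2$ and converges in quadratic mean to some $W\in L^2$ with $\E[W^2]=K$; moreover $W\ge 0$ as an $L^2$-limit of nonnegative variables, and $\E[W]=\lim_t e^{-\nu t}\E[N_t]$ by $L^1$-convergence.

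For the almost sure statement, assume now that the convergence in \eqref{Erreurttau} is exponentially fast and $W>0$. Letting $t'\to\infty$ in the last identity and using the $L^2$-convergence to $W$, the exponential rate gives $\E[(e^{-\nu t}N_t-W)^2]\le C\,e^{-\delta t}$ for some $C,\delta>0$ and all large $t$. By the Markov inequality and the Borel--Cantelli lemma, along any arithmetic subsequence $t_j=ja$ (with $a>0$ fixed) one has $e^{-\nu t_j}N_{t_j}\to W$ a.s. To interpolate I would use that $t\mapsto N_t$ is nondecreasing: from the representation $N_t=1+\sum_{n\ge 1}\tilde\Sigma_n(t)$ obtained in the proof of Proposition~\ref{pro1}, each $\tilde\Sigma_n(\cdot)$ is nondecreasing because $\kappa\ge 2$. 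Hence, for $t\in[ja,(j+1)a]$,
\[
e^{-\nu a}\big(e^{-\nu ja}N_{ja}\big)\ \le\ e^{-\nu t}N_t\ \le\ e^{\nu a}\big(e^{-\nu(j+1)a}N_{(j+1)a}\big),
\]
so $e^{-\nu a}W\le\liminf_t e^{-\nu t}N_t\le\limsup_t e^{-\nu t}N_t\le e^{\nu a}W$ a.s.; doing this on a common almost sure event for all $a=1/k$, $k\ge1$, and letting $k\to\infty$ yields $e^{-\nu t}N_t\to W$ a.s.

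The one genuinely non-routine ingredient is the uniform-in-$\tau$ estimate \eqref{Erreurttau} — and, for the last part, its exponential rate; this is exactly what has to be verified in each concrete model and is the purpose of the later sections. Granting it, the rest is the usual $L^2$/Borel--Cantelli interpolation, the only delicate points being to keep all estimates in the first two steps uniform in $\tau\ge 0$ (so that the limits $t\to\infty$ and $t'\to\infty$ commute with the suprema) and to ensure the exponential rate in the third step does not deteriorate as $\tau$ varies.
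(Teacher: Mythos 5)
Your proof is correct and follows essentially the same route as the paper: obtain uniformly-in-$\tau$ control of $e^{-\nu(2t+\tau)}\E[N_tN_{t+\tau}]$ from Proposition~\ref{pro1} plus the hypotheses, deduce the Cauchy criterion in $L^2$, and then use the exponential rate (via the bound $\E[(e^{-\nu t}N_t-W)^2]\le Ce^{-\delta t}$) together with the monotonicity of $t\mapsto N_t$ to upgrade to almost-sure convergence. The only cosmetic difference is that the paper compresses the almost-sure step into the observation $\int_0^\infty \E[(e^{-\nu t}N_t-W)^2]\,dt<\infty$ and refers to Harris' Theorem~21.1, whereas you explicitly spell out the Borel--Cantelli-along-arithmetic-subsequence argument and the bracketing $e^{-\nu a}(e^{-\nu ja}N_{ja})\le e^{-\nu t}N_t\le e^{\nu a}(e^{-\nu(j+1)a}N_{(j+1)a})$ for $t\in[ja,(j+1)a]$, which is precisely the Harris interpolation; this is a useful expansion of the same idea rather than a different approach.
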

\begin{proof}[Proof of Corollary \ref{QM}] Due to Corollary
\ref{QM} and Proposition \ref{pro1},
$$
\lim_{t\rightarrow \infty} \EE\left[\left(e^{-\nu t}N_t-e^{-\nu(t+\tau)}N_{t+\tau}\right)^2\right]=0,
$$
for any $\tau\geq 0$, uniformly in $\tau$.
The Cauchy criterion ensures then the convergence in quadratic mean of $e^{-\nu t}N_t$  as $t$ tends to infinity to a random variable $W$ with finite second moment.

For the last point, we deduce from Proposition \ref{pro1} that
$
\int_0^{\infty}\EE\left[\left(e^{-\nu t}N_t-W\right)^2\right]dt<\infty.
$
This yields (arguing as for the proof of Theorem 21.1 in \cite{H}) the almost sure convergence, as $t$ tends to infinity, of $e^{-\nu t}N_t$   to $W$.
\end{proof}
We will apply these results in the two following sections under some
additional
independence assumptions.
\subsection{Some extensions of Harris' results}\label{three}
For further results, we will make the following stronger assumption.
\begin{acond}\label{hypoHarris}
$(X_n)_n$ is a stationary Markov process,
$(\kappa(X_n))_{n}$ is a sequence of i.i.d. square integrable random variables of expectation $\kappa_1$, which is independent
of $(\xi(X_n))_n$.
Moreover, for all $k\in\mathbb N$, $(X_n^{(k)})_{n\ge k+1}$
and $(X_n)_{n\ge k+1}$ are independent given $X_k$ and $\nu$ (as defined in (\ref{nuDEF})) satisfies,
\begin{equation}\label{nux}
\forall x\in\X,\quad\nu=\inf\left\{\gamma>0,\,\, \sum_{n\geq 0}\kappa_1^{n} \EE\left[e^{-\gamma S_{n+1}}|X_0=x\right]<\infty\right\}<\infty\, .
\end{equation}
We set $\kappa_2:=\mathbb E [\kappa(X_1)(\kappa(X_1)-1)]$.
\end{acond}
Define $f_{x,0}(t)=(\kappa_1-1)e^{-\nu t}\sum_{n\geq 0}\kappa_1^{n} \PP\left(S_{n+1}-S_0\leq t|X_0=x\right).$
We will make the following assumption involving the Laplace transform ${\tilde f}_{x,0}$ of $f_{x,0}$:
\begin{equation}\label{Laplacedef}
\forall \gamma>0,\quad
{\tilde f}_{x,0}(\gamma)=\int_0^{\infty}e^{-\gamma t}f_{x,0}(t)dt=\frac{\kappa_1-1}{\gamma+\nu}\sum_{n\geq 0}\kappa_1^{n} \EE\left[e^{-(\gamma+\nu) (S_{n+1}-S_0)}|X_0=x\right]\, .
\end{equation}
\begin{acond}\label{analytiq}
Suppose that
there exist two positive reals $\delta<\nu$ and $\epsilon$ such that, for any $x$, the Laplace transform $
{\tilde f}_{x,0}$, extended on the complex plane, satisfies the following conditions:
\begin{enumerate}
\item ${\tilde f}_{x,0}$ is analytic in $\{z = u + iy , |u| <\delta+\epsilon,\,y\in \RR\} \setminus \{0\}$,
\item ${\tilde f}_{x,0}$ has a simple pole at $0$, with residue $\tilde C_0(x)$,
\item $\int_{-\infty}^{+\infty}| {\tilde f}_{x,0}(\delta+ iy)| dy <\infty$ ,
\item
$\lim_{y \rightarrow \pm \infty}
˜{\tilde f}_{x,0}(u + iy) = 0 $,
uniformly in $u \in [-\delta, \delta]$,
\item $\Psi_0(x):= \int_{-\infty}^{+\infty}|{\tilde f}_{x,0}(-\delta + iy)| dy < \infty$ .
\end{enumerate}
\end{acond}
We generalize the approach of \cite{LouhichiYcart15}
to obtain the following result
extending \cite[Theorem 19.1]{H}
to the case where the lifetimes are dependent.
\begin{atheo}\label{theo1}
Assume that Hypotheses \ref{hypoHarris}
and \ref{analytiq} are satisfied and that
$$\EE\left[e^{-\nu \xi(X_0)}\tilde C_0(X_0)\right]+\EE\left[e^{-(\nu-\delta) \xi(X_0)}\Psi_0(X_0)\right]+\sum_{k=0}^{\infty}\kappa_1^{k}
\EE\left[\tilde C^2_0(X_k)e^{-2\nu S_k}\right]<\infty \, ,$$
\begin{eqnarray}\label{Htheo1}
\mbox{and}&& \sum_{k=0}^{\infty}\kappa_1^k \EE\left[\tilde C_0(X_k)\Psi_0(X_k)e^{-(2\nu-\delta) S_k}+\Psi^2_0(X_k)e^{-2(\nu-\delta) S_{k}}\right]<\infty, {\nonumber}\\
&& \sum_{k=0}^{\infty}\kappa_1^k \EE\left[(\tilde C_0(X_k)+\Psi_0(X_k)+k)e^{-(\nu+\delta) S_{k}}\right]<\infty,
\end{eqnarray}
then
there exists a square integrable random variable $W$ such that $(e^{-\nu t}N_t)_{t\geq 0}$ converges in quadratic mean to $W$ as $t$ tends to infinity, with
\begin{eqnarray*}
&& \EE[W]=\kappa_1\EE\left[e^{-\nu \xi(X_0)}\tilde C_0(X_0) \right] \\
&& {\mbox{Var}}(W) = \kappa_2 \sum_{k=0}^{\infty}\kappa_1^{k}
\EE\left[\tilde C^2_0(X_k)e^{-2\nu S_k}\right]- \kappa_1^2\left(\EE\left[e^{-\nu \xi(X_0)}\tilde C_0(X_0)\right]\right)^2.
\end{eqnarray*}
If, moreover, $W>0$ almost surely then  $e^{-\nu t}N_t$ converges almost surely to $W$.
\end{atheo}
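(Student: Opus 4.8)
The whole statement follows by applying Corollary~\ref{QM}, so the task is to verify its three hypotheses under Hypotheses~\ref{hypoHarris}--\ref{analytiq} and the displayed summability conditions, and then to read off the two moments of $W$. The hypothesis $\nu<\infty$ is part of \eqref{nux}. The other two hypotheses of Corollary~\ref{QM} both reduce, through Propositions~\ref{pro:ENt} and \ref{pro1}, to the pointwise estimate \eqref{limPPPP} for $f_{x,0}$. I would prove \eqref{limPPPP} by Laplace inversion: the bounded function $f_{x,0}(t)=e^{-\nu t}\sum_{n\ge 0}\kappa_1^n(\kappa_1-1)\,\PP\big(\sum_{i=1}^{n+1}\xi(X_i)\le t\mid X_0=x\big)$ has Laplace transform $\tilde f_{x,0}$ as in \eqref{Laplacedef}, and Hypothesis~\ref{analytiq} lets one move the inversion contour from $\{\Re z=\delta\}$ to $\{\Re z=-\delta\}$, crossing only the simple pole at $0$ (residue $\tilde C_0(x)$), the horizontal segments vanishing by conditions~(1),(4); thus $f_{x,0}(t)-\tilde C_0(x)=\frac1{2\pi}\int_\RR e^{(-\delta+iy)t}\tilde f_{x,0}(-\delta+iy)\,dy$, which is bounded in modulus by $\Psi_0(x)e^{-\delta t}/(2\pi)$ by condition~(5).

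\emph{First moment.} By Proposition~\ref{pro:ENt} and the facts that $(\kappa(X_n))_n$ is i.i.d.\ with mean $\kappa_1$ and is independent of $(\xi(X_n))_n$, one has $\EE[N_t]=1+(\kappa_1-1)\sum_{n\ge 0}\kappa_1^n\PP(S_n\le t)$; conditioning on $X_0$ and recognizing $f_{X_0,0}$ gives $e^{-\nu t}\EE[N_t]=\kappa_1\,\EE[e^{-\nu\xi(X_0)}f_{X_0,0}(t-\xi(X_0))]+o(1)$. Inserting \eqref{limPPPP} and dominating by $\EE[e^{-\nu\xi(X_0)}(\tilde C_0(X_0)+\Psi_0(X_0))]<\infty$ (first summability hypothesis), dominated convergence yields $e^{-\nu t}\EE[N_t]\to\kappa_1\EE[e^{-\nu\xi(X_0)}\tilde C_0(X_0)]$ (this is Proposition~\ref{PPPPP}); in particular $\limsup_t e^{-\nu t}\EE[N_t]<\infty$.

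\emph{Second moment.} For \eqref{Erreurttau} I start from the triple sum in Proposition~\ref{pro1} and, conditioning on $X_k$, compute $\EE[A_{n,m,k}\mathbf 1_{\{S_{n-1}\le t,\,S_{m-1}^{(k)}\le t+\tau\}}]$: given $X_k$ the two lines $(X_n)_{n\ge k+1}$ and $(X_n^{(k)})_{n\ge k+1}$ are independent (Hypothesis~\ref{hypoHarris}) and the $\kappa$'s are independent of the lifetimes, the $\kappa$-factors at generations $<k$ averaging to $\kappa_1$, while at the branch generation $k$ the relevant factor is $\kappa(X_k)(\kappa(X_k)-1)$, averaging to $\kappa_2$, in the generic range $k\le\min(n,m)-2$ (the degenerate cases $k=n-1,m-1$, hidden in the convention on $\prod_{j\in\{k\}\setminus\{n-1,m-1\}}$, giving only lower-order terms). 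Summing over $n,m$ reconstructs the two factors $e^{\nu(t-S_k)}f_{X_k,0}(t-S_k)$ and $e^{\nu(t+\tau-S_k)}f_{X_k,0}(t+\tau-S_k)$ (using $X_k^{(k)}=X_k$), so that $e^{-\nu(2t+\tau)}$ times the $k$-th slice equals $\kappa_1^k\kappa_2\,\EE[e^{-2\nu S_k}f_{X_k,0}(t-S_k)f_{X_k,0}(t+\tau-S_k)]$ plus lower-order terms. Applying \eqref{limPPPP} to each inner factor replaces $f_{X_k,0}(t-S_k)$ and $f_{X_k,0}(t+\tau-S_k)$ by $\tilde C_0(X_k)$ with errors $\le\Psi_0(X_k)e^{\delta S_k}e^{-\delta t}/(2\pi)$, uniformly in $\tau\ge 0$; the main term summed over $k$ is $K:=\kappa_2\sum_{k\ge 0}\kappa_1^k\EE[\tilde C_0^2(X_k)e^{-2\nu S_k}]<\infty$ (first summability hypothesis), while the cross and squared error terms (carrying weights $e^{-(2\nu-\delta)S_k}$, $e^{-2(\nu-\delta)S_k}$) and the degenerate terms (carrying a factor of order $k$ and weight $e^{-(\nu+\delta)S_k}$) sum, by \eqref{Htheo1}, to $O(e^{-\epsilon_1 t})$ uniformly in $\tau$. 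This establishes \eqref{Erreurttau} with exponential rate (this is Proposition~\ref{PP}).

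\emph{Conclusion, and main difficulty.} Corollary~\ref{QM} now produces a square-integrable $W$ with $e^{-\nu t}N_t\to W$ in quadratic mean; $L^2$-convergence gives $\EE[W]=\lim_t e^{-\nu t}\EE[N_t]=\kappa_1\EE[e^{-\nu\xi(X_0)}\tilde C_0(X_0)]$ and, taking $\tau=0$, $\EE[W^2]=\lim_t e^{-2\nu t}\EE[N_t^2]=K$, hence $\mathrm{Var}(W)=K-\kappa_1^2\big(\EE[e^{-\nu\xi(X_0)}\tilde C_0(X_0)]\big)^2$; and since the convergence in \eqref{Erreurttau} is exponentially fast, the last clause of Corollary~\ref{QM} gives the almost sure convergence once $W>0$ a.s. The delicate part is the second-moment step: the exact decomposition of $\EE[A_{n,m,k}\mathbf 1_{\{\cdots\}}]$ --- in particular the appearance of $\kappa_2$ at the branch point and the treatment of the boundary cases $k=n-1,m-1$ --- and, above all, showing that the error left by \eqref{limPPPP} sums over $k,n,m$ to $O(e^{-\epsilon_1 t})$ \emph{uniformly in} $\tau\ge0$, which is exactly what the technical conditions~\eqref{Htheo1} are designed to provide.
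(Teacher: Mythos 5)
Your proposal is correct and takes essentially the same route as the paper: apply Corollary~\ref{QM}, verify its hypotheses through Propositions~\ref{PPPPP} and~\ref{PP} (which in turn rest on the pointwise bound of Lemma~\ref{lemnux}), and then identify $\EE[W]$ and $\EE[W^2]$ as the limits of $e^{-\nu t}\EE[N_t]$ and $e^{-2\nu t}\EE[N_t^2]$ via $L^2$-convergence. The one thing you add is a self-contained contour-integration sketch of \eqref{limPPPP}, which the paper itself only cites from \cite[Lemma~2.2]{LouhichiYcart15}; and there is one small slip in the first-moment domination, where the error term $e^{-\nu\xi(X_0)}\Psi_0(X_0)e^{\delta\xi(X_0)}=e^{-(\nu-\delta)\xi(X_0)}\Psi_0(X_0)$ should be dominated using $\EE[e^{-(\nu-\delta)\xi(X_0)}\Psi_0(X_0)]<\infty$ rather than the (smaller, hence insufficient) $\EE[e^{-\nu\xi(X_0)}\Psi_0(X_0)]$ you wrote — though the correct quantity is indeed what the stated hypothesis supplies, so the argument goes through as you intend.
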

The rest of this subsection is devoted to the proof of Theorem \ref{theo1}. Define, for $r\leq m$, the partial sums
$ S_{r,m}:= \sum_{i=r}^m\xi(X_i),$ so that $S_n=S_{0,n}$ and $S_{r,m}^{(k)}:= \sum_{i=r}^m\xi(X_i^{(k)})$.
The following lemma is very useful in order to prove the exponential speed of convergence in
\eqref{Erreurttau} and then to obtain the almost sure convergence of $e^{-\nu t}N_t$. Its proof is an immediate consequence of Lemma  2.2  in \cite{LouhichiYcart15} and we omit it.
\begin{alem}\label{lemnux}
Assume Hypothesis \ref{analytiq}.
Then, for any $t>0$,
\begin{eqnarray}\label{limP}
&& \left| e^{-\nu t}\sum_{n\geq 0}\kappa_1^{n}(\kappa_1-1) \PP(S_{1,n+1}\leq t|X_0=x)-\tilde C_0(x)\right|\leq \frac{\Psi_0(x)}{2\pi}e^{-\delta t}.
\end{eqnarray}
\end{alem}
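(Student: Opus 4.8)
The plan is to deduce Lemma~\ref{lemnux} directly from the inversion-of-Laplace-transform argument already available in \cite{LouhichiYcart15}, specialised to the sequence at hand. First I would observe that the quantity on the left-hand side of \eqref{limP} is, up to the factor $e^{-\nu t}$, nothing but
\[
F_x(t):=\sum_{n\geq 0}\kappa_1^{n}(\kappa_1-1)\,\PP\bigl(S_{1,n+1}\leq t\mid X_0=x\bigr),
\]
which is a nondecreasing function of $t$ whose Laplace transform is, by a term-by-term computation (legitimate by monotone convergence since all summands are nonnegative),
\[
\int_0^\infty e^{-\gamma t}\,dF_x(t)=(\kappa_1-1)\sum_{n\geq 0}\kappa_1^{n}\,\EE\bigl[e^{-\gamma(S_{n+1}-S_0)}\mid X_0=x\bigr],
\]
so that $e^{-\nu t}F_x(t)$ has Laplace transform $\tilde f_{x,0}(\gamma)=\int_0^\infty e^{-(\gamma+\nu)t}\,dF_x(t)/(\gamma+\nu)$ after the usual integration-by-parts/normalisation — exactly the function introduced in \eqref{Laplacedef}. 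The series defining $\tilde f_{x,0}$ converges for $\mathrm{Re}\,\gamma>0$ precisely because of the assumption $\nu<\infty$ built into Hypothesis~\ref{hypoHarris}, so $f_{x,0}$ is well defined.

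Next I would run the complex-inversion (Fourier/Laplace) argument: under Hypothesis~\ref{analytiq}, $\tilde f_{x,0}$ is analytic on the strip $\{|u|<\delta+\epsilon\}$ except for a simple pole at $0$ with residue $\tilde C_0(x)$, it decays along vertical lines (conditions (3)--(5)), and $\int|\tilde f_{x,0}(-\delta+iy)|\,dy=\Psi_0(x)<\infty$. Shifting the Bromwich contour from the line $\mathrm{Re}\,z=\delta$ (or any $u>0$) across the pole at $0$ to the line $\mathrm{Re}\,z=-\delta$ picks up the residue $\tilde C_0(x)$ and leaves a remainder integral along $\mathrm{Re}\,z=-\delta$; estimating that remainder by $e^{-\delta t}$ times $\frac{1}{2\pi}\int_{-\infty}^{+\infty}|\tilde f_{x,0}(-\delta+iy)|\,dy=\frac{\Psi_0(x)}{2\pi}$ gives exactly
\[
\bigl|e^{-\nu t}F_x(t)-\tilde C_0(x)\bigr|\le\frac{\Psi_0(x)}{2\pi}\,e^{-\delta t},
\]
which is \eqref{limP}. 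The decay conditions (3)--(4) are what justify closing the contour and applying Cauchy's theorem on the rectangle with vertical sides $\mathrm{Re}\,z=\pm\delta$; the horizontal pieces vanish as the imaginary part goes to $\pm\infty$.

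Since this is precisely the content of Lemma~2.2 of \cite{LouhichiYcart15} applied with the branching rate $\kappa_1$ in place of $2$ and with the conditional law given $X_0=x$ in place of the unconditional one, the cleanest presentation is simply to invoke that lemma verbatim. The only point requiring a word of care — and the mild ``obstacle'' here — is checking that the hypotheses of the cited lemma translate correctly: namely that the five analyticity/decay/integrability conditions on $\tilde f_{x,0}$ in Hypothesis~\ref{analytiq} are exactly the hypotheses needed there, and that the elementary identity \eqref{Laplacedef} for $\tilde f_{x,0}$ indeed holds (which follows from monotone convergence, as noted). Once that dictionary is in place, no further work is needed, which is why the statement says the proof is immediate and is omitted.
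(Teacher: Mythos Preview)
Your proposal is correct and follows exactly the approach the paper takes: the paper simply states that the result is an immediate consequence of Lemma~2.2 in \cite{LouhichiYcart15} and omits the proof, and your write-up spells out precisely that dictionary (identifying $f_{x,0}$ and $\tilde f_{x,0}$, checking that Hypothesis~\ref{analytiq} matches the hypotheses of the cited lemma, and recalling the contour-shifting inversion). There is nothing to add.
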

Lemma \ref{lemnux} together with Proposition \ref{pro:ENt} allows to give an exact asymptotic behavior of $\EE(N_t)$ as $t$ tends to infinity, as shows the following proposition.
\begin{apro}\label{PPPPP} Suppose that $(X_n)_{n\geq 0}$ is a stationary sequence of random variables, that $(\kappa(X_i))_{i\geq 0}$ is a sequence of i.i.d. random variables with finite first moment. Let $\kappa_1=\EE(\kappa(X_1))$. Suppose moreover that $(\kappa(X_i))_{i\geq 0}$ is independent of $(\xi(X_i))_{i\geq 0}$, that $\nu$ satisfies (\ref{nux}) and that Hypothesis \ref{analytiq} is satisfied for some positive $\delta$ strictly less than $\nu$. If $\EE(e^{-\nu \xi(X_0)}\tilde C_0(X_0))<\infty$ and if $\EE(e^{-(\nu-\delta) \xi(X_0)}\Psi_0(X_0))<\infty$,
then $\EE(N_t)<\infty$ and there exists $\epsilon_1>0$
such that
$$
\EE(N_t)= e^{\nu t}\kappa_1\EE(e^{-\nu \xi(X_0)}\tilde C_0(X_0))[1+ O(e^{-\epsilon_1 t})],\,\,as\,\,t\rightarrow \infty\, .
$$
\end{apro}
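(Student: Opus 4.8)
The plan is to start from the exact formula for $\EE(N_t)$ provided by Proposition~\ref{pro:ENt} and rewrite it so that Lemma~\ref{lemnux} can be applied directly. Under the present hypotheses ($(\kappa(X_i))_i$ i.i.d.\ with mean $\kappa_1$, independent of $(\xi(X_i))_i$, and $(X_n)_n$ stationary), for each $n\ge 1$ the factor $\prod_{j=0}^{n-1}\kappa(X_j)(\kappa(X_n)-1)$ is independent of $\mathbf 1_{\{S_n\le t\}}$, so that $g_n$ evaluated inside the sum factorizes as $\kappa_1^{\,n}(\kappa_1-1)\,\PP(S_n\le t)$. Conditioning on $X_0=x$ and using stationarity to identify $S_n-S_0$ in distribution with $S_{1,n+1}$, I would first establish
$$
\EE(N_t)=1+\EE\big[(\kappa(X_0)-1)\mathbf 1_{\{\xi(X_0)\le t\}}\big]
 +\EE\Big[\sum_{n\ge 1}\kappa_1^{\,n}(\kappa_1-1)\,\PP\big(S_{1,n+1}\le t-\xi(X_0)\,\big|\,X_0\big)\Big],
$$
where the inner conditional probability is taken with respect to the chain started afresh from $X_0$. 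The $n=0$ term of the sum in Lemma~\ref{lemnux}, namely $(\kappa_1-1)\PP(S_{1,1}\le s\,|\,X_0=x)=(\kappa_1-1)\mathbf 1_{\{\xi(x)\le s\}}$, is exactly the discrete contribution $\EE[(\kappa(X_0)-1)\mathbf 1_{\{\xi(X_0)\le t\}}]$ once we set $s=t-\xi(X_0)$ is absorbed; a small bookkeeping step reconciles the $n=0$ vs $n\ge1$ split, so that altogether
$$
\EE(N_t)-1=\EE\Big[\sum_{n\ge 0}\kappa_1^{\,n}(\kappa_1-1)\,\PP\big(S_{1,n+1}\le t-\xi(X_0)\,\big|\,X_0\big)\Big].
$$

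Next I would apply Lemma~\ref{lemnux} with $t$ replaced by $s:=t-\xi(X_0)$ and $x:=X_0$, which gives, on the event $\{\xi(X_0)\le t\}$,
$$
\Big|\,e^{-\nu(t-\xi(X_0))}\!\!\sum_{n\ge 0}\kappa_1^{\,n}(\kappa_1-1)\,\PP\big(S_{1,n+1}\le t-\xi(X_0)\,\big|\,X_0\big)-\tilde C_0(X_0)\,\Big|
 \le \frac{\Psi_0(X_0)}{2\pi}\,e^{-\delta(t-\xi(X_0))}.
$$
Multiplying through by $e^{-\nu\xi(X_0)}$ and then by $e^{-\nu t}$ yields
$$
\Big|\,e^{-\nu t}\big(\EE(N_t)-1\big)-\EE\big[e^{-\nu\xi(X_0)}\tilde C_0(X_0)\big]\,\Big|
 \le \frac{e^{-\delta t}}{2\pi}\,\EE\big[e^{-(\nu-\delta)\xi(X_0)}\Psi_0(X_0)\big] + (\text{contribution of }\{\xi(X_0)>t\}).
$$
The first error term is finite by the hypothesis $\EE(e^{-(\nu-\delta)\xi(X_0)}\Psi_0(X_0))<\infty$ and decays like $e^{-\delta t}$. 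The term coming from $\{\xi(X_0)>t\}$ I would bound by splitting the original sum: on that event $S_{1,n+1}\le t-\xi(X_0)<0$ is impossible, so that part of the sum contributes nothing to $\EE(N_t)-1$ beyond the $\mathbf 1_{\{\xi(X_0)\le t\}}$ already accounted for, and one checks that $e^{-\nu t}\PP(\xi(X_0)>t)\to 0$ (indeed it is $o(e^{-\nu t})$ if $\xi(X_0)$ has any moment, which is implied by the finiteness of $\EE(e^{-(\nu-\delta)\xi(X_0)}\Psi_0(X_0))$ together with $\Psi_0>0$). Finally the constant $e^{-\nu t}$ coming from the $"-1"$ is negligible. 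Collecting terms and choosing any $\epsilon_1\in(0,\min(\delta,\nu))$ gives
$$
\EE(N_t)=e^{\nu t}\kappa_1\,\EE\big(e^{-\nu\xi(X_0)}\tilde C_0(X_0)\big)\,[1+O(e^{-\epsilon_1 t})],
$$
provided $\kappa_1\EE(e^{-\nu\xi(X_0)}\tilde C_0(X_0))>0$; note $\tilde C_0\ge 0$ is the residue at a simple pole of a Laplace transform of a nonnegative function, and positivity of this constant is what makes the $[1+O(\cdot)]$ formulation meaningful. Finiteness of $\EE(N_t)$ itself is immediate from Proposition~\ref{pro:ENt} together with $\nu<\infty$.

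The main obstacle, and the step deserving the most care, is the reduction of the double object $\EE\big[\sum_n \kappa_1^n(\kappa_1-1)\PP(S_{1,n+1}\le t-\xi(X_0)\mid X_0)\big]$ to a clean outer expectation of $e^{-\nu t}e^{-\nu\xi(X_0)}\tilde C_0(X_0)$: one must justify interchanging $\EE[\cdot]$ with the (pointwise, in $x$) estimate of Lemma~\ref{lemnux}, control the region $\{\xi(X_0)>t\}$, and verify that all three expectations appearing ($\EE[e^{-\nu\xi(X_0)}\tilde C_0(X_0)]$, $\EE[e^{-(\nu-\delta)\xi(X_0)}\Psi_0(X_0)]$, and the tail term) are finite under the stated assumptions. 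The interchange is legitimate because the bound in Lemma~\ref{lemnux} is uniform in $x$ up to the factor $\Psi_0(x)$, which is integrated against the finite measure $e^{-(\nu-\delta)\xi(X_0)}\,d\PP$; everything else is routine.
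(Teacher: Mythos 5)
Your plan is conceptually the same as the paper's proof: start from Proposition~\ref{pro:ENt}, use i.i.d.-ness of $(\kappa(X_i))_i$ and its independence from $(\xi(X_i))_i$ to replace the $\kappa$-product by $\kappa_1^n(\kappa_1-1)$, condition on $X_0$ so that the inner sum is the object controlled by Lemma~\ref{lemnux} with argument $s=t-\xi(X_0)$, multiply by $e^{-\nu\xi(X_0)}$ and integrate; the paper packages this inner sum as $f_{X_0,0}(t-\xi(X_0))$ and then splits $f_{X_0,0}=\tilde C_0+(f_{X_0,0}-\tilde C_0)$, bounding the second piece by $\tfrac{\Psi_0(X_0)}{2\pi}e^{-\delta(t-\xi(X_0))}$.

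Be careful, though, with the indexing: since $\{S_n\le t\}=\{S_{1,n}\le t-\xi(X_0)\}$, the conditional probability appearing for the generic term of Proposition~\ref{pro:ENt} is $\PP(S_{1,n}\le t-\xi(X_0)\mid X_0)$, not $\PP(S_{1,n+1}\le t-\xi(X_0)\mid X_0)$. To match the $S_{1,n+1}$ form of Lemma~\ref{lemnux} you must reindex the $n\ge 1$ part, which produces an overall factor $\kappa_1$ in front of the sum, and the $n=0$ contribution $(\kappa_1-1)\PP(\xi(X_0)\le t)$ must be kept aside — it is not the $n=0$ term of the Lemma~\ref{lemnux} sum, which involves $\xi(X_1)$, not $\xi(X_0)$. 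As written, your display $\EE(N_t)-1=\EE\big[\sum_{n\ge 0}\kappa_1^n(\kappa_1-1)\PP(S_{1,n+1}\le t-\xi(X_0)\mid X_0)\big]$ is off by the factor $\kappa_1$ and would lead to $\EE(N_t)\sim e^{\nu t}\EE[e^{-\nu\xi(X_0)}\tilde C_0(X_0)]$, without the $\kappa_1$ present in the statement. The correct intermediate identity, and the one the paper uses, is $\sum_{n\ge 1}\kappa_1^n(\kappa_1-1)\PP(S_n\le t)=e^{\nu t}\kappa_1\,\EE\big[e^{-\nu\xi(X_0)}f_{X_0,0}(t-\xi(X_0))\mathbf 1_{\{\xi(X_0)\le t\}}\big]$, with the $n=0$ term and the additive $1$ absorbed into an $O(e^{-\nu t})$ remainder. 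Once that bookkeeping is fixed, the rest of your argument (apply the pointwise Lemma~\ref{lemnux} bound, integrate against $e^{-(\nu-\delta)\xi(X_0)}$, use the two finiteness hypotheses and the positivity of $\tilde C_0$ to pull out the bracket $[1+O(e^{-\epsilon_1 t})]$) is exactly the paper's.
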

\begin{proof}
Recall that $
f_{x,0}(u)= e^{-\nu u}\sum_{n\ge 1}\kappa_1^{n-1}(\kappa_1-1)
   \mathbb P(S_{1,n}\le u|X_0=x)$. Moreover
\begin{eqnarray*}
&&\sum_{n\ge 1}\kappa_1^n(\kappa_1-1)
   \mathbb P(S_{n}\le t)= 
 \E\left[\sum_{n\ge 1}\kappa_1^n(\kappa_1-1)\mathbb P(S_{1,n}\le t-\xi(X_0)|X_0)\mathbf 1_{\{\xi(X_0)\leq t\}}\right]   \\
&&=    e^{\nu t}\kappa_1\E\left[ e^{-\nu \xi(X_0)} f_{X_0,0}(t-\xi(X_0))\mathbf 1_{\{\xi(X_0)\leq t\}} \right] \\
&&= e^{\nu t}\kappa_1\E\left[ e^{-\nu \xi(X_0)}\left( f_{X_0,0}(t-\xi(X_0))-\tilde C_0(X_0)\right)\mathbf 1_{\{\xi(X_0)\leq t\}} \right]+e^{\nu t}\kappa_1\E\left[ e^{-\nu \xi(X_0)}\tilde C_0(X_0)\mathbf 1_{\{\xi(X_0)\leq t\}} \right].
\end{eqnarray*}
Now Lemma \ref{lemnux} gives,
\begin{eqnarray*}
&& \E\left[ e^{-\nu \xi(X_0)}\left( f_{X_0,0}(t-\xi(X_0))-\tilde C_0(X_0)\right)\mathbf 1_{\{\xi(X_0)\leq t\}} \right] \leq \frac{e^{-\delta t}}{2\pi} \E\left[ \Psi_0(X_0)e^{-(\nu-\delta) \xi(X_0)}\right].
\end{eqnarray*}
Consequently $\sum_{n\ge 1}\kappa_1^n(\kappa_1-1)
   \mathbb P(S_{n}\le t) <\infty$ by the requirements of this proposition and
\begin{eqnarray}\label{f}
&&\sum_{n\ge 1}\kappa_1^n(\kappa_1-1)
   \mathbb P(S_{n}\le t) = e^{\nu t}\kappa_1\left[\E\left( e^{-\nu \xi(X_0)}\tilde C_0(X_0)\mathbf 1_{\{\xi(X_0)\leq t\}}\right)+e^{-\delta t} D(t)\right]\hspace{1cm}
\end{eqnarray}
with $0<D(t) \leq ({2\pi})^{-1} \E\left( \Psi_0(X_0)e^{-(\nu-\delta) \xi(X_0)}\right)$. This allows to deduce
thanks to Proposition \ref{pro:ENt} that $\EE(N_t)<\infty$ and
\begin{eqnarray*}
&& \mathbb E(N_t)= e^{\nu t}\kappa_1\EE [e^{-\nu \xi(X_0)}\tilde C_0(X_0)\mathbf 1_{\{\xi(X_0)\leq t\}}]\left[1+e^{-\nu t}A(t)+ e^{-\delta t}D(t)\right],
\end{eqnarray*}
with $0<A(t)<\sup_{t>0}A(t)<\infty, 0<D(t)<\sup_{t>0}D(t)<\infty$. \end{proof}

The following proposition gives an exact asymptotic behavior of  $\EE(N_tN_{t+\tau})$ under further assumptions.
Theorem \ref{theo1} follows directly from the following proposition
combined with Corollary \ref{QM}.
\begin{apro}\label{PP}
Assume that Hypotheses \ref{hypoHarris}
and \ref{analytiq} are satisfied. Suppose also that
$\sum_{k=0}^{\infty}\kappa_1^{k}\EE(\tilde C^2_0(X_k)e^{-2\nu S_{k}})<\infty $. If Condition (\ref{Htheo1}) is satisfied,
then, for any $t>0$, $\tau\geq 0$, $\EE(N_tN_{t+\tau})<\infty$ and $$
\EE(N_tN_{t+\tau})=  e^{\nu (2t+\tau)}\kappa_2\sum_{k=0}^{\infty}\kappa_1^{k}\EE\left(\tilde C^2_0(X_k)e^{-2\nu S_{k}}\right)[1+ae^{-\epsilon_1 t} ],\,\,as\,\,t\rightarrow \infty,
$$
where $a$ and $\epsilon_1$ are positive constants independent of $t$ and $\tau$.
\end{apro}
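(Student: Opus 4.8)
The plan is to use Proposition \ref{pro1}, which expresses $\EE[N_tN_{t+\tau}]$ as $\E[N_t]+\E[N_{t+\tau}]-1$ plus the triple sum $\Sigma(t,\tau):=\sum_{n,m\ge1}\sum_{k=0}^{\min(n,m)-1}\EE[A_{n,m,k}\mathbf 1_{\{S_{n-1}\le t,\,S^{(k)}_{m-1}\le t+\tau\}}]$. By Proposition \ref{PPPPP} the two single-generation terms are $O(e^{\nu t})=o(e^{\nu(2t+\tau)})$, so the whole task is to show
$$e^{-\nu(2t+\tau)}\Sigma(t,\tau)=\kappa_2\sum_{k=0}^{\infty}\kappa_1^{k}\EE\bigl(\tilde C_0^2(X_k)e^{-2\nu S_k}\bigr)\,[1+ae^{-\epsilon_1 t}].$$
First I would use Hypothesis \ref{hypoHarris}: the $\kappa(X_j)$ are i.i.d.\ of mean $\kappa_1$ and independent of $(\xi(X_j))$, and $(X^{(k)}_n)_{n\ge k+1}$, $(X_n)_{n\ge k+1}$ are conditionally independent given $X_k$. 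Taking conditional expectation of $A_{n,m,k}$ with respect to the lifetime $\sigma$-field, and using that for the indices appearing in the products $i\le n-2$, $j\le m-2$ the relevant $\kappa$'s are independent of the two branches past generation $k$, one factors $\EE[A_{n,m,k}\mid\text{lifetimes}]$ into $\kappa_1^{n-2-k}\kappa_1^{m-2-k}\cdot\EE[(\kappa(X_k)-1)\cdots]$; the $k$-th generation factor, after separating the ``new cells'' contributions on the two branches, produces exactly $\kappa_2=\EE[\kappa(X_1)(\kappa(X_1)-1)]$ (the common-ancestor cell at generation $k$ contributes a $\kappa(\kappa-1)$-type term when $k\notin\{n-1,m-1\}$, and the boundary cases $k=n-1$ or $k=m-1$ are $o$-negligible as in Harris). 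Summing the geometric factors $\sum_{n>k}\kappa_1^{n-1-k}\mathbf 1_{\{S_{n-1}\le t\}}$ along each branch independently, and using the conditional independence given $X_k$, reduces $\Sigma(t,\tau)$ to
$$\kappa_2\sum_{k=0}^{\infty}\kappa_1^{k}\,\EE\!\left[\Bigl(\textstyle\sum_{n>k}\kappa_1^{n-1-k}\PP(S_{k,n-1}\le t-S_{k-1}\mid X_k)\Bigr)\Bigl(\sum_{m>k}\kappa_1^{m-1-k}\PP(S^{(k)}_{k,m-1}\le t+\tau-S_{k-1}\mid X_k)\Bigr)\right]$$
plus lower-order terms, where the two inner sums are (shifted copies of) $f_{X_k,0}$ evaluated at $t-S_{k-1}$ and $t+\tau-S_{k-1}$.

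Next I would apply Lemma \ref{lemnux} (the bound \eqref{limP}) to each inner sum: on the event $\{S_{k-1}\le t\}$ one has $\sum_{n>k}\kappa_1^{n-1-k}\PP(S_{k,n-1}\le t-S_{k-1}\mid X_k)=e^{\nu(t-S_{k-1})}\bigl(\tilde C_0(X_k)+O(\Psi_0(X_k)e^{-\delta(t-S_{k-1})})\bigr)/(\kappa_1-1)$, and similarly the $t+\tau$ branch gives the factor $e^{\nu(t+\tau-S_{k-1})}(\tilde C_0(X_k)+O(\Psi_0(X_k)e^{-\delta(t+\tau-S_{k-1})}))/(\kappa_1-1)$. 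Multiplying, the main term is $e^{\nu(2t+\tau)}e^{-2\nu S_{k-1}}\tilde C_0^2(X_k)/(\kappa_1-1)^2$ — note $S_{k-1}$ here should be matched against the normalization $e^{-2\nu S_k}$ in the statement by shifting the index (the lifetime of the common ancestor at generation $k$ is absorbed into the branch sums, so after bookkeeping one lands on $\EE[\tilde C_0^2(X_k)e^{-2\nu S_k}]$). The cross terms and the $\Psi_0^2$ term, once summed over $k$ against $\kappa_1^k$, are controlled by the three series in Condition \eqref{Htheo1}, giving a remainder bounded by $Ce^{\nu(2t+\tau)}e^{-\epsilon_1 t}$ uniformly in $\tau\ge0$ — this is where the assumption $\delta<\nu$ is essential so that $e^{-\delta t}$ beats nothing in the prefactor and $e^{-2(\nu-\delta)S_k}$, $e^{-(2\nu-\delta)S_k}$ remain summable.

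The most delicate points, and where I expect the real work to be, are: (1) the combinatorial bookkeeping that extracts exactly $\kappa_2$ (not $\kappa_1^2$ or something else) from the generation-$k$ factor of $A_{n,m,k}$ — one must carefully treat the definition of $A_{n,m,k}$, in particular the term $\prod_{j\in\{k\}\setminus\{n-1,m-1\}}(\kappa(X_j)-1)$ together with the two factors $(\kappa(X_{n-1})-1)$, $(\kappa^{(k)}_{m-1}-1)$, separating generic $k$ from the boundary cases $k=n-1$, $k=m-1$ and showing the latter are $o(e^{\nu(2t+\tau)})$; (2) justifying the interchange of the infinite sums over $n,m,k$ with expectations and the truncation of the indicators to split $\{S_{n-1}\le t\}$ into $\{S_{k-1}\le t\}$ cap $\{S_{k,n-1}\le t-S_{k-1}\}$ — here one invokes $\sum_k\kappa_1^k\EE[\dots e^{-(\nu+\delta)S_k}]<\infty$ from \eqref{Htheo1} and the finiteness of $\EE[N_{t+\tau}]$ from Proposition \ref{pro:ENt} for absolute convergence; and (3) making the remainder bound \emph{uniform in $\tau\ge0$}, which works because in every error term the $\tau$-dependence enters only through an extra favorable factor $e^{-\nu\tau}$ or $e^{-(\nu-\delta)\tau}\le 1$ relative to the $e^{\nu\tau}$ already pulled out. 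Once these are in place, adding back $\E[N_t]+\E[N_{t+\tau}]-1=o(e^{\nu(2t+\tau)})$ yields the claimed asymptotic with $a,\epsilon_1>0$ independent of $t,\tau$.
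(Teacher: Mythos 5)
Your plan follows the paper's proof essentially step for step: decompose via Proposition~\ref{pro1}, compute $\EE[A_{n,m,k}]$ using the i.i.d.\ structure of the $\kappa(X_j)$'s (yielding $\kappa_2(\kappa_1-1)^2\kappa_1^{n+m-4-k}$ in the generic case), condition on $(S_k,X_k)$ to factor the joint probability into two branch sums, apply Lemma~\ref{lemnux} to each branch, and control the error terms by the three series in Condition~\eqref{Htheo1}, with the $k=n-1$ and $k=m-1$ boundary cases treated separately as $o$-terms. The delicate points you flag (the $\kappa_2$ combinatorics, the absolute-convergence justification, and uniformity in $\tau$) are precisely those the paper handles, and your minor index slip (conditioning on $S_{k-1}$ rather than $S_k$, and the transient $\kappa_1^{n-2-k}\kappa_1^{m-2-k}$) is self-corrected when you land on $\sum_k\kappa_1^k\EE[\tilde C_0^2(X_k)e^{-2\nu S_k}]$.
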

\begin{proof}
The task is to apply Proposition \ref{pro1}. For $0\leq k\leq \min(n,m)-1 $, we have
\begin{eqnarray*}
&& \EE\left[A_{n,m,k}\mathbf 1_{\{S_{n-1}\leq t,\,\,\,S_{m-1}^{(k)} \leq t+\tau\}}\right]= \EE\left[A_{n,m,k}\right]\PP\left({S_{n-1}\leq t},\,\, {S_{m-1}^{(k)} \leq t+\tau}\right).
\end{eqnarray*}
Observe that
$$
\EE\left[A_{n,m,k}\right]\\
= \left\{ \begin{array}{ccc}
       \mathbb E[(\kappa(X_0)-1)^2] \kappa_1^{n-1}
              \quad\quad{\mbox{ if }}  k=n-1=m-1 \\
      \mathbb \kappa_2(\kappa_1-1)\kappa_1^{\max(n-2,m-2)}
   \quad\quad{\mbox{ if }}  (k= m-1\ ou \ k= n-1),\ n\ne m\\
     \kappa_2(\kappa_1-1)^2\kappa_1^{n-2+m-2-k}
     \quad\quad{\mbox{ if }}  (k\ne  m-1\ et \ k\ne n-1)  \, .
      \end{array}
      \right.
     $$
Now we have for $k\neq m-1$,
\begin{eqnarray*}
&&\PP\left({S_{n-1}\leq t},\,\, {S_{m-1}^{(k)} \leq t+\tau}\right)= \\
&&\mathbb E\left[ \PP\left({S_{k+1,n-1}\leq t-S_k},\,\,\,S_{k+1,m-1}^{(k)} \leq t+\tau-S_k|S_k,X_k\right) \mathbf 1_{\{S_k\leq t\}}\right] \\
&&= \mathbb E\left[ \PP\left(S_{k+1,n-1}\leq t-S_k|S_k,X_k\right)  \PP\left(S_{k+1,m-1}^{(k)} \leq t+\tau-S_k|S_k,X_k\right)\mathbf 1_{\{S_k\leq t\}}\right].
\end{eqnarray*}
When $k=m-1$ (and then necessarily $m\leq n$)
$$\PP\left({S_{n-1}\leq t},\,\, {S_{m-1} \leq t+\tau}\right) =\PP\left({S_{n-1}\leq t}\right).
$$
Consequently,
\begin{eqnarray*}
&& \sum_{n=1}^{\infty} \sum_{m=1}^{n}\EE\left[A_{n,m,m-1}\mathbf 1_{\{S_{n-1}\leq t,\ S_{m-1}+S_{m,m-1}^{(m-1)} \leq t+\tau\}}\right]\\
&&= \sum_{n=1}^{\infty} \sum_{m=1}^{n-1}\kappa_1^{n-2}(\kappa_1-1)\kappa_2\PP\left({S_{n-1}\leq t}\right)+ \sum_{n=1}^{\infty}\kappa_1^{n-1}\E[(\kappa(X_0)-1)^2]
\PP\left({S_{n-1}\leq t}\right){\nonumber}\\
&&=  (\kappa_1-1)\kappa_2\sum_{n=1}^{\infty}(n-1)\kappa_1^{n-2}\PP\left({S_{n-1}\leq t}\right) + \E[(\kappa(X_0)-1)^2]\sum_{n=1}^{\infty}\kappa_1^{n-1}
\PP\left({S_{n-1}\leq t}\right),{\nonumber}
\end{eqnarray*}
and for any $0<\delta<\nu$
\begin{eqnarray}\label{F2}
&& e^{-\nu(2t+\tau)}\sum_{n=1}^{\infty} \sum_{m=1}^{n}\EE\left[A_{n,m,m-1}\mathbf 1_{\{S_{n-1}\leq t,\ S_{m-1}+S_{m,m-1}^{(m-1)} \leq t+\tau\}}\right]{\nonumber}\\
&& \leq e^{-t(\nu-\delta)}(\kappa_1-1)\kappa_2\sum_{n=1}^{\infty}(n-1)\kappa_1^{n-2}\EE[e^{-(\nu+\delta)S_{n-1}}]{\nonumber} \\
&&+ e^{-t(\nu-\delta)}\E[(\kappa(X_0)-1)^2]
\sum_{n=1}^{\infty}\kappa_1^{n-1}\EE[e^{-(\nu+\delta)S_{n-1}}].
\end{eqnarray}
We also have,
\begin{eqnarray}\label{ll}
&&\sum_{n=2}^{\infty} \sum_{m=2}^{\infty}\sum_{k=0}^{\min(n,m)-2} \EE\left[A_{n,m,k}\mathbf 1_{\{S_{n-1}\leq t,\, S_{k}+S_{k+1,m-1}^{(k)} \leq t+\tau\}}\right]\\
&&= \kappa_2\sum_{k=0}^{\infty}\kappa_1^k \mathbb E\left[\sum_{n\geq k+2}\kappa_1^{n-2-k}(\kappa_1-1) \PP\left(S_{k+1,n-1}\leq t-S_k|S_k,X_k\right)\right.\nonumber\\
&&\times \left. \sum_{m\geq k+2}\kappa_1^{m-2-k}(\kappa_1-1)\PP\left(S_{k+1,m-1}^{(k)} \leq t+\tau-S_k|S_k,X_k\right)\mathbf 1_{\{S_k\leq t\}}\right]\nonumber
\end{eqnarray}
Now the bound (\ref{limP}) gives,
\begin{eqnarray*}
&& \left|e^{-\nu(t-\sum_{i=0}^k\xi(x_i))}\sum_{n\geq k+2}\kappa_1^{n-2-k}(\kappa_1-1)\PP\left({S_{k+1,n-1}\leq t-\sum_{i=0}^k\xi(x_i)}|X_k=x_k\right)-\tilde C_0(x_k)\right|\\
&&\leq \Psi_0(x_k)e^{-\delta (t-\sum_{i=0}^k\xi(x_i))}\\
&& \left|e^{-\nu(t+\tau-\sum_{i=0}^k\xi(x_i))}\sum_{m\geq k+2}\kappa_1^{m-2-k}(\kappa_1-1)\PP\left(S_{k+1,m-1}^{(k)} \leq t+\tau-\sum_{i=0}^k\xi(x_i)|X_k=x_k\right)-\tilde C_0(x_k)\right| \\
&&\leq \Psi_0(x_k)e^{-\delta (t+\tau-\sum_{i=0}^k\xi(x_i))}.
\end{eqnarray*}
The two last bounds together with (\ref{ll}) give then
\begin{eqnarray}\label{EqproPPP}
&& \sum_{n=2}^{\infty} \sum_{m=2}^{\infty}\sum_{k=0}^{\min(n,m)-2}\EE
\left[A_{n,m,k}\mathbf 1_{\{{S_{n-1}\leq t},\,\, {S_{k}+S_{k+1,m-1}^{(k)} \leq t+\tau}\}}\right]\nonumber\\
&&= e^{\nu(2t+\tau)}\kappa_2\left[\sum_{k=0}^{\infty}\kappa_1^k\EE(\tilde C^2_0(X_k)e^{-2\nu S_{k}}\mathbf 1_{S_{k}\leq t}) + e^{-\delta t} a\right],
\end{eqnarray}
where
$
a \leq 2 \sum_{k=0}^{\infty}\kappa_1^k \EE(\tilde C_0(X_k)\Psi_0(X_k)e^{-(2\nu-\delta) S_{k}}) + e^{-\delta t}\sum_{k=0}^{\infty}\kappa_1^k \EE(\Psi^2_0(X_k)e^{-2(\nu-\delta) S_{k}}).
$ \\
Finally,
\begin{eqnarray}\label{I4}
&& \sum_{n=1}^{\infty} \sum_{m=n+1}^{\infty} \EE\left(A_{n,m,n-1}\mathbf 1_{\{S_{n-1}\leq t,\ S_{n-1}+S_{n,m-1}^{(k)} \leq t+\tau\}}\right) \\
&&=\kappa_2(\kappa_1-1)\sum_{n=1}^{\infty} \mathbb E\left[\sum_{m=n+1}^{\infty}\kappa_1^{m-2} \PP\left(S_{n,m-1}^{(n-1)}\leq t+\tau -S_{n-1}|S_{n-1},X_{n-1}\right) \mathbf 1_{\{S_{n-1}\leq t\}}\right]\nonumber\\
&&=\kappa_2(\kappa_1-1) \mathbb E\left[\sum_{n=1}^{\infty}\kappa_1^{n-1} \mathbf 1_{\{S_{n-1}\leq t\}}\sum_{m=n+1}^{\infty}\kappa_1^{m-n-1} \PP\left(S_{n,m-1}^{(n-1)}\leq t+\tau -S_{n-1}|S_{n-1},X_{n-1}\right)\right]\nonumber
\end{eqnarray}
Now the bound (\ref{limP}) gives
\begin{eqnarray}\label{I3}
&&\left|(\kappa_1-1)e^{-\nu(t+\tau-\sum_{i=0}^{n-1}\xi(x_i))}\sum_{m\geq n+1}\kappa_1^{m-n-1}\right. {\nonumber}\\
&& \left.\PP\left(S_{n,m-1}^{(n-1)} \leq t+\tau-\sum_{i=0}^{n-1}\xi(x_i)|X_{n-1}=x_{n-1}\right)-\tilde C_0(x_{n-1})\right|{\nonumber} \\
&&\leq \Psi_0(x_{n-1})e^{-\delta (t+\tau-\sum_{i=0}^{n-1}\xi(x_i))}.
\end{eqnarray}
We have also,
\begin{equation}\label{I2}
 e^{-\nu t}\sum_{n\geq 1}\kappa_1^{n-1}\E\left(\tilde C_{0}(X_{n-1})e^{-\nu S_{n-1}}\mathbf 1_{\{S_{n-1}\leq t\}}\right)
\leq e^{-(\nu-\delta) t}\sum_{n\geq 1}\kappa_1^{n-1}\E\left[\tilde C_{0}(X_{n-1})e^{-(\nu+\delta) S_{n-1}}\right],
\end{equation}
and
\begin{eqnarray}\label{I1}
&&e^{-\nu t}\E\left[\sum_{n\geq 1}\kappa_1^{n-1}\mathbf 1_{\{S_{n-1}\leq t\}}\Psi_0(X_{n-1})e^{-\nu S_{n-1}}e^{-\delta (t+\tau-S_{n-1})}\right]{\nonumber}\\
&&\leq e^{-(\nu+\delta)t}\sum_{n\geq 1}\kappa_1^{n-1}\E\left(\mathbf 1_{\{S_{n-1}\leq t\}}\Psi_0(X_{n-1})e^{-(\nu-\delta) S_{n-1}}\right){\nonumber}\\
&&\leq e^{-(\nu-\delta)t}\sum_{n\geq 1}\kappa_1^{n-1}\E\left[\Psi_0(X_{n-1})e^{-(\nu+\delta) S_{n-1}}\right]
\end{eqnarray}
We get collecting Inequalities (\ref{I4}), (\ref{I3}), (\ref{I2}) and (\ref{I1})
\begin{eqnarray}\label{F1}
&& e^{-\nu(2t+\tau)}\sum_{n=1}^{\infty} \sum_{m=n+1}^{\infty} \EE\left(A_{n,m,n-1}\mathbf 1_{\{S_{n-1}\leq t,\ S_{n-1}+S_{n,m-1}^{(k)} \leq t+\tau\}}\right){\nonumber}\\
&&\leq  e^{-(\nu-\delta)t}\kappa_2\sum_{n\geq 1}\kappa_1^{n-1}\E\left((\tilde C_{0}(X_{n-1})+\Psi_0(X_{n-1}))e^{-(\nu+\delta) S_{n-1}}\right).
\end{eqnarray}
We then obtain combining (\ref{F2}), (\ref{EqproPPP}) and (\ref{F1}),
\begin{eqnarray}\label{EqproPP}
&&e^{-\nu(2t+\tau)}\sum_{n=1}^{\infty} \sum_{m=1}^{\infty}\sum_{k=0}^{\min(n,m)-1}\EE\left[A_{n,m,k}
\mathbf 1_{\{S_{n-1}\leq t,\,\, {S_{k}+S_{k+1,m-1}^{(k)} \leq t+\tau}\}}\right]{\nonumber} \\
&&= \kappa_2\sum_{k=0}^{\infty}\kappa_1^k\EE[\tilde C^2_0(X_k)e^{-2\nu S_{k}}\mathbf 1_{\{S_{k}\leq t\}}] + e^{-(\nu-\delta) t} A+ e^{-\delta t}\kappa_2a,
\end{eqnarray}
where $a$ is as defined in (\ref{EqproPPP}) and
\begin{eqnarray*}
&& A\leq \kappa_2\sum_{n\geq 1}\kappa_1^{n-1}\E\left[(\tilde C_{0}(X_{n-1})+\Psi_0(X_{n-1}))e^{-(\nu+\delta) S_{n-1}}\right]{\nonumber}\\
&&+(\kappa_1-1)\kappa_2\sum_{n=1}^{\infty}(n-1)\kappa_1^{n-2}\EE(e^{-(\nu+\delta)S_{n-1}})+ \E[(\kappa(X_0)-1)^2]\sum_{n=1}^{\infty}\kappa_1^{n-1}
\EE[e^{-(\nu+\delta)S_{n-1}}].
\end{eqnarray*}
Consequently, we obtain
 collecting (\ref{EqproPP}) together with  (\ref{Eqpro1}) and Proposition \ref{PPPPP}, that there exists $\epsilon_1>0$,
such that, for any $t,\tau\geq 0$,
\begin{eqnarray*}\label{EqproP}
&&\EE[N_tN_{t+\tau}] = e^{\nu(2t+\tau)}\kappa_2\sum_{k=0}^{\infty}\kappa_1^k
\EE[\tilde C^2_0(X_k)e^{-2\nu S_{k}}](1+c_{t,\tau}e^{-\epsilon_1 t}),\,\,{\mbox{as}}\,\,t\rightarrow\infty,
\end{eqnarray*}
where  $\sup_{t,\tau}c_{t,\tau}<\infty$.
\end{proof}
\begin{proof}[Proof of Theorem \ref{theo1}]
The bound \eqref{EqproPP} proves that the convergence in \eqref{Erreurttau} is exponentially fast and satisfied with
$$
K=\kappa_2\sum_{k=0}^{\infty}\kappa_1^k\EE[\tilde C^2_0(X_k)e^{-2\nu S_{k}}].
$$
The proof of Theorem \ref{theo1} is complete by using Corollary~\ref{QM}, Proposition~\ref{PP}, Proposition~\ref{PPPPP},
and the fact that,
\begin{eqnarray*}
&& \EE[W]=\lim_{t\rightarrow \infty}\EE[e^{-\nu t}N_t)]=\kappa_1\EE[e^{-\nu \xi(X_0)}\tilde C_0(X_0)],\,\,{\mbox{by Proposition \ref{PPPPP},}}\\
&& \EE[W^2]=\lim_{t\rightarrow \infty}\EE [e^{-2\nu t}N_t^2]=\kappa_2\sum_{k=0}^{\infty}\kappa_1^{k}
\EE[\tilde C^2_0(X_k)e^{-2\nu S_{k}}]\,\, {\mbox{by Proposition \ref{PP}}}.
\end{eqnarray*}
\end{proof}%
\subsection{About Hypothesis \ref{analytiq}}\label{four}
The purpose of this section is to discuss Hypothesis \ref{analytiq} yielding to the key bound (\ref{limP}). We assume Hypothesis \ref{hypoHarris} along this paragraph.
\subsubsection{The i.i.d. case}
 If moreover the lifetimes are i.i.d.  then the growth rate $\nu$ as defined in (\ref{nux})
is also that defined in \eqref{(H)}, that is
$
\kappa_1\EE\left[e^{-\nu \xi(X_0)}\right]=1.
$
The following lemma gives additional assumptions ensuring (\ref{limP}) in this i.i.d. case. Although this case is classical, its study is important to make comparison with previous results (obtained with other methods of proofs). Also proofs for more general results will be obtained in the spirit of this classical case, see the paragraph below.

\begin{alem}[i.i.d. case]\label{lemiid}
Assume Hypothesis \ref{hypoHarris}, that $(\xi(X_n))_n$
is a sequence of i.i.d. random variables admitting
a density $g\in\cL^p$ for some $p>1$. If, for some $M>0,\,\nu>\delta>0$,
$\inf_{y\in \RR}\left|1-\kappa_1\EE\left[e^{-(-\delta+\nu+iy)\xi(X_0)}\right]\right|>0$ and $\inf_{|y|\geq M}\inf_{u\in[-\delta,\delta]}|1-\kappa_1\EE\left[e^{-(u+\nu+iy)\xi(X_0)}\right]|>0$,
 then Hypothesis \ref{analytiq} is satisfied  for any $x\in\X$, with
\begin{eqnarray*}
&& \tilde C_0(x)=\frac{\kappa_1-1}{\kappa_1^2\nu}\left(\EE\left[\xi(X_0)e^{-\nu \xi(X_0)}\right]\right)^{-1},\\
&& \Psi_0(x)\leq {\mbox{Const.}}\left(\inf_{y\in \RR}\left|1-\kappa_1\EE\left[e^{-(-\delta+\nu+iy)\xi(X_0)}\right]\right|\right)^{-1}\left[\int_{-\infty}^{+\infty} g^p(u)du\right]^{1/(p-1)}.
\end{eqnarray*}
\end{alem}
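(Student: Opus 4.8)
The plan is to exploit the fact that in the i.i.d.\ case the Laplace transform $\tilde f_{x,0}$ has an explicit closed form, independent of $x$, and then to verify conditions (1)--(5) of Hypothesis \ref{analytiq} one by one. First I would observe that, since $(\xi(X_n))_n$ is i.i.d.\ and independent of the $\kappa(X_n)$'s, conditioning gives $\EE[e^{-z S_{n+1}}\mid X_0=x] = \EE[e^{-z\xi(X_0)}]^{n+1} =: \varphi(z)^{n+1}$, where $\varphi(z):=\EE[e^{-z\xi(X_0)}]$ is analytic on $\{\Re z > 0\}$ and extends continuously to $\{\Re z \ge 0\}$ (indeed, since $\xi\ge 0$, $\varphi$ is analytic on the open right half-plane and bounded by $1$ there; on a strip $\{-\delta-\epsilon < \Re z\}$ it is analytic because $\EE[e^{\delta'\xi(X_0)}]<\infty$ — wait, this is \emph{not} assumed, so instead I use the density: $\varphi(z) = \int_0^\infty e^{-zt} g(t)\,dt$ is the Fourier--Laplace transform of $g\in L^1\cap L^p$, hence analytic on $\{\Re z>0\}$ and continuous on $\overline{\{\Re z\ge 0\}}$, but for $\Re z<0$ one cannot in general extend it; the key point, made precise below, is that the relevant object is not $\varphi$ alone but the \emph{geometric sum}). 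Plugging into \eqref{Laplacedef},
\[
\tilde f_{x,0}(\gamma) = \frac{\kappa_1-1}{\gamma+\nu}\sum_{n\ge 0}\kappa_1^{\,n}\varphi(\gamma+\nu)^{\,n+1}
= \frac{\kappa_1-1}{\gamma+\nu}\cdot\frac{\varphi(\gamma+\nu)}{1-\kappa_1\varphi(\gamma+\nu)},
\]
valid wherever $|\kappa_1\varphi(\gamma+\nu)|<1$; call $F(z):=\dfrac{(\kappa_1-1)}{z}\cdot\dfrac{\varphi(z)}{1-\kappa_1\varphi(z)}$, so that $\tilde f_{x,0}(\gamma)=F(\gamma+\nu)$. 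This is manifestly independent of $x$, which is why $\tilde C_0$ and $\Psi_0$ come out constant in $x$.

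Next I would analytically continue $F$ into the strip $\{|\Re z-\nu|<\delta+\epsilon\}$. The factor $1/z$ is analytic away from $z=0$, i.e.\ away from $\gamma=-\nu$, which is outside the region of interest $\{|\Re\gamma|<\delta+\epsilon\}$ once $\delta+\epsilon<\nu$. For $\varphi$: since $g\in L^p$ with $p>1$, the Hausdorff--Young / Riemann--Lebesgue argument shows $y\mapsto\varphi(u+\nu+iy)$ is bounded and tends to $0$ as $|y|\to\infty$, uniformly for $u$ in compact sets where $\varphi$ is defined; the hypotheses $\inf_{y}|1-\kappa_1\varphi(-\delta+\nu+iy)|>0$ and $\inf_{|y|\ge M}\inf_{u\in[-\delta,\delta]}|1-\kappa_1\varphi(u+\nu+iy)|>0$ are exactly what is needed to guarantee that $1-\kappa_1\varphi(z)$ stays away from $0$ on the relevant portions of the strip (hence $F$ has no poles there besides a possible one coming from $z=\nu$, i.e.\ $\gamma=0$). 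At $z=\nu$ one has $\varphi(\nu)=\EE[e^{-\nu\xi(X_0)}]=1/\kappa_1$ by the Malthusian equation \eqref{(H)}, so $1-\kappa_1\varphi(\nu)=0$; since $\varphi'(\nu)=-\EE[\xi(X_0)e^{-\nu\xi(X_0)}]\ne 0$, this zero is simple, so $F$ has a simple pole at $z=\nu$, i.e.\ $\tilde f_{x,0}$ has a simple pole at $\gamma=0$. The residue is computed by l'Hôpital:
\[
\tilde C_0(x)=\operatorname*{Res}_{\gamma=0}F(\gamma+\nu) = \frac{(\kappa_1-1)}{\nu}\cdot\frac{\varphi(\nu)}{-\kappa_1\varphi'(\nu)}
= \frac{\kappa_1-1}{\nu}\cdot\frac{1/\kappa_1}{\kappa_1\EE[\xi(X_0)e^{-\nu\xi(X_0)}]}
= \frac{\kappa_1-1}{\kappa_1^2\,\nu}\Big(\EE[\xi(X_0)e^{-\nu\xi(X_0)}]\Big)^{-1},
\]
which is the claimed value. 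This settles conditions (1) (analyticity off $0$), (2) (simple pole with this residue), and (4) (vanishing at $\pm i\infty$ uniformly in $u\in[-\delta,\delta]$, from the Riemann--Lebesgue behaviour of $\varphi$ together with the uniform lower bound on $|1-\kappa_1\varphi|$ for $|y|\ge M$, plus continuity and the pole-free-ness on the compact $u\in[-\delta,\delta]$, $|y|\le M$).

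Finally, for conditions (3) and (5) I would bound $\int_{\RR}|\tilde f_{x,0}(\pm\delta+iy)|\,dy$. On the line $\Re\gamma=-\delta$ one has $\Re(\gamma+\nu)=\nu-\delta>0$, so $\varphi(\nu-\delta+iy)=\int_0^\infty e^{-(\nu-\delta+iy)t}g(t)\,dt = \widehat{g_\delta}(y)$ is the Fourier transform of $g_\delta(t):=e^{-(\nu-\delta)t}g(t)$, and $g_\delta\in L^p$ (in fact $g_\delta\in L^1\cap L^p$), so by Hausdorff--Young $\widehat{g_\delta}\in L^{p'}(\RR)$ with $p'=p/(p-1)$ and $\|\widehat{g_\delta}\|_{p'}\le \|g_\delta\|_p \le \|g\|_p$. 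Writing $\tilde f_{x,0}(-\delta+iy)=\frac{\kappa_1-1}{-\delta+\nu+iy}\cdot\frac{\varphi}{1-\kappa_1\varphi}$ and using $|{-\delta+\nu+iy}|\ge \nu-\delta>0$ together with the uniform lower bound $c_0:=\inf_y|1-\kappa_1\varphi(-\delta+\nu+iy)|>0$, one gets $|\tilde f_{x,0}(-\delta+iy)|\le \frac{\kappa_1-1}{(\nu-\delta)c_0}|\varphi(-\delta+\nu+iy)|$; this is in $L^{p'}$ but to land in $L^1$ (as (5) requires) I would instead keep the $1/|y|$ decay from the first factor: $|\tilde f_{x,0}(-\delta+iy)|\le \frac{\kappa_1-1}{c_0}\cdot\frac{|\widehat{g_\delta}(y)|}{|{-\delta+\nu+iy}|}$, and then Hölder with exponents $p'$ and $p$ gives
\[
\Psi_0(x) = \int_{\RR}|\tilde f_{x,0}(-\delta+iy)|\,dy \;\le\; \frac{\kappa_1-1}{c_0}\,\|\widehat{g_\delta}\|_{p'}\,\Big(\int_{\RR}\frac{dy}{|{-\delta+\nu+iy}|^{p}}\Big)^{1/p} \;\le\; \mathrm{Const.}\;c_0^{-1}\,\Big(\int_{\RR}g^p(u)\,du\Big)^{1/p},
\]
and since the paper writes the exponent as $1/(p-1)=p'/p$ I would reconcile this by the alternative splitting $|\varphi|^{p'}$-in-$L^1$: precisely, $\Psi_0(x)\le \mathrm{Const.}\,c_0^{-1}\int_{\RR}|\widehat{g_\delta}(y)|^{p'}\,dy\le \mathrm{Const.}\,c_0^{-1}\|g\|_p^{p'}=\mathrm{Const.}\,c_0^{-1}\big(\int g^p\big)^{p'/p}=\mathrm{Const.}\,c_0^{-1}\big(\int g^p\big)^{1/(p-1)}$, using $|\widehat{g_\delta}(y)/({-\delta+\nu+iy})|\le C|\widehat{g_\delta}(y)|^{p'}$ on $\{|\widehat{g_\delta}(y)|\le \|\widehat{g_\delta}\|_\infty\}$ after absorbing the $1/|y|$ decay — this is the bookkeeping I would do carefully. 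The same computation on $\Re\gamma=+\delta$ (where $\Re(\gamma+\nu)=\nu+\delta>0$, and $1-\kappa_1\varphi$ is bounded away from $0$ there because $\varphi(\nu+\delta+iy)$ has modulus $\le \varphi(\nu+\delta)<\varphi(\nu)=1/\kappa_1$) gives (3). The main obstacle I anticipate is precisely this last step: matching the exponent $1/(p-1)$ in the stated bound on $\Psi_0$, which requires choosing the right interpolation/Hölder split between the Hausdorff--Young estimate on $\widehat{g_\delta}$ and the $|y|^{-1}$ decay of the prefactor, rather than any conceptual difficulty — everything else is a direct translation of the geometric-series formula for $\tilde f_{x,0}$ plus the two infimum hypotheses, which are tailored to rule out extra poles and to give the uniform vanishing at infinity.
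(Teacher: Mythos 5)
Your plan follows the paper's own proof essentially line by line: plug the i.i.d.\ product structure into \eqref{Laplacedef} to get the closed form $\tilde f_{x,0}(\gamma)=\frac{\kappa_1-1}{\gamma+\nu}\,\frac{\varphi(\gamma+\nu)}{1-\kappa_1\varphi(\gamma+\nu)}$ with $\varphi(z)=\EE[e^{-z\xi(X_0)}]$, locate the simple pole at $\gamma=0$ via the Malthusian equation $\kappa_1\varphi(\nu)=1$ and l'H\^opital, verify the vertical-line integrability by H\"older against the $|y|^{-1}$ decay of the prefactor, and get condition (4) from $|\varphi|\le 1$ together with the two infimum hypotheses. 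That is exactly what the paper does, so your overall route is correct. One small remark: for (4) you invoke Riemann--Lebesgue on $\varphi$; the paper simply bounds $|\varphi|\le 1$ and lets the $((\nu+u)^2+y^2)^{-1/2}$ factor produce the decay, which is a bit lighter.

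The one step you should delete is your attempted ``reconciliation'' of the exponent $1/(p-1)$. The inequality you propose, $|\widehat{g_\delta}(y)|/|{-\delta+\nu+iy}|\le C\,|\widehat{g_\delta}(y)|^{p'}$, would require $|\widehat{g_\delta}(y)|^{1-p'}\le C\,|{-\delta+\nu+iy}|$; since $1-p'<0$, the left side blows up wherever $\widehat{g_\delta}(y)$ is small at moderate $|y|$, so the bound fails and the argument does not go through. It is also not needed: your first, natural H\"older split
$\int|\tilde f_{x,0}(-\delta+iy)|\,dy \le \mathrm{Const.}\, c_0^{-1}\,\big(\int|\varphi(-\delta+\nu+iy)|^{p'}dy\big)^{1/p'}\big(\int((\nu-\delta)^2+y^2)^{-p/2}dy\big)^{1/p}$
followed by Hausdorff--Young yields $(\int g^p)^{1/p}$, and this is precisely what the paper's own proof establishes (it quotes Harris's bound $\int|\varphi|^{p'}dy\le \tilde C_p\,(\int g^p)^{1/(p-1)}$, then correctly takes the $p'$-th root). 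The exponent $1/(p-1)$ that appears in the lemma's \emph{statement} is the exponent before taking that root; treat it as a slip and keep the $(\int g^p)^{1/p}$ bound you derived.
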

\noindent
\begin{rem} Note that if $\xi(X_0)$ is exponentially distributed with parameter $\lambda$, then for any $\delta>0$ sufficiently small,
\begin{eqnarray*}
&& \nu=\lambda(\kappa_1-1)\\
&& \inf_{y\in \RR}\left|1-\kappa_1\EE\left[e^{-(-\delta+\nu+iy)\xi(X_0)}\right]\right|>0\\
&& \inf_{|y|\geq M}\inf_{u\in[-\delta,\delta]}\left|1-\kappa_1\EE\left[e^{-(u+\nu+iy)\xi(X_0)}\right]\right|\geq {M\lambda\kappa_1}(\lambda+\delta)^{-1}((\delta+\lambda +\nu)^2+M^2)^{-1/2}>0
\end{eqnarray*}
\end{rem}
\begin{rem} Due to Lemma \ref{lemiid}, Propositions \ref{PPPPP} and \ref{PP} are respectively  Theorem 17.2 and Lemma 18.1 in \cite{H}
($m$, $h''(1)$ and $n_1$ there being respectively $\kappa_1$, $\kappa_2$ and the constant function $\tilde C_0$).
\end{rem}
\begin{proof}[Proof of Lemma \ref{lemiid}]
The task is to check the conditions on the Laplace transform ${\tilde f}_{x,0}$ (as calculated in (\ref{Laplacedef})). In this i.i.d. case, we have, for $\gamma=s+iy$ and $s>0$,
$$
{\tilde f}_{x,0}(\gamma)=\frac{\kappa_1-1}{\gamma+\nu}\frac{\EE[e^{-(\gamma+\nu)\xi(X_0)}]}{1-\kappa_1\EE[e^{-(\gamma+\nu)\xi(X_0)}]}.
$$
The right hand side of the last equality is analytic in a sufficiently narrow strip $\{z=u+iy, |u|\leq \delta+\epsilon,y\in \RR\}\setminus\{0\}$ for $\delta+\epsilon<\nu$. It has a simple pole at $0$ with residue $\tilde C_0(x)$, since
$$
\lim_{z\rightarrow 0}z{\tilde f}_{x,0}(z)=\frac{\kappa_1-1}{\kappa_1^2\nu}\left(\EE\left[\xi(X_0)e^{-\nu \xi(X_0)}\right]\right)^{-1}=\tilde C_0(x).$$ We have, since $|1-\kappa_1\EE(e^{-(\delta+\nu+iy)\xi(X_0)})|$ is bounded below by a strictly positive constant (this follows from $\sup_{y\in \RR}|\EE[e^{-(\delta+\nu+iy)\xi(X_0)}]|< \EE[e^{-\nu\xi(X_0)}]=1/\kappa_1 $), and letting $p'=p/(p-1)$,
\begin{eqnarray*}
&& \int_{-\infty}^{+\infty} |{\tilde f}_{x,0}(\delta+iy)|dy \leq {\mbox{Cst}} \int_{-\infty}^{+\infty} \frac{|\EE[e^{-(\delta+iy+\nu)\xi(X_0)}]|}{\sqrt{(\nu+\delta)^2+y^2}}dy\\
&& \leq {\mbox{Cst}} \left(\int_{-\infty}^{+\infty} |\EE[e^{-(\delta+iy+\nu)\xi(X_0)}]|^{p'}dy\right)^{1/p'} \left(\int_{-\infty}^{+\infty} ((\nu+\delta)^2+y^2)^{-p/2}dy\right)^{1/p}.
\end{eqnarray*}
Now we have, arguing as for the proof of Lemma 3 in Harris (1963) page 163,
$$
\int_{-\infty}^{+\infty} |\EE[e^{-(\delta+iy+\nu)\xi(X_0)}]|^{p'}dy \leq \tilde C_p\left[\int_{-\infty}^{+\infty} e^{-p(\delta+\nu)t}g^p(t)dt\right]^{1/(p-1)}.
$$
Consequently $\int_{-\infty}^{+\infty} |{\tilde f}_{x,0}(\delta+iy)|dy<\infty$ since the density $g$ is supposed to be in ${\cL}^p$. Using the same arguments, we prove that $\int_{-\infty}^{+\infty} |{\tilde f}_{x,0}(-\delta+iy)|dy<\infty$, in fact,
\begin{eqnarray*}
&&\int_{-\infty}^{+\infty} |{\tilde f}_{x,0}(-\delta+iy)|dy\\
&& \leq {\kappa_1 \tilde C_p}\left(\inf_{y\in \RR}\left|1-\kappa_1\EE[e^{-(-\delta+\nu+iy)\xi(X_0)}]\right|\right)^{-1}\left[\int_{-\infty}^{+\infty} g^p(t)dt\right]^{1/p} \left(\int_{-\infty}^{+\infty} ((\nu-\delta)^2+y^2)^{-p/2}dy\right)^{1/p},
\end{eqnarray*}
which is finite by the requirements of the lemma.
 Now, we have for any $u\in [-\delta, \delta]$ and for any $|y|>M>0$,
\begin{eqnarray*}
&& |˜{\tilde f}_{x,0}(u + iy)|\leq {(\kappa_1-1)}((\nu+u)^2+y^2)^{-1/2}\left|1-\kappa_1\EE[e^{-(u+\nu+iy)\xi(X_0)}]\right|^{-1}\\
&& \leq \frac{\kappa_1}{|y|}\left(\inf_{|y|\geq M}\inf_{u\in[-\delta,\delta]}\left|1-\kappa_1\EE[e^{-(u+\nu+iy)\xi(X_0)}]\right|\right)^{-1}
\end{eqnarray*}
this proves that,\, for all $M>0$,\,
$
\sup_{|y|>M}\sup_{u\in [-\delta, \delta]}\left|{\tilde f}_{x,0}(u + iy)\right|<\infty
$
and then
 $\lim_{y \rightarrow \pm \infty}
˜{\tilde f}_{x,0}(u + iy) = 0 $,
uniformly in $u \in [-\delta, \delta]$. Hypothesis (\ref{analytiq}) is then satisfied.
\end{proof}
\subsubsection{Multiplicative ergodic case}
 Hypothesis \ref{analytiq} can also be satisfied by  Markov Chains   having  multiplicative ergodic sums (cf. Definition 3.3 and Section 3 in \cite{LouhichiYcart15}), the multiplicative ergodic property supposes that, for any $\gamma >0$, any $x \in\mathbb X$ and any $n\in \NN$,
    \begin{equation}\label{GE}
 \EE\left[e^{-\gamma S_{1,n+1}(\xi,X)}|X_0=x\right]=\alpha(\gamma,x)L^{n+1}(\gamma)+r_{n+1}(\gamma,x)
    \end{equation}
    for  suitable non-negative functions $\alpha$, $L$ and $(r_n)_n$.
We suppose here that,
    \begin{enumerate}
    \item[(a)] For all $x\in\mathbb X$,
 the functions $\alpha(\cdot,x)$, $L$ and $r_n(\cdot,x)$ can be extended to  analytic functions in $\{z=u+iy,|u|\leq \delta+\epsilon<\nu, y\in \RR\}$
    \item[(b)] L is positive and non-increasing on $\RR^{*}_{+}$. The  equation $
    \kappa_1L(z)=1,$ has a unique positive solution in $\mathbb C$,
    denoted by $\nu$.
     \item[(c)] The mapping $L$ is holomorphic at $\nu$ and $L'(\nu)<0$.
     \item[(d)] The series $\sum_{n>0} \kappa_1^n r_n(\gamma,x)$ converges uniformly in $\gamma$ in a neighborhood of $\nu$
    uniformly in $x$.
     \item[(e)] There exists $p'>1$ such that $\int_{-\infty}^{\infty}|\alpha(\pm \delta +\nu +iy,x)L(\pm \delta +\nu +iy)|^{p'}dy <\infty$
     and that $\int_{-\infty}^{\infty}|\sum_{n\geq 0}\kappa_1^{n}r_n(\pm \delta+\nu+iy,x)|^{p'}dy<\infty$.
     \item[(f)]  $\inf_{y\in \RR}|1-\kappa_1L(\pm \delta+iy+\nu)|>0,$ $\infty>\sup_{|y|>M}\sup_{u\in [-\delta,\delta]}|\alpha(u+iy+\nu,x)L(u+iy+\nu)|>0$, $\infty>\sup_{|y|>M}\sup_{u\in [-\delta, \delta]}\sum_{n\geq 0}\kappa_1^{n}|r_n(u+iy+\nu,x)|>0$
    and $\inf_{|y|>M}\inf_{u\in [-\delta, \delta]}|1-\kappa_1L(u+iy+\nu)|>0$, for some $M>0$.
    \end{enumerate}
\begin{rem} Under Hypotheses  (\ref{hypoHarris}) and (\ref{analytiq}), the multiplicative ergodic property stated in (\ref{GE})
    ensures (\ref{formule-mult-erg}), in fact, in this case
    $$
    g_n(\gamma)=(\kappa_1-1)\kappa_1^nL^n(\gamma)\EE(\alpha(\gamma,X_0))+ (\kappa_1-1)\kappa_1^n\EE(r_n(\gamma,X_0)),\,\,for\,\, any\,\,\gamma>0.
    $$
\end{rem}
\begin{alem}\label{lemgeoerg}
Hypothesis \ref{analytiq} is satisfied under Conditions (a)-$\cdots$-(f) with
    \begin{eqnarray*}
    &&\tilde C_0(x)=-\frac{(\kappa_1-1)}{\kappa_1^2\nu L'(\nu)}\alpha(\nu,x)\\
    && \Psi_0(x) \leq  {\kappa_1}\left({\inf_{y\in \RR}\left|1-\kappa_1L(-\delta+iy+\nu,x)\right|}\right)^{-1} \times\\
   &&\left(\int_{-\infty}^{\infty}|\alpha(-\delta +\nu +iy,x)L(-\delta +\nu +iy)|^{p'}dy\right)^{1/p'}\left(\int_{-\infty}^{+\infty} ((\nu-\delta)^2+y^2)^{-p/2}dy\right)^{1/p}\\
   &&+\kappa_1 \left(\int_{-\infty}^{\infty}|\sum_{n\geq 0}\kappa_1^{n}r_{n+1}(-\delta+\nu+iy,x)|^{p'}dy\right)^{1/p'}\left(\int_{-\infty}^{+\infty} ((\nu-\delta)^2+y^2)^{-p/2}dy\right)^{1/p}
    \end{eqnarray*}
    for any $x\in\X$.
    \end{alem}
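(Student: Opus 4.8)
The plan is to verify the five analytic conditions of Hypothesis~\ref{analytiq} directly on the complexified Laplace transform ${\tilde f}_{x,0}$, following the skeleton of the proof of Lemma~\ref{lemiid} but feeding in the multiplicative ergodic decomposition \eqref{GE} in place of the product structure available in the i.i.d.\ case. The first step is to obtain a closed form: substituting \eqref{GE} into \eqref{Laplacedef} (with $S_{n+1}-S_0=\sum_{j=1}^{n+1}\xi(X_j)$) and summing the geometric series $\sum_{n\ge 0}(\kappa_1 L(\gamma+\nu))^n$ --- convergent for $\Re\gamma>0$ near $0$, since $\kappa_1 L(\nu)=1$, $L(\nu)=1/\kappa_1$ is real, $L'(\nu)<0$ is real and $L$ is positive and non-increasing on $\R_+^*$ by (b)--(c) --- one gets
\[
{\tilde f}_{x,0}(\gamma)=\frac{\kappa_1-1}{\gamma+\nu}\left(\frac{\alpha(\gamma+\nu,x)\,L(\gamma+\nu)}{1-\kappa_1 L(\gamma+\nu)}+\sum_{n\ge 0}\kappa_1^{n}\,r_{n+1}(\gamma+\nu,x)\right),
\]
and this right-hand side is what I propose as the analytic continuation of ${\tilde f}_{x,0}$ to the strip $\{z=u+iy:|u|<\delta+\epsilon\}$.

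Next I would check condition~(1): by (a) and the uniform convergence in (d), the maps $\alpha(\cdot+\nu,x)$, $L(\cdot+\nu)$ and $z\mapsto\sum_{n\ge 0}\kappa_1^n r_{n+1}(z+\nu,x)$ are analytic on $\{|u|\le\delta\}$, so the only genuine point is that $z\mapsto 1-\kappa_1 L(z+\nu)$ has no zero on $\{|u|\le\delta\}\setminus\{0\}$. On the boundary lines $u=\pm\delta$ and on $\{|y|>M\}$ this is exactly the content of the lower bounds in (f), while on the remaining compact region $\{|u|\le\delta,\ |y|\le M\}\setminus\{0\}$ it follows from the uniqueness of the root $\nu$ of $\kappa_1 L(z)=1$ in (b) together with $\delta<\nu$. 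For condition~(2), (c) gives $1-\kappa_1 L(\gamma+\nu)=-\kappa_1 L'(\nu)\gamma+O(\gamma^2)$ near $\gamma=0$, so ${\tilde f}_{x,0}$ has a simple pole at $0$ and, using $L(\nu)=1/\kappa_1$, its residue equals $\tilde C_0(x)=-(\kappa_1-1)\alpha(\nu,x)/(\kappa_1^2\nu L'(\nu))$, the announced value.

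For conditions~(3) and~(5) I would estimate, on the lines $u=\pm\delta$,
\[
|{\tilde f}_{x,0}(\pm\delta+iy)|\le\frac{\kappa_1-1}{\sqrt{(\nu\pm\delta)^2+y^2}}\left(\frac{|\alpha(\pm\delta+\nu+iy,x)L(\pm\delta+\nu+iy)|}{\inf_{y}|1-\kappa_1 L(\pm\delta+iy+\nu)|}+\Big|\sum_{n\ge 0}\kappa_1^n r_{n+1}(\pm\delta+\nu+iy,x)\Big|\right),
\]
and then apply Hölder's inequality with exponents $p'$ and $p:=p'/(p'-1)>1$: the $L^{p'}$-norms in $y$ of $\alpha L$ and of $\sum_n\kappa_1^n r_{n+1}$ on these lines are finite by (e), and $\int_{\R}((\nu\pm\delta)^2+y^2)^{-p/2}\,dy<\infty$ because $p>1$. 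This yields $\int|{\tilde f}_{x,0}(\delta+iy)|\,dy<\infty$ and precisely the stated upper bound for $\Psi_0(x)=\int|{\tilde f}_{x,0}(-\delta+iy)|\,dy$. Finally, condition~(4) is immediate: for $|y|>M$ and $u\in[-\delta,\delta]$, the sup/inf bounds in (f) give $|{\tilde f}_{x,0}(u+iy)|\le C(x)/|y|$, hence the uniform decay as $y\to\pm\infty$.

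I expect the only genuinely delicate point to be the non-vanishing of $1-\kappa_1 L(\cdot+\nu)$ on the whole punctured strip, where one has to glue the boundary and large-$|y|$ estimates of (f) to the uniqueness hypothesis (b) on the central compact part (and keep track of the fact that the geometric summation identity is established only on a small right half-disc before being continued analytically); the rest is a faithful transcription of the i.i.d.\ computation of Lemma~\ref{lemiid}, with \eqref{GE} doing the work of the powers of a scalar Laplace transform and the series $\sum_n\kappa_1^n r_{n+1}$ treated as a single analytic remainder term.
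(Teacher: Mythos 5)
Your proof is correct and follows essentially the same route as the paper: the same closed-form expression for ${\tilde f}_{x,0}$ obtained by substituting \eqref{GE} into \eqref{Laplacedef}, the same residue computation from Condition~(c), the same H\"older estimate with the conjugate pair $(p,p')$ together with Conditions~(e)--(f) for the integrals along $u=\pm\delta$, and the same $\kappa_1/|y|$ bound from Condition~(f) for the uniform decay in Condition~(4). Your explicit discussion of the non-vanishing of $1-\kappa_1 L(\cdot+\nu)$ on the punctured strip makes precise a step that the paper's proof leaves tacit when it asserts the analytic continuation directly from Conditions~(a) and~(d).
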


\begin{proof}[Proof of Lemma \ref{lemgeoerg}]
We have, in this case,
\begin{eqnarray*}
   &&{\tilde f}_{x,0}(\gamma)=\frac{\kappa_1-1}{\gamma+\nu}\sum_{n\geq 0}\kappa_1^{n} \EE[e^{-(\gamma+\nu) S_{1,n+1}(\xi,X)}|X_0=x] \\
   &&= \frac{\kappa_1-1}{\gamma+\nu} \frac{\alpha(\gamma+\nu,x)L(\gamma+\nu)}{1-\kappa_1L(\gamma+\nu)}+ \frac{\kappa_1-1}{\gamma+\nu}\sum_{n\geq 0}\kappa_1^{n}r_{n+1}(\gamma+\nu,x),
    \end{eqnarray*}
    which can be extended thanks to Conditions (a) and (d) to an analytic function in $\{z=u+iy,|u|\leq \delta+\epsilon<\nu, y\in \RR\}\setminus\{0\}$.
    Conditions (a), (b), (c) and (d) allow to deduce that
    $$
    \lim_{z\rightarrow 0}z{\tilde f}_{x,0}(z)=-\frac{(\kappa_1-1)}{\kappa_1^2\nu L'(\nu)}\alpha(\nu,x)=:\tilde C_0(x).
    $$
   We have arguing as for the proof of Lemma \ref{lemiid}, for any $|y|>M$ and any $u\in [-\delta, \delta]$,
   \begin{eqnarray*}
   && |{\tilde f}_{x,0}(u+iy)|\leq \frac{\kappa_1}{|y|}\frac{\sup_{|y|>M}\sup_{u\in [-\delta,\delta]}|\alpha(u+iy+\nu,x)L(u+iy+\nu)|}{\inf_{|y|>M}\inf_{u\in [-\delta, \delta]}|1-\kappa_1L(u+iy+\nu)|}\\
   && +  \frac{\kappa_1}{|y|}\sup_{|y|>M}\sup_{u\in [-\delta, \delta]}\sum_{n\geq 0}\kappa_1^{n}|r_{n+1}(u+iy+\nu,x)|,
   \end{eqnarray*}
   which proves, thanks to (f), that $\lim_{y\rightarrow \pm \infty}\sup_{u\in [-\delta,\delta]}|{\tilde f}_{x,0}(u+iy)|=0$. Now,
   \begin{eqnarray*}
   && \int |{\tilde f}_{x,0}(\pm\delta+iy)|dy \leq {\kappa_1}\left(\inf_{y\in \RR}|1-\kappa_1L(\pm\delta+iy+\nu)|\right)^{-1}\times\\
   &&\left(\int_{-\infty}^{\infty}|\alpha(\pm\delta +\nu +iy,x)L(\pm\delta +\nu +iy)|^{p'}dy\right)^{1/p'}\left(\int_{-\infty}^{+\infty} ((\nu\pm\delta)^2+y^2)^{-p/2}dy\right)^{1/p}\\
   &&+\kappa_1 \left(\int_{-\infty}^{\infty}|\sum_{n\geq 0}\kappa_1^{n}r_{n+1}(\pm \delta+\nu+iy,x)|^{p'}dy\right)^{1/p'}\left(\int_{-\infty}^{+\infty} ((\nu\pm\delta)^2+y^2)^{-p/2}dy\right)^{1/p},
   \end{eqnarray*}
which is finite thanks to Conditions {({e})} and {({f})}.
    Hypothesis (\ref{analytiq}) is then satisfied.
\end{proof}
 \subsubsection{Bifurcating Markov chains} Hypothesis \ref{hypoHarris} supposes also that the two sequences $(X_n^{(k)})_{n\geq k+1}$ and $(X_n)_{n\geq k+1}$ are independent given $X_k$ for any $k\in \NN$. This condition can be satisfied by bifurcating Markov chains as defined in Section 3 of \cite{LouhichiYcart15}.
\section{Tools for Markov models}
\label{results}
In this section, we discuss assumptions of Theorems
\ref{cor-th1} and \ref{cor-th2}.
We assume that $X=(X_n)_n$ is a Markov chain on $(\X,\cX)$
with Markov kernel $P(x,dy)$, invariant probability $\pi$,
and initial probability $\mu$ (i.e. $\mu$ is the distribution of $X_0$).
\subsection{Notations, definitions and hypotheses
on Banach spaces}\label{nota}
Recall that the Laplace-type kernels $(P_\gamma)_\gamma$ are given by
\begin{equation}\label{def-P-gamma-funct}
\forall \gamma\in[0,+\infty),\ P_\gamma f=P( \kappa e^{-\gamma\xi}f)\ \ \mbox{and}\ \
       P_{\infty}f=P(\kappa\mathbf 1_{\{\xi=0\}}f).
\end{equation}
For any normed complex vector spaces $(\cB_0,\|\cdot\|_{\cB_0})$ and $(\cB_1,\|\cdot\|_{\cB_1})$, we endow the space $\mathcal L(\cB_0,\cB_1)$ with the operator norm $\|\cdot\|_{\cB_0,\cB_1}$ given by
$$\forall Q\in\mathcal L(\cB_0,\cB_1),\ \  \|Q\|_{{\cB_0},{\cB_1}}=\sup_{f\in\cB_0,\ \|f\|_{{\cB_0}}=1}\|Qf\|_{{\cB_1}}.$$
We simply write
$(\mathcal L(\cB),\|\cdot\|_{\cB})$ for $(\mathcal L(\cB,\cB),\|\cdot\|_{\cB,\cB})$.
For any $Q\in\cL(\cB)$, we denote by $Q^*$ its adjoint operator.
We write $\sigma(Q)=\sigma(Q_{|\cB})$ for the spectrum of $Q$:
$$\sigma(Q) :=\{\lambda\in\mathbb C\ :\ (Q-\lambda\, I)\mbox{ is non invertible}\},$$
where $I$ denotes the identity operator on $\cB$. Recall that $Q$ and $Q^*$ have the same norm in $\cL(\cB)$ and $\cL(\cB^*)$ respectively, as well as the same spectrum.
The spectral radius of $Q_{|\cB}$ (resp.~its essential spectral radius) is denoted by $r(Q)=r(Q_{|\cB})$ (resp.~$r_{ess}(Q)=r_{ess}(Q_{|\cB})$). Recall that
$$r_{ess}(Q):=\sup\{|\lambda|\ :\ \lambda\in\mathbb C\ \mbox{and} \ (Q-\lambda\,  I)\mbox{ is non Fredholm}\}.$$

To check the assumptions of Theorems~\ref{generalspectraltheorem1} and \ref{generalspectraltheorem2}, we work with Banach spaces contained in $\mathbb L^1(\pi)$ (as stated in Hypothesis~\ref{hypcompl}) and
we use some assumptions involving the notion of positivity and non-negativity on such a space (or on its dual space), as defined below.
\begin{adefi}
Let $\mathcal B$ be a Banach space composed of functions $f:\X\rightarrow \C$ (or of classes of such functions modulo $\pi$).
If $f\in\mathcal B$ is a class of functions, we say that it is {\bf non-negative}
(resp. {\bf positive}) if one of its representant is so.
We say that it is {\bf non-null} if the null function is not one of its representant.
An element $\psi\in\mathcal B^*$ is said to be
{\bf non-negative} if for every non-negative $f\in\mathcal B$,
we have $\psi(f)\ge 0$.
An element $\psi\in\mathcal B^*$ is said to be
{\bf positive} if for every non-negative non-null $f\in\mathcal B$,
we have $\psi(f)> 0$.
\end{adefi}
\begin{acond}\label{(A)}
For every
$\phi\in\cB$, $\phi\geq 0$, $\phi\neq 0$, there exists $\psi\in\cB^*$, $\psi\geq 0$, such that $\psi(\phi) > 0$. For every $\psi\in\cB^*$, $\psi\geq 0$, $\psi\neq 0$, there exists $\phi\in\cB$, $\phi\geq 0$ such that $\psi(\phi) > 0$.
\end{acond}
%
%
\begin{acond}\label{(B)}
Let $J\subset [0,+\infty]$ such that: $\forall \gamma\in J,\ P_\gamma\in\cL(\cB)$. For every $\gamma\in J$ such that $r(\gamma)>0$, the following properties hold: if $\phi\in\cB$ is non-null and non-negative, then $P_\gamma \phi > 0$ (modulo $\pi$) and every non-null non-negative $\psi\in\cB^*\cap\ker(P_\gamma^*-r(\gamma)I)$ is positive.
\end{acond}
Note that Hypothesis~\ref{(A)} is quite general. Let us recall the definition of a Banach lattice.
\begin{adefi} \label{del-BL}
A complex Banach space $(\cB,\|\cdot\|_{\cB})$ of functions $f:\X\rightarrow\mathbb C$
(or of classes of such functions modulo $\pi$) is said to be a {\bf complex Banach lattice} if it is stable by $|\cdot|$, by real part and if
$$\forall f,g\in\cB,\quad f(\X)\cup g(\X)\subset\R\quad
      \Rightarrow\quad \min(f,g),\, \max(f,g)\in\cB , $$
$$\forall f,g\in\cB,\quad |f|\le|g|\quad\Rightarrow\quad \|\, |f|\, \|_\cB = \|f\|_\cB\le \|g\|_\cB = \|\, |g|\, \|_\cB.$$
\end{adefi}

Such a space satisfies Hypothesis~\ref{(A)}.
Classical instances of Banach lattices of functions are the spaces $(\L^p(\pi),\|\cdot\|_p)$ and $(\cB_V,\|\cdot\|_{V})$ (see (\ref{def-La}) and (\ref{def-BV})), as well as the space $(\mathcal L^\infty(\mathbb X),\|\cdot\|_{\infty})$ composed of all the bounded measurable $\C$-valued functions on $\X$, and equipped with its usual norm $\|f\|_{\infty}:=\sup_{x\in\X}|f(x)|.$

\subsection{About Hypothesis~\ref{hypcompl}}
%
\begin{apro} \label{pro-reduc-BL}
Let $J$ be a subinterval of $[0,+\infty]$ and let $\mathcal B$ be a Banach lattice of functions on $\X$ (or of classes of such functions modulo $\pi$). We assume that Hypothesis~\ref{(B)} holds, that $\cB\subset \mathbb L^1(\pi)$, and that for every $\gamma\in J$
\begin{enumerate}[(i)]
\item $P_\gamma\in\cL(\cB)$, $r(\gamma):=r(P_{\gamma|\cB})>0$, and $P_\gamma$ is quasi-compact on $\cB$,
\vspace*{1mm}
\item for every $f,g\in\cB$ with $f>0$, $P_\gamma f=r(\gamma)f$ and $P_\gamma g=r(\gamma)g$, we have $g\in \C\cdot f$,
\end{enumerate}
Assume moreover that the Markov kernel $P$ satisfies the following condition: 1 is the only complex number $\lambda$ of modulus 1 such that $P(h/|h|)=\lambda h/|h|$  in $\L^1(\pi)$ for some $h\in\cB$, $|h|>0$ (modulo $\pi$).
Then Hypothesis \ref{hypcompl} is fulfilled with $J$ and $\cB$.
\end{apro}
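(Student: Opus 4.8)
The plan is to verify, for each $\gamma\in J$, the two items of Hypothesis~\ref{hypcompl}; since item~(i) ($r(\gamma)>0$ and quasi-compactness) is among the present assumptions, the task reduces to item~(ii). Observe first that $P_\gamma$ is a \emph{positive} bounded operator on the Banach lattice $\cB$: as $\kappa\,e^{-\gamma\xi}\ge0$ and $P$ is Markov, $f\ge0$ forces $P_\gamma f=P(\kappa e^{-\gamma\xi}f)\ge0$. Being positive, quasi-compact and of spectral radius $r(\gamma)>0$, the operator $P_\gamma$ has $r(\gamma)\in\sigma(P_\gamma)$, and $r(\gamma)>r_{ess}(P_\gamma)$ makes it an isolated pole of the resolvent of finite rank; the classical Perron--Frobenius/Krein--Rutman theory for positive quasi-compact operators on Banach lattices then yields a non-null non-negative $v\in\cB$ with $P_\gamma v=r(\gamma)v$ and a non-null non-negative $\psi\in\cB^*$ with $P_\gamma^*\psi=r(\gamma)\psi$. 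Hypothesis~\ref{(B)} immediately upgrades these: $v=r(\gamma)^{-1}P_\gamma v>0$ (modulo $\pi$), and $\psi$ is positive.

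Next I would show $\dim\ker(P_\gamma-r(\gamma)I)=1$. For $h\in\cB\setminus\{0\}$ with $P_\gamma h=r(\gamma)h$, the lattice inequality $P_\gamma|h|\ge|P_\gamma h|=r(\gamma)|h|$ combined with the positivity of $\psi$ and $P_\gamma^*\psi=r(\gamma)\psi$ forces $\psi\big(P_\gamma|h|-r(\gamma)|h|\big)=0$, hence $P_\gamma|h|=r(\gamma)|h|$; then $|h|$ is a non-null non-negative eigenfunction, so $|h|>0$ by Hypothesis~\ref{(B)}. Applying assumption~(ii) of the Proposition successively with $(f,g)=(|h|,h)$ and $(f,g)=(v,|h|)$ gives $h\in\C\cdot|h|\subset\C\cdot v$, so $\ker(P_\gamma-r(\gamma)I)=\C v$. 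For the first-order pole, if $w$ satisfies $(P_\gamma-r(\gamma)I)^2w=0$ with $(P_\gamma-r(\gamma)I)w=cv$, $c\ne0$, then iterating $P_\gamma(w/c)=r(\gamma)(w/c)+v$ gives $P_\gamma^n(w/c)=r(\gamma)^n(w/c)+n\,r(\gamma)^{n-1}v$; applying $\psi$ yields $n\,r(\gamma)^{n-1}\psi(v)=0$ for all $n\ge1$, whence $\psi(v)=0$, contradicting $v>0$ and $\psi$ positive. So $\ker(P_\gamma-r(\gamma)I)^2=\ker(P_\gamma-r(\gamma)I)$ is one-dimensional and $r(\gamma)$ is a first-order pole.

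There remains the peripheral spectrum, which is where the aperiodicity hypothesis on $P$ is used and which I expect to be the main obstacle. Let $\lambda\in\sigma(P_\gamma)$ with $|\lambda|=r(\gamma)$; quasi-compactness gives $P_\gamma h=\lambda h$ for some $h\ne0$, and as above $P_\gamma|h|=r(\gamma)|h|$ with $|h|>0$. Set $u:=h/|h|$, $|u|\equiv1$. The identity $P_\gamma|h|=|P_\gamma h|$ is equality in the triangle inequality for the non-negative measures $m_x(dy):=\kappa(y)e^{-\gamma\xi(y)}|h|(y)\,P(x,dy)$, so for $\pi$-a.e.\ $x$ the function $u$ is $m_x$-a.e.\ equal to a constant $\omega(x)$ with $|\omega(x)|=1$; since $\{\kappa e^{-\gamma\xi}|h|=0\}$ is $\pi$-null and $\pi P=\pi$, this set is $P(x,\cdot)$-null for $\pi$-a.e.\ $x$, hence $u=\omega(x)$ $P(x,\cdot)$-a.e.\ and $(Pu)(x)=\omega(x)$. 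Substituting $u(y)\equiv\omega(x)$ into $\lambda h=P(\kappa e^{-\gamma\xi}h)$ gives $\lambda|h|(x)u(x)=\omega(x)\,(P_\gamma|h|)(x)=\omega(x)\,r(\gamma)|h|(x)$, so $\omega(x)=(\lambda/r(\gamma))\,u(x)$; therefore $P(h/|h|)=(\lambda/r(\gamma))\,h/|h|$ in $\L^1(\pi)$ with $h\in\cB$ and $|h|>0$, and the hypothesis on $P$ forces $\lambda/r(\gamma)=1$. Thus $r(\gamma)$ is the only eigenvalue of modulus $r(\gamma)$.

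Putting the steps together establishes Hypothesis~\ref{hypcompl} on $J$ and $\cB$. The delicate point is the last step: passing from the lattice identity $P_\gamma|h|=|P_\gamma h|$ to the action of the \emph{original} kernel $P$ on the phase $u=h/|h|$ requires care with the classes modulo $\pi$ — notably using $\pi P=\pi$ to transfer a $\pi$-null set inside $P(x,\cdot)$ — and exploits that only $h$, not $u$, must lie in $\cB$, which is precisely how the aperiodicity condition is formulated. A secondary, more routine difficulty is that the first step rests on the Perron--Frobenius/Krein--Rutman theory for positive quasi-compact operators on a general Banach lattice rather than on elementary spectral computations.
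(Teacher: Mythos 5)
Your proof is correct and follows the same strategy the paper uses, which is to establish the contents of its Proposition~\ref{firstorder} and then apply it with $Q=P_\gamma$. The logical core matches point by point: a non-negative Perron pair $(v,\psi)$ upgraded to strict positivity via Hypothesis~\ref{(B)}; the lattice argument deducing $P_\gamma|h|=r(\gamma)|h|$ from $\psi\geq0$ and $P_\gamma^*\psi=r(\gamma)\psi$; assumption~(ii) of the Proposition to collapse $\ker(P_\gamma-r(\gamma)I)$ to $\C v$; and the equality-case-of-the-triangle-inequality plus the aperiodicity hypothesis on $P$ for the peripheral spectrum. The only places where your route diverges from the paper's are purely technical. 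For the existence of the non-negative Perron pair you invoke Krein--Rutman theory for positive quasi-compact operators; the paper instead derives it by hand, first proving $r(\gamma)\in\sigma(P_\gamma)$ via a Banach--Steinhaus/lattice argument, then exhibiting the leading Laurent coefficient $A_{-q}=\lim_n (r_n-r(\gamma))^q\sum_{k\geq0}r_n^{-(k+1)}P_\gamma^k$ as a non-negative operator and setting $\phi:=A_{-q}h_0$, $\psi_1:=A_{-q}^*\psi_0$. To rule out a pole of order $q\geq 2$ you iterate $P_\gamma^n(w/c)=r(\gamma)^n(w/c)+n\,r(\gamma)^{n-1}v$ and apply $\psi$; the paper instead observes that $A_{-q}^2=0$ when $q\geq2$, forcing $\psi_1(\phi)=\psi_0(A_{-q}^2h_0)=0$, a contradiction. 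Finally, your handling of the modulo-$\pi$ issue in the peripheral-spectrum step (using the invariance $\pi P=\pi$ to transfer the $\pi$-null set $\{|h|=0\}$ into the kernels $P(x,\cdot)$) is exactly the subtlety the paper addresses through its auxiliary full-measure sets $A$, $B$, $C$, $D$, $E$. Both routes are valid and yield the same conclusion.
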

Proposition~\ref{pro-reduc-BL} is proved in Section~\ref{proof-reduc-BL}.

%
%
%
%
%
\begin{apro}\label{pro-phi-pi}
Assume that Hypothesis \ref{hypcompl} holds for some $\cB$ and $J$. Let $\gamma\in J$. Then
\begin{equation} \label{Pi-gamma-lim}
\Pi_\gamma = \lim_n r(\gamma)^{-n} P_\gamma^n\ \ \text{in }\,  \cL(\cB),
\end{equation}
and there exist some nonzero elements $\widehat\pi_\gamma\in\cB^*\cap\ker(P_\gamma^*-r(\gamma)I)$ and $\widehat\phi_\gamma\in\cB\cap\ker(P_\gamma-r(\gamma)I)$ such that
$\widehat\pi_\gamma(\widehat\phi_\gamma) = 1$ and
\begin{equation} \label{phi-gamma-pi-gamma}
\forall f\in\cB,\quad \Pi_\gamma f = \widehat\pi_\gamma(f)\, \widehat\phi_\gamma\qquad \text{and} \qquad \forall f^*\in\cB^*,\quad \Pi_\gamma^* f^* = f^*(\widehat\phi_\gamma)\, \widehat\pi_\gamma.
\end{equation}
If $\cB$ satisfies Hypothesis~\ref{(A)}, then $\widehat\phi_\gamma$ and $\widehat\pi_\gamma$ are non-negative
in $\cB$ and $\cB^*$ respectively.
Under the additional Hypothesis~\ref{(B)}, every non-null non-negative $\phi\in\cB\cap\ker(P_\gamma-r(\gamma)I)$ is positive $\pi-$a.s.~and, for every non-null and non-negative $f\in\cB$, we have $\Pi_\gamma f >0$ $\pi-$a.s..
\end{apro}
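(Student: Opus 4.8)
The plan is to derive everything from the quasi-compactness and the pole structure provided by Hypothesis~\ref{hypcompl}, combined with the lattice-positivity assumptions \ref{(A)} and \ref{(B)}. First I would recall the standard consequence of quasi-compactness: under Hypothesis~\ref{hypcompl}, $P_\gamma$ decomposes on $\cB$ as $P_\gamma = r(\gamma)\Pi_\gamma + N_\gamma$, where $\Pi_\gamma$ is the spectral (Riesz) projector associated with the peripheral eigenvalue $r(\gamma)$, $N_\gamma = P_\gamma(I-\Pi_\gamma)$ has spectral radius $<r(\gamma)$, and $\Pi_\gamma N_\gamma = N_\gamma\Pi_\gamma = 0$. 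Since by \ref{hypcompl}(ii) the eigenvalue $r(\gamma)$ is a first-order pole with $\dim\ker(P_\gamma - r(\gamma)I) = \dim\ker(P_\gamma - r(\gamma)I)^2 = 1$, the projector $\Pi_\gamma$ has rank one and the nilpotent part vanishes on the range of $\Pi_\gamma$; hence $r(\gamma)^{-n}P_\gamma^n = \Pi_\gamma + r(\gamma)^{-n}N_\gamma^n \to \Pi_\gamma$ in $\cL(\cB)$, which is \eqref{Pi-gamma-lim}.

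Next, because $\Pi_\gamma$ has rank one, there is a nonzero $\widehat\phi_\gamma\in\cB$ spanning its range and a nonzero $\widehat\pi_\gamma\in\cB^*$ such that $\Pi_\gamma f = \widehat\pi_\gamma(f)\,\widehat\phi_\gamma$ for all $f\in\cB$; rescaling, we may normalize $\widehat\pi_\gamma(\widehat\phi_\gamma)=1$ (this is nonzero since $\Pi_\gamma^2=\Pi_\gamma$ and $\Pi_\gamma\ne 0$). The identity $P_\gamma\Pi_\gamma = \Pi_\gamma P_\gamma = r(\gamma)\Pi_\gamma$ forces $P_\gamma\widehat\phi_\gamma = r(\gamma)\widehat\phi_\gamma$ and $P_\gamma^*\widehat\pi_\gamma = r(\gamma)\widehat\pi_\gamma$, so $\widehat\phi_\gamma\in\cB\cap\ker(P_\gamma - r(\gamma)I)$ and $\widehat\pi_\gamma\in\cB^*\cap\ker(P_\gamma^* - r(\gamma)I)$. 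Dualizing $\Pi_\gamma f = \widehat\pi_\gamma(f)\widehat\phi_\gamma$ gives $\Pi_\gamma^* f^* = f^*(\widehat\phi_\gamma)\widehat\pi_\gamma$, which is the second part of \eqref{phi-gamma-pi-gamma}.

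For the positivity statements, I would use \eqref{Pi-gamma-lim} together with non-negativity of the operator $P_\gamma$: since $P_\gamma$ maps non-negative functions to non-negative functions (it is built from $P$ and the non-negative weight $\kappa e^{-\gamma\xi}$), each $r(\gamma)^{-n}P_\gamma^n f$ is non-negative whenever $f\ge 0$, hence the limit $\Pi_\gamma f = \widehat\pi_\gamma(f)\widehat\phi_\gamma$ is non-negative. Picking one non-negative non-null $f_0\in\cB$ with $\widehat\pi_\gamma(f_0)\ne 0$ (such $f_0$ exists by Hypothesis~\ref{(A)}, which supplies, for the non-null non-negative functional $\widehat\pi_\gamma$, a non-negative $\phi\in\cB$ with $\widehat\pi_\gamma(\phi)>0$) shows that $\widehat\phi_\gamma$ is a positive scalar multiple of a non-negative element, so we may take $\widehat\phi_\gamma\ge 0$; then $\widehat\pi_\gamma(f)\,\widehat\phi_\gamma = \Pi_\gamma f\ge 0$ for all non-negative $f$ forces $\widehat\pi_\gamma(f)\ge 0$, i.e.\ $\widehat\pi_\gamma$ is non-negative in $\cB^*$. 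Finally, under Hypothesis~\ref{(B)}: any non-null non-negative $\phi\in\cB\cap\ker(P_\gamma - r(\gamma)I)$ satisfies $r(\gamma)\phi = P_\gamma\phi > 0$ $\pi$-a.s.\ by the first clause of \ref{(B)}, so $\phi>0$ $\pi$-a.s.; in particular $\widehat\phi_\gamma>0$ $\pi$-a.s. For a non-null non-negative $f\in\cB$ we have $\Pi_\gamma f = \widehat\pi_\gamma(f)\widehat\phi_\gamma$, and $\widehat\pi_\gamma$ is a non-null non-negative element of $\cB^*\cap\ker(P_\gamma^*-r(\gamma)I)$, hence positive by the second clause of \ref{(B)}, so $\widehat\pi_\gamma(f)>0$ and thus $\Pi_\gamma f>0$ $\pi$-a.s.

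The main obstacle I anticipate is bookkeeping around the rank-one normalization and the measure-theoretic meaning of ``positive'' for classes modulo $\pi$: one must be careful that ``$P_\gamma\phi>0$ $\pi$-a.s.'' in Hypothesis~\ref{(B)} genuinely upgrades the a priori only-non-negative conclusion from \eqref{Pi-gamma-lim}, and that the functional $\widehat\pi_\gamma$ produced abstractly is really non-null (which follows since $\Pi_\gamma\ne 0$) so that Hypothesis~\ref{(B)}'s second clause applies. Everything else is the routine spectral-decomposition argument for quasi-compact operators with a simple dominant eigenvalue.
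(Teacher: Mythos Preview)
Your argument is correct and matches the paper's almost exactly: both derive \eqref{Pi-gamma-lim} and the rank-one form \eqref{phi-gamma-pi-gamma} directly from the quasi-compactness and simple-pole structure in Hypothesis~\ref{hypcompl}, then use positivity of $r(\gamma)^{-n}P_\gamma^n$ together with Hypothesis~\ref{(A)} to obtain $\widehat\phi_\gamma,\widehat\pi_\gamma\ge 0$, and finally invoke Hypothesis~\ref{(B)} to upgrade to strict positivity. The only difference is cosmetic: the paper first shows $\widehat\pi_\gamma\ge 0$ (by testing against a non-negative $\psi_\gamma\in\cB^*$ supplied by Hypothesis~\ref{(A)}) and then deduces $\widehat\phi_\gamma\ge 0$, whereas you proceed in the reverse order.
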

\begin{proof}
Properties~(\ref{Pi-gamma-lim}) and the existence of $\widehat\phi_\gamma$ and $\widehat\pi_\gamma$ in (\ref{phi-gamma-pi-gamma}) follow from Hypothesis~\ref{hypcompl}. Now assume that Hypothesis~\ref{(A)} holds. Then (\ref{Pi-gamma-lim}) and the first assertion in Hypothesis~\ref{(A)}, applied with $\phi = \widehat\phi_\gamma$ and the associated $\psi_\gamma\in\cB^*$, $\psi_\gamma\geq 0$, imply that, for every $g\in\cB$, $g\geq 0$,
 we have $0 \leq  \lim_n r(\gamma)^{-n} \psi_\gamma(P_\gamma^n g) = \widehat\pi_\gamma(g)\, \psi_\gamma(\widehat\phi_\gamma)$,
hence $\widehat\pi_\gamma\geq 0$ since $\psi_\gamma(\widehat\phi_\gamma) >0$. Next the second assertion in Hypothesis~\ref{(A)}, applied with $\psi=\widehat\pi_\gamma$ and the associated $\phi_\gamma\in\cB$, $\phi_\gamma\geq 0$, gives
$0 \leq \lim_n r(\gamma)^{-n} P_\gamma^n \phi_\gamma = \widehat\pi_\gamma(\phi_\gamma)\widehat\phi_\gamma$,
hence $\widehat\phi_\gamma\geq 0$ since $\widehat\pi_\gamma(\phi_\gamma)>0$.
Finally, under Hypotheses~\ref{(A)} and \ref{(B)}, if $\gamma\in J$ and if  $f\in\cB$, $f\neq 0$, $f\geq 0$, then $\widehat\pi_\gamma(f)>0$. Thus $\Pi_\gamma f = \widehat\pi_\gamma(f)\, \widehat\phi_\gamma$ is positive modulo $\pi$ since so is $\widehat\phi_\gamma>0$ from Hypothesis~\ref{(B)}.
\end{proof}
%
%
\begin{acor} \label{cor-plus-esp}
Assume that, for some subinterval $J\subset[0,+\infty)$, Hypothesis \ref{hypcompl} holds on two Banach spaces $\cB_1$ and $\cB_2$, both containing $1_\X$ and satisfying Hypotheses~\ref{(A)} and \ref{(B)}. Then
$$\forall \gamma\in J,\quad r(P_{\gamma| \cB_1}) = r(P_{\gamma| \cB_2})=\lim_{n\rightarrow +\infty}(\pi(P_\gamma^n\mathbf 1_{\mathbb X}))^{1/n}.
$$
If moreover $\cB_1\hookrightarrow \cB_2$ and if, for $i=1,2$, $\Pi_{\gamma,i}$ denotes the rank-one eigen-projector associated with $P_{\gamma|\cB_i}$ in (\ref{sup-vit-ponctuel}), then the restriction of $\Pi_{\gamma,2}$ to $\cB_1$ equals to $\Pi_{\gamma,1}$.
\end{acor}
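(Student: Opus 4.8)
The plan is to reduce everything to the limit representation of the eigen-projector given in Proposition~\ref{pro-phi-pi}, namely $\Pi_\gamma = \lim_n r(\gamma)^{-n}P_\gamma^n$ in $\cL(\cB)$, combined with the elementary fact that integration against $\pi$ is a \emph{continuous} linear functional on each $\cB_i$ because $\cB_i\hookrightarrow\L^1(\pi)$.

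Fix $\gamma\in J$. For $i\in\{1,2\}$ write $r_i:=r(P_{\gamma|\cB_i})$, and let $\Pi_{\gamma,i}\in\cL(\cB_i)$, $\widehat\phi_{\gamma,i}\in\cB_i$, $\widehat\pi_{\gamma,i}\in\cB_i^*$ be the objects produced by Proposition~\ref{pro-phi-pi} on $\cB_i$ (legitimate since Hypotheses~\ref{hypcompl}, \ref{(A)} and \ref{(B)} all hold there). Since $\mathbf 1_\X\in\cB_i$ is non-null and non-negative, the last assertion of Proposition~\ref{pro-phi-pi} gives $\Pi_{\gamma,i}\mathbf 1_\X>0$ $\pi$-a.s.; as $\Pi_{\gamma,i}\mathbf 1_\X\in\cB_i\subset\L^1(\pi)$, the number $c_i:=\pi(\Pi_{\gamma,i}\mathbf 1_\X)$ is then finite and strictly positive. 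Applying the continuous functional $f\mapsto\pi(f)\in\cB_i^*$ to the $\cB_i$-convergence $r_i^{-n}P_\gamma^n\mathbf 1_\X\to\Pi_{\gamma,i}\mathbf 1_\X$ (which is (\ref{Pi-gamma-lim}) evaluated at $\mathbf 1_\X$) yields $r_i^{-n}\,\pi(P_\gamma^n\mathbf 1_\X)\to c_i$. Note that $\pi(P_\gamma^n\mathbf 1_\X)\ge 0$ for every $n$ because $P_\gamma f=P(\kappa\,e^{-\gamma\xi}f)$ maps non-negative functions to non-negative functions ($\kappa\ge 2$, $P$ Markov); hence $(\pi(P_\gamma^n\mathbf 1_\X))^{1/n}$ is well defined, and since $c_i\in(0,+\infty)$ one obtains $(\pi(P_\gamma^n\mathbf 1_\X))^{1/n}\to r_i$. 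The left-hand side does not depend on $i$, so $r_1=r_2=\lim_n(\pi(P_\gamma^n\mathbf 1_\X))^{1/n}$, which is the first claim.

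For the second claim, assume moreover $\cB_1\hookrightarrow\cB_2$ and write $r:=r_1=r_2$. For any $f\in\cB_1$, Proposition~\ref{pro-phi-pi} on $\cB_1$ gives $r^{-n}P_\gamma^nf\to\Pi_{\gamma,1}f$ in $\cB_1$, hence also in $\cB_2$ by continuity of the injection $\cB_1\hookrightarrow\cB_2$; and since $f\in\cB_2$, Proposition~\ref{pro-phi-pi} on $\cB_2$ gives $r^{-n}P_\gamma^nf\to\Pi_{\gamma,2}f$ in $\cB_2$. Uniqueness of limits in $\cB_2$ forces $\Pi_{\gamma,1}f=\Pi_{\gamma,2}f$ for every $f\in\cB_1$, i.e. the restriction of $\Pi_{\gamma,2}$ to $\cB_1$ coincides with $\Pi_{\gamma,1}$ (and in particular $\Pi_{\gamma,2}(\cB_1)\subset\cB_1$).

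There is no serious obstacle here; the statement is essentially a soft corollary. The two points that need a little care are (i) that $\pi$ acts continuously on each $\cB_i$, which is exactly the hypothesis $\cB_i\subset\L^1(\pi)$ with continuous injection, and (ii) the \emph{strict} positivity of $c_i$ — without it the passage to $n$-th roots would only produce one-sided estimates on $r_i$. Point (ii) is precisely where Hypothesis~\ref{(B)} enters, through Proposition~\ref{pro-phi-pi}: it gives $\widehat\pi_{\gamma,i}(\mathbf 1_\X)>0$ and $\widehat\phi_{\gamma,i}>0$ $\pi$-a.s., so that $\Pi_{\gamma,i}\mathbf 1_\X=\widehat\pi_{\gamma,i}(\mathbf 1_\X)\,\widehat\phi_{\gamma,i}$ has strictly positive $\pi$-integral.
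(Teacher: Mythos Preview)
Your proof is correct and follows essentially the same approach as the paper: both apply Proposition~\ref{pro-phi-pi} to obtain the asymptotic $\pi(P_\gamma^n\mathbf 1_\X)\sim c_i\,r_i^n$ with $c_i>0$ (the positivity coming from Hypothesis~\ref{(B)}), take $n$-th roots to identify $r_1=r_2$ with the limit, and for the second claim use the limit representation $\Pi_{\gamma,i}f=\lim_n r^{-n}P_\gamma^n f$ together with the continuous injection $\cB_1\hookrightarrow\cB_2$. Your write-up is slightly more explicit about the role of continuity of $f\mapsto\pi(f)$ and the finiteness of $c_i$, but the argument is the same.
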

\begin{proof}
For $i=1,2$, Proposition~\ref{pro-phi-pi} applied to $P_{\gamma|\cB_i}$ (with the notations $\widehat\phi_{\gamma,i}$ and $\widehat\pi_{\gamma,i}$) gives
$$\pi(P_\gamma^n\mathbf 1_\X) = (r(P_{\gamma | \cB_i}))^n \widehat\pi_{\gamma,i}(\mathbf 1_\X)\pi(\widehat\phi_{\gamma,i}) + \text{o}\big(r(P_{\gamma|\cB_i})^n\big)$$
with $\pi(\widehat\phi_{\gamma,i})\, \widehat\pi_{\gamma,i}(\mathbf 1_\X)>0$ from Hypothesis~\ref{(B)}. 
Hence the first assertion holds. Now let $f\in\cB_1$. Then $\Pi_{\gamma,1} f = \lim_n r(P_{\gamma|\cB_2})^{-n}(P_{\gamma|\cB_1})^{n} f$ in $\cB_1$ from Proposition~\ref{pro-phi-pi} applied to $P_{\gamma|\cB_1}$ and from the previous fact. It follows from $\cB_1\hookrightarrow \cB_2$ that this convergence holds in $\cB_2$ too. Now Proposition~\ref{pro-phi-pi} applied to $P_{\gamma|\cB_2}$ gives $\Pi_{\gamma,1} f = \Pi_{\gamma,2} f$.
\end{proof}
\subsection{About Condition \eqref{def-B}}
\begin{apro} \label{pro-B-gamma}
Assume that
$(P_\gamma)_\gamma$ and $\mu$ satisfy the assumptions of Theorem~\ref{cor-th1},  excepted (\ref{def-B}). Then the real number $B(\gamma)$ given in (\ref{def-B}) is well-defined for every $\gamma\in J_0$, and
\begin{equation} \label{fle-B-gamma}
\forall \gamma\in J_0,\quad B(\gamma)  = \widehat\pi_\gamma(Ph_{\kappa,\gamma})\, \mu\left(\kappa\, e^{-\gamma \xi}\widehat\phi_\gamma\right),
\end{equation}
where $\widehat\phi_\gamma$ and $\widehat\pi_\gamma$ are given in (\ref{phi-gamma-pi-gamma}).
Assume moreover that the space $\cB$ involved in the assumptions of  Theorem \ref{generalspectraltheorem1} satisfies Hypotheses~\ref{(A)} and \ref{(B)} on $J_0$
and that one of the following assumptions holds true
\begin{enumerate}[(i)]
	\item $\mu$ is absolutely continuous with respect to $\pi$,
	\item the first part in Hypothesis~\ref{(B)} is reinforced as follows: for every $\gamma\in J$, if $\phi\in\cB$ is non-null and non-negative,
then $P_\gamma \phi > 0$ everywhere on $\X$.
\end{enumerate}
Then (\ref{def-B}) holds.
\end{apro}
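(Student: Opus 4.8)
The plan is to reduce the whole statement to the rank‑one decomposition $\Pi_\gamma=\widehat\pi_\gamma(\cdot)\,\widehat\phi_\gamma$ of Proposition~\ref{pro-phi-pi}, and then to trace which hypothesis controls each of the two resulting scalar factors. First, $B(\gamma)$ is well defined: under the hypotheses of Theorem~\ref{cor-th1} one has $Ph_{\kappa,\gamma}\in\cB_0$, hence $Ph_{\kappa,\gamma}\in\cB$ since $\cB$ is $\cB_0$ or $\cB_3$ with $\cB_0\hookrightarrow\cB_3$, and therefore $\Pi_\gamma(Ph_{\kappa,\gamma})\in\cB$ because $\Pi_\gamma\in\cL(\cB)$; moreover $\mu(\kappa e^{-\gamma\xi}\cdot)\in\cB_3^*$ restricts to an element of $\cB^*$ because $\cB\hookrightarrow\cB_3$ or $\cB=\cB_3$. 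Substituting $\Pi_\gamma(Ph_{\kappa,\gamma})=\widehat\pi_\gamma(Ph_{\kappa,\gamma})\,\widehat\phi_\gamma$ and pulling the scalar $\widehat\pi_\gamma(Ph_{\kappa,\gamma})$ out of the linear functional $\mu(\kappa e^{-\gamma\xi}\cdot)$ then yields exactly \eqref{fle-B-gamma}.

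By \eqref{fle-B-gamma}, the strict positivity \eqref{def-B} amounts to $\widehat\pi_\gamma(Ph_{\kappa,\gamma})>0$ together with $\mu(\kappa e^{-\gamma\xi}\widehat\phi_\gamma)>0$. For the first factor, since $\kappa\ge 2$ everywhere and $\xi$ is $[0,+\infty)$‑valued, $h_{\kappa,\gamma}=(\kappa-1)e^{-\gamma\xi}>0$ everywhere on $\X$, so $Ph_{\kappa,\gamma}(x)=\int_\X h_{\kappa,\gamma}(y)\,P(x,dy)>0$ for every $x$; thus $Ph_{\kappa,\gamma}$ is a non-null, non-negative element of $\cB$. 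By Proposition~\ref{pro-phi-pi}, $\widehat\pi_\gamma$ is a nonzero non-negative element of $\cB^*\cap\ker(P_\gamma^*-r(\gamma)I)$ (Hypothesis~\ref{(A)}), hence positive by Hypothesis~\ref{(B)}; consequently $\widehat\pi_\gamma(Ph_{\kappa,\gamma})>0$.

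For the second factor, Proposition~\ref{pro-phi-pi} gives that $\widehat\phi_\gamma\in\cB\cap\ker(P_\gamma-r(\gamma)I)$ is non-null, non-negative, and positive $\pi$-a.s. Under~(i), $\mu\ll\pi$, so $\widehat\phi_\gamma>0$ $\mu$-a.s., and since $\kappa e^{-\gamma\xi}>0$ everywhere, $\mu(\kappa e^{-\gamma\xi}\widehat\phi_\gamma)=\int_\X\kappa e^{-\gamma\xi}\widehat\phi_\gamma\,d\mu>0$. Under~(ii), applying the reinforced positivity to the non-null non-negative $\widehat\phi_\gamma$ yields $P_\gamma\widehat\phi_\gamma>0$ everywhere on $\X$; since $P_\gamma\widehat\phi_\gamma=r(\gamma)\widehat\phi_\gamma$ with $r(\gamma)>0$, the function $r(\gamma)^{-1}P_\gamma\widehat\phi_\gamma$ is a representative of the class $\widehat\phi_\gamma$ that is $>0$ everywhere, and evaluating $\mu(\kappa e^{-\gamma\xi}\cdot)$ on this representative gives $\mu(\kappa e^{-\gamma\xi}\widehat\phi_\gamma)=r(\gamma)^{-1}\int_\X\kappa e^{-\gamma\xi}\,(P_\gamma\widehat\phi_\gamma)\,d\mu>0$. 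In both cases, \eqref{fle-B-gamma} together with the bound of the previous paragraph gives $B(\gamma)>0$, i.e. \eqref{def-B}.

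The delicate point — and the reason the dichotomy (i)/(ii) is imposed — is the positivity of $\mu(\kappa e^{-\gamma\xi}\widehat\phi_\gamma)$ when $\mu$ is not absolutely continuous with respect to $\pi$: in that situation $\widehat\phi_\gamma$ is only a class modulo $\pi$, its $\pi$-a.s.\ positivity carries no information $\mu$-a.s., and a priori some representative could vanish $\mu$-almost everywhere. Hypothesis~(i) resolves this by making the $\pi$-a.s.\ statement $\mu$-meaningful, while Hypothesis~(ii) exhibits, through $P_\gamma$, a genuinely everywhere-positive representative of $\widehat\phi_\gamma$ on which the functional $\mu(\kappa e^{-\gamma\xi}\cdot)$ can safely be evaluated; everything else is bookkeeping with the chain $\cB_0\hookrightarrow\cdots\hookrightarrow\cB_3$ and the rank-one formula of Proposition~\ref{pro-phi-pi}.
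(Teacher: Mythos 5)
Your argument is correct and follows essentially the same route as the paper: expand $\Pi_\gamma$ via the rank-one formula of Proposition~\ref{pro-phi-pi} to obtain \eqref{fle-B-gamma}, then derive positivity of $\widehat\pi_\gamma(Ph_{\kappa,\gamma})$ from Hypotheses~\ref{(A)} and \ref{(B)} and handle $\mu(\kappa e^{-\gamma\xi}\widehat\phi_\gamma)>0$ separately under (i) and (ii). The only small variations are cosmetic — you establish non-nullity of $Ph_{\kappa,\gamma}$ by everywhere-positivity of $h_{\kappa,\gamma}$ rather than via $\pi(Ph_{\kappa,\gamma})=\pi(h_{\kappa,\gamma})>0$, and you spell out the eigenfunction-identity trick behind the paper's ``obvious'' case (ii).
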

\begin{proof}
Let $\gamma\in J_0$ (thus $r(\gamma)>0$). First, by assumption $Ph_{\kappa,\gamma}\in\cB_0$. Thus $\Pi_\gamma(Ph_{\kappa,\gamma})\in
\cB_3$
since
$\Pi_\gamma\in \cL(\cB_0,
\cB_3)$ (see Theorem \ref{generalspectraltheorem1}), and $B(\gamma)$ is then  well-defined from the assumptions on $\mu$ in Theorem~\ref{cor-th1}. Formula~(\ref{fle-B-gamma}) follows from (\ref{phi-gamma-pi-gamma}).
Now, under the additional assumptions in $(i)$, we know from Proposition~\ref{pro-phi-pi} that $\widehat\phi_\gamma\geq 0$ and $\widehat\pi_\gamma\geq 0$. Moreover
$Ph_{\kappa,\gamma}\geq 0$ and $Ph_{\kappa,\gamma}\neq 0$ in $\L^1(\pi)$
since $\pi(Ph_{\kappa,\gamma}) = \pi(h_{\kappa,\gamma}) > 0$.
Since $\cB_0\hookrightarrow\cB_3\hookrightarrow \L^1(\pi)$ by hypothesis, it follows that $Ph_{\kappa,\gamma}\neq 0$ in $\cB_1$ and in $\cB_3$. Thus $\widehat\pi_\gamma(Ph_{\kappa,\gamma}) > 0$ from Hypothesis~\ref{(B)}. Also note that $\widehat\phi_\gamma > 0$ (modulo $\pi$) from Proposition~\ref{pro-phi-pi}.
Then $(i)$ follows from the previous remarks and from (\ref{fle-B-gamma}).
Assertion~$(ii)$ is obvious since, in this case, $\widehat\phi_\gamma>0$ everywhere.
\end{proof}
\subsection{About the monotonicity of the spectral radius}
\begin{apro}\label{LEMME0}
If $(\cB,\|\cdot\|_{\cB})$ is a complex Banach lattice of functions $f:\X\rightarrow\mathbb C$ (or of classes of functions modulo $\pi$), and if $P_\gamma\in\cL(\cB)$ for every $\gamma\in[0,+\infty)$, then the map $\gamma\mapsto r(\gamma)$ is non-increasing on $[0,+\infty)$.
\end{apro}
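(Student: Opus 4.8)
The plan is to show that for $0 \le \gamma_1 < \gamma_2$ one has $r(\gamma_2) \le r(\gamma_1)$, by exploiting the pointwise domination $P_{\gamma_2} f = P(\kappa e^{-\gamma_2 \xi} f)$ against $P_{\gamma_1}|f| = P(\kappa e^{-\gamma_1 \xi}|f|)$ together with the lattice structure of $\cB$. First I would observe the key inequality: since $\kappa \ge 0$, $\xi \ge 0$ and $e^{-\gamma_2 \xi} \le e^{-\gamma_1 \xi}$, for any $f \in \cB$ we have, $\pi$-almost everywhere,
\[
  |P_{\gamma_2} f| \le P(\kappa e^{-\gamma_2 \xi}|f|) \le P(\kappa e^{-\gamma_1 \xi}|f|) = P_{\gamma_1}|f|.
\]
Iterating this and using that $P_{\gamma_1}$ is a positive operator (it maps non-negative functions to non-negative functions, since $P$, $\kappa$ and $e^{-\gamma_1\xi}$ are non-negative), one gets $|P_{\gamma_2}^n f| \le P_{\gamma_1}^n |f|$ pointwise for every $n \ge 1$: indeed, assuming $|P_{\gamma_2}^n f| \le P_{\gamma_1}^n|f|$, apply $P_{\gamma_2}$ and then the bound above to $P_{\gamma_2}^n f$ to obtain $|P_{\gamma_2}^{n+1} f| \le P_{\gamma_1}|P_{\gamma_2}^n f| \le P_{\gamma_1}(P_{\gamma_1}^n|f|) = P_{\gamma_1}^{n+1}|f|$, where the middle step uses positivity (monotonicity) of $P_{\gamma_1}$.

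Next I would translate this pointwise domination into a norm inequality using the Banach lattice axioms from Definition \ref{del-BL}: from $|P_{\gamma_2}^n f| \le P_{\gamma_1}^n|f|$ and the property that $|g_1| \le |g_2|$ implies $\|g_1\|_\cB \le \|g_2\|_\cB$, we get
\[
  \|P_{\gamma_2}^n f\|_\cB = \big\|\, |P_{\gamma_2}^n f|\, \big\|_\cB \le \big\|\, P_{\gamma_1}^n|f|\, \big\|_\cB = \|P_{\gamma_1}^n|f|\|_\cB \le \|P_{\gamma_1}^n\|_\cB\, \|\,|f|\,\|_\cB = \|P_{\gamma_1}^n\|_\cB\, \|f\|_\cB,
\]
using $\|\,|f|\,\|_\cB = \|f\|_\cB$ once more. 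Taking the supremum over $\|f\|_\cB = 1$ yields $\|P_{\gamma_2}^n\|_\cB \le \|P_{\gamma_1}^n\|_\cB$ for all $n \ge 1$. Finally, by Gelfand's formula $r(\gamma) = \lim_n \|P_{\gamma|\cB}^n\|_\cB^{1/n}$, taking $n$-th roots and passing to the limit gives $r(\gamma_2) \le r(\gamma_1)$, i.e. $\gamma \mapsto r(\gamma)$ is non-increasing on $[0,+\infty)$.

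The only delicate point — and what I would treat carefully rather than as routine — is making sure every manipulation is legitimate for elements of $\cB$ viewed as classes modulo $\pi$: the pointwise inequalities $|P_{\gamma_2} f| \le P_{\gamma_1}|f|$ hold $\pi$-a.e. and are stable under the operations used, and they are comparisons between genuine elements of $\cB$ because $\cB$ is stable under $|\cdot|$ and because $P_{\gamma_i}$ maps $\cB$ into $\cB$ by hypothesis; the functional $P_{\gamma_1}$ being order-preserving on $\cB$ is exactly the statement that it sends the non-negative cone into itself, which follows from its explicit kernel form in \eqref{def-P-gamma-funct}. No spectral theory beyond Gelfand's formula is needed, so this is the main (and essentially only) obstacle, and it is mild.
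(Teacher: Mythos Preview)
Your proof is correct and follows essentially the same route as the paper: pointwise domination $|P_{\gamma_2}^n f|\le P_{\gamma_1}^n|f|$ obtained by induction from positivity of the kernel, then the Banach lattice axiom $|g_1|\le|g_2|\Rightarrow\|g_1\|_\cB\le\|g_2\|_\cB$ to compare operator norms, and finally Gelfand's formula. If anything, your write-up is more explicit about the induction step and the role of $\|\,|f|\,\|_\cB=\|f\|_\cB$ than the paper's version.
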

\begin{proof}
For any $0\le\gamma<\gamma'\le\infty$ and for any $f,g\in\cB$ such that
$|f|\le|g|$, we have $e^{-\gamma'\xi} |f|\le e^{-\gamma\xi} |g|$ and so
$P_{\gamma'}|f|\le P_{\gamma}|g|$, which implies by induction that $P_{\gamma'}^n|f|\le P_{\gamma}^n|f|$ for every integer $n\ge 1$. We conclude that $\|P_{\gamma'}^n\|_{\cB}\le \|P_{\gamma}^n\|_{\cB}$
since $(\cB,\|\cdot\|_{\cB})$ is a Banach lattice.
This implies that $r(\gamma')\le r(\gamma)$ and so the desired statement.
\end{proof}
\begin{apro}\label{decroissanceAB}
Assume that Hypothesis \ref{hypcompl} is fulfilled for some $J$ and for some $\cB$ satisfying Hypotheses~\ref{(A)} and \ref{(B)}.
Then $\gamma\rightarrow r(\gamma)$
is non-increasing on $J$.
\end{apro}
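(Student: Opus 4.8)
The plan is to adapt the proof of Proposition~\ref{LEMME0}, but since $\cB$ is no longer assumed to be a Banach lattice the comparison $\|P_{\gamma'}^n\|_\cB\le\|P_\gamma^n\|_\cB$ is not available. Instead I will pair the dominant eigen-elements of $P_\gamma$ and $P_{\gamma'}$ furnished by Hypothesis~\ref{hypcompl} and Proposition~\ref{pro-phi-pi}, exploiting the positivity granted by Hypotheses~\ref{(A)} and \ref{(B)}.

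Fix $\gamma,\gamma'\in J$ with $\gamma<\gamma'$ (the case $\gamma'=+\infty$ being included, with the convention $e^{-\infty\cdot\xi}=\mathbf 1_{\{\xi=0\}}$); I want to show $r(\gamma')\le r(\gamma)$. By Proposition~\ref{pro-phi-pi} applied to $P_{\gamma'}$ there is a non-null $\widehat\phi_{\gamma'}\in\cB\cap\ker(P_{\gamma'}-r(\gamma')I)$, which by Hypothesis~\ref{(A)} may be taken non-negative; applied to $P_\gamma$ it yields a non-null non-negative $\widehat\pi_\gamma\in\cB^*\cap\ker(P_\gamma^*-r(\gamma)I)$, and since $r(\gamma)>0$ (Hypothesis~\ref{hypcompl}) this $\widehat\pi_\gamma$ is a \emph{positive} linear form on $\cB$ by Hypothesis~\ref{(B)}. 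In particular $\widehat\pi_\gamma(\widehat\phi_{\gamma'})>0$.

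The elementary point is that $P_{\gamma'}\psi\le P_\gamma\psi$ in $\cB$ for every non-negative $\psi\in\cB$: choosing a non-negative representative of $\psi$, one has pointwise $P_\gamma\psi-P_{\gamma'}\psi=P\big(\kappa(e^{-\gamma\xi}-e^{-\gamma'\xi})\psi\big)\ge 0$ because $\xi\ge 0$ and $\gamma<\gamma'$; moreover $P_\gamma\psi-P_{\gamma'}\psi\in\cB$ since $P_\gamma,P_{\gamma'}\in\cL(\cB)$, so the non-negative form $\widehat\pi_\gamma$ gives $\widehat\pi_\gamma(P_{\gamma'}\psi)\le\widehat\pi_\gamma(P_\gamma\psi)$. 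Taking $\psi=\widehat\phi_{\gamma'}$ and using that $\widehat\pi_\gamma$ is a left eigenvector of $P_\gamma$ for the eigenvalue $r(\gamma)$, I obtain
\[
r(\gamma')\,\widehat\pi_\gamma(\widehat\phi_{\gamma'})
=\widehat\pi_\gamma\big(P_{\gamma'}\widehat\phi_{\gamma'}\big)
\le\widehat\pi_\gamma\big(P_\gamma\widehat\phi_{\gamma'}\big)
=r(\gamma)\,\widehat\pi_\gamma(\widehat\phi_{\gamma'}),
\]
and dividing by $\widehat\pi_\gamma(\widehat\phi_{\gamma'})>0$ yields $r(\gamma')\le r(\gamma)$, as desired.

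The only delicate part is the bookkeeping with classes modulo $\pi$ and with representatives: one must check that non-negativity is stable under application of $P_\gamma$, that $P_\gamma\psi-P_{\gamma'}\psi$ is a bona fide element of $\cB$ on which $\widehat\pi_\gamma$ can be tested, and that the ``cross'' pairing $\widehat\pi_\gamma(\widehat\phi_{\gamma'})$ of objects attached to two different parameters is strictly positive --- all of which follow from Hypotheses~\ref{(A)}--\ref{(B)} and Proposition~\ref{pro-phi-pi}. Once these are settled, the computation above is immediate.
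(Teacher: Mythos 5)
Your proof is correct, and it is a genuinely cleaner route than the paper's. The paper pairs $\widehat\pi_{\gamma_1}$ with $\widehat\phi_{\gamma_1}$ (both attached to the \emph{larger} parameter $\gamma_1$), compares $\widehat\pi_{\gamma_1}(P_{\gamma_1}^n\widehat\phi_{\gamma_1})$ with $\widehat\pi_{\gamma_1}(P_{\gamma_2}^n\widehat\phi_{\gamma_1})$, and then lets $n\to\infty$ using the spectral decomposition $P_{\gamma_2}^n = r(\gamma_2)^n\Pi_{\gamma_2}+o(r(\gamma_2)^n)$; extracting the leading coefficient forces it to prove positivity of the \emph{product} $\widehat\pi_{\gamma_2}(\widehat\phi_{\gamma_1})\widehat\pi_{\gamma_1}(\widehat\phi_{\gamma_2})$. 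You instead do the classical one-shot Perron--Frobenius comparison: pair the left eigenfunctional at the smaller parameter $\widehat\pi_\gamma$ against the right eigenfunction at the larger parameter $\widehat\phi_{\gamma'}$, apply each $P$ once, and use $P_{\gamma'}\psi\le P_\gamma\psi$ on non-negative $\psi$. This avoids the asymptotics and the spectral projector entirely, needs only the single positivity $\widehat\pi_\gamma(\widehat\phi_{\gamma'})>0$, and uses Proposition~\ref{pro-phi-pi} only for the existence and signs of $\widehat\phi_{\gamma'}$ and $\widehat\pi_\gamma$ rather than for the full convergence $r(\gamma)^{-n}P_\gamma^n\to\Pi_\gamma$. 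In short: your argument is strictly lighter in machinery and yields the same conclusion; both rely on Hypotheses~\ref{(A)} and \ref{(B)} in exactly the same places.
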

\begin{proof}
We use the notations of Proposition~\ref{pro-phi-pi}. Let $\gamma_1,\gamma_2\in J$ such that $\gamma_2<\gamma_1$. Then $\widehat\pi_{\gamma_1}(P_{\gamma_1}^n\widehat\phi_{\gamma_1})\le
\widehat\pi_{\gamma_1}(P_{\gamma_2}^n\widehat\phi_{\gamma_1})$ since $\widehat\pi_{\gamma_1}$ is non-negative. Moreover
$\widehat\pi_{\gamma_1}(P_{\gamma_1}^n\widehat\phi_{\gamma_1})=(r(\gamma_1))^n$
and
$$
\widehat\pi_{\gamma_1}(
P_{\gamma_2}
^n\widehat\phi_{\gamma_1})=(r(\gamma_2))^n
\widehat\pi_{\gamma_2}(\widehat\phi_{\gamma_1})\widehat\pi_{\gamma_1}(\widehat\phi_{\gamma_2})+o\left((r(
\gamma_2
)
)^n\right).
$$
Since
$\widehat\pi_{\gamma_1}$ and $\widehat\pi_{\gamma_2}$ are positive
and since $\phi_{\gamma_1}$ and $\phi_{\gamma_2}$ are non-null
non-negative,
Hypothesis~\ref{(B)} gives
$\widehat\pi_{\gamma_2}(\widehat \phi_{\gamma_1})\widehat\pi_{\gamma_1}(\widehat\phi_{\gamma_2})>0$.
Thus $(r(\gamma_1))^n\le O\left((r(\gamma_2))^n\right)$, so $r(\gamma_1)\le r(\gamma_2)$.
\end{proof}
\begin{arem}
The non-increasingness of $r$ on $J$ implies that the set $J_0$ in (\ref{J0}) is an interval. Let us also indicate that it can happen that $r(\cdot)$ is constant on $[0,+\infty)$ (see Appendix \ref{counterexample}).
\end{arem}

The next result is relevant to check the condition $r'(\nu)\neq 0$ in Theorem~\ref{cor-th2}.
\begin{apro} \label{pro-deri-stric-pos}
Assume that the assumptions of Theorem \ref{generalspectraltheorem2} hold.

\begin{enumerate}[(i)]
	\item Let $\gamma\in J_0$ be such that $\pi\big(\Pi_\gamma(\xi\, \Pi_\gamma\mathbf 1_\X)\big) > 0$. Then $r'(\gamma) < 0$.
	\vspace*{1mm}
  \item Assume moreover that the space $\cB_2$ involved in the assumptions of Theorem \ref{generalspectraltheorem2} satisfy Hypotheses~\ref{(A)} and \ref{(B)} on $J_0$,
and that $\pi(\{\xi=0\})<1$. Then $r'(\cdot)< 0$ on $J_0$, thus $\gamma\mapsto r(\gamma)$ is strictly decreasing on $J_0$.
\end{enumerate}
\end{apro}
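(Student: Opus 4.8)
The plan is to compute $r'(\gamma)$ from the first-order perturbation formula for the simple dominant eigenvalue $r(\gamma)$ and then fix its sign using the rank-one structure of $\Pi_\gamma$, the positivity of $P_\gamma$, and (for (ii)) the positivity of the dominant eigenelements.

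Fix $\gamma\in J_0$, so $r(\gamma)>\delta_0>0$. Under the assumptions of Theorem~\ref{generalspectraltheorem2} the maps $\gamma\mapsto r(\gamma)$ and $\gamma\mapsto\Pi_\gamma$ are $C^1$ on $J_0$, $P'_\gamma f=P_\gamma(-\xi f)$ with $f\mapsto\xi f$ bounded from $\cB_1$ to $\cB_2$, and by Proposition~\ref{pro-phi-pi} one has $\Pi_\gamma f=\widehat\pi_\gamma(f)\,\widehat\phi_\gamma$ with $\widehat\pi_\gamma(\widehat\phi_\gamma)=1$, $P_\gamma\widehat\phi_\gamma=r(\gamma)\widehat\phi_\gamma$, $\widehat\pi_\gamma\circ P_\gamma=r(\gamma)\widehat\pi_\gamma$. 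Differentiating $P_\gamma\Pi_\gamma=r(\gamma)\Pi_\gamma=\Pi_\gamma P_\gamma$ and $\Pi_\gamma^2=\Pi_\gamma$ and composing on both sides with $\Pi_\gamma$, the terms in $\Pi'_\gamma$ cancel, leaving the classical identity $\Pi_\gamma P'_\gamma\Pi_\gamma=r'(\gamma)\Pi_\gamma$; evaluating at $\mathbf 1_\X$, using $P'_\gamma\Pi_\gamma\mathbf 1_\X=-P_\gamma(\xi\,\Pi_\gamma\mathbf 1_\X)$ and $\Pi_\gamma P_\gamma=r(\gamma)\Pi_\gamma$, and integrating against $\pi$ (which lies in $\cB^*$ since $\cB\hookrightarrow\L^1(\pi)$) yields \[ r'(\gamma)\,\pi(\Pi_\gamma\mathbf 1_\X)=-r(\gamma)\,\pi\big(\Pi_\gamma(\xi\,\Pi_\gamma\mathbf 1_\X)\big). \] Equivalently, via the rank-one form of $\Pi_\gamma$, $r'(\gamma)=-r(\gamma)\,\widehat\pi_\gamma(\xi\widehat\phi_\gamma)$, while $\pi(\Pi_\gamma(\xi\Pi_\gamma\mathbf 1_\X))=\widehat\pi_\gamma(\mathbf 1_\X)\,\pi(\widehat\phi_\gamma)\,\widehat\pi_\gamma(\xi\widehat\phi_\gamma)$ and $\pi(\Pi_\gamma\mathbf 1_\X)=\widehat\pi_\gamma(\mathbf 1_\X)\,\pi(\widehat\phi_\gamma)$.

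For (i): the hypothesis $\pi(\Pi_\gamma(\xi\Pi_\gamma\mathbf 1_\X))>0$ forces $\widehat\pi_\gamma(\xi\widehat\phi_\gamma)\neq 0$, with the same sign as $\pi(\Pi_\gamma\mathbf 1_\X)$. Now $\Pi_\gamma=\lim_n r(\gamma)^{-n}P_\gamma^n$ in $\cL(\cB)$ (Proposition~\ref{pro-phi-pi}) and $P_\gamma$ preserves non-negativity ($P$ is Markov and $\kappa\,e^{-\gamma\xi}>0$), so $P_\gamma^n\mathbf 1_\X\ge 0$, hence $\pi(\Pi_\gamma\mathbf 1_\X)=\lim_n r(\gamma)^{-n}\pi(P_\gamma^n\mathbf 1_\X)\ge 0$; being nonzero it is positive. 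Therefore $\widehat\pi_\gamma(\xi\widehat\phi_\gamma)>0$ and $r'(\gamma)=-r(\gamma)\,\widehat\pi_\gamma(\xi\widehat\phi_\gamma)<0$. For (ii): by (i) it suffices to show $\pi(\Pi_\gamma(\xi\Pi_\gamma\mathbf 1_\X))>0$ for every $\gamma\in J_0$. Since $\cB_2$ now satisfies Hypotheses~\ref{(A)}--\ref{(B)} on $J_0$, Proposition~\ref{pro-phi-pi} applied to $P_{\gamma|\cB_2}$ (whose dominant eigenelements are identified with $\widehat\phi_\gamma,\widehat\pi_\gamma$ along the chain of spaces via Corollary~\ref{cor-plus-esp} and the one-dimensionality in Hypothesis~\ref{hypcompl}) gives $\widehat\phi_\gamma>0$ $\pi$-a.s.\ and $\widehat\pi_\gamma$ positive, so $\widehat\pi_\gamma(\mathbf 1_\X)>0$ and $\pi(\widehat\phi_\gamma)>0$; moreover $\xi\widehat\phi_\gamma\in\cB_2$ is non-negative and, because $\pi(\{\xi=0\})<1$, it is $\pi$-a.s.\ positive on the set $[\xi>0]$ of positive $\pi$-measure, hence non-null in $\cB_2\hookrightarrow\L^1(\pi)$, so $\widehat\pi_\gamma(\xi\widehat\phi_\gamma)>0$. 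Thus $\pi(\Pi_\gamma(\xi\Pi_\gamma\mathbf 1_\X))=\widehat\pi_\gamma(\mathbf 1_\X)\,\pi(\widehat\phi_\gamma)\,\widehat\pi_\gamma(\xi\widehat\phi_\gamma)>0$, and (i) yields $r'<0$ on all of $J_0$, i.e.\ $\gamma\mapsto r(\gamma)$ is strictly decreasing on $J_0$.

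The step I expect to be most delicate is the functional-analytic bookkeeping along the chain $\cB_0\hookrightarrow\cB_1\hookrightarrow\cB_2\hookrightarrow\cB_3$: making rigorous sense of the perturbation identity $\Pi_\gamma P'_\gamma\Pi_\gamma=r'(\gamma)\Pi_\gamma$ when $P'_\gamma$ is only bounded from $\cB_1$ to $\cB_2$ and $\gamma\mapsto P_\gamma$ is only continuous into $\cL(\cB_i,\cB_{i+1})$ (never into $\cL(\cB_0)$), and consistently identifying the dominant eigenprojectors, eigenvectors and eigenfunctionals attached to the different spaces (so that $\widehat\phi_\gamma\in\cB_1$ and $\widehat\pi_\gamma\in\cB_2^*$), which is exactly where Corollary~\ref{cor-plus-esp} enters. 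Once this is in place the positivity and sign argument above is routine.
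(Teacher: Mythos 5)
Your proof is correct and follows essentially the same strategy as the paper: establish the first-order perturbation formula $r'(\gamma)\,\pi(\Pi_\gamma\mathbf 1_\X)=-r(\gamma)\,\pi\big(\Pi_\gamma(\xi\,\Pi_\gamma\mathbf 1_\X)\big)$ and then fix the sign of the factors by positivity of the dominant eigenelements (via Proposition~\ref{pro-phi-pi} under Hypotheses~\ref{(A)}--\ref{(B)} for part~(ii)). The only cosmetic difference is that the paper (Lemma~\ref{deriveenegative}, Proposition~\ref{ap-pro-ap-deriv}) derives the formula by differentiating the eigenvector equation $P_\gamma\widehat\phi_\gamma=r(\gamma)\widehat\phi_\gamma$ and pairing with $\widehat\pi_\gamma\in\cB_2^*$ (which neatly sidesteps $\Pi'_\gamma$ and the issue of which operator space the product rule lives in), whereas you differentiate the projector relations $P_\gamma\Pi_\gamma=r(\gamma)\Pi_\gamma$, $\Pi_\gamma^2=\Pi_\gamma$ --- a standard equivalent route that indeed requires the extra bookkeeping along the Banach-space chain that you flagged.
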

\begin{proof}
Assertion~$(i)$ follows from Proposition~\ref{ap-pro-ap-deriv}.
Let us derive $(ii)$ from~$(i)$. Let $\gamma\in J_0$.
First note that $\xi\Pi_\gamma\mathbf 1_\X\in \cB_2$. Indeed it follows from the assumptions of Theorem \ref{generalspectraltheorem2} that $\Pi_\gamma\mathbf 1_\X\in\cB_1$ (use also Corollary~\ref{cor-plus-esp}) and that the map $f\mapsto \xi f$ is in $\cL(\cB_1,\cB_2)$. Hence $\xi\Pi_\gamma\mathbf 1_\X\in \cB_2$. Moreover, under the assumptions in $(ii)$, we know from the last assertion of Proposition~\ref{pro-phi-pi} (applied on $\cB_2$) that $\Pi_\gamma\mathbf 1_\X > 0$ modulo $\pi$,
thus $\xi\, \Pi_\gamma\mathbf 1_\X \neq 0$ in $\L^1(\pi)$
(since
$\pi(\{\xi=0\})<1$),
and so $\xi\, \Pi_\gamma\mathbf 1_\X\neq 0$ in $\cB_2$ from $\cB_2\hookrightarrow \L^1(\pi)$. Then it follows again from the last assertion of Proposition~\ref{pro-phi-pi} (applied on $\cB_2$) that $\Pi_{\gamma}(\xi\, \Pi_{\gamma} 1_\X) > 0$ modulo $\pi$, thus $\pi\big(\Pi_\gamma(\xi\, \Pi_\gamma\mathbf 1_\X)\big) > 0$. Hence $r'(\gamma)< 0$ from $(i)$.
\end{proof}
%

%
%
%
A consequence of the monotonicity of $\gamma\mapsto r(\gamma)$
the following characterisation of $\nu<\infty$.
\begin{apro} \label{cor-th1v1}
Assume that the assumptions of Theorem~\ref{cor-th1} hold
(with $J$ and $\delta_0$ given in Hypothesis \ref{hypKL} or \ref{hypKL}*); in particular for every $\gamma\in J$ the Laplace kernel $P_\gamma$ is assumed to continuously act on the Banach space $\cB$ chosen in the assumptions of Theorem~\ref{generalspectraltheorem1}.\vspace*{-2mm}
\begin{enumerate}[(i)]
	\item
If $\delta_0<1$, then
for every
$\gamma\in J$ we have: $\ G(\gamma)<\infty \Leftrightarrow r (\gamma)<1$.
\vspace*{2mm}
  \item If $r$ is non-increasing, if $J=(a,+\infty]$ for some $a\geq 0$ and if $\delta_0 <1$,
then $\nu<\infty \Leftrightarrow r(\infty)= r(P_{\infty|\cB})<1$.
\end{enumerate}

\end{apro}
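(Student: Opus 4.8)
The plan is to prove (i) by splitting $J$ according to whether $r(\gamma)$ exceeds $\delta_0$, and then to deduce (ii) from (i) using the monotonicity of $r$ together with the upper semicontinuity bound \eqref{thmkellerliverani1}.

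For (i), the starting point is the remark that, since $\delta_0<1$, every $\gamma\in J$ with $r(\gamma)\ge 1$ automatically lies in $J_0$ (recall \eqref{J0}). On $J_0$ the hypotheses of Theorem~\ref{cor-th1} are in force, so $(S_n,\kappa(X_n))_n$ is multiplicatively ergodic there with $\rho(\gamma)=r(\gamma)$ and $A(\gamma)=B(\gamma)/r(\gamma)>0$, by \eqref{def-B} and Theorem~\ref{cor-th1}; taking the one-point set $K=\{\gamma\}$ in \eqref{formule-mult-erg} provides $M_\gamma>0$ and $\theta_\gamma\in(0,1)$ with $|g_n(\gamma)-A(\gamma)r(\gamma)^n|\le M_\gamma(r(\gamma)\theta_\gamma)^n$ for all $n\ge 1$. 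If $r(\gamma)\ge 1$, then for $n$ large (so that $M_\gamma\theta_\gamma^n\le A(\gamma)/2$) one gets $g_n(\gamma)\ge\tfrac12A(\gamma)r(\gamma)^n\ge\tfrac12A(\gamma)>0$, hence $G(\gamma)=\sum_{n\ge0}g_n(\gamma)=+\infty$; this gives the implication $G(\gamma)<\infty\Rightarrow r(\gamma)<1$. If instead $\delta_0<r(\gamma)<1$, the same estimate yields the summable bound $0\le g_n(\gamma)\le A(\gamma)r(\gamma)^n+M_\gamma(r(\gamma)\theta_\gamma)^n$, so $G(\gamma)<\infty$ (the term $g_0(\gamma)=\mu((\kappa-1)e^{-\gamma\xi})$ being finite since $\kappa(X_0)$ is integrable). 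It remains to treat $\gamma\in J\setminus J_0$, i.e.~$r(\gamma)\le\delta_0<1$: here I would use only the spectral radius formula $r(\gamma)=\lim_n\|P_\gamma^n\|_\cB^{1/n}$, fixing $\rho\in(r(\gamma),1)$ and $C_\gamma\ge 1$ with $\|P_\gamma^n\|_\cB\le C_\gamma\rho^n$ for all $n\ge 0$, so that \eqref{formuleFourier} gives
\[
|g_n(\gamma)|=\big|\mu\big(\kappa\,e^{-\gamma\xi}\,P_\gamma^{n-1}(Ph_{\kappa,\gamma})\big)\big|\le\big\|\mu(\kappa\,e^{-\gamma\xi}\cdot)\big\|_{\cB^*}\,C_\gamma\,\rho^{n-1}\,\|Ph_{\kappa,\gamma}\|_\cB\qquad(n\ge 1),
\]
which is summable; hence $G(\gamma)<\infty$, and since $r(\gamma)<1$ here as well, the equivalence in (i) holds in every case. (This last step uses $\mu(\kappa e^{-\gamma\xi}\cdot)\in\cB^*$ and $Ph_{\kappa,\gamma}\in\cB$, which are part of the standing assumptions.)

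For (ii), I would first record that each $g_n$ is non-increasing on $[0,+\infty)$ (since $S_n\ge 0$), hence so is $G$, and that $\nu=\inf\{\gamma>0:G(\gamma)<\infty\}$. If $\nu<\infty$, pick a finite $\gamma_*\in J=(a,+\infty]$ with $G(\gamma_*)<\infty$ (possible since $G$ is non-increasing); then $r(\gamma_*)<1$ by (i), and $r(\infty)\le r(\gamma_*)<1$ by monotonicity of $r$. Conversely, if $r(\infty)=r(P_{\infty|\cB})<1$, then $\max(\delta_0,r(\infty))<1$ because $\delta_0<1$, so \eqref{thmkellerliverani1} applied at $\gamma_0=\infty$ gives $\limsup_{\gamma\to+\infty}r(\gamma)\le\max(\delta_0,r(\infty))<1$; hence $r(\gamma)<1$, and therefore $G(\gamma)<\infty$ by (i), for all sufficiently large $\gamma\in J$, so $\nu<\infty$.

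The main obstacle is the sub-case $\gamma\in J\setminus J_0$ of part (i): there the spectral-gap apparatus of Theorem~\ref{cor-th1} (the rank-one projector $\Pi_\gamma$ and the multiplicative ergodicity) is unavailable, so one is reduced to the crude geometric bound coming from $r(\gamma)=\lim_n\|P_\gamma^n\|_\cB^{1/n}<1$, which makes it necessary to check a priori that $Ph_{\kappa,\gamma}$ and the linear form $\mu(\kappa e^{-\gamma\xi}\cdot)$ belong to $\cB$ and $\cB^*$ respectively. For (ii), the only delicate point is that $r$ may fail to be continuous at $+\infty$, which is precisely what the Keller--Liverani inequality \eqref{thmkellerliverani1} bypasses.
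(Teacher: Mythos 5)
Your proposal is correct and follows essentially the same route as the paper: part (i) is proved by splitting $J$ into $J_0$ (where multiplicative ergodicity from Theorem~\ref{cor-th1} and Remark~\ref{multergodP1P2} gives the equivalence) and $J\setminus J_0$ (where $r(\gamma)\le\delta_0<1$ yields a geometric bound on $g_n(\gamma)$ via the Gelfand formula and \eqref{formuleFourier}); part (ii) is then deduced from (i) together with monotonicity of $r$ and the Keller--Liverani limsup bound \eqref{thmkellerliverani1}. The only slight variation is in (ii): for the direction $\nu<\infty\Rightarrow r(\infty)<1$ you invoke monotonicity of $r$ directly to get $r(\infty)\le r(\gamma_*)$, whereas the paper reformulates the condition as $\limsup_{\gamma\to+\infty}r(\gamma)<1$ and then does a case analysis on $r(\infty)\le\delta_0$ versus $r(\infty)>\delta_0$ using \eqref{thmkellerliverani1} and the continuity of $r$ on $J_0$ --- the two arguments are equivalent under the hypothesis that $r$ is non-increasing on $J=(a,+\infty]$ including the endpoint $\infty$, which is the intended reading.
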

\begin{proof}
First, if $\gamma\in J_0$, then $G(\gamma)<\infty \Leftrightarrow r(\gamma)<1$ due to Theorem~\ref{cor-th1}
and to Remark \ref{multergodP1P2}. Second, if $\gamma \in J\setminus J_0$, then $r(\gamma)\le \delta_0<1$, so that, for some fixed $\delta\in(\delta_0,1)$, we can deduce from the definition of $g_n(\gamma)$ and $r(\gamma)$, and from assumptions of Theorem~\ref{cor-th1}, that there exists $\tilde C_\delta>0$ such that
$G(\gamma) \le  \sum_{n=0}^{+\infty} \tilde C_\delta\delta^n<\infty$.
Hence $(i)$ is fulfilled.
Now, under the assumptions of $(ii)$,
it follows from Proposition~\ref{LEMME0} and from $(i)$ that: $\nu<\infty \Leftrightarrow \limsup_{\gamma\rightarrow +\infty}r(\gamma) < 1$. Moreover, due to
Theorem~\ref{cor-th1}, we know that $\limsup_{\gamma\rightarrow +\infty}r(\gamma)\le \max(\delta_0,r(\infty))$, and even that $\lim_{\gamma\rightarrow +\infty} r(\gamma)=r(\infty)$ if $r(\infty)>\delta_0$. Considering the cases $r(\infty)\leq \delta_0$ and $r(\infty)>\delta_0$ then gives the desired equivalence in $(ii)$.
\end{proof}
\subsection{Positivity of the spectral radius}
Recall that we have set $J_0:=\{\gamma\in J : r(\gamma)>\delta_0\}$
under the assumptions of Theorem~\ref{generalspectraltheorem1}. Another consequence of the monotonicity of $\gamma\mapsto r(\gamma)$
is the following lemma.
\begin{alem}\label{rnonnul}
Let $\gamma_1,\gamma_2,\gamma_3$ be such that $0\leq \gamma_1 < \gamma_2 < \gamma_3$. Assume that the assumptions of Theorem~\ref{generalspectraltheorem1} hold with $J=(\gamma_1,\gamma_2)$
and that $r$ is non-increasing on $J$.
Moreover suppose that, for every $\gamma\in(\gamma_1,\gamma_3)$, $P_\gamma$ continuously acts on $\cB$ and that the map $f\mapsto \pi(\kappa e^{-\gamma\xi}f)$ is in $\cB^*$, where $\cB$ is the space given in Theorem~\ref{generalspectraltheorem1}.
Finally suppose that
\begin{equation} \label{C-fini-0}
\Delta_0 := \limsup_{n} \bigg(\pi\big(\kappa\, P_{0}^n\mathbf 1_{\mathbb X}\big)\bigg)^{\frac{1}{n}} < \infty.
\end{equation}
If $ J_0\neq\{0\}$, then we have $r(\gamma)>0$
for every $\gamma\in (\gamma_1,\gamma_3)$.
\end{alem}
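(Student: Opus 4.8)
I would compare $r(\gamma)=r(P_{\gamma|\cB})$ with the $\pi$-averaged growth rate
$$\Delta(\gamma):=\limsup_{n\to\infty}\big(\pi(P_\gamma^n\mathbf 1_{\mathbb X})\big)^{1/n}\qquad(\gamma\ge 0),$$
and show that $\Delta$ is finite, log-convex and strictly positive at two points of $[0,\gamma_3)$, which forces $\Delta>0$ on $(\gamma_1,\gamma_3)$. First I would record the lower bound $r(\gamma)\ge\Delta(\gamma)$ for every $\gamma\in(\gamma_1,\gamma_3)$: there $P_\gamma\in\cL(\cB)$, $\mathbf 1_{\mathbb X}\in\cB$, and $\ell_\gamma:=\pi(\kappa\,e^{-\gamma\xi}\,\cdot)\in\cB^*$ by hypothesis, so $\|P_\gamma^n\|_{\cB}\ge \ell_\gamma(P_\gamma^n\mathbf 1_{\mathbb X})\big/(\|\ell_\gamma\|_{\cB^*}\|\mathbf 1_{\mathbb X}\|_{\cB})$; using $\pi P=\pi$ one has $\ell_\gamma(g)=\pi(P_\gamma g)$, whence $\ell_\gamma(P_\gamma^n\mathbf 1_{\mathbb X})=\pi(P_\gamma^{n+1}\mathbf 1_{\mathbb X})\ge 0$, and taking $n$-th roots and $\limsup$ (the constant's $n$-th root tending to $1$) gives $r(\gamma)=\lim_n\|P_\gamma^n\|_{\cB}^{1/n}\ge\Delta(\gamma)$. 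So it suffices to prove $\Delta(\gamma)>0$ on $(\gamma_1,\gamma_3)$.

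Next I would establish three properties of $\Delta$ on $[0,\infty)$. \emph{(a) Finiteness:} since $P$ is a positive kernel and $0\le e^{-\gamma\xi}\le 1$, an easy induction yields $P_\gamma^n\mathbf 1_{\mathbb X}\le P_0^n\mathbf 1_{\mathbb X}$ pointwise, and as $\kappa\ge 1$ this forces $\pi(P_\gamma^n\mathbf 1_{\mathbb X})\le\pi(\kappa\,P_0^n\mathbf 1_{\mathbb X})$, hence $\Delta(\gamma)\le\Delta_0<\infty$ by \eqref{C-fini-0}. \emph{(b) Log-convexity:} writing $P_\gamma^n f(x)=\mathbb E_x\big[\prod_{i=1}^n\kappa(X_i)\,e^{-\gamma\sum_{i=1}^n\xi(X_i)}f(X_n)\big]$ for $f\ge 0$ and $\gamma=(1-t)\alpha+t\beta$, Hölder's inequality (exponents $\tfrac1{1-t},\tfrac1t$) under $\mathbb E_x$ gives the pointwise bound $P_\gamma^n f\le(P_\alpha^n f)^{1-t}(P_\beta^n f)^t$; integrating against $\pi$ and applying Hölder once more gives $\pi(P_\gamma^n f)\le(\pi(P_\alpha^n f))^{1-t}(\pi(P_\beta^n f))^t$, so with $f=\mathbf 1_{\mathbb X}$ and $n$-th roots, $\Delta\big((1-t)\alpha+t\beta\big)\le\Delta(\alpha)^{1-t}\Delta(\beta)^t$; i.e. $\gamma\mapsto\log\Delta(\gamma)$ is convex on $[0,\infty)$. \emph{(c) Positivity at $0$:} since $\kappa\ge 2$ and $P\mathbf 1_{\mathbb X}=\mathbf 1_{\mathbb X}$, we get $P_0\mathbf 1_{\mathbb X}=P\kappa\ge 2\,\mathbf 1_{\mathbb X}$, hence $P_0^n\mathbf 1_{\mathbb X}\ge 2^n\mathbf 1_{\mathbb X}$ by induction, so $\Delta(0)\ge 2>0$.

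Then I would pin $\Delta$ down at a point of $J_0$. Fix $\gamma^*\in J_0$ (non-empty by hypothesis), so $\gamma^*\in(\gamma_1,\gamma_2)$, $r(\gamma^*)>\delta_0>0$, and Hypothesis~\ref{hypcompl} holds at $\gamma^*$ on $\cB$. Since in the present setting the space $\cB$ satisfies Hypotheses~\ref{(A)}--\ref{(B)} (as do the Banach lattices used in the examples, and as is implicit in the monotonicity of $r$ invoked here), Proposition~\ref{pro-phi-pi} applies and yields $\Pi_{\gamma^*}\mathbf 1_{\mathbb X}>0$ $\pi$-a.s.; moreover $r(\gamma^*)^{-n}P_{\gamma^*}^n\mathbf 1_{\mathbb X}\to\Pi_{\gamma^*}\mathbf 1_{\mathbb X}$ in $\cB$, hence in $\mathbb L^1(\pi)$ since $\cB\hookrightarrow\mathbb L^1(\pi)$, so $r(\gamma^*)^{-n}\pi(P_{\gamma^*}^n\mathbf 1_{\mathbb X})\to\pi(\Pi_{\gamma^*}\mathbf 1_{\mathbb X})>0$. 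Consequently $\Delta(\gamma^*)=r(\gamma^*)\in(\delta_0,\Delta_0]$, and in particular $\log\Delta(\gamma^*)$ is finite.

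Finally I would conclude by convexity. The function $\log\Delta$ maps $[0,\gamma_3)$ into $[-\infty,+\infty)$ (bounded above by $\log\Delta_0$ by (a)), is convex there by (b), and is finite at the two distinct points $0$ and $\gamma^*$ with $0<\gamma^*<\gamma_3$. A convex, bounded-above function that is finite at two points $x_0<x_1$ of an interval is finite on $\{x\ge x_0\}$: for $x>x_1$ one writes $x_1$ as a convex combination of $x_0$ and $x$ and bounds $\log\Delta(x)$ below by a finite quantity; for $x_0<x<x_1$ one does the same after first obtaining a finite value at some $w\in(x_1,\gamma_3)$. Hence $\log\Delta>-\infty$ on $[0,\gamma_3)\supseteq(\gamma_1,\gamma_3)$, i.e. $\Delta(\gamma)>0$ for all $\gamma\in(\gamma_1,\gamma_3)$, and with the first paragraph $r(\gamma)\ge\Delta(\gamma)>0$ there. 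The main non-routine ingredient is the log-convexity of $\gamma\mapsto\Delta(\gamma)$ obtained from the Hölder interpolation of the Laplace kernels, together with the identification $\Delta(\gamma^*)=r(\gamma^*)$ at an interior point of $J_0$ via the positivity of the eigen-projector; the monotonicity of $r$ assumed in the statement is not otherwise needed (it only ensures that $J_0$ is an interval).
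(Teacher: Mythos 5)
Your proof is correct, and at its heart it uses the same key mechanism as the paper's proof: a H\"older interpolation inequality between the kernels $P_\alpha$, $P_\beta$, which in the paper takes the explicit form $r(\gamma_0)\le \Delta_0^{1/q}\, r(\gamma)^{1/p}$ for $\gamma=p\gamma_0>\gamma_0$. What you do differently is to package this as log-convexity of the auxiliary quantity $\Delta(\gamma)=\limsup_n(\pi(P_\gamma^n\mathbf 1_\X))^{1/n}$ on the whole of $[0,\gamma_3)$, and to anchor the convexity argument at the two points $\gamma=0$ (where $\kappa\ge 2$ forces $\Delta(0)\ge 2$) and $\gamma=\gamma^*\in J_0$ (where $\Delta(\gamma^*)=r(\gamma^*)>\delta_0$ via the positivity of $\Pi_{\gamma^*}\mathbf 1_\X$). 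In contrast, the paper fixes $\gamma_0\in J_0\setminus\{0\}$ and handles the left half $(\gamma_1,\gamma_0]$ by the assumed non-increasingness of $r$, invoking H\"older only for $\gamma>\gamma_0$. As a consequence, your version does not need the monotonicity hypothesis at all (you correctly remark this), which is a genuine, if modest, simplification. The trade-off is that your argument relies on the identity $P_0\mathbf 1_\X=P\kappa\ge 2\,\mathbf 1_\X$ (i.e.\ on $\kappa\ge 2$) to seed the convexity at $\gamma=0$, a fact the paper does not use. One shared point to flag: both your step identifying $\Delta(\gamma^*)=r(\gamma^*)$ via $\pi(\Pi_{\gamma^*}\mathbf 1_\X)>0$ and the paper's appeal to the positivity of $\widehat\pi_{\gamma_0}$, $\widehat\phi_{\gamma_0}$ tacitly invoke Hypotheses~\ref{(A)}--\ref{(B)} for $\cB$ through Proposition~\ref{pro-phi-pi}; these are not listed among the lemma's explicit hypotheses, but you make the same assumption the paper does, so this is not a gap specific to your argument.
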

\begin{proof}
Let $\gamma_0\in J_0$, $\gamma_0\neq0$.
Then
$ r (\gamma)\ge r (\gamma_0)> 0$ for every
$\gamma\in(\gamma_1,\gamma_0]$.
Next let $\gamma\in(\gamma_0,\gamma_3)$ and set $p:=\gamma/\gamma_0>1$.
Since $\widehat\pi_{\gamma_0}$ is positive and $\widehat\phi_{\gamma_0}$ is non-negative and non-null (modulo $\pi$), we have
$$0<r (\gamma_0)=r\left(\frac{\gamma}p\right)
   = \lim_{n\rightarrow +\infty} \left(\pi\big(\kappa\, e^{-\frac{\gamma}{p}\xi}\, P_{\frac\gamma p}^n\mathbf 1_{\mathbb X}\big)\right)^{\frac 1n} =
	\lim_{n\rightarrow+\infty}\bigg(\E_\pi\bigg[\bigg(\prod_{j=0}^{n}\kappa(X_j)\bigg)\, e^{-\frac\gamma p S_n}\bigg]\bigg)^{\frac 1n}$$
due to \eqref{sup-vit} since $\mathbf 1_{\mathbb X}\in \mathcal B$ and $f\mapsto \pi(\kappa e^{-\frac{\gamma}{p}\xi}f)$ is in $\cB^*$, and due to (\ref{formuleFourier})
(in which we replace $h_{\kappa,\gamma}$ by $\kappa e^{-\gamma\xi}$).
Let $q=p/(p-1)$. Writing $\kappa(X_j) = \kappa(X_j)^{1/q}\kappa(X_j)^{1/p}$, it follows from the H\"older inequality that
\begin{eqnarray*}
r\left(\gamma_0\right) &\leq&  \limsup_{n\rightarrow+\infty}\bigg(\E_\pi\bigg[\prod_{j=0}^{n}\kappa(X_j)\bigg]\bigg)^{\frac{1}{nq}}\times\ \limsup_{n\rightarrow+\infty}\bigg(\E_\pi\bigg[\bigg(\prod_{j=0}^{n}\kappa(X_j)\bigg)\, e^{-\gamma S_n}\bigg]\bigg)^{\frac{1}{np}} \\
&\le& \limsup_{n\rightarrow +\infty} \bigg(\pi\big(\kappa\, P_{0}^n\mathbf 1_{\mathbb X}\big)\bigg)^{\frac{1}{nq}} \times \limsup_{n\rightarrow +\infty} \left(\pi\big(\kappa\, e^{-\gamma\xi}\, P_{\gamma}^n\mathbf 1_{\mathbb X}\big)\right)^{\frac{1}{np}}.
\end{eqnarray*}
The above first limit superior equals to $\Delta _0^{1/q}$ by hypothesis, and the second limit superior is less than $(r(\gamma))^{\frac 1p}$ from the definition of $r(\gamma)$ and from the fact that $\mathbf 1_{\mathbb X}\in \mathcal B$ and $f\mapsto \pi(\kappa e^{-\gamma\xi}f)$ is in $\cB^*$. Thus $0 < r(\gamma_0) \leq  \Delta _0^{1/q}\, (r(\gamma))^{\frac 1p}$.
\end{proof}
\begin{arem} \label{rem-C0}
Condition~(\ref{C-fini-0}) holds if $0\in J$ (in particular $P_0\in\cL(\cB)$) and if the map $f\mapsto \pi(\kappa f)$ is in $\cB^*$ since $\Delta_0 \leq r(0)$ from the definition of the spectral radius $r(0)$ of $P_0$. Moreover note that  (\ref{C-fini-0}) holds too if $\kappa$ is bounded by some constant $d>0$ since $P_0 \leq d\, P$.
\end{arem}
%
%

\section{Knudsen gas: Proof of Theorem \ref{pro-Knudsen} } \label{Knudsengas}


In this section, we apply our general results for the Knudsen gas.
Here the Laplace-type kernels $P_\gamma$ act
on the usual Lebesgue space  $(\mathbb L^a(\pi),\|\cdot\|_a)$ for some suitable $a\in[1,+\infty)$, where
\begin{equation} \label{def-La}
\|f\|_a:=\left(\int_{\X}|f(x)|^a\, d\pi(x)\right)^{\frac 1 a}.
\end{equation}

Theorem \ref{pro-Knudsen} directly follows from the next more precise theorem.
\begin{atheo}[Knudsen gas] \label{pro-Knudsenv1}
Assume that $X=(X_n)_n$ is a Knudsen gas as described in Example~\ref{exemp-Knud}, and that its initial distribution $\mu$ on $\X$ is
absolutely continuous with respect to $\pi$, with density in $\L^{p}(\pi)$.
Assume moreover that $\kappa\equiv 2$ and that the function $\xi:\X\rightarrow[0,+\infty)$ in (\ref{def-Sn}) is measurable. Then $(S_n)_n$ is multiplicatively ergodic on the interval $J_0=\{\gamma>0\, :\, r(\gamma) > 2(1-\alpha)\}$,
where $r(\gamma)$ denotes le spectral radius of $P_\gamma$ on $\L^b$ with $b:=\frac{p}{p-1}$. If moreover
$\alpha>1/2$ and if
\begin{equation} \label{cond-Zn-knud}
2\alpha \sum_{n\ge 0}(2(1-\alpha))^n
\P_\pi\bigg(\sum_{k=0}^{n}\xi(Z_k)=0\bigg) <1,
\end{equation}
where $(Z_n)_n$ is a Markov process with transition $U$, then $\nu$ defined in \eqref{nuDEF} is finite.
Finally, if  $\pi(\xi^\tau)<\infty$ for some $\tau>1$
and if $p>\frac{\tau}{\tau-1}$, then the constant $C_\nu$ in
\eqref{CDEF} is well defined and finite, and consequently the conclusion \eqref{CVMOYESP} of Corollary \ref{cor1} holds true.
\end{atheo}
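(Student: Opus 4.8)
The plan is to deduce Theorem~\ref{pro-Knudsenv1} from the general Theorems~\ref{generalspectraltheorem1} and~\ref{generalspectraltheorem2}, taking for the chain of Banach spaces the Lebesgue spaces $\cB_i:=\L^{a_i}(\pi)$. Everything rests on the rank-one splitting $P_\gamma f = 2P(e^{-\gamma\xi}f) = A_\gamma f + B_\gamma f$, with $A_\gamma f:=2\alpha\,\pi(e^{-\gamma\xi}f)\,\mathbf 1_\X$ and $B_\gamma f:=2(1-\alpha)\,U(e^{-\gamma\xi}f)$: since $0\le e^{-\gamma\xi}\le1$ and $U$ is a contraction of every $\L^a(\pi)$ (being a Markov operator fixing $\pi$), one has $\|B_\gamma\|_{\L^a(\pi)}\le2(1-\alpha)$ for every $a$ and every $\gamma\in[0,+\infty]$, while $A_\gamma$ is finite-rank; hence $r_{ess}(P_{\gamma|\L^a(\pi)})\le\delta_0:=2(1-\alpha)$, which is exactly what pushes us into the dual Keller--Liverani framework of Hypothesis~\ref{hypKL}*. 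I would set $\cB_3:=\L^{b}(\pi)$ with $b:=p/(p-1)$, so that $\cB_3^*=\L^p(\pi)$ and the initial-law functional $\mu(\kappa e^{-\gamma\xi}\cdot)$, represented by $2e^{-\gamma\xi}\,d\mu/d\pi$, lies in $\L^p(\pi)=\cB_3^*$ precisely because $d\mu/d\pi\in\L^p(\pi)$. For the first part of the theorem only $\cB_0:=\L^{a_0}(\pi)\hookrightarrow\cB_3$ (with $a_0>b$) is needed; for the last part I will insert a longer chain $\cB_0\hookrightarrow\cB_1=\L^{a_1}(\pi)\hookrightarrow\cB_2=\L^{a_2}(\pi)\hookrightarrow\cB_3$ with $a_0>a_1>a_2>a_3=b$.

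The bulk of the work is checking the hypotheses of Theorem~\ref{generalspectraltheorem1} (taking $\cB:=\cB_3$). Continuity of $\gamma\mapsto P_\gamma$ from $(0,+\infty)$ into $\cL(\L^a(\pi))$ follows from $\|e^{-\gamma\xi}-e^{-\gamma'\xi}\|_\infty\le|\gamma-\gamma'|/(e\min(\gamma,\gamma'))$, but continuity on the full interval $J=[0,+\infty]$ holds only between two \emph{different} spaces, from $J$ to $\cL(\L^{a_i}(\pi),\L^{a_{i+1}}(\pi))$ with $a_i>a_{i+1}$, because $\|e^{-\gamma\xi}-\mathbf 1_{\{\xi=0\}}\|_{s_i}^{s_i}=\pi(e^{-s_i\gamma\xi}\mathbf 1_{\{\xi>0\}})\to0$ as $\gamma\to\infty$ and $\|e^{-\gamma\xi}-\mathbf 1_\X\|_{s_i}\to0$ as $\gamma\to0$, where $1/s_i=1/a_{i+1}-1/a_i$; this is the ``chain of spaces'' phenomenon of Remark~\ref{rem-method}. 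Inequality~\eqref{cond-r-ess-dual} with $\delta_0=2(1-\alpha)$ is the estimate above applied to the adjoints ($r_{ess}(P_\gamma^*)=r_{ess}(P_\gamma)$). For the dual Doeblin--Fortet inequality~\eqref{D-F-dual} I would expand $(P_\gamma^*)^n=(B_\gamma^*)^n+\sum_{k=1}^n(B_\gamma^*)^{n-k}A_\gamma^*(P_\gamma^*)^{k-1}$ and use $\|(B_\gamma^*)^m\|\le\delta_0^m$ on every $\L^{a'}(\pi)$, $\|P_\gamma^*\|_{\L^{a'}(\pi)}\le2$, together with the key fact that $A_\gamma^*\psi=2\alpha\,\pi(\psi)\,e^{-\gamma\xi}$ sends $\L^{a_i'}(\pi)$ into $\L^{a_{i+1}'}(\pi)$ with norm $\le2\alpha$ (since $|\pi(\psi)|\le\|\psi\|_1\le\|\psi\|_{a_i'}$ and $\|e^{-\gamma\xi}\|_{a_{i+1}'}\le1$); the resulting sum $\sum_{k=1}^n\delta_0^{n-k}2^{k-1}$ is $\le2^{n-1}/\alpha$ because $\delta_0/2=1-\alpha<1$, giving \eqref{D-F-dual} with $\delta_0=2(1-\alpha)$ and $M=2$. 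On $J_0=\{\gamma:r(\gamma)>\delta_0\}$ quasi-compactness of $P_\gamma$ on $\cB_3$ is then automatic, and the rest of Hypothesis~\ref{hypcompl} I would obtain from Proposition~\ref{pro-reduc-BL}: every $\L^a(\pi)$ is a Banach lattice, so Hypothesis~\ref{(A)} holds; Hypothesis~\ref{(B)} holds because for non-null $\phi\ge0$ one has $P_\gamma\phi\ge2\alpha\,\pi(e^{-\gamma\xi}\phi)\,\mathbf 1_\X>0$ everywhere (as $e^{-\gamma\xi}>0$) and any non-null $\psi\ge0$ with $P_\gamma^*\psi=r(\gamma)\psi$ satisfies $\psi=r(\gamma)^{-1}e^{-\gamma\xi}\big(2\alpha\,\pi(\psi)+2(1-\alpha)U^*\psi\big)>0$; one-dimensionality of the dominant eigenspace is the usual Krein--Rutman-type argument; and the aperiodicity requirement on $P$ holds since $Pu=\lambda u$, $|u|=1$, $|\lambda|=1$ forces $1=|\lambda u|\le\alpha|\pi(u)|+(1-\alpha)\le1$, whence $|\pi(u)|=\int|u|\,d\pi=1$, so $u$ is constant and $\lambda=1$. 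Since $\gamma\mapsto Ph_{\kappa,\gamma}=P(e^{-\gamma\xi})$ and $\gamma\mapsto\mu(\kappa e^{-\gamma\xi}\cdot)$ are continuous into $\cB_0$ and $\cB_3^*$ by the same estimates, and $B(\gamma)>0$ on $J_0$ by Proposition~\ref{pro-B-gamma}(i) (using $\mu\ll\pi$, $\widehat\pi_\gamma$ positive, $\widehat\phi_\gamma>0$ $\pi$-a.s., and $\pi(Ph_{\kappa,\gamma})=\pi(e^{-\gamma\xi})>0$), Theorem~\ref{generalspectraltheorem1}(i) yields that $(S_n)_n$ is multiplicatively ergodic on $J_0$ with $\rho=r$.

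For the finiteness of $\nu$ I would invoke Proposition~\ref{cor-th1v1}(ii): $r$ is non-increasing (Proposition~\ref{LEMME0}), $\delta_0=2(1-\alpha)<1$ because $\alpha>1/2$, and $r(0)=2\,r(P_{|\L^b(\pi)})=2$ (so also $\inf_{J_0}r<1<\sup_{J_0}r$); hence $\nu<\infty\iff r(\infty)=r(P_{\infty|\L^b(\pi)})<1$, and then $\nu=\inf\{\gamma>0:r(\gamma)<1\}$. To prove $r(\infty)<1$ I again use the rank-one structure $P_\infty=A_\infty+B_\infty$, $A_\infty f=2\alpha\,\pi(\mathbf 1_{\{\xi=0\}}f)\,\mathbf 1_\X$, $B_\infty f=2(1-\alpha)U(\mathbf 1_{\{\xi=0\}}f)$: for $|z|>2(1-\alpha)$, $zI-P_\infty$ is invertible on $\L^b(\pi)$ iff $2\alpha\,\pi\big(\mathbf 1_{\{\xi=0\}}(zI-B_\infty)^{-1}\mathbf 1_\X\big)\neq1$, and expanding the Neumann series $(zI-B_\infty)^{-1}=\sum_{n\ge0}z^{-n-1}(2(1-\alpha))^n\tilde U^n$ with $\tilde U f=U(\mathbf 1_{\{\xi=0\}}f)$, and using $\pi$-invariance of $U$, shows $\pi(\mathbf 1_{\{\xi=0\}}\tilde U^n\mathbf 1_\X)=\P_\pi\big(\sum_{k=0}^n\xi(Z_k)=0\big)$, so the above quantity equals $2\alpha\sum_{n\ge0}z^{-n-1}(2(1-\alpha))^n\,\P_\pi\big(\sum_{k=0}^n\xi(Z_k)=0\big)$; for $|z|\ge1$ it is, in modulus, at most its value at $z=1$, which is $<1$ by assumption~\eqref{cond-Zn-knud}. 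Thus $\sigma(P_{\infty|\L^b(\pi)})\cap\{|z|\ge1\}=\emptyset$, so $r(\infty)<1$ and $\nu<\infty$; note in passing that \eqref{cond-Zn-knud} forces $\pi(\{\xi=0\})<1$, which will be used below.

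Finally, for $C_\nu$ I would apply Theorem~\ref{generalspectraltheorem2} on a bounded interval $J=(\gamma_1,\gamma_3)$ with $0<\gamma_1<\nu<\gamma_3<\infty$ (here $r(\nu)=1>\delta_0$, so $\nu\in J_0$). The only place the extra moment assumption enters is the requirement that $\gamma\mapsto P_\gamma$ be $C^1$ from $J$ to $\cL(\cB_1,\cB_2)$ with $P'_\gamma f=P_\gamma(-\xi f)=-2P(\xi e^{-\gamma\xi}f)$, which needs $f\mapsto\xi f$ to be bounded from $\L^{a_1}(\pi)$ into $\L^{a_2}(\pi)$, i.e. $\xi\in\L^s(\pi)$ with $1/s=1/a_2-1/a_1$; granting that, the difference quotients converge in $\cL(\cB_1,\cB_2)$ by dominated convergence (being dominated in norm by $2\|f\mapsto\xi f\|_{\cL(\cB_1,\cB_2)}$). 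So I must be able to choose $a_0>a_1>a_2>a_3=b=p/(p-1)$ with $1/a_2-1/a_1\ge1/\tau$ and $1/a_1>0$; this forces $1/\tau<1/a_2<1/b=1-1/p$, and conversely such a chain exists exactly when $1/p+1/\tau<1$, i.e. $p>\tau/(\tau-1)$ --- which is the stated hypothesis. It then remains to get $r'(\nu)\neq0$: by Proposition~\ref{pro-deri-stric-pos}(ii), since $\cB_2=\L^{a_2}(\pi)$ is a Banach lattice satisfying Hypotheses~\ref{(A)}--\ref{(B)} on $J_0$ and $\pi(\{\xi=0\})<1$, we have $r'<0$ on $J_0$, in particular $r'(\nu)<0$. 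Theorem~\ref{generalspectraltheorem2}(ii') then gives that $C_\nu$ of \eqref{CDEF} is well defined and finite, equal to $-A(\nu)/(\nu r'(\nu))>0$ (since $A(\nu)=B(\nu)/r(\nu)=B(\nu)>0$), and that \eqref{CVMOYESP} holds. I expect the main technical obstacles to be the uniform verification of \eqref{D-F-dual} along the chain of $\L^a(\pi)$ spaces together with the exponent bookkeeping that ties existence of the chain to $p>\tau/(\tau-1)$, and the resolvent computation identifying $r(\infty)$ and reading off condition~\eqref{cond-Zn-knud}; the positivity statements ($B(\gamma)>0$, $r'(\nu)<0$) are comparatively routine once the Banach-lattice structure and the strict positivity of $e^{-\gamma\xi}$ are exploited.
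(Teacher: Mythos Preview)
Your proposal is correct and follows the same overall architecture as the paper (apply Theorems~\ref{generalspectraltheorem1}--\ref{generalspectraltheorem2} on the Lebesgue-space chain $\L^{a_i}(\pi)$, then invoke Proposition~\ref{cor-th1v1} for $\nu<\infty$ and Proposition~\ref{pro-deri-stric-pos} for $r'(\nu)<0$), but two pieces are handled differently.

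First, you work under the \emph{dual} Hypothesis~\ref{hypKL}*, establishing \eqref{D-F-dual} via the expansion $(P_\gamma^*)^n=(B_\gamma^*)^n+\sum_{k=1}^n(B_\gamma^*)^{n-k}A_\gamma^*(P_\gamma^*)^{k-1}$. The paper instead uses the \emph{direct} Hypothesis~\ref{hypKL}: from $P_\gamma=2\alpha\,\pi(e^{-\gamma\xi}\cdot)\mathbf 1_\X+2(1-\alpha)U(e^{-\gamma\xi}\cdot)$ one gets the single-step inequality $\|P_\gamma f\|_a\le 2(1-\alpha)\|f\|_a+2\alpha\|f\|_1$ (Lemma~\ref{prop1}(iii)), which iterates immediately to \eqref{D-F-direct}. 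Both are valid on reflexive $\L^a$-spaces; the direct route is shorter and avoids the geometric-sum bookkeeping. Your hand-wave on one-dimensionality of $\ker(P_\gamma-r(\gamma)I)$ is exactly the paper's Lemma~\ref{prop1}(v): subtract a multiple to kill $\pi(e^{-\gamma\xi}\cdot)$, reduce to $r(\gamma)h=2(1-\alpha)U(e^{-\gamma\xi}h)$, take absolute values and integrate against $\pi$.

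Second, for $r(\infty)<1$ you do a Sherman--Morrison resolvent computation on $P_\infty=A_\infty+B_\infty$, obtaining that $zI-P_\infty$ is invertible for $|z|\ge1$ precisely when \eqref{cond-Zn-knud} holds. The paper reaches the same conclusion through Lemma~\ref{rayonspectralKnudsen}, which derives the implicit equation \eqref{eqlambda} for any eigenvalue $\lambda>2(1-\alpha)$; letting $\gamma\to\infty$ in \eqref{eqlambdav2} gives the left-hand side of \eqref{cond-Zn-knud} at $\lambda=1$. The two computations are equivalent rearrangements of the same rank-one perturbation identity, and your version is arguably more transparent. Your exponent analysis linking the existence of the chain $a_0>a_1>a_2>a_3=b$ with $1/a_2-1/a_1\ge1/\tau$ to the condition $p>\tau/(\tau-1)$ matches the paper's choice exactly.
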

Theorem~\ref{pro-Knudsenv1} straightforwardly extends to the case $\kappa(\cdot)\equiv m$, where $m\geq 2$ is any integer. To prove Theorem~\ref{pro-Knudsenv1}, we apply Theorems~\ref{cor-th1} and \ref{cor-th2}. First we prove the following.
\begin{alem}\label{prop1}
Let $1\le b<a$.
\begin{enumerate}[(i)]
\item For every $\gamma\ge 0$, $r_{ess}(P_{\gamma|\L^{a}(\pi)})\le 2(1-\alpha)$.
\item The function $\gamma\rightarrow P_\gamma $ is continuous from $(0,+\infty]$ to $\mathcal L(\L^{a}(\pi) ,\mathbb L^b(\pi))$.
\item For any $\gamma\in[0,+\infty]$ and any $f\in\L^a(\pi)$,
$\|P_\gamma f\|_a\le 2((1-\alpha)\|f\|_a + \alpha\|f\|_1)$.
\item For any $\gamma>0$, for any non-null non-negative $f\in\L^a(\pi)$ and every non-null non-negative $g\in\L^{a'}(\pi)$ with $a'= \frac{a}{a-1}$, we have
$\pi(gP_\gamma f)>0$ and $P_\gamma f >0$.
\item If $r(\gamma)>2(1-\alpha)$, for every $f,g\in\L^a(\pi)$ with $f>0$, $P_\gamma f=r(\gamma)f$ and $P_\gamma g=r(\gamma)g$, then we have $g\in \C\cdot f$.
\item 1 is the only complex number $\lambda$ of modulus 1 such that
$P(h/|h|)=\lambda h/|h|$  in $\L^1(\pi)$ for some $h\in\cB$, $|h|>0$ (modulo $\pi$).
\end{enumerate}
\end{alem}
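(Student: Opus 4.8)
The plan is to exploit the elementary splitting of $P_\gamma$ coming from $P=\alpha\pi+(1-\alpha)U$ and $\kappa\equiv 2$. Writing $L_\gamma(f):=\pi(e^{-\gamma\xi}f)$ and $U_\gamma f:=U(e^{-\gamma\xi}f)$, one has
\[
P_\gamma f = 2\alpha\,L_\gamma(f)\,\mathbf 1_\X + 2(1-\alpha)\,U_\gamma f =: A_\gamma f + B_\gamma f ,
\]
with $A_\gamma:=2\alpha\,(\mathbf 1_\X\otimes L_\gamma)$ of rank one and $B_\gamma:=2(1-\alpha)U_\gamma$. Since $U$ is a Markov operator with invariant probability $\pi$, Jensen's inequality gives $\|U\varphi\|_a\le\|\varphi\|_a$ for every $a\ge 1$; together with $0\le e^{-\gamma\xi}\le 1$ this yields $\|B_\gamma\|_{\L^a(\pi)}\le 2(1-\alpha)$ and $|L_\gamma(f)|\le\|f\|_1\le\|f\|_a$. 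From this, $(iii)$ is immediate, and for $(i)$ it suffices to observe that $A_\gamma$ is compact (finite rank), so $r_{ess}(P_{\gamma|\L^a(\pi)})=r_{ess}(B_{\gamma|\L^a(\pi)})\le\|B_\gamma\|_{\L^a(\pi)}\le 2(1-\alpha)$, the essential spectral radius being unchanged under a compact perturbation (the same argument covers $\gamma=+\infty$, with $e^{-\gamma\xi}$ replaced by $\mathbf 1_{\{\xi=0\}}$).

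For $(ii)$, I would bound, for $1\le b<a$ and $f\in\L^a(\pi)$, $\|(P_\gamma-P_{\gamma'})f\|_b\le 2\,\|(e^{-\gamma\xi}-e^{-\gamma'\xi})f\|_b$ (using that $\varphi\mapsto\pi(\varphi)\mathbf 1_\X$ and $U$ are $\L^b(\pi)$-contractions and $\|\cdot\|_1\le\|\cdot\|_b$), then apply H\"older with exponent $a/b$ to get $\|(e^{-\gamma\xi}-e^{-\gamma'\xi})f\|_b\le\|e^{-\gamma\xi}-e^{-\gamma'\xi}\|_{ab/(a-b)}\,\|f\|_a$. Because $\gamma\mapsto e^{-\gamma\xi(x)}$ is continuous on $(0,+\infty]$ for each $x$ (with $e^{-\infty\xi}:=\mathbf 1_{\{\xi=0\}}$) and dominated by $1$ on the finite measure space $(\X,\pi)$, dominated convergence gives $\|e^{-\gamma\xi}-e^{-\gamma'\xi}\|_{ab/(a-b)}\to 0$ as $\gamma'\to\gamma$, hence the claimed continuity.

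For $(iv)$, fix $\gamma\in(0,+\infty)$; then $e^{-\gamma\xi}>0$ everywhere, so $L_\gamma(f)>0$ for any non-null non-negative $f\in\L^a(\pi)$, whence $P_\gamma f\ge 2\alpha\,L_\gamma(f)\,\mathbf 1_\X>0$ everywhere, and $\pi(g\,P_\gamma f)\ge 2\alpha\,L_\gamma(f)\,\pi(g)>0$ for non-null non-negative $g\in\L^{a'}(\pi)$. The one step carrying real content is $(v)$: since $r(\gamma)>2(1-\alpha)\ge\|B_\gamma\|_{\L^a(\pi)}$, the operator $r(\gamma)I-B_\gamma$ is invertible on $\L^a(\pi)$ via its Neumann series, and I would set $w_\gamma:=(r(\gamma)I-B_\gamma)^{-1}\mathbf 1_\X$, a \emph{fixed} element of $\L^a(\pi)$. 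Any $h$ with $P_\gamma h=r(\gamma)h$ satisfies $A_\gamma h=(r(\gamma)I-B_\gamma)h$, i.e.\ $h=2\alpha\,L_\gamma(h)\,w_\gamma$; thus $\ker(P_{\gamma|\L^a(\pi)}-r(\gamma)I)\subset\C\,w_\gamma$. A positive eigenfunction $f$ is then a non-zero multiple of $w_\gamma$, and every eigenfunction $g$ lies in $\C\,w_\gamma=\C\,f$, which is $(v)$.

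Finally, for $(vi)$, given $|h|>0$ (mod $\pi$) and $Pu=\lambda u$ in $\L^1(\pi)$ with $u:=h/|h|$, $|u|=1$, $|\lambda|=1$, I would write $u=\lambda^{-1}(\alpha\,\pi(u)\,\mathbf 1_\X+(1-\alpha)Uu)$ and take moduli: using $|\pi(u)|\le\pi(|u|)=1$ and $|Uu|\le U|u|=\mathbf 1_\X$ (mod $\pi$) forces $1=|u|\le\alpha|\pi(u)|+(1-\alpha)|Uu|\le 1$ (mod $\pi$), so, since $\alpha\in(0,1)$, necessarily $|\pi(u)|=1$; equality in $|\pi(u)|=\pi(|u|)$ forces $u$ to be $\pi$-a.e.\ a unimodular constant $c$, and then $Pu=c=u$ gives $\lambda=1$. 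I expect $(v)$ to be the only non-routine item, but it presents no real obstacle once one notices that the inequality $r(\gamma)>2(1-\alpha)$ makes the non-compact part $B_\gamma$ subdominant, collapsing the eigenvalue equation to a scalar relation; all other items are bookkeeping around the splitting $P_\gamma=A_\gamma+B_\gamma$ and the contractivity of the Markov operator $U$.
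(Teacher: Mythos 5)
Your argument coincides with the paper's for parts~$(i)$--$(iv)$ and~$(vi)$: same rank-one/compact splitting $P_\gamma = 2\alpha\,\mathbf 1_\X\otimes L_\gamma + 2(1-\alpha)U_\gamma$ for $(i)$ and $(iii)$, the same H\"older/dominated-convergence estimate with the exponent $c=ab/(a-b)$ for $(ii)$, the same lower bound $P_\gamma f\ge 2\alpha L_\gamma(f)\mathbf 1_\X$ for $(iv)$, and the same modulus-plus-convexity argument for $(vi)$.

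The one genuine divergence is in~$(v)$. The paper normalizes $h:=g-\beta f$ with $\beta=\pi(e^{-\gamma\xi}g)/\pi(e^{-\gamma\xi}f)$ so that $\pi(e^{-\gamma\xi}h)=0$; the eigenvalue equation then collapses to $r(\gamma)h=2(1-\alpha)U(e^{-\gamma\xi}h)$, and integrating $|h|$ against the $U$-invariant measure $\pi$ yields $r(\gamma)\pi(|h|)\le 2(1-\alpha)\pi(|h|)$, hence $h=0$. You instead observe that $\|B_\gamma\|_{\L^a}\le 2(1-\alpha)<r(\gamma)$ makes $r(\gamma)I-B_\gamma$ invertible by Neumann series, so any eigenvector $h$ solves $(r(\gamma)I-B_\gamma)h=2\alpha L_\gamma(h)\mathbf 1_\X$ and therefore $h\in\C\,w_\gamma$ with $w_\gamma:=(r(\gamma)I-B_\gamma)^{-1}\mathbf 1_\X$. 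Both are correct; yours is slightly stronger in that it directly shows the eigenspace is one-dimensional and exhibits a canonical generator $w_\gamma$, whereas the paper's argument is a targeted uniqueness estimate that leans on the $\pi$-invariance of $U$. Either route suffices for what Proposition~\ref{pro-reduc-BL} requires.
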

\begin{proof} $\ $\\
{\it (i)} Observe that $P_\gamma=2(\alpha \pi(e^{-\gamma\xi}\cdot)+
(1-\alpha)U_\gamma)$ with $U_\gamma:=U(e^{-\gamma\xi}\cdot)$.
Since the sum of a Fredholm operator with a compact operator is
Fredholm, we directly obtain
$r_{ess}(P_\gamma)=2(1-\alpha)r_{ess}(U_\gamma)\le 2(1-\alpha)$. \\[0.12cm]
{\it (ii)} For every $0\le\gamma,\gamma'<\infty$ and every $f\in\cB$ such that $\|f\|_a=1$, we have
\begin{eqnarray*}
\|P_\gamma f-P_{\gamma'}f\|_b&=& 2\|P((e^{-\gamma\xi}-e^{-\gamma'\xi})f)\|_b\\
&\le& 2\|(e^{-\gamma\xi}-e^{-\gamma'\xi})f\|_b\le  2\|e^{-\gamma\xi}-e^{-\gamma'\xi}\|_c,
\end{eqnarray*}
where $c$ is such that $\frac 1a+\frac 1c=\frac 1b$.
Hence $\|P_\gamma-P_{\gamma'}\|_{\L^a(\pi),\L^b(\pi)}\le 2\|e^{-\gamma\xi}-e^{-\gamma'\xi}\|_c $,
which converges to 0 as $\gamma'$ goes to $\gamma$, by the dominated convergence theorem. In the same way, we prove that $\|P_\gamma-P_{\infty}\|_{\L^a(\pi),\L^b(\pi)}\le 2\|e^{-\gamma\xi}\|_c $ and hence the continuity of $\gamma\mapsto P_\gamma$
at infinity. \\[0.12cm]
$(iii)$ For every $\gamma\in[0,+\infty]$ and every $f\in\L^a(\pi)$,
$\|P_\gamma f\|_a\le 2\|Pf\|_a\le 2((1-\alpha)\|f\|_a + \alpha\|f\|_1)$
since $\|Uf\|_a\le\|f\|_a$.
This gives the Doeblin-Fortet inequality. \\[0.12cm]
$(iv)$ For any non-null non-negative $f\in\L^a(\pi)$, we have $P_\gamma f \geq 2\alpha\pi(e^{-\gamma\xi}f)1_{\X}>0$. The other assertion of $(iv)$ is then obvious. \\[0.12cm]
$(v)$ Let $f,g\in\L^a(\pi)$ such that $f>0$,
$P_\gamma f=r(\gamma)f$ and $P_\gamma g=r(\gamma)g$  in $\L^a(\pi)$.
Set
$\beta := \frac{\pi(e^{-\gamma\xi}g)}{\pi(e^{-\gamma\xi}f)}$ and $h:=g- \beta\, f$.
Then $\pi(e^{-\gamma\xi} h)=0$ and $P_\gamma h = r(\gamma)\, h$, which gives $r(\gamma)\, h = 2(1-\alpha)\, U(e^{-\gamma\xi} h)$, so that $r(\gamma)\, |h| \leq 2(1-\alpha)\, U(|h|)$. Since $\pi\, U = \pi$, we obtain: $r(\gamma)\, \pi(|h|) \leq 2 (1-\alpha)\, \pi(|h|)$. Finally we conclude that $\pi(|h|)=0$ because $r(\gamma) > 2(1-\alpha)$ and so $g=\beta f$ in $\L^a(\pi)$. \\[0.12cm]
$(vi)$
Let $k\in\L^1(\pi)$ and $\lambda\in \C$ be such that $|\lambda|=1$,
$|k|\equiv \mathbf 1_\X$ and $P(k)=\lambda k$. Then
$\lambda k=\alpha\pi(k)+(1-\alpha)U(k)$. Taking the modulus, we obtain
$1 \le \alpha|\pi(k)|+(1-\alpha)U(\mathbf 1_\X)\le 1$. By convexity we conclude that
$|\pi(k)|=1$ and that $k$ is constant modulo $\pi$, so that $\lambda=1$.
\end{proof}
\begin{proof}[Proof of the multiplicative ergodicity]
We apply Theorem~\ref{cor-th1}. Let $b:=\frac{p}{p-1}$ and $a>b$. From Assertion~$(i)$-(iii) of Lemma~\ref{prop1}, $P_\gamma$ satisfies Hypothesis \ref{hypKL} with $J=[0,+\infty)$, $\cB_0=\L^a(\pi)$ and $\cB_1=\L^{b}(\pi)$
(to obtain (\ref{D-F-direct}), iterate Inequality~$(iii)$ of Lemma~\ref{prop1} and use $\|\cdot\|_1 \leq \|\cdot\|_b$). Next, if $\gamma\in J_0$ (i.e.~$r(\gamma) > 2(1-\alpha)$), then $P_\gamma$ is quasi-compact from Assertion~$(i)$ of Lemma~\ref{prop1}.
Note that Assertion~$(iv)$ of Lemma~\ref{prop1} implies that Hypothesis~\ref{(B)} holds on the space $\L^a(\pi)$. Then $P_\gamma$ satisfies the hypotheses of Proposition~\ref{pro-reduc-BL} on $\cB_0=\L^a(\pi)$ from Assertions~$(v)$ and $(vi)$ of Lemma~\ref{prop1}. Thus Hypothesis~\ref{hypcompl} holds
on $J_0$ with $\cB_0=\L^a(\pi)$. Consequently $(S_n)_n$ is multiplicatively ergodic on $J_0$ with respect to $\mathbb P_\mu$, if we prove that $\mu_{\gamma} : f\mapsto \mu(e^{-\gamma\xi}f)$ is in $(\L^{b}(\pi))^*$, that the function $\gamma\mapsto \mu_{\gamma}$ is continuous from $J_0$ to $(\L^{b}(\pi))^*$, and finally that Condition~(\ref{def-B}) holds. Since $\mu$ is absolutely continuous with respect to $\pi$ with density $g_\mu$ in $\L^{p}(\pi)$, we have
$$\forall f\in\L^{b}(\pi),\quad \mu_{\gamma}(f) = \int_{\R^d} f(y)\, e^{-\gamma\xi(y)}\, g_\mu(y)\, d\pi(y)$$
thus $\mu_{\gamma}\in(\L^{b}(\pi))^*$ since $e^{-\gamma\xi}\, g_\mu \in \L^{p}(\pi)$. Moreover the norm in $(\L^{b}(\pi))^*$ of $(\mu_\gamma - \mu_{\gamma'})$  equals to $\|(e^{-\gamma\xi} - e^{-\gamma'\xi})\, g_\mu\|_p$, which converges to $0$ as $\gamma'\r\gamma$ from Lebesgue's theorem.
Finally Condition~(\ref{def-B}) holds from Assertion~$(i)$ of Proposition~\ref{pro-B-gamma}.
\end{proof}

In view of \eqref{nuDEF}, we study the spectral radius $r(\gamma)$ of $P_\gamma$. First observe that the  non-increasingness of $r(\cdot)$ follows from Proposition~\ref{LEMME0} since $\L^a(\pi)$ is a Banach lattice. Consequently the set $J_0 := \{\gamma>0\, :\, r(\gamma)>2(1-\alpha)\}$ is an interval with $\min J_0 = 0$ since $r(0)=2$.
Next set $h_\gamma:= e^{-\gamma\xi}$
for $\gamma\ge 0$ and $ h_\infty:=\mathbf{1}_{\{\xi=0\}}$.
Recall that $P_\gamma f = 2 [\alpha\pi(f\, h_\gamma)+
(1-\alpha)U(f\, h_\gamma)]$.
We set $\tilde U_\gamma(\cdot):=h_\gamma\, U(\cdot)$.
\begin{alem}\label{rayonspectralKnudsen}
Let $ \gamma\in[0,\infty]$ and $a\in(1,+\infty)$.
Let $\lambda$ be an eigenvalue of $(P_\gamma)_{|\mathbb L^a(\pi)}$
such that $\lambda>2(1-\alpha)\rho(\tilde U_\gamma)$. Then
\begin{equation}\label{eqlambda}
\lambda=2\alpha\sum_{n\ge 0}\frac{2^n(1-\alpha)^n}{\lambda^n}\pi(\tilde U_\gamma^n(h_\gamma)).
\end{equation}
In particular if $r(\gamma)>2(1-\alpha)$, then $\lambda=r(\gamma)$
satisfies \eqref{eqlambda}.
\end{alem}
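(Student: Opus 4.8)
The plan is to exploit the rank-one nature of the $\pi$-part of $P_\gamma$, which reduces the eigenvalue equation $P_\gamma f=\lambda f$ to a scalar identity once the remaining ``$U$-part'' has been inverted by a Neumann series. First I would write $h_\gamma:=e^{-\gamma\xi}$, so that $P_\gamma f=2\alpha\,\pi(h_\gamma f)\,\mathbf 1_\X+2(1-\alpha)\,U(h_\gamma f)$, and recall $\tilde U_\gamma f=h_\gamma\,U(f)$; since $0\le h_\gamma\le 1$ and $U$ is a contraction of $\L^a(\pi)$, we have $\|\tilde U_\gamma\|_{\L^a(\pi)}\le 1$, hence $\rho(\tilde U_\gamma)\le 1$. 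Given an eigenfunction $f\in\L^a(\pi)\setminus\{0\}$ with $P_\gamma f=\lambda f$, I would set $c:=2\alpha\,\pi(h_\gamma f)\in\C$ and $g:=h_\gamma f\in\L^a(\pi)$ and note that $\lambda>0$, since $\lambda>2(1-\alpha)\rho(\tilde U_\gamma)\ge 0$.

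Multiplying the identity $\lambda f=c\,\mathbf 1_\X+2(1-\alpha)\,U(g)$ by the bounded function $h_\gamma$ gives, in $\L^a(\pi)$,
$$\bigl(\lambda I-2(1-\alpha)\,\tilde U_\gamma\bigr)\,g=c\,h_\gamma .$$
Because $2(1-\alpha)\rho(\tilde U_\gamma)/\lambda<1$ by hypothesis, the operator $\lambda I-2(1-\alpha)\tilde U_\gamma$ is invertible in $\mathcal L(\L^a(\pi))$ with inverse $\sum_{n\ge 0}(2(1-\alpha))^n\lambda^{-n-1}\tilde U_\gamma^{\,n}$ (a Neumann series converging for the operator norm), so that $g=c\sum_{n\ge 0}(2(1-\alpha))^n\lambda^{-n-1}\,\tilde U_\gamma^{\,n}h_\gamma$. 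Then I would apply the continuous linear form $\pi(\cdot)=\int_\X\cdot\,d\pi$ (continuous on $\L^a(\pi)$ since $\L^a(\pi)\subset\L^1(\pi)$) to both sides and use $\pi(g)=\pi(h_\gamma f)=c/(2\alpha)$ to obtain
$$\frac{c}{2\alpha}=c\sum_{n\ge 0}\frac{(2(1-\alpha))^n}{\lambda^{n+1}}\,\pi\bigl(\tilde U_\gamma^{\,n}h_\gamma\bigr).$$

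It remains to rule out $c=0$: if $c=0$, then $g\ne 0$ (otherwise $\lambda f=2(1-\alpha)U(g)=0$ together with $\lambda\ne 0$ would force $f=0$), and $\lambda g=2(1-\alpha)\tilde U_\gamma g$ would exhibit $\lambda/(2(1-\alpha))$ as an eigenvalue of $\tilde U_\gamma$, giving $\lambda\le 2(1-\alpha)\rho(\tilde U_\gamma)$ and contradicting the hypothesis; hence $c\ne 0$. Dividing the last display by $c$ and multiplying by $2\alpha\lambda$ then yields exactly \eqref{eqlambda} (since $(2(1-\alpha))^n=2^n(1-\alpha)^n$). For the last assertion, if $r(\gamma)>2(1-\alpha)$ then $r(\gamma)>2(1-\alpha)\ge 2(1-\alpha)\rho(\tilde U_\gamma)$, and Assertion~$(i)$ of Lemma~\ref{prop1} gives $r_{ess}(P_{\gamma|\L^a(\pi)})\le 2(1-\alpha)<r(\gamma)$, so $P_\gamma$ is quasi-compact on $\L^a(\pi)$ and $r(\gamma)$ is an eigenvalue of $P_{\gamma|\L^a(\pi)}$; the first part then applies with $\lambda=r(\gamma)$. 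I expect the only genuinely delicate point to be the verification $c\ne 0$ (equivalently, that the eigenfunction is not annihilated by the rank-one component of $P_\gamma$); everything else — the Neumann inversion, which rests on the strict inequality $\lambda>2(1-\alpha)\rho(\tilde U_\gamma)$, and the reduction to the scalar identity — is routine bookkeeping.
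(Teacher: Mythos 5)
Your proposal is correct and follows essentially the same route as the paper's proof: multiply the eigenvalue identity by $h_\gamma$ to obtain $(\lambda I-2(1-\alpha)\tilde U_\gamma)(h_\gamma f)=2\alpha\pi(h_\gamma f)h_\gamma$, invert via the Neumann series (valid precisely because $\lambda>2(1-\alpha)\rho(\tilde U_\gamma)$), apply $\pi$, and rule out $\pi(h_\gamma f)=0$ by noting it would force $\lambda/(2(1-\alpha))$ into the spectrum of $\tilde U_\gamma$. Your explicit verification that $h_\gamma f\neq 0$ before invoking the contradiction, and your remark that $\rho(\tilde U_\gamma)\le 1$ (needed for the final assertion), slightly tighten points the paper leaves implicit.
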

\begin{proof}
Let $\gamma\in[0,\infty]$.
Let $\lambda\in\mathbb C$ and $f\in\mathbb L^a(\pi)$, $f\neq 0$, be such that
$P_\gamma f=\lambda f$ in $\mathbb L^a(\pi)$, i.e.
$\lambda f=2[\alpha\pi(f\, h_\gamma)+(1-\alpha)U(f \, h_\gamma)]$
that can be rewritten
$$\lambda f\, h_\gamma=2(\alpha\pi(f\, h_\gamma)\, h_\gamma+(1-\alpha)\tilde U_\gamma
     (f \, h_\gamma)).$$
Observe that $\pi(f\, h_\gamma)\ne 0$. Indeed $\pi(f\, h_\gamma) = 0$ would imply
$\lambda f \, h_\gamma=2(1-\alpha)\tilde U_\gamma(f \, h_\gamma)$, which contradicts the fact that
$\lambda/(2-2\alpha)$ is not in the spectrum of $\tilde U_\gamma$.
Now setting $g:=f\, h_\gamma/\pi(f\, h_\gamma)$, we have
\begin{equation}\label{equationg}
\lambda g=2(\alpha \, h_\gamma+(1-\alpha)\tilde U_\gamma (g))
\end{equation}
and so
$$\left[id- \frac{2(1-\alpha)}\lambda\tilde U_\gamma\right] (g)=\frac{2\alpha}\lambda \, h_\gamma.$$
Hence
$$ g=\frac{2\alpha }\lambda
  \left[id- \frac{2(1-\alpha)}\lambda\tilde U_\gamma\right]^{-1} (h_\gamma)
   =\frac{2\alpha}\lambda\sum_{n\ge 0}\frac{2^n(1-\alpha)^n}{\lambda^n}
      \tilde U_\gamma^n h_\gamma$$
and so
$$\lambda=\lambda\pi(g)=2\alpha\sum_{n\ge 0}\frac{2^n(1-\alpha)^n}{\lambda^n}
      \pi\left(\tilde U_\gamma^n \, h_\gamma\right).$$
\end{proof}

Let $(Z_n)_n$ be a Markov process with transition $U$.
Note that $\pi(\tilde U_\gamma^n(h_\gamma))=\mathbb E_\pi[e^{-\gamma \sum_{k=0}^nZ_k}]$ if $\gamma\in[0,\infty)$ and $
\pi(\tilde U_\infty^n(h_\infty))= P_\pi[\sum_{k=0}^nZ_k=0]$.
Hence \eqref{eqlambda} can be rewritten
\begin{equation}\label{eqlambdav2}
\lambda=2\alpha\sum_{n\ge 0}\frac{2^n(1-\alpha)^n}{\lambda^n}\mathbb E_\pi[e^{-\gamma \sum_{k=0}^nZ_k}].
\end{equation}

Assume $\alpha>1/2$ and $\pi(\xi>0)=1$ (so that $r(\infty)=0$).
Let
$c:=\sup J_0$.
Due to \eqref{thmkellerliverani1},
we know that $\lim_{\gamma\r c^{-}}r(\gamma)\le 2(1-\alpha)$ and that
$r$ is continuous on $J_0$.
So $\nu$ is well defined and is such that
$2\alpha\sum_{n\ge 0}{2^n(1-\alpha)^n}\mathbb E_\pi[e^{-\gamma \sum_{k=0}^nZ_k}]=1$
(note that this quantity is 2 when $\gamma=0$
and that its limit at infinity is smaller than 1).
\begin{proof}[Proof of the existence of the constant $C_\nu$ in
\eqref{CDEF}]
We apply Theorem~\ref{cor-th2}.
Assume that $\alpha>1/2$, that (\ref{cond-Zn-knud}) holds,
and that $\pi(\xi^\tau)<\infty$ for some $\tau>1$. Let $p>\frac{\tau}{\tau-1}$ and set $a_3:=\frac{p}{p-1}$ (ie.~$1/p+1/a_3 = 1$). Note that $a_3 < \tau$. Let $a_2$ be such that $a_3<a_2<\tau$. Since $\lim_{a\r+\infty} \frac{\tau a}{\tau+ a} = \tau$, we can chose $a_1>a_2$ such that
$a_2 < \frac{\tau a_1}{\tau+ a_1}$. Next let $a_0 > a_1$. From Lemma~\ref{prop1} we deduce  that the assumptions of Theorem \ref{generalspectraltheorem2} hold with the spaces $\cB_i=\L^{a_i}(\pi)$ for $i=0,1,2,3$, so that we can apply Theorem~\ref{cor-th2}: we conclude that $r$ is $C^1$ on $[0,\theta_1)$.
The fact that $r'<0$ can easily be proved using Proposition \ref{pro-deri-stric-pos}. In this particular case, we can also use the fact that $r$ is given by an implicit formula $F(r(\gamma),\gamma)=0$ (see \eqref{eqlambda}), with $F$
$C^1$ with non-null derivatives at $(r(\gamma),\gamma)$.
Moreover $C_\nu$ in \eqref{CDEF} is well-defined,
provided that $\mu_{\gamma} : f\mapsto \mu(e^{-\gamma\xi}f)$ is in $(\L^{a_3}(\pi))^*$ and that the function $\gamma\mapsto \mu_{\gamma}$ is continuous from $J_0$ to $(\L^{a_3}(\pi))^*$. These conditions hold since $\mu$ is absolutely continuous with respect to $\pi$ with density in $\L^{p}(\pi)$ (see the proof of the multiplicative ergodicity).
Moreover \eqref{def-B} has been proved together with the multiplicative ergodicity.
\end{proof}
%
%
%
%
%
%
%
%
\section{Linear autoregressive model: proof of Theorem \ref{thmAR}}\label{proofAR}

Let $(X_n)_{n\in\N}$ be a linear autoregressive model as described in Example~\ref {exemp-AR}.
Then $(X_n)_{n\in\N}$ is a Markov chain with transition kernel
\begin{equation} \label{P-AR}
P(x,A)=\int_{\R} \mathbf{1}_A(\alpha x + y) p(y)\, dy = \int_{\R} \mathbf{1}_A( y) p(y-\alpha x)\, dy.
\end{equation}
Recall that the density $p$ is assumed to satisfy the domination condition (\ref{dom-nu}) ensuring that $p$ has a moment of order $r_0$, that is
\begin{equation} \label{moment-AR}
\int |x|^{r_0} p(x) dx <\infty.
\end{equation}
In fact the domination condition (\ref{dom-nu}) means that $p$ satisfies (\ref{moment-AR}) under a (local) uniform domination way.

Set $V(x) := (1+|x|)^{r_0}$, $x\in\R$. Recall that, under Assumption~(\ref{moment-AR}), $P$ satisfies the following drift condition (see \cite{MeynTweedie09})
\begin{equation} \label{inequality-drift}
\forall \delta > |\alpha|^{r_0},\ \exists L\equiv L(\delta) > 0,\quad PV \leq \delta \, V + L\, \mathbf{1}_\X.
\end{equation}
Moreover it is well-known that $(X_n)_{n\in\N}$ is $V$-geometrically ergodic, see \cite{MeynTweedie09}.
Let $(\cB_V,\|\cdot\|_V)$ be the weighted-supremum Banach space
\begin{equation} \label{def-BV}
\cB_V := \big\{ \ f : \X\r\C, \text{ measurable }: \|f\|_V  := \sup_{x\in\X} |f(x)| V(x)^{-1} < \infty\ \big\}.
\end{equation}
Let $(\cC_V,\|\cdot\|_V)$ denote the following subspace of $\cB_V$:
$$\cC_V := \bigg\{ \ f\in\cB_V : \text{ $f$ is continuous and }\ \lim_{|x|\r\infty} \frac{f(x)}{V(x)}\ \text{exists in }\ \C\bigg\},$$
where the symbol $\lim_{|x|\r\infty}$ means that the limits when $x\r\pm\infty$ exist and are equal.
Note that $V\in\cC_V$ and that $\cC_V$ is a closed subspace of $(\cB_V,\|\cdot\|_V)$.       For every $f\in\cC_V$ we define
$$\ell_V(f) := \lim_{|x|\r\infty} \frac{f(x)}{V(x)}.$$
Let $\cC_{0,V} := \{f\in\cC_V : \ell_V(f)=0\}$. Finally we denote by $(\cC_b,\|\cdot\|_\infty)$ the space of bounded continuous complex-valued functions on $\R$ endowed
with the supremum norm $\|\cdot\|_\infty$.
We will see below that, for every $\gamma\in(0,+\infty]$, $P_\gamma$ continuously acts on $\cC_V$ (see Lemma~\ref{C-V-0}). For $\gamma\in(0,+\infty]$, we denote by $r(\gamma)$ the spectral radius of $P_\gamma$ on $\cC_V$, that is:
$$r(\gamma) \equiv r(P_\gamma) := \lim_n\|P_\gamma^n\|_V^{1/n} = \lim_n\|P_\gamma^n V\|_V^{1/n}$$
where $\|\cdot\|_V$ also denotes the operator norm on $\cC_V$. We will also prove that $\lim_{\gamma\r0_+}r(\gamma) \geq 2$.

Recall that $\xi:\X\rightarrow [0,+\infty)$ is a measurable function and that $S_n=\sum_{k=0}^n \xi(X_k)$.
Let $\kappa:\X\r \{2,...\}$ be a measurable function.
Theorem~\ref{proprieteAR} below directly follows from the next more precise theorem when applied with $\mu=\delta_x$ or $\mu=\pi$.
\begin{atheo}\label{proprieteAR}
Assume that the previous assumptions hold.
Assume that the distribution $\mu$ of $X_0$ belongs to $\cC_V^{\, *}$, namely satisfying $\mu(V) < \infty$.
Assume moreover that $\xi$ is coercive, that $\kappa$ is bounded, that $p$ is continuous, and that $\sup_{\mathbb R}\xi/V<\infty$. Then
\begin{enumerate}[a)]
\item
$(S_n,\kappa(X_n))_n$ is multiplicatively ergodic on $(0,+\infty)$
with $\rho=r$ on $[0,+\infty)$.
\item If moreover the Lebesgue measure of the set $[\xi=0]$ is zero,
then $\lim_{\gamma\rightarrow +\infty}r(\gamma)=0$. Hence $\nu$ is finite.
\item Moreover, if  there exists $\tau>0$ such that $\sup_{\R}\xi^{1+\tau}/V<\infty$, then $\gamma\mapsto   r(\gamma)$ admits a negative derivative on $[0,+\infty)$. Hence \eqref{CDEF} holds also with $C_\nu\in(0,+\infty)$.
\end{enumerate}
\end{atheo}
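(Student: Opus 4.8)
The plan is to verify the hypotheses of Theorems \ref{generalspectraltheorem1} and \ref{generalspectraltheorem2} with a suitable chain of weighted-supremum spaces, then read off the three assertions from those theorems. For part a), I would introduce, for a parameter $s\in(0,r_0]$, the spaces $\cB_s := \cC_{V^{s/r_0}}$ (so $V^{s/r_0}=(1+|x|)^s$), which form a scale with $\cB_s\hookrightarrow\cB_{s'}$ for $s>s'$ since $\sup_x (1+|x|)^{s'-s}<\infty$. The drift inequality \eqref{inequality-drift} gives, for each such power, a Doeblin–Fortet inequality for $P$, and because $\kappa$ is bounded by some $d$ one has $P_\gamma f = P(\kappa e^{-\gamma\xi}f)$ with $|P_\gamma f|\le d\,P(|f|)$; iterating yields \eqref{D-F-direct} with $\cB_0=\cB_{s_0}$, $\cB_1=\cB_{s_1}$ for $s_0>s_1$, with $\delta_0$ as small as we like (below $1$, in fact below any prescribed threshold) by taking the drift constant close to $|\alpha|^{r_0}$, while the essential spectral radius bound \eqref{cond-r-ess-direct} follows from the classical fact that $P$ (hence $P_\gamma$, being dominated) is quasi-compact on these weighted spaces with $r_{ess}\le\delta_0$ — this is where coercivity of $\xi$ enters, ensuring $e^{-\gamma\xi}\to 0$ at infinity so that $P_\gamma$ maps into $\cC_{0,V^{\cdot}}$ up to a compact piece. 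Continuity of $\gamma\mapsto P_\gamma$ from $(0,+\infty]$ to $\cL(\cB_s,\cB_{s'})$ for $s>s'$ comes from dominated convergence together with the bound $\sup_x \xi(x)/V(x)<\infty$ (which controls $\|(\xi f)\|$-type terms and the Lipschitz-in-$\gamma$ estimate $|e^{-\gamma\xi}-e^{-\gamma'\xi}|\le |\gamma-\gamma'|\,\xi\,e^{-\min(\gamma,\gamma')\xi}$), and this is the reason one must drop from $\cB_s$ to a strictly larger space. For Hypothesis \ref{hypcompl} I would invoke Proposition \ref{pro-reduc-BL}: $\cB_s$ is a Banach lattice, so Hypothesis \ref{(A)} is automatic; Hypothesis \ref{(B)} holds because $P$ has a strictly positive density kernel (from $p>0$ in \eqref{P-AR}), whence $P_\gamma\phi>0$ everywhere for non-null non-negative $\phi$; the one-dimensionality of the peripheral eigenspace and the aperiodicity condition ($1$ is the only modulus-one eigenvalue of $P$ acting by $P(h/|h|)=\lambda h/|h|$) again follow from positivity of the density. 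Having Hypothesis \ref{hypcompl} on $J_0$, the continuity of $\gamma\mapsto Ph_{\kappa,\gamma}$ into $\cB_{s_0}$ and of $\gamma\mapsto\mu(\kappa e^{-\gamma\xi}\cdot)$ into $\cB_{s_3}^*$ (using $\mu(V)<\infty$), and the positivity \eqref{def-B} via Proposition \ref{pro-B-gamma}(ii) (since $P_\gamma\phi>0$ everywhere), Theorem \ref{generalspectraltheorem1}(i) gives multiplicative ergodicity on $J_0$ with $\rho=r$; finally $J_0=(0,+\infty)$ because $r$ is non-increasing (Proposition \ref{LEMME0}) with $r(0)\ge 2$ and $\delta_0<1<2$, so every $\gamma>0$ with $r(\gamma)>\delta_0$ is in $J_0$, and the lower bound $r(0)\ge 2$ itself is obtained by testing $P_0$ against $\mathbf 1_\X$: $P_0\mathbf 1_\X=P\kappa\ge 2$, so $r(0)=\lim_n\|P_0^n\mathbf 1_\X\|_V^{1/n}\ge 2$.

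For part b), I must show $r(\infty):=r(P_{\infty|\cB})<1$ and then apply Proposition \ref{cor-th1v1}(ii) (with $J=(0,+\infty]$, $\delta_0<1$, $r$ non-increasing). Here $P_\infty f=P(\kappa\mathbf 1_{\{\xi=0\}}f)$; since the Lebesgue measure of $[\xi=0]$ is zero and the transition kernel $P(x,dy)=p(y-\alpha x)\,dy$ is absolutely continuous, $P(\mathbf 1_{\{\xi=0\}}g)=0$ for every $g$, hence $P_\infty=0$ on $\cB$ and $r(\infty)=0<1$. Therefore $\nu<\infty$, and $\nu=\inf\{\gamma>0: r(\gamma)<1\}$ by Theorem \ref{generalspectraltheorem1}(ii) (the strict inequalities $\inf_{J_0}r<1<\sup_{J_0}r$ hold since $r\to 0$ at $+\infty$ while $r(0^+)\ge 2$ — continuity of $r$ on $J_0$ from Theorem \ref{generalspectraltheorem1} and \eqref{thmkellerliverani1} near the endpoint give $\lim_{\gamma\to+\infty}r(\gamma)=r(\infty)=0$). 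The coercivity of $\xi$ is not needed for this step beyond what was already used; what matters is $\Leb([\xi=0])=0$.

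For part c), under $\sup_\R \xi^{1+\tau}/V<\infty$, I would exhibit the longer chain required by Theorem \ref{generalspectraltheorem2}: pick $0<s_3<s_2<s_1<s_0\le r_0$ with the gap between $s_1$ and $s_2$ large enough that multiplication by $\xi$ maps $\cB_{s_1}\to\cB_{s_2}$ — indeed $\|\xi f\|_{\cB_{s_2}}\le \big(\sup_x \xi(x)(1+|x|)^{-(s_1-s_2)}\big)\|f\|_{\cB_{s_1}}$, and since $\xi^{1+\tau}\le C V=C(1+|x|)^{r_0}$ we get $\xi(x)\le C'(1+|x|)^{r_0/(1+\tau)}$, so it suffices that $s_1-s_2 \ge r_0/(1+\tau)$, i.e. choose the $s_i$ with $s_1-s_2$ at least that value (possible after rescaling, or simply work with powers $(1+|x|)^{w_i}$ with $w_0>w_1>w_2>w_3\ge 0$ and $w_1-w_2\ge r_0/(1+\tau)$, all $w_i\le r_0$). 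Then $\gamma\mapsto P_\gamma$ is $C^1$ from $J$ into $\cL(\cB_{s_1},\cB_{s_2})$ with $P'_\gamma f=P_\gamma(-\xi f)$, by the standard difference-quotient estimate and dominated convergence, using precisely that $\xi f\in\cB_{s_2}$. Theorem \ref{generalspectraltheorem2} then yields that $r$ and $\Pi_\gamma$ are $C^1$ on $J_0$, and Proposition \ref{pro-deri-stric-pos}(ii) gives $r'<0$ on $J_0$ — its hypotheses are met because $\cB_{s_2}$ is a Banach lattice satisfying \ref{(A)} and \ref{(B)}, and $\pi(\{\xi=0\})=0<1$ (the invariant $\pi$ is absolutely continuous, so $\pi([\xi=0])=0$ from $\Leb([\xi=0])=0$). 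In particular $r'(\nu)\ne 0$, so Theorem \ref{generalspectraltheorem2}(ii') gives that $C_\nu$ of \eqref{CDEF} is well defined and finite, and \eqref{CVMOYESP} holds; positivity $C_\nu\in(0,+\infty)$ follows from $C_\nu=-A(\nu)/(\nu r'(\nu))$ with $A(\nu)=B(\nu)/r(\nu)>0$ and $r'(\nu)<0$.

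\textbf{Main obstacle.} The genuinely delicate point is the essential-spectral-radius bound \eqref{cond-r-ess-direct} together with the Doeblin–Fortet inequality \eqref{D-F-direct} \emph{uniformly in $\gamma$}, on the weighted-supremum spaces $\cC_{V^{w}}$: one must check that the quasi-compactness of $P$ on these spaces (a drift/minorization argument) transfers to $P_\gamma=P(\kappa e^{-\gamma\xi}\,\cdot)$ with $\delta_0$ independent of $\gamma$, and that the relevant compact part is genuinely compact as an operator $\cC_{V^w}\to\cC_{V^w}$ — this uses the continuity of $p$, the coercivity of $\xi$ (so $e^{-\gamma\xi}$ vanishes at infinity, pushing $P_\gamma$ into the "little-oh" subspace $\cC_{0,V^w}$ modulo a manifestly compact remainder), and the Arzelà–Ascoli-type equicontinuity coming from $\int(1+|y|)^{r_0}q_{x_0}(y)\,dy<\infty$ in \eqref{dom-nu}. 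Everything else (the lattice/positivity hypotheses, the $r(0)\ge 2$ and $r(\infty)=0$ computations, the $C^1$-dependence) is routine once this spectral input is in place.
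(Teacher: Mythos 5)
Your overall strategy (verify the hypotheses of Theorems~\ref{generalspectraltheorem1} and \ref{generalspectraltheorem2} on a chain of weighted-supremum spaces, get \eqref{def-B} from Proposition~\ref{pro-B-gamma}$(ii)$, get $r'<0$ from Proposition~\ref{pro-deri-stric-pos}, and get part~$b)$ from $P_\infty=0$) is the right one, and parts of it match the paper. But there are two genuine gaps. First, you set the spectral machinery up in the wrong direction: you claim $\cB_s=\cC_{V^{s/r_0}}\hookrightarrow\cB_{s'}$ for $s>s'$, whereas in fact $\cC_{V^a}\hookrightarrow\cC_{V^b}$ only for $a\le b$ (a bound $|f|\le C(1+|x|)^a$ implies one with a \emph{larger} exponent, not a smaller one), and consequently you try to verify Hypothesis~\ref{hypKL} with the \emph{direct} Doeblin--Fortet inequality \eqref{D-F-direct}. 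On weighted-supremum spaces the drift condition $P_\gamma V\le \varepsilon V+D\mathbf 1_\X$ does not yield \eqref{D-F-direct} (there is no way to dominate $\|P_\gamma^n f\|_{V^{w_0}}$ by $\delta_0^n\|f\|_{V^{w_0}}$ plus a \emph{weaker} norm of $f$); it yields the \emph{dual} inequality \eqref{D-F-dual} on $\cC_V^*$, and the compactness needed for the essential-spectral-radius bound is that of $P_\gamma:\cC_b\to\cC_V$ (hence of $P_\gamma^*:\cC_V^*\to\cC_b^*$), via Ascoli and the domination \eqref{dom-nu}. This is exactly why the paper uses Hypothesis~\ref{hypKL}* here (see Remark~\ref{rem-method} and Corollary~\ref{cor-QC-gamma}); your transplant of the Knudsen-gas (direct) strategy does not go through. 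A related point you gloss over: the uniform smallness $\delta_0=\varepsilon$ comes from $e^{-\gamma\beta}\delta$ and hence only holds for $\gamma\ge\gamma_1>0$; one must work on $J=(\gamma_1,\cdot)$ and let $\gamma_1\downarrow 0$ at the end, not take ``the drift constant close to $|\alpha|^{r_0}$'' once and for all.

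Second, to get multiplicative ergodicity on all of $(0,+\infty)$ with $\rho=r$ you must show $r(\gamma)>0$ for \emph{every} $\gamma>0$ (i.e.\ $\theta_1=+\infty$ in the paper's notation). Your argument only uses that $r$ is non-increasing with $r(0)\ge 2$, which gives an upper bound $r(\gamma)\le r(0)$ for $\gamma>0$ and says nothing about positivity; a priori $r$ could vanish beyond some finite $\gamma$, and since $J_0=\{\gamma:r(\gamma)>\delta_0\}$, letting $\delta_0\to0$ only covers $\{r>0\}$. The paper closes this via Lemma~\ref{rnonnul}: a H\"older interpolation between $\gamma_0\in J_0$ and any larger $\gamma$, using the positivity of $\widehat\pi_{\gamma_0}$ and the finiteness of $\Delta_0$ (here guaranteed by the boundedness of $\kappa$, Remark~\ref{rem-C0}), shows $0<r(\gamma_0)\le\Delta_0^{1/q}r(\gamma)^{1/p}$ and hence $r(\gamma)>0$. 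Without this step Assertion~$a)$ is only established on the (possibly proper) subinterval where $r>0$. The remaining items --- $P_\infty=0$ giving $r(\infty)=0$ and $\nu<\infty$, the choice of exponents so that $f\mapsto\xi f\in\cL(\cB_1,\cB_2)$ using $\xi\le CV^{1/(1+\tau)}$, and $r'(\nu)<0$ via Proposition~\ref{pro-deri-stric-pos} --- are correct in substance, modulo the same reversal of the weight exponents along the chain.
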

The next subsections are devoted to the proof of Theorem~\ref{proprieteAR}.
\subsection{ Study of Hypothesis \ref{hypKL}*}
%
In this subsection we prove that $(P_\gamma,J,\cB_0,\cB_1)$ satisfies Hypothesis \ref{hypKL}* with $J=(0,+\infty]$, $\cB_0=\cC_b$, and $\cB_1=\cC_V$.
\begin{alem} \label{C-V-0}
Assume that Assumption~(\ref{dom-nu})
holds (thus (\ref{moment-AR})), that $p$ is continuous and that $\xi$ is
coercive. Then, for every $\gamma\in(0,+\infty]$, $P_\gamma$ continuously acts on both $\cC_b$ and $\mathcal C_V$. For every $\gamma\in[0,+\infty]$, $P_\gamma$  is compact from $\cC_b$ into $\cC_V$. For every $\gamma\in(0,+\infty]$, we have $P_\gamma(\cB_V) \subset \cC_{0,V}$.
\end{alem}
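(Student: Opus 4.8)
The plan is to establish the three assertions in the order in which they build on one another: first that $P_\gamma$ continuously acts on $\cC_V$ (and on $\cC_b$), then the compactness of $P_\gamma:\cC_b\to\cC_V$, and finally the fact that $P_\gamma$ maps all of $\cB_V$ into $\cC_{0,V}$. Throughout, recall that $P_\gamma f = P(\kappa\, e^{-\gamma\xi}\, f)$ with $P(x,dy) = p(y-\alpha x)\, dy$ and $\kappa$ bounded, say $\kappa\le d$.

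\emph{Continuous action on $\cC_V$ and $\cC_b$.} For the bounded case, if $f\in\cC_b$ then $|P_\gamma f(x)| \le d\,\|f\|_\infty \int p(y-\alpha x)\, dy = d\,\|f\|_\infty$, and continuity of $x\mapsto P_\gamma f(x)$ follows from the dominated convergence theorem using the continuity of $p$ and the domination~(\ref{dom-nu}); hence $P_\gamma\in\cL(\cC_b)$. For $\cC_V$, the drift inequality~(\ref{inequality-drift}) gives $PV\le \delta V + L\mathbf 1_\X$, and since $\kappa e^{-\gamma\xi}\le d$ we get $P_\gamma V\le d\,PV\le d(\delta V + L)$, so $\|P_\gamma f\|_V \le d(\delta+L)\|f\|_V$ for all $f\in\cB_V$; it remains to check $P_\gamma f\in\cC_V$ when $f\in\cC_V$, i.e. continuity (again dominated convergence) and the existence of $\lim_{|x|\to\infty} P_\gamma f(x)/V(x)$ — but this last point is subsumed by the third assertion, which shows the limit is in fact $0$.

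\emph{Compactness of $P_\gamma:\cC_b\to\cC_V$.} Take a bounded sequence $(f_n)$ in $\cC_b$, $\|f_n\|_\infty\le 1$. The functions $g_n:=P_\gamma f_n$ are uniformly bounded by $d$, and equicontinuous on compact sets: $|g_n(x)-g_n(x')|\le d\int |p(y-\alpha x)-p(y-\alpha x')|\,dy$, which tends to $0$ uniformly in $n$ as $x'\to x$ by continuity of translation in $L^1$ (the $L^1$ bound on $p$ comes from~(\ref{dom-nu})). By Arzelà–Ascoli and a diagonal extraction, a subsequence converges uniformly on compacts to some continuous $g$. To upgrade this to convergence in $\|\cdot\|_V$, I use the tail control: here is where coercivity of $\xi$ enters. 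For $|x|\ge R_\beta$ (where $[\xi\le\beta]\subset[-R_\beta,R_\beta]$) we have, for any fixed $\beta$,
\begin{equation*}
|g_n(x)| \le d\int p(y-\alpha x)\, e^{-\gamma\xi(y)}\, dy \le d\Big(\int_{[\xi\le\beta]} p(y-\alpha x)\,dy + e^{-\gamma\beta}\Big),
\end{equation*}
and then, dividing by $V(x)=(1+|x|)^{r_0}$ and using the moment bound~(\ref{moment-AR}) to control $\int_{[\xi\le\beta]} p(y-\alpha x)\,dy$ for large $|x|$ (the mass of $p$ near a fixed bounded set, shifted far away, is $O(|x|^{-r_0})$ by Markov's inequality applied to the $r_0$-th moment), one gets $\sup_n \sup_{|x|\ge R}|g_n(x)|/V(x)\to 0$ as $R\to\infty$. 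Combining the uniform-on-compacts convergence with this uniform smallness of the $V$-weighted tails yields $\|g_{n_k}-g\|_V\to0$, so $P_\gamma$ is compact from $\cC_b$ to $\cC_V$ (and the same estimate shows $g\in\cC_{0,V}$, giving in particular $P_\gamma(\cC_b)\subset\cC_V$).

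\emph{$P_\gamma(\cB_V)\subset\cC_{0,V}$.} Let $f\in\cB_V$, $\|f\|_V\le1$, so $|f|\le V$. Continuity of $P_\gamma f$ follows as before from dominated convergence. For the limit: $|P_\gamma f(x)|/V(x) \le d\,(P_\gamma V)(x)/V(x)$, and by the same splitting over $[\xi\le\beta]$ and its complement, together with the tail estimate for $\int_{[\xi\le\beta]}p(y-\alpha x)\,dy \to 0$ as $|x|\to\infty$ (coercivity makes $[\xi\le\beta]$ bounded) and the fact that $V$ grows polynomially so $\int_{[\xi>\beta]} p(y-\alpha x)\, V(y)\,dy \cdot e^{-\gamma\beta}/V(x)$ is $\le C e^{-\gamma\beta}$ uniformly in $x$ (using $V(y)\le C'V(y-\alpha x)V(\alpha x)$ and the $r_0$-moment of $p$), we obtain $\limsup_{|x|\to\infty}|P_\gamma f(x)|/V(x)\le C e^{-\gamma\beta}$ for every $\beta$; letting $\beta\to\infty$ gives the limit $0$. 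Hence $P_\gamma f\in\cC_{0,V}$.

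\emph{Main obstacle.} The routine parts (continuity via dominated convergence, the drift-inequality bound) are straightforward; the real work is the uniform control of the $V$-weighted tails that simultaneously (a) converts Arzelà–Ascoli's local convergence into norm convergence in $\cC_V$ and (b) forces the limit at infinity to vanish. This hinges on combining coercivity of $\xi$ (to shrink the ``bad'' region $[\xi\le\beta]$ to a bounded set) with the polynomial moment~(\ref{moment-AR}) of $p$ (to estimate how little mass $p(\cdot-\alpha x)$ puts on a fixed bounded set when $|x|$ is large, relative to the weight $V(x)=(1+|x|)^{r_0}$). Getting the two scales — the threshold $\beta$ and the spatial cutoff $R$ — to play together correctly is the delicate bookkeeping step.
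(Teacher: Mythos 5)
Your proof is correct and establishes all three assertions, but for the key step ($P_\gamma(\cB_V)\subset\cC_{0,V}$, and hence the continuity on $\cC_V$) it takes a genuinely different route from the paper. The paper changes variables $y\mapsto \alpha x + y$ to write $(P_\gamma f)(x)/V(x)=\int_\R \theta_\gamma(\alpha x+y)\,\frac{f(\alpha x+y)}{V(x)}\,p(y)\,dy$, dominates the integrand by $\|\kappa\|_\infty\|f\|_V\,(1+|y|)^{r_0}p(y)$ (integrable by~(\ref{moment-AR})), and applies dominated convergence: coercivity of $\xi$ makes the integrand vanish pointwise in $y$ as $|x|\to\infty$, so the limit is $0$ in one stroke. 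You instead split $\X$ into $[\xi\le\beta]$ and its complement, combine a Markov-type tail bound for $\int_{[\xi\le\beta]}p(y-\alpha x)\,dy$ with the geometric factor $e^{-\gamma\beta}$ on the complement, and let $\beta\to\infty$. Both are valid; yours requires the "two-scale bookkeeping" you flag as the main obstacle, while the paper's change-of-variables packages the coercivity into a single DCT argument and avoids that bookkeeping. For compactness, your argument (Arzelà--Ascoli with equicontinuity from $L^1$-continuity of translation) is essentially the paper's (which invokes Scheffé's lemma for the same $L^1$ convergence), except that your additional coercivity-based tail estimate is superfluous: the uniform bound $|g_n|\le d=\|\kappa\|_\infty$ together with $V(x)\to\infty$ already forces $\sup_n\sup_{|x|\ge R}|g_n(x)|/V(x)\to 0$, which is exactly what the paper uses.

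One small point to tidy if you were to write this out in full: your $\beta$-splitting for part 3, as for the paper's DCT, implicitly uses that the bounded set $[\xi\le\beta]$ recedes relative to $\alpha x$ as $|x|\to\infty$; when $\alpha=0$ the kernel is constant in $x$ so the conclusion holds trivially via $V(x)\to\infty$, but it is worth noting this degenerate case separately.
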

\begin{proof}{}
Let $\gamma\in[0,+\infty]$. From (\ref{inequality-drift}) it easily follows that $P_\gamma V \leq PV \leq (\delta+L)V$, so that $P_\gamma$ continuously acts on $\cB_V$.
Let $f\in\cB_V$. Then
\begin{equation} \label{P-gam-chi}
\forall x\in\R,\quad (P_\gamma f)(x) = \int_\R \psi_\gamma(x,y)\, dy
\end{equation}
$$\text{with}\quad \left\{ \begin{array}{lcl}
\psi_\gamma(x,y) := \kappa(y)e^{-\gamma \xi(y)}\, f(y)\, p(y-\alpha x)  & \text{if} & 0 \leq \gamma < \infty  \\[0.2cm]
\psi_\gamma(x,y) := \kappa(y)1_{\{\xi=0\}}(y)\, f(y)\, p(y-\alpha x)  & \text{if} & \gamma = +\infty.
\end{array}\right.$$
Let $A>0$. We deduce from Assumption~(\ref{dom-nu}) and from a usual compactness argument ($[-A,A]$ is compact) that there exists a non-negative function $q\equiv q_A$ such that $y\mapsto V(y)\, q(y)$ is Lebesgue-integrable and
$$\forall v\in [-A,A],\ \forall y\in\R,\quad p(y+v) \leq q(y).$$
Thus we have for every $x\in [-A,A]$ and for every $y\in\R$
\begin{equation} \label{ineg-qA}
|\psi_\gamma(x,y)| \, \leq\,  \|\kappa\|_\infty\|f\|_V \left(1+|y|\right)^{r_0} p(y-\alpha x) \leq
 \|\kappa\|_\infty\|f\|_V V(y)\, q(y).
\end{equation}
Since $x\mapsto\psi_\gamma(x,y)$ is continuous for every $y\in\R$ from the continuity of $p$, we deduce from Lebesgue's theorem that the function $P_\gamma f$ is continuous on $\R$. We have proved that, if $\gamma\in[0,+\infty]$ and if $f\in\cB_V$, then $P_\gamma f$ is continuous on $\R$. Thus $P_\gamma$ continuously acts on $\cC_b$.

Now, if $\gamma\in(0,+\infty]$, then
$$\forall x\in\R,\quad \frac{(P_\gamma f)(x)}{V(x)} = \int_\R \chi_\gamma(x,y)\, dy\qquad \text{with } \ \chi_\gamma(x,y) := \theta_\gamma(\alpha x+y)\, \frac{f(\alpha x+y)}{V(x)}\, p(y)$$
$$\text{where}\quad \theta_\gamma(\alpha x+y) :=
\left\{ \begin{array}{lcl}
\kappa(y)e^{-\gamma \xi(\alpha x+y)}  & \text{if} & 0 < \gamma < \infty  \\[0.2cm]
\kappa(y)\mathbf 1_{\{\xi=0\}}(\alpha x+y)  & \text{if} & \gamma = +\infty.
\end{array}\right.$$
For every $(x,y)\in\R^2$, we obtain that
$$|\chi_\gamma(x,y)| \, \leq\, \theta_\gamma(\alpha x+y)\,  \|f\|_V\, \left(\frac{1+|x|+|y|}{1+|x|}\right)^{r_0} p(y) \, \leq\,  \|\kappa\|_\infty\|f\|_V \big(1+|y|\big)^{r_0} p(y).$$
Moreover $\lim_{|x|\r +\infty} \theta_\gamma(\alpha x+y) = 0$ since $\xi$ is coercive. It follows again from Lebesgue's theorem that
$$\lim_{|x|\r +\infty} \frac{(P_\gamma f)(x)}{V(x)} = 0,$$
thus $P_\gamma f \in \cC_{0,V}$. We have proved that, if $\gamma\in(0,+\infty]$, then
$P_\gamma(\cB_V) \subset \cC_{0,V}$, thus the last assertion of Lemma~\ref{C-V-0} holds and $P_\gamma$ continuously acts on $\cC_V$.
Finally, to prove the compactness property, let $\gamma\in[0,+\infty]$ and consider $P_\gamma$ as written in (\ref{P-gam-chi}). Since $p$ is continuous, the image by $P_\gamma$ of the unit ball $\{f\in\cC_b : \|f\|_\infty \leq 1\}$ is equicontinuous from Scheff\'e's lemma. Then $P_\gamma$
is compact from $\cC_b$ into $\cC_V$ from Ascoli's theorem and from $\lim_{|x|\r\infty} V(x) = +\infty$.
\end{proof}

With the usual convention $V^0:=1$, we have the identification $\cC_{V^0}=\cC_b$. Then the continuity of $\gamma\mapsto P_\gamma$ from $(0,+\infty]$ into $\cL(\cC_b,\cC_V)$ follows from the following.

\begin{alem} \label{lem-cont-P-gamma}
Let $0\leq a < a+b \leq 1$.
Assume that $\xi \leq cV$ for some positive constant $c$. Then the following operator-norm inequality holds for every $(\gamma,\gamma')\in[0,+\infty)^2$
$$
\|P_\gamma - P_{\gamma'}\|_{{\cC}_{V^a},{\cC}_{V^{a+b}}}\ := \sup_{f\in{\cC}_{V^a},\|f\|_{V^{a}}\leq 1}\|P_\gamma f - P_{\gamma'} f\|_{V^{a+b}} \ \ \leq \|\kappa\|_\infty (c|\gamma - \gamma'|)^b\Vert P\Vert_{V^{a+b}} \, .$$
\end{alem}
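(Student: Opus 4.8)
The plan is to reduce the operator-norm estimate to a pointwise bound on the scalar multiplier $e^{-\gamma\xi}-e^{-\gamma'\xi}$ and then to push that bound through $P$ using only that $P$ is a nonnegative kernel. Since $a<a+b\le 1$ we have $b\in(0,1]$ and $a+b\in(0,1]$, so $V^{a+b}\le V$ and $V^{a+b}$ is an admissible weight. The elementary ingredient is that, for all $u,v\ge 0$,
\[
|e^{-u}-e^{-v}|\ \le\ \min(1,|u-v|)\ \le\ |u-v|^{b},
\]
the first inequality because $t\mapsto e^{-t}$ is $1$-Lipschitz and bounded by $1$ on $[0,\infty)$, the second because $\min(1,t)\le t^{b}$ for every $t\ge 0$ when $b\in(0,1]$ (treat $t\le 1$ and $t>1$ separately). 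Applying this with $u=\gamma\,\xi(y)$, $v=\gamma'\,\xi(y)$ and using the hypothesis $\xi\le cV$ gives, for every $y\in\R$,
\[
\bigl|e^{-\gamma\xi(y)}-e^{-\gamma'\xi(y)}\bigr|\ \le\ |\gamma-\gamma'|^{b}\,\xi(y)^{b}\ \le\ \bigl(c\,|\gamma-\gamma'|\bigr)^{b}\,V(y)^{b}.
\]

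Next I would fix $f\in\cC_{V^{a}}$ with $\|f\|_{V^{a}}\le 1$, i.e.\ $|f(y)|\le V(y)^{a}$ for all $y$, and write $P_\gamma f-P_{\gamma'}f=P(g)$ with $g:=\kappa\,(e^{-\gamma\xi}-e^{-\gamma'\xi})\,f$. Combining the two displays above, $|g(y)|\le\|\kappa\|_\infty\,(c|\gamma-\gamma'|)^{b}\,V(y)^{a+b}$ for every $y$; in particular $g\in\cB_{V^{a+b}}\subset\cB_V$ and $Pg$ is well defined pointwise since $PV^{a+b}\le PV<\infty$ by \eqref{inequality-drift}. Using the nonnegativity of $P$,
\[
|Pg|\ \le\ P|g|\ \le\ \|\kappa\|_\infty\,\bigl(c|\gamma-\gamma'|\bigr)^{b}\,P V^{a+b},
\]
and since $P$ is a positive operator one has $\|P\|_{V^{a+b}}=\|PV^{a+b}\|_{V^{a+b}}$ (for $\|f\|_{V^{a+b}}\le 1$, $|Pf|\le P|f|\le PV^{a+b}$; and $\|V^{a+b}\|_{V^{a+b}}=1$), hence $PV^{a+b}\le\|P\|_{V^{a+b}}\,V^{a+b}$ pointwise. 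Therefore
\[
\|P_\gamma f-P_{\gamma'}f\|_{V^{a+b}}\ =\ \|Pg\|_{V^{a+b}}\ \le\ \|\kappa\|_\infty\,\bigl(c\,|\gamma-\gamma'|\bigr)^{b}\,\|P\|_{V^{a+b}},
\]
and taking the supremum over $\{f\in\cC_{V^{a}}:\|f\|_{V^{a}}\le 1\}$ yields the claimed inequality. (If one also wants the images to land in $\cC_{V^{a+b}}$, this follows from Lemma~\ref{C-V-0} applied to $g\in\cB_V$ together with the analogous continuity statement for the kernel $P$ itself; but since the quantity $\|P_\gamma-P_{\gamma'}\|_{\cC_{V^{a}},\cC_{V^{a+b}}}$ is defined in the statement as a supremum of weighted-sup norms, this is not needed for the estimate.)

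I do not expect a genuine obstacle: the whole content of the lemma is the Hölder-type bound $|e^{-u}-e^{-v}|\le|u-v|^{b}$ for $b\in(0,1]$ combined with the linear-growth control $\xi\le cV$. The only mildly delicate point is conceptual rather than technical --- one must move from the weight $V^{a}$ to the strictly larger weight $V^{a+b}$ in order to absorb the factor $\xi^{b}\le(cV)^{b}$; this is precisely the mechanism by which continuity in $\gamma$ of $P_\gamma$ is recovered in the absence of any exponential moment on $\xi$, at the price of working with a chain of Banach spaces as explained in Remark~\ref{rem-method}.
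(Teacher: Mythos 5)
Your proof is correct and follows essentially the same route as the paper's: the Hölder-type bound $|e^{-u}-e^{-v}|\le|u-v|^b$ for $b\in(0,1]$ (you reach it via $\min(1,t)\le t^b$, the paper via $|e^{-u}-e^{-v}|\le|e^{-u}-e^{-v}|^b\le|u-v|^b$, but these are the same elementary fact), followed by absorbing $\xi^b\le(cV)^b$ into the heavier weight $V^{a+b}$ and using positivity of $P$ to dominate $|Pg|$ by $\|P\|_{V^{a+b}}V^{a+b}$.
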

\begin{proof}{}
Let $(\gamma,\gamma')\in[0,+\infty)^2$. For all $(u,v)\in[0,+\infty)^2$, we have $|e^{-u} - e^{-v}| \leq |e^{-u} - e^{-v}|^b \leq |u-v|^b$ from Taylor's inequality. Thus we obtain for any $f\in\cC_{V^a}$
\begin{eqnarray*}
\big|(P_\gamma f)(x) - (P_{\gamma'} f)(x)\big| &\leq&
\|\kappa\|_\infty\Vert f\Vert_{V^a}  \int_\R \big|e^{-\gamma\xi(y)} - e^{-\gamma'\xi(y)}\big| (V(y))^a p(y-\alpha x)\, dy \\
&\leq& \|\kappa\|_\infty\Vert f\Vert_{V^a} (c\, |\gamma - \gamma'|)^b \int_\R (V(y))^{a+b}\,  p(y-\alpha x)\, dy \\
&\leq& \|\kappa\|_\infty\Vert f\Vert_{V^a} (c\, |\gamma - \gamma'|)^b  P V^{a+b}(x),
\end{eqnarray*}
from which we deduce the desired inequality.
\end{proof}
\begin{alem} \label{prop-1-AR}
Assume that Assumption~(\ref{dom-nu}) holds (thus (\ref{moment-AR})) and that $\xi$ is coercive. Then
$$\|P_\gamma- P_{\infty}\|_{{\cC}_b,{\cC}_V} := \sup_{f\in{\cC}_b,\|f\|_\infty\leq 1}\|P_\gamma f- P_{\infty}f\|_V \ \longrightarrow 0
\quad \text{when}\  \gamma\r+\infty.$$
\end{alem}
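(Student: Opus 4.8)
The plan is to reduce the operator-norm convergence to a single scalar estimate and then dispose of that estimate by splitting $\R$ into a bounded part and its complement. First I would write out the difference: since $e^{-\gamma\xi(y)}-\mathbf 1_{\{\xi(y)=0\}}=e^{-\gamma\xi(y)}\mathbf 1_{\{\xi(y)>0\}}\ge 0$ for every $y\in\R$, for any $f\in\cC_b$ with $\|f\|_\infty\le 1$ and any $x\in\R$,
$$\big|(P_\gamma f-P_\infty f)(x)\big|\ \le\ \|\kappa\|_\infty\int_\R e^{-\gamma\xi(y)}\,\mathbf 1_{\{\xi(y)>0\}}\,p(y-\alpha x)\,dy,$$
using that $\kappa$ is bounded. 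Hence $\|P_\gamma-P_\infty\|_{\cC_b,\cC_V}\le\|\kappa\|_\infty\,\sup_{x\in\R}I_\gamma(x)$, where $I_\gamma(x):=V(x)^{-1}\int_\R e^{-\gamma\xi(y)}\mathbf 1_{\{\xi(y)>0\}}\,p(y-\alpha x)\,dy$, and it suffices to show $\sup_{x\in\R}I_\gamma(x)\to 0$ as $\gamma\to+\infty$.

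Fix $\eta>0$. For $|x|$ large I would use the crude bound $e^{-\gamma\xi}\mathbf 1_{\{\xi>0\}}\le 1$ together with $\int_\R p(y-\alpha x)\,dy=1$ to get $I_\gamma(x)\le V(x)^{-1}=(1+|x|)^{-r_0}$; thus there is $T>0$, independent of $\gamma$, with $\sup_{|x|\ge T}I_\gamma(x)<\eta$ for every $\gamma$. For $|x|\le T$ the point is to produce an $x$-free integrable majorant: invoking the local domination (\ref{dom-nu}) exactly as in the proof of Lemma~\ref{C-V-0}, for $A:=|\alpha|T$ there is a non-negative $q\equiv q_A$ with $Vq$ Lebesgue-integrable and $p(y+v)\le q(y)$ for all $|v|\le A$ and all $y$; since $|-\alpha x|\le A$ when $|x|\le T$ and $V\ge 1$, this yields
$$\sup_{|x|\le T}I_\gamma(x)\ \le\ \int_\R e^{-\gamma\xi(y)}\,\mathbf 1_{\{\xi(y)>0\}}\,q(y)\,dy.$$
The integrand tends to $0$ pointwise as $\gamma\to+\infty$ (it is identically $0$ on $[\xi=0]$ and $\le e^{-\gamma\xi(y)}\to 0$ on $[\xi>0]$) and is dominated by the integrable function $q$, so by dominated convergence the right-hand side is $<\eta$ for $\gamma$ large. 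Combining the two regimes gives $\limsup_{\gamma\to+\infty}\sup_{x\in\R}I_\gamma(x)\le\eta$, and letting $\eta\downarrow 0$ completes the proof.

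The step I expect to be the only real subtlety is getting a bound uniform over the compact range $|x|\le T$: a direct pointwise-in-$x$ dominated-convergence argument fails, because $\sup\{e^{-\gamma\xi(w)}\mathbf 1_{\{\xi(w)>0\}}:w\text{ in a fixed compact set}\}$ need not tend to $0$ (as $\xi$ may take arbitrarily small positive values there, and $\xi$ is not assumed continuous). The device that circumvents this is the translation $u=y-\alpha x$ combined with the local uniform domination of $p$, which replaces $p(y-\alpha x)$ by the single majorant $q(y)$ valid simultaneously for all $|x|\le T$; afterwards the limit is taken on a fixed integrand by ordinary dominated convergence. Everything else — the estimate for large $|x|$ and the manipulation of $P_\gamma-P_\infty$ — is routine.
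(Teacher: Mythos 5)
Your proof is correct and follows essentially the same route as the paper's: split into $|x|$ large, where the crude bound $|P_\gamma f - P_\infty f|\le\|\kappa\|_\infty$ and the decay of $V(x)^{-1}$ suffice uniformly in $\gamma$, and $|x|$ in a fixed compact, where the local domination from~(\ref{dom-nu}) (via a compactness argument as in Lemma~\ref{C-V-0}) replaces $p(y-\alpha x)$ by an $x$-free integrable majorant $q(y)$. The only stylistic difference is at the end: you apply dominated convergence once to $\int e^{-\gamma\xi}\mathbf 1_{\{\xi>0\}}\,q\,dy$, whereas the paper reaches the same conclusion by an explicit $\beta$-splitting of the $y$-integral into $[\xi>\beta]$ and $[0<\xi\le\beta]$, choosing $\beta$ small and then $\gamma$ large.
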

\begin{proof}{}
Let $\varepsilon >0$. Let $f\in\cC_b$ be such that $\|f\|_\infty \leq 1$. From $|P_\gamma f| \leq P\mathbf{1}_\X = \mathbf{1}_\X$ it follows that there exists $A\equiv A(\varepsilon)$ such that :
\begin{equation} \label{inequality-1}
|x| > A\ \Rightarrow\ \forall \gamma\in(0,+\infty),\ \frac{|(P_\gamma f)(x)|}{V(x)} \leq \varepsilon.
\end{equation}
Moreover, for any $\beta >0$ and $x\in\R$ such that $|x|\leq A$, we obtain that
\begin{eqnarray*}
\big|(P_\gamma f-P_{\infty} f)(x)\big| &\le& \|\kappa\|_\infty
e^{-\gamma\beta} \int_{[\xi > \beta]} p(y-\alpha x)\, dy + \|\kappa\|_\infty \int_{[0<\xi \leq \beta]} p(y-\alpha x)\, dy  \\
&\leq& \|\kappa\|_\infty\left( e^{-\gamma\beta}  + \int_{[0<\xi \leq \beta]} q(y)\, dy\right)\,
\end{eqnarray*}
where $q\equiv q_A$ is the function given in (\ref{ineg-qA}).
Since $q$ is Lebesgue-integrable on $\R$, we have $\int_{[0<\xi \leq \beta]} q(y)\, dy \r 0$ when $\beta\r 0$, so that there exists $\beta_0\equiv \beta_0(\varepsilon)>0$ such that
$$\|\kappa\|_\infty \int_{[0<\xi \leq \beta_0]} q(y)\, dy \leq \frac{\varepsilon}{2}.$$
Finally let $\gamma_0 \equiv \gamma_0(\varepsilon)>0$ be such that : $\forall \gamma > \gamma_0,\ \|\kappa\|_\infty e^{-\gamma\beta_0} \leq \varepsilon/2$. Then
\begin{equation} \label{inequality-2}
|x| \leq A\ \Rightarrow\ \forall \gamma\in(\gamma_0,\infty),\ \frac{|(P_\gamma f
        - P_{\infty} f)(x)|}{V(x)} \leq  |(P_\gamma f)(x) - (P_{\infty}f)(x)| \leq  \varepsilon.
\end{equation}
Inequalities (\ref{inequality-1}) and (\ref{inequality-2}) provides the desired statement.
\end{proof}
%

To study the conditions (\ref{cond-r-ess-dual}) and (\ref{D-F-dual}) of Hypothesis \ref{hypKL}* with $J=(0,+\infty]$, $\cB_0=\cC_b$, and $\cB_1=\cC_V$,  we use the duality arguments of \cite[prop.~5.4]{HerLed14}.
The topological dual spaces of $\cC_V$ and $\cC_b$ are denoted by $(\cC_V^*,\|\cdot\|_V)$ and $(\cC_b^*,\|\cdot\|_\infty)$ respectively (for the sake of simplicity we use the same notation for the dual norms). For any $\gamma>0$, we denote by $P^*_\gamma$ the adjoint operator of $P_\gamma$ on $\cC_V$.  Note that each $P_\gamma^*$ is a contraction with respect to the dual norm $\|\cdot\|_\infty$  because so is $P_\gamma$ on $\cC_b$.

In the sequel, $\delta > |\alpha|^{r_0}$ is fixed, as well as the associated constant $L\equiv L(\delta)$ in (\ref{inequality-drift}).

\begin{alem} \label{lem-D-F-P-gamma}
Assume that (\ref{moment-AR}) holds, that $\kappa$ is bounded, and that $\xi$ is coercive.
Then, for every $\gamma >0$ and for every $\beta>0$, there exists a positive constant $L_{\beta}$ such that
\begin{equation} \label{drift-P-gamma}
P_\gamma V \leq \|\kappa\|_\infty(e^{-\gamma\beta}\, \delta \, V + L_{\beta}\, \mathbf{1}_\X).
\end{equation}
Moreover
\begin{equation} \label{drift-P-gamma-infty}
P_{\infty} V \leq \|\kappa\|_\infty\left(\sup_{[\xi =0]} V\right) \mathbf{1}_\X.
\end{equation}
\end{alem}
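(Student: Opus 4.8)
The plan is to get (\ref{drift-P-gamma}) by combining the elementary pointwise bound $\kappa e^{-\gamma\xi} \le \|\kappa\|_\infty e^{-\gamma\xi}$ with the drift inequality (\ref{inequality-drift}) for $P$, after splitting the range of $\xi$ at the level $\beta$. Concretely, for $f = V$ write
\[
(P_\gamma V)(x) = \int_\R \kappa(y) e^{-\gamma\xi(y)} V(y)\, p(y-\alpha x)\, dy
 \le \|\kappa\|_\infty \int_\R e^{-\gamma\xi(y)} V(y)\, p(y-\alpha x)\, dy.
\]
On the set $[\xi > \beta]$ we bound $e^{-\gamma\xi(y)} \le e^{-\gamma\beta}$, and on $[0\le \xi \le \beta]$ we bound $e^{-\gamma\xi(y)} \le 1$. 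The first contribution is then $\le e^{-\gamma\beta}\,(PV)(x) \le e^{-\gamma\beta}(\delta V(x) + L)$ by (\ref{inequality-drift}). For the second contribution I would use coercivity of $\xi$: the set $[\xi \le \beta]$ is bounded, say $[\xi\le\beta]\subset[-A_\beta,A_\beta]$, so $\sup_{[\xi\le\beta]} V =: V_\beta < \infty$, and hence
\[
\int_{[0\le\xi\le\beta]} V(y)\, p(y-\alpha x)\, dy \le V_\beta \int_\R p(y-\alpha x)\, dy = V_\beta.
\]
Adding the two pieces and absorbing the bounded terms into a single constant gives $P_\gamma V \le \|\kappa\|_\infty(e^{-\gamma\beta}\,\delta\, V + L_\beta \mathbf 1_\X)$ with, say, $L_\beta := e^{-\gamma\beta} L + V_\beta \le L + V_\beta$; note $L_\beta$ can be chosen independent of $\gamma$, which is what the later Keller--Liverani argument needs.

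For (\ref{drift-P-gamma-infty}) the argument is even shorter: by definition $P_\infty V = P(\kappa \mathbf 1_{\{\xi=0\}} V)$, and pointwise $\kappa(y)\mathbf 1_{\{\xi=0\}}(y) V(y) \le \|\kappa\|_\infty \big(\sup_{[\xi=0]} V\big)\mathbf 1_\X(y)$, so integrating against the (sub-)probability kernel $P$ gives $P_\infty V \le \|\kappa\|_\infty \big(\sup_{[\xi=0]} V\big)\mathbf 1_\X$. Here coercivity again guarantees $\sup_{[\xi=0]} V < \infty$ since $[\xi=0]$ is bounded.

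The only genuinely delicate point is making sure the "constant" $L_\beta$ in (\ref{drift-P-gamma}) does not secretly depend on $\gamma$ in a harmful way — but since $e^{-\gamma\beta}\le 1$ and $V_\beta$ depends only on $\beta$ through coercivity, this is immediate. Everything else is routine: the only hypotheses actually used are (\ref{moment-AR}) (to have (\ref{inequality-drift}) at our disposal), boundedness of $\kappa$, and coercivity of $\xi$ (to bound the sublevel sets $[\xi\le\beta]$ and $[\xi=0]$, hence $\sup_{[\xi\le\beta]}V$ and $\sup_{[\xi=0]}V$). I would present it as the two short displays above followed by the splitting estimate, with no further machinery.
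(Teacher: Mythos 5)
Your proof is correct and follows essentially the same route as the paper: split at $[\xi>\beta]$ versus $[\xi\le\beta]$, use the drift inequality (\ref{inequality-drift}) on the first piece and coercivity of $\xi$ (hence finiteness of $\sup_{[\xi\le\beta]}V$) on the second, then bound $P_\infty V$ directly via $\sup_{[\xi=0]}V$. Your explicit observation that $L_\beta$ can be taken independent of $\gamma$ (by bounding $e^{-\gamma\beta}L\le L$) matches the paper's choice $L_\beta=L+\sup_{[\xi\le\beta]}V$ and is indeed the point that matters for the later uniform Doeblin--Fortet estimate.
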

\begin{proof}{}
We have for every $\gamma >0$ and for every $\beta>0$
\begin{eqnarray*}
P_\gamma V &=& P(\kappa e^{-\gamma\xi} V) = P\big(\kappa e^{-\gamma\xi} \mathbf{1}_{[\xi > \beta]}V\big) + P\big(\kappa e^{-\gamma\xi} \mathbf{1}_{[\xi \leq \beta]}V\big)  \\
&\leq&\|\kappa\|_\infty\left( e^{-\gamma\beta} \big(\delta \, V + L\, \mathbf{1}_\X\big) + \int_{[\xi \leq \beta]} V(y) P(\cdot,dy)\right) \qquad \qquad \text{(from (\ref{inequality-drift}))}  \\
&\leq& \|\kappa\|_\infty\left(e^{-\gamma\beta}\, \delta \, V + \big(L + \sup_{[\xi \leq \beta]} V\big)\mathbf{1}_\X \right)
\end{eqnarray*}
from which we deduce the first desired statement.
For $P_{\infty}$, we have
$$
P_{\infty} V=P(\kappa\mathbf 1_{\{\xi=0\}}V) \leq
\left(\sup_{[\xi =0]} V\right) P(\kappa)\le \left(\sup_{[\xi =0]} V\right)\|\kappa\|_\infty\mathbf 1_\X.
$$
\end{proof}

\begin{acor} \label{cor-QC-gamma}
Assume that Assumption~(\ref{moment-AR}) holds true, that $\kappa$ is bounded, and that $\xi$ is coercive. Then,
for every $\gamma_1>0$ and for every $\varepsilon > 0$, there exists a constant $D>0$
such that
\begin{equation}\label{DFgamma>0}
\forall \gamma\in[\gamma_1,+\infty],\ \forall f^*\in\cC^*_V,\quad \|P^*_\gamma f^*\|_V \leq \varepsilon \, \|f^*\|_V + D\, \|f^*\|_\infty.
\end{equation}
Moreover, for every $\gamma\in(0,+\infty]$, the essential spectral radius $r_{ess}(P_\gamma)$ is zero.
\end{acor}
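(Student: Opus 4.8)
The plan is to derive the dual Doeblin--Fortet inequality (\ref{DFgamma>0}) from the drift estimates (\ref{drift-P-gamma})--(\ref{drift-P-gamma-infty}) of Lemma~\ref{lem-D-F-P-gamma} by a pointwise splitting of $P_\gamma f$, and then to obtain $r_{ess}(P_\gamma)=0$ by combining (\ref{DFgamma>0}) with the compactness of $P_\gamma:\cC_b\to\cC_V$ from Lemma~\ref{C-V-0}, via the duality argument of \cite[prop.~5.4]{HerLed14}. First I would fix $\gamma_1>0$ and $\varepsilon>0$ and, using the constant $\delta>|\alpha|^{r_0}$ fixed before Lemma~\ref{lem-D-F-P-gamma}, choose $\beta>0$ with $\|\kappa\|_\infty\, e^{-\gamma_1\beta}\,\delta\le\varepsilon$. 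Since $\xi$ is coercive, $[\xi=0]$ is bounded, so $\sup_{[\xi=0]}V<\infty$; set $D:=1+\|\kappa\|_\infty\max(L_\beta,\sup_{[\xi=0]}V)$, with $L_\beta$ the constant in (\ref{drift-P-gamma}). For finite $\gamma\ge\gamma_1$ one has $e^{-\gamma\beta}\le e^{-\gamma_1\beta}$, so (\ref{drift-P-gamma}) gives $P_\gamma V\le\varepsilon V+D\,\mathbf{1}_\X$, and for $\gamma=+\infty$ the bound (\ref{drift-P-gamma-infty}) gives the same. Hence $P_\gamma V\le\varepsilon V+D\,\mathbf{1}_\X$ holds uniformly over $\gamma\in[\gamma_1,+\infty]$, with $D=D(\gamma_1,\varepsilon)$.

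Next I would prove (\ref{DFgamma>0}) by splitting. Let $\gamma\in[\gamma_1,+\infty]$ and $f\in\cC_V$ with $\|f\|_V\le1$, so $|f|\le V$; since $P_\gamma$ is a positive operator, $|P_\gamma f|\le P_\gamma|f|\le P_\gamma V\le\varepsilon V+D\,\mathbf{1}_\X$ pointwise. Put $\theta:=\varepsilon V/(\varepsilon V+D)$ and write $P_\gamma f=\phi+\psi$ with $\phi:=\theta\,(P_\gamma f)$ and $\psi:=(1-\theta)(P_\gamma f)$. Both are continuous by Lemma~\ref{C-V-0}, $|\phi|\le\varepsilon V$ and $|\psi|\le D$, and since $P_\gamma f\in\cC_{0,V}$ (Lemma~\ref{C-V-0}) the function $\phi/V$ tends to $0$ at $\pm\infty$; thus $\phi\in\cC_V$ with $\|\phi\|_V\le\varepsilon$ and $\psi\in\cC_b$ with $\|\psi\|_\infty\le D$. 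For $f^*\in\cC_V^*$, restricting $f^*$ to $\cC_b$ is legitimate because $\cC_b\hookrightarrow\cC_V$ with norm $\le1$ (as $V\ge1$), whence $\|f^*\|_\infty\le\|f^*\|_V$; therefore, for every such $f$, $|(P_\gamma^*f^*)(f)|=|f^*(\phi)+f^*(\psi)|\le\|f^*\|_V\|\phi\|_V+\|f^*\|_\infty\|\psi\|_\infty\le\varepsilon\|f^*\|_V+D\|f^*\|_\infty$. Taking the supremum over $f\in\cC_V$ with $\|f\|_V\le1$ gives (\ref{DFgamma>0}).

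Finally, for the essential spectral radius, fix $\gamma\in(0,+\infty]$ and apply the previous two steps with $\gamma_1=\gamma$: for every $\varepsilon>0$ the inequality (\ref{DFgamma>0}) holds. This is precisely a dual Doeblin--Fortet inequality for the pair $(\cC_b,\cC_V)$, and by Lemma~\ref{C-V-0} the operator $P_\gamma:\cC_b\to\cC_V$ is compact, equivalently $P_\gamma^*:\cC_V^*\to\cC_b^*$ is compact. By the duality argument of \cite[prop.~5.4]{HerLed14}, these two facts yield $r_{ess}(P_{\gamma|\cC_V})=r_{ess}(P^*_{\gamma|\cC_V^*})\le\varepsilon$; letting $\varepsilon\downarrow0$ gives $r_{ess}(P_\gamma)=0$.

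The only genuinely non-routine step is the last one: one cannot feed (\ref{DFgamma>0}) into Hennion's quasi-compactness theorem directly, because the inclusion $\cC_V^*\hookrightarrow\cC_b^*$ is not compact, so the missing compactness must be supplied by $P_\gamma$ itself (compact from $\cC_b$ into $\cC_V$), after iterating (\ref{DFgamma>0}); this is exactly what \cite[prop.~5.4]{HerLed14} packages, and it is also the mechanism producing the iterated inequality (\ref{D-F-dual}). Steps~1 and 2 are routine consequences of Lemmas~\ref{lem-D-F-P-gamma} and \ref{C-V-0}; the one small caveat worth recording is that the splitting requires $D>0$, which is why I take $D\ge1$.
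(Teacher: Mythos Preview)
Your proof is correct and follows essentially the same route as the paper's: derive the uniform drift $P_\gamma V\le\varepsilon V+D\,\mathbf 1_\X$ from Lemma~\ref{lem-D-F-P-gamma}, convert it into the dual Doeblin--Fortet inequality (\ref{DFgamma>0}), and then combine this with the compactness of $P_\gamma^*:\cC_V^*\to\cC_b^*$ (coming from Lemma~\ref{C-V-0}) to bound $r_{ess}(P_\gamma)$ by an arbitrary $\varepsilon$. The only cosmetic differences are that the paper obtains the drift for all $\gamma\in[\gamma_1,+\infty]$ in one stroke via the monotonicity $P_\gamma V\le P_{\gamma_1}V$ rather than treating $\gamma=+\infty$ separately, that it delegates the splitting step to \cite[p.~190]{FerHerLed13} rather than writing it out, and that it cites \cite{Hen93} rather than \cite[prop.~5.4]{HerLed14} for the quasi-compactness conclusion.
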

\begin{proof}{}
Choose $\beta = \beta(\gamma_1,\varepsilon)>0$ such that $\|\kappa\|_\infty e^{-\gamma_1\beta}\, \delta < \varepsilon$. Then we deduce from Lemma~\ref{lem-D-F-P-gamma} that $P_{\gamma_1} V \leq \varepsilon\, V +  D\, \mathbf{1}_\X$, where $D\equiv D(L,\gamma_1,\varepsilon)$ is a positive constant.
Now let $\gamma\in[\gamma_1,+\infty]$. Since $P_\gamma V \leq P_{\gamma_1}V$, we also have $P_{\gamma} V \leq \varepsilon\, V +  D\, \mathbf{1}_\X$.
This inequality easily rewrites as (\ref{DFgamma>0}) (see the proof in \cite[p.~190]{FerHerLed13}). Finally, since $P^*_\gamma$ is compact from $\cC^*_V$ into $\cC^*_b$ (Lemma~\ref{C-V-0}), we deduce from \cite{Hen93} that
$r_{ess}(P_\gamma) \leq \varepsilon$. We obtain $r_{ess}(P_\gamma) = 0$ because $\varepsilon$ is arbitrary.
\end{proof}
\begin{arem}\label{cor-QC-gammabis}
Let $\gamma_1>0$,
$\varepsilon>0$ and $0 \le a\le a+b\le 1$.
Observe that Corollary \ref{cor-QC-gamma} holds also if we replace $V$ by $V^{a+b}$ (since $\vartheta_1$ admits a moment of order $r_0(a+b)$). Moreover notice that
\eqref{DFgamma>0}  with $V^{a+b}$
instead of $V$ directly gives
that there exists a constant $D_{\varepsilon,a+b}>0$ such that
\begin{equation}\label{DFgamma>0bis}
\forall \gamma\in[\gamma_1,+\infty],\ \forall f^*\in\cC^*_{V^{a+b}},\quad \|P^*_\gamma f^*\|_{V^{a+b}} \leq \varepsilon \, \|f^*\|_{V^{a+b}} + D_{\varepsilon,a+b}\, \|f^*\|_{V^a}
\end{equation}
since $\Vert f^*\Vert_\infty\le\Vert f^*\Vert_{V^a}$.
\end{arem}

\subsection{A preliminary useful statement on $r(\gamma)$}

\begin{apro} \label{r-grand-2}
Assume that Assumption~(\ref{moment-AR}) holds true, that $\kappa$ is bounded, that $\xi$ is coercive  and finally that the function $\xi/V$ is bounded on $\R$. Then $\displaystyle \lim_{\gamma\r0_+}r(\gamma) \geq 2$.
\end{apro}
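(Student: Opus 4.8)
The plan is to bound $r(\gamma)$ from below by testing $P_\gamma$ on the constant function $\mathbf 1_\X$, which belongs to $\cC_V$ with $\|\mathbf 1_\X\|_V=\sup_{x\in\R}V(x)^{-1}=V(0)^{-1}=1$. Since $P_\gamma$ continuously acts on $\cC_V$ for $\gamma>0$ (Lemma~\ref{C-V-0}) and $r(\gamma)=\lim_n\|P_\gamma^n\|_V^{1/n}$, the operator-norm bound $\|P_\gamma^n\mathbf 1_\X\|_V\le\|P_\gamma^n\|_V\,\|\mathbf 1_\X\|_V=\|P_\gamma^n\|_V$ gives $r(\gamma)\ge\limsup_n\|P_\gamma^n\mathbf 1_\X\|_V^{1/n}$. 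Iterating the definition of $P_\gamma$ yields the identity $(P_\gamma^n\mathbf 1_\X)(x)=\E_x\big[\prod_{j=1}^n\kappa(X_j)\,e^{-\gamma\xi(X_j)}\big]$, where $\E_x$ is the expectation for the chain started at $X_0=x$. As $\kappa\ge 2$ everywhere and $P_\gamma^n\mathbf 1_\X\ge 0$, evaluating at $x=0$ gives
$$\|P_\gamma^n\mathbf 1_\X\|_V\ \ge\ (P_\gamma^n\mathbf 1_\X)(0)\ \ge\ 2^n\,\E_0\Big[e^{-\gamma\sum_{j=1}^n\xi(X_j)}\Big],$$
so it remains to bound $\E_0\big[e^{-\gamma\sum_{j=1}^n\xi(X_j)}\big]^{1/n}$ from below, uniformly in $n$, by a quantity that tends to $1$ as $\gamma\to 0_+$.

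The decisive step is the estimate $\E_0\big[\sum_{j=1}^n\xi(X_j)\big]\le C_1 n+C_2$ with $C_1,C_2$ independent of $\gamma$. Fix $\delta\in(|\alpha|^{r_0},1)$ (possible since $|\alpha|<1$) and let $L=L(\delta)$ be as in the drift inequality (\ref{inequality-drift}); iterating it gives $P^jV\le\delta^jV+\frac{L}{1-\delta}\,\mathbf 1_\X$ for every $j\ge 0$. Using $\xi\le cV$ with $c:=\sup_\R(\xi/V)<\infty$, we get $\E_0[\xi(X_j)]=(P^j\xi)(0)\le c\,(P^jV)(0)\le c\big(\delta^j+\frac{L}{1-\delta}\big)$, and summing over $j=1,\dots,n$ proves the claim with $C_1=\frac{cL}{1-\delta}$ and $C_2=\frac{c\delta}{1-\delta}$. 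By Jensen's inequality applied to the convex map $u\mapsto e^{-\gamma u}$,
$$\E_0\Big[e^{-\gamma\sum_{j=1}^n\xi(X_j)}\Big]\ \ge\ e^{-\gamma(C_1 n+C_2)},\qquad\text{hence}\qquad \E_0\Big[e^{-\gamma\sum_{j=1}^n\xi(X_j)}\Big]^{1/n}\ \ge\ e^{-\gamma(C_1+C_2)}\quad(n\ge1).$$
Combining with the first paragraph gives $r(\gamma)\ge 2\,e^{-\gamma(C_1+C_2)}$ for every $\gamma>0$, and letting $\gamma\to0_+$ yields $\lim_{\gamma\to0_+}r(\gamma)\ge 2$; the limit is legitimate because $\cC_V$ is a complex Banach lattice, so $\gamma\mapsto r(\gamma)$ is non-increasing on $(0,+\infty]$ by the argument of Proposition~\ref{LEMME0}.

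The only genuinely delicate point is the linear-in-$n$ control of $\E_0[\sum_{j=1}^n\xi(X_j)]$: it is there that the moment hypothesis $\sup_\R(\xi/V)<\infty$ and the contraction factor $\delta<1$ in the drift inequality are used (the latter being available precisely because $|\alpha|<1$). Everything else---the identity for $P_\gamma^n\mathbf 1_\X$, the inequality $\kappa\ge 2$, Jensen's inequality, and the handling of $n$-th roots---is routine; one need only keep every constant free of $\gamma$ so that the passage to the limit is immediate.
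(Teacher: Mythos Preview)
Your proof is correct and takes a genuinely different, more elementary route than the paper. The paper reduces to the case $\kappa\equiv 2$ (writing $\widetilde P_\gamma$, $\widetilde r(\gamma)$), invokes the Keller--Liverani perturbation machinery (Theorem~\ref{thmkellerliverani}) to prove that $\gamma\mapsto\widetilde r(\gamma)$ is continuous at $\gamma=0$ with $\widetilde r(0)=2$, and then compares $r(\gamma)\ge\widetilde r(\gamma)$. You bypass all of this by directly producing the explicit lower bound $r(\gamma)\ge 2\,e^{-\gamma(C_1+C_2)}$ via the drift inequality and Jensen. Your argument is shorter, self-contained, and uses only the drift condition~(\ref{inequality-drift}) and the hypothesis $\xi\le cV$; the paper's route, while heavier, has the side benefit of actually establishing continuity of $\widetilde r$ at $0$, which is a slightly stronger conclusion (though not needed for the proposition as stated). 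One minor sharpening: since $(C_1n+C_2)/n\to C_1$, your bound in fact yields $r(\gamma)\ge 2\,e^{-\gamma C_1}$.
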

\begin{proof}{}
We need the following lemma concerning the special case $\kappa\equiv 2$. To avoid confusion we write below $\widetilde P_\gamma$ and $\widetilde r(\gamma)$ in place of $P_\gamma$ and $r(\gamma)$ when $\kappa\equiv 2$.
\begin{alem} \label{lem-kappa-2}
Assume that (\ref{moment-AR}) holds, that $\kappa\equiv 2$, and that $\xi$ is coercive. Then $\widetilde P_0$ continuously acts on $\mathcal C_V$. Moreover the function $\gamma\mapsto \widetilde r(\gamma)$ is continuous at $\gamma=0$, with $\widetilde r(0)=2$.
\end{alem}
From $P_\gamma^n \geq \widetilde P_\gamma^n$ (since $\kappa \geq 2$), we deduce that
$r(\gamma) = r(P_\gamma) \geq r(\widetilde P_\gamma) = \widetilde r(\gamma)$.
It follows from Proposition~\ref{LEMME0} that
$\lim_{\gamma\r0_+}r(\gamma) \geq \lim_{\gamma\r0_+}r(\gamma) = 2$.
\end{proof}
\begin{proof}[Proof of Lemma~\ref{lem-kappa-2}]
Note that $\widetilde P_0 =2P$. If $f\in\cC_V$, then
$$\forall x\in\R,\quad \frac{(Pf)(x)}{V(x)} = \int_\R \chi_0(x,y)\, dy\qquad \text{with } \ \chi_0(x,y) := \frac{f(\alpha x+y)}{V(x)}\, p(y).$$
We know that $|\chi_0(x,y)| \leq \|f\|_V \big(1+|y|\big)^{r_0} p(y)$, and writing $\chi_0(x,y) := \frac{f(\alpha x+y)}{V(\alpha x+y)}\, \frac{V(\alpha x+y)}{V(x)}\, p(y)$
 proves that $\lim_{|x|\r +\infty} \chi_0(x,y) = \ell_V(f)\, |\alpha|^{r_0}\, p(y)$. It follows from Lebesgue's theorem that $\lim_{|x|\r +\infty} \frac{(Pf)(x)}{V(x)} = \ell_V(f)\, |\alpha|^{r_0}$. Thus $P(\cC_V) \subset \cC_V$ and $P$ continuously acts on $\cC_V$.

Iterating Inequality (\ref{inequality-drift}) proves that $P$ is power-bounded on $\cC_V$ (i.e.~$\sup_{n\geq 1}\|P^nV\|_V < \infty$), thus
$r(P)=1$ since $P$ is Markov. Moreover (\ref{inequality-drift}) rewrites as the following (dual) Doeblin-Fortet inequality (see the proof in \cite[p.~190]{FerHerLed13}):
\begin{equation}\label{DFgamma=0}
\forall f^*\in\cC_V^*,\quad \|P^* f^*\|_V \leq \delta  \, \|f^*\|_V + L\, \|f^*\|_\infty.
\end{equation}
Since $P$ is compact from $\cC_b$ into $\cC_V$
(apply the same argument as in Lemma~\ref{C-V-0}),
so is $P^*$ from $\cC_V^*$ into $\cC_b^*$. Then we deduce from \cite{Hen93} that, under Assumption~(\ref{moment-AR}), $P$ is a quasi-compact operator on $\cC_V$ and its essential spectral radius  $r_{ess}(P)$ satisfies the following bound (see also \cite[Sect.~8]{Wu04}): $r_{ess}(P) \leq \delta$. It follows that
\begin{equation} \label{r-ess-P}
\widetilde r(0)= r(\widetilde P_0) = 2\quad \text{and} \quad r_{ess}(\widetilde P_0) \leq 2\delta.
\end{equation}
Observe that (\ref{inequality-drift}) and Inequality $\widetilde P_\gamma V \leq 2PV$give $\widetilde P_\gamma V \leq 2\delta \, V + 2L\, \mathbf{1}_\X$, which  rewrites as the following Doeblin-Fortet inequality:
\begin{equation}\label{DFgamma=0-bis}
\forall \gamma\in[0,+\infty),\ \forall f^*\in\cC_V^*,\quad \|{\widetilde P_\gamma}^* f^*\|_V \leq 2\delta  \, \|f^*\|_V + 2L\, \|f^*\|_\infty.
\end{equation}
Using (\ref{r-ess-P}), (\ref{DFgamma=0-bis}), Corollary \ref{cor-QC-gamma}  and Lemma~\ref{lem-cont-P-gamma} (with $a=0$ et $b=1$), it follows from
Theorem~\ref{generalspectraltheorem1}
(applied with $\delta_0 = 2\delta$) that $\gamma\mapsto \widetilde r(\gamma)$ is continuous at $\gamma=0$ since $\widetilde r(0)=2 > 2\delta$.
\end{proof}

\subsection{Study of Hypothesis~\ref{hypcompl}} \label{sec-hyp-val-peri}
In this subsection we prove that Hypothesis \ref{hypcompl} holds with respect to $(J_1,\cB_1)$ with $\cB_1=\mathcal\cC_{V}$ and $J_1:=(0,\theta_1)$, where
\begin{equation} \label{theta-1}
\theta_1:=\sup\{\gamma>0\, :\, r(\gamma)>0\}.
\end{equation}
We know from Proposition~\ref{r-grand-2} that $\theta_1>0$. Since $\mathcal\cC_{V}$ is a Banach lattice, we use Proposition~\ref{pro-reduc-BL} to prove Hypothesis \ref{hypcompl}.

Observe that, for any $\gamma\in(0,\theta_1)$, $P_\gamma$ is quasi-compact on $\cC_V$ from Corollary~\ref{cor-QC-gamma} since $r(\gamma)>0$.
The other conditions of Proposition~\ref{pro-reduc-BL}
follow from Remark~\ref{rqe-dec-fl} and Lemmas~\ref{lem-val-prop-simple}-\ref{lem-val-prop-simple2} below. First we state the following.
\begin{alem} \label{dec-fl}
For any non-null $e^*\in \cC_V^*$, $e^*\geq 0$, there exists a nonnegative measure $\mu\equiv \mu_{e^*}$ on $(\R,\cX)$ such that
\begin{equation} \label{dec-fl-exp}
\forall f\in\cC_V,\quad e^*(f) = \mu\left(\frac{f}{V} - \ell_V(f)\, \mathbf{1}_\R\right) + e^*(V)\, \ell_V(f).
\end{equation}
\end{alem}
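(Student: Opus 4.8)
The plan is to reduce to the Riesz--Markov representation theorem by identifying $\cC_V$ with the space of continuous functions on the one-point compactification of $\R$. First I would introduce the linear map $T\colon\cC_V\to\mathcal{A}$, $Tf:=f/V$, where $\mathcal{A}$ denotes the space of continuous functions $g\colon\R\to\C$ for which $\lim_{x\to+\infty}g(x)$ and $\lim_{x\to-\infty}g(x)$ exist and are equal, equipped with the supremum norm. Since $V$ is continuous and strictly positive, $T$ is a bijection with inverse $g\mapsto Vg$, it is isometric ($\|Tf\|_\infty=\|f\|_V$), and $f\ge 0$ if and only if $Tf\ge 0$; also every $g\in\mathcal{A}$ is automatically bounded, and $\ell_V(f)=\lim_{|x|\to\infty}(Tf)(x)$. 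Next, $\mathcal{A}$ is canonically isometrically isomorphic to $C(\widehat\R)$, where $\widehat\R:=\R\cup\{\infty\}$ is the Alexandroff one-point compactification of $\R$ (a compact metrizable space): a function on $\R$ extends continuously to $\widehat\R$ exactly when it is continuous and admits a common limit at $\pm\infty$, and this extension is unique. I denote by $\widehat g\in C(\widehat\R)$ the extension of $g\in\mathcal{A}$, so that $\widehat{Tf}(\infty)=\ell_V(f)$.

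Then I would consider the functional $\Lambda$ on $C(\widehat\R)$ defined by $\Lambda(\widehat g):=e^*(Vg)=e^*(T^{-1}g)$. It is linear, bounded (with norm $\|e^*\|_{\cC_V^{\,*}}$), and non-negative: if $\widehat g\ge 0$ on $\widehat\R$ then $g\ge 0$ on $\R$, hence $T^{-1}g=Vg\ge 0$ in $\cC_V$, hence $\Lambda(\widehat g)=e^*(T^{-1}g)\ge 0$ by the hypothesis $e^*\ge 0$. By the Riesz--Markov theorem on the compact metrizable space $\widehat\R$, there is a finite non-negative Borel measure $\lambda$ on $\widehat\R$ with $\Lambda(h)=\int_{\widehat\R}h\,d\lambda$ for all $h\in C(\widehat\R)$; its total mass is $\lambda(\widehat\R)=\Lambda(\mathbf 1)=e^*(V)<\infty$. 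I then set $\mu:=\lambda_{|\R}$, which is a finite non-negative measure on $(\R,\cX)$, and $c:=\lambda(\{\infty\})$; evaluating $\Lambda$ on the constant function $\mathbf 1$ gives $c=\lambda(\widehat\R)-\mu(\R)=e^*(V)-\mu(\R)$.

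Finally, for $f\in\cC_V$, applying the representation to $h=\widehat{Tf}$ yields
\[
e^*(f)=\Lambda\big(\widehat{Tf}\big)=\int_\R\frac{f}{V}\,d\mu+\widehat{Tf}(\infty)\,\lambda(\{\infty\})=\mu\!\left(\frac{f}{V}\right)+\ell_V(f)\,\big(e^*(V)-\mu(\R)\big),
\]
which is exactly the claimed identity $e^*(f)=\mu\big(\tfrac{f}{V}-\ell_V(f)\,\mathbf 1_\R\big)+e^*(V)\,\ell_V(f)$. I do not expect a serious obstacle: the only points needing a little care are that $T$ maps $\cC_V$ \emph{onto} $\mathcal{A}$ and is isometric, that the extension-to-$\widehat\R$ correspondence is an isomorphism onto all of $C(\widehat\R)$ (so that $\Lambda$ is defined on the whole space and Riesz--Markov applies), and that positivity of $e^*$ is preserved under both identifications --- all routine consequences of $V$ being continuous with $V>0$.
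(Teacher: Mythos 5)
Your proof is correct and rests on the same core idea as the paper's: use the isometry $f\mapsto f/V$ to transport $e^*$ to a positive functional on the space of bounded continuous functions having a common limit at $\pm\infty$, then invoke a Riesz representation theorem and reassemble the decomposition \eqref{dec-fl-exp}. The only real difference is technical: the paper restricts the transported functional $\widetilde e^*$ to the subspace $\cC_0$ of functions vanishing at infinity, applies Riesz there to obtain $\mu$, and then recovers the $e^*(V)\,\ell_V(f)$ term by writing $g=(g-\ell(g)\mathbf 1_\R)+\ell(g)\mathbf 1_\R$; you instead pass to the one-point compactification $\widehat\R$, apply Riesz--Markov on the compact space, and split the resulting finite measure $\lambda$ into its restriction $\mu=\lambda_{|\R}$ and its atom at $\infty$. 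Both routes are standard and yield the identical formula; your version has the minor advantage of making the finiteness of $\mu$ and the bound $\mu(\R)\le e^*(V)$ entirely transparent, at the small cost of introducing $\widehat\R$.
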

\begin{arem}\label{rqe-dec-fl}
Due to Lemma~\ref{dec-fl},
Hypothesis~\ref{(B)}
is fulfilled with $J_1$ and $\cB=\cC_V$. Indeed, let $\gamma\in J_1$ and let $\phi\in\cC_V$ be non-null and non-negative. Then, we have $P_\gamma \phi > 0$ everywhere from the definition of $P$ and the strict positivity of the function $p(\cdot)$. Now prove that, if $\psi\in\cB^*\cap\ker(P_\gamma^*-r(\gamma)I)$ is non-null and non-negative, then $\psi(P_\gamma\phi)>0$, so $\psi$ is positive. Let $\gamma>0$. First observe that $\psi\neq c\, \ell_V$ for every $c\in\C$ because $r(\gamma) > 0$ and $P_\gamma^*(\ell_V) = 0$ from Lemma~\ref{C-V-0}. Second note that $\mu=0$ in (\ref{dec-fl-exp}) implies that $e^*=e^*(V)\, \ell_V$. Thus the nonnegative measure $\mu\equiv \mu_{\psi}$ associated with $\psi$ in (\ref{dec-fl-exp}) is non-null. Since $\ell_V(P_\gamma\phi)=0$ from Lemma~\ref{C-V-0}, we deduce from (\ref{dec-fl-exp}) (applied with $e^*=\psi$) and from $P_\gamma \phi > 0$ that $\psi(P_\gamma\phi) = \mu(P_\gamma\phi/V)>0$.
\end{arem}
\begin{proof}[Proof of Lemma~\ref{dec-fl}]
Let $(\cC,\|\cdot\|)$ denote the following space
$$\cC := \bigg\{ \ g : \R\r\C\  \text{ continuous }: \|g\| := \sup_{x\in\R} |g(x)| < \infty\ \text{ and }\ \lim_{|x|\r\infty} g(x)\ \text{exists in}\ \C\bigg\}.$$
For every $g\in\cC$, we set: $\ell(g) := \lim_{|x|\r\infty} g(x)$. We denote by $\cC^*$ the topological dual space of $\cC$. Let $e^*\in \cC_V^*$, $e^*\geq 0$, and let $\widetilde e^*\in\cC^*$ be defined by:
$$\forall g\in\cC,\quad \widetilde e^*(g) := e^*(gV).$$
Next let $\widetilde e^*_0$ be the restriction of $\widetilde e^*$ to $\cC_0:=\{g\in\cC : \ell(g)=0\}$. From the Riesz representation theorem, there exists a unique positive measure $\mu$ on $(\R,\cX)$ such that
$$\forall g\in\cC_0,\quad \widetilde e^*_0(g) = \mu(g) := \int_\R g\, d\mu.$$
Then, writing $g = (g-\ell(g)\, \mathbf{1}_\R)+ \ell(g)\, \mathbf{1}_\R$ for any $g\in\cC$, we obtain that
$$\widetilde e^*(g) = \mu\big(g-\ell(g)\, \mathbf{1}_\R\big) +  \widetilde e^*(\mathbf{1}_\R)\, \ell(g).$$
We conclude by observing that, for any $f\in\cC_V$, we have $e^*(f) = \widetilde e^*(f/V)$.
\end{proof}

\begin{alem} \label{lem-val-prop-simple}
If $f,g\in \cC_V$ are such that
$P_\gamma f=r(\gamma)f$ and $P_\gamma g=r(\gamma)g$ with $f>0$, then $g\in \C\cdot f$.
\end{alem}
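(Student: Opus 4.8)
The plan is to mimic the argument used for the Knudsen gas (Assertion $(v)$ of Lemma~\ref{prop1}): since $r(\gamma)>0$ for $\gamma\in J_1$, the eigenvalue $r(\gamma)$ is a simple pole of $P_\gamma$ on $\cC_V$, and I want to show the geometric eigenspace is one-dimensional. The key structural fact is that $P_\gamma$ has a strong positivity-improving property coming from the transition density $p>0$: for any non-null non-negative $h\in\cB_V$, the function $P_\gamma h$ is \emph{strictly positive everywhere} on $\R$, as already observed in Remark~\ref{rqe-dec-fl}. This will let me run a standard ``ratio'' argument.

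First I would record the positivity-improving property precisely: if $h\in\cC_V$ is real-valued, non-null and non-negative, then $(P_\gamma h)(x)=\int_\R \kappa(y)e^{-\gamma\xi(y)}h(y)\,p(y-\alpha x)\,dy>0$ for every $x\in\R$, because $\kappa\geq 2$, $p>0$, and $h$ is not a.e.\ zero (it is continuous and non-null, hence positive on a non-empty open set). Next, given $f,g\in\cC_V$ with $f>0$, $P_\gamma f=r(\gamma)f$, $P_\gamma g=r(\gamma)g$, and $r(\gamma)>0$: writing $g=\Re g+i\,\Im g$ and using that $P_\gamma$ preserves real-valuedness, both $\Re g$ and $\Im g$ are real eigenfunctions for $r(\gamma)$, so it suffices to treat real $g$. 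Since $f>0$ and $f\in\cC_V$ with $V$ coercive, and $g/f$ is continuous with $\lim_{|x|\to\infty} g(x)/f(x)=\ell_V(g)/\ell_V(f)$ finite (the denominator is nonzero because $f>0$ forces $\ell_V(f)\geq 0$; one should also argue $\ell_V(f)>0$ using positivity-improving: $f=r(\gamma)^{-1}P_\gamma f$ and the coercivity structure, or bypass this by noting $g/f$ is in any case bounded and continuous), the quantity $m:=\inf_{x\in\R}\, g(x)/f(x)$ is finite and attained in the limit or at a point. Set $h:=g-mf\geq 0$, $h\in\cC_V$, $h\geq 0$, and $P_\gamma h=r(\gamma)h$. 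If $h$ were non-null, then $h=r(\gamma)^{-1}P_\gamma h>0$ everywhere by the positivity-improving property, and moreover $\inf_x h(x)/f(x)=0$ by construction; but $h/f>0$ everywhere and $h/f\to (\ell_V(g)-m\ell_V(f))/\ell_V(f)$, so the infimum is either $0$ attained at infinity — contradicting $h>0$ in the limit only if $\ell_V(h)>0$, which I'd get again from positivity-improving applied to the tail behaviour — or attained at a finite point $x_0$ with $h(x_0)=0$, contradicting $h>0$. Hence $h\equiv 0$, i.e.\ $g=mf\in\C\cdot f$.

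The main obstacle is the delicate handling of the limit at infinity: the infimum of $g/f$ may only be ``attained'' asymptotically, so the clean finite-maximum-principle argument needs supplementing. The cleanest fix I would use is to observe that $f>0$, $f\in\cC_V$, together with $P_\gamma f=r(\gamma)f$ and the positivity-improving property, forces $\ell_V(f)>0$: indeed if $\ell_V(f)=0$ then $f\in\cC_{0,V}$, but one can show (via the explicit kernel and coercivity of $\xi$, exactly as in the proof that $P_\gamma(\cB_V)\subset\cC_{0,V}$) that this is consistent, so instead I would argue directly on $h$. Since $P_\gamma h\in\cC_{0,V}$ always (Lemma~\ref{C-V-0}), any non-negative eigenfunction $h=r(\gamma)^{-1}P_\gamma h$ with $r(\gamma)>0$ automatically lies in $\cC_{0,V}$, hence $\ell_V(h)=0$, hence $h/f\to 0$ at infinity; combined with $h>0$ everywhere (if $h\not\equiv0$), the infimum of $h/f$ is $0$ and not attained at any finite point nor at infinity in a way that forces $h=0$ there — but $h/f\to 0$ with $h/f>0$ everywhere is not itself contradictory. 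The genuine contradiction comes instead from re-examining the definition of $m$: since $g/f\to \ell_V(g)/\ell_V(f)$ and $g/f>m$ would fail to be sharp unless the infimum is attained, I take $m=\min(\inf_{\text{finite }x} g/f,\ \ell_V(g)/\ell_V(f))$; if $m=\ell_V(g)/\ell_V(f)$ is the infimum then $h=g-mf$ has $\ell_V(h)=0$ and $h\geq0$, and if $h\not\equiv 0$ then $h>0$ everywhere with $h/f\to 0$, so there is a sequence $x_n$ with $h(x_n)/f(x_n)\to 0$; now apply the eigenvalue equation at such points together with a Harnack-type lower bound $(P_\gamma h)(x)\geq c_K \|h\|$ uniformly for $x$ in compact sets — but the $x_n$ escape to infinity, so this does not immediately work either. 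The resolution I would ultimately adopt is the one already used in Lemma~\ref{prop1}$(v)$: pass to the auxiliary function, use invariance of $\pi$ (here the ergodic measure of $P$) and the contraction of the remaining operator, i.e.\ write $h$ in terms of $\beta := \ell_V(g)/\ell_V(f)$ and show $\pi(|h|)=0$ by comparing $r(\gamma)\pi(|h|)$ with $\pi(P_\gamma|h|)\le \pi(\delta_0 |h| + \dots)$ and using $r(\gamma)>\delta_0$ on $J_0$ — but on all of $J_1$ one does not have $r(\gamma)>\delta_0$, so I'd instead invoke that $r(\gamma)$ is an isolated simple eigenvalue (from quasi-compactness via Corollary~\ref{cor-QC-gamma}, which gives $r_{ess}(P_\gamma)=0<r(\gamma)$) and that positivity-improving plus Krein--Rutman-type theory (or the direct argument above localized to finite $x$ after subtracting off the correct multiple of $f$) yields $\dim\ker(P_\gamma-r(\gamma)I)=1$. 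So the honest plan is: (1) quasi-compactness gives $r(\gamma)$ isolated; (2) positivity-improving gives a strictly positive eigenfunction and, by a standard argument, simplicity of $r(\gamma)$ as an eigenvalue; (3) conclude $g\in\C\cdot f$.
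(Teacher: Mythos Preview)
Your ratio argument breaks down at a point you half-recognize but then dismiss. You write that $g/f\to \ell_V(g)/\ell_V(f)$ at infinity and that ``one should also argue $\ell_V(f)>0$''. In fact $\ell_V(f)=0$ necessarily: by Lemma~\ref{C-V-0} we have $P_\gamma(\cB_V)\subset\cC_{0,V}$, and since $f=r(\gamma)^{-1}P_\gamma f$, the eigenfunction $f$ (and likewise $g$) lies in $\cC_{0,V}$. So the limit $\ell_V(g)/\ell_V(f)$ is $0/0$, there is no reason for $g/f$ to be bounded, and your $m=\inf_x g(x)/f(x)$ need not be finite. All the subsequent attempted repairs (Harnack bounds on escaping sequences, $\pi$-contraction, Krein--Rutman hand-waving) are patching a hole that cannot be patched along these lines.

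The paper's proof sidesteps the boundedness issue entirely by evaluating at a single fixed point rather than taking an infimum. Choose $\beta:=g(0)/f(0)\in\C$ (well defined since $f(0)>0$) and set $h:=g-\beta f$, so that $h(0)=0$. Now invoke Proposition~\ref{firstorder}(b): since $P_\gamma h=r(\gamma)h$ with $|r(\gamma)|=r(\gamma)$, one gets $P_\gamma|h|=r(\gamma)|h|$. Hence $r(\gamma)|h|(0)=(P_\gamma|h|)(0)$, and the right-hand side is strictly positive whenever $|h|$ is non-null (positivity of the density $p$ and continuity of $|h|$). Since $|h|(0)=0$, this forces $|h|\equiv 0$, i.e.\ $g=\beta f$. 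The key ingredient you are missing is the use of Proposition~\ref{firstorder}(b) to pass from an arbitrary eigenfunction $h$ to the nonnegative eigenfunction $|h|$; once you have that, a single zero of $|h|$ kills it, and no global control of $g/f$ is needed.
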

\begin{proof}{}
Let $f,g\in\ker(P_\gamma - r(\gamma) I)$ with $f>0$.
Let $\beta\in\C$ be such that $h:=g - \beta f$ vanishes at $0$. Since $h\in\ker(P_\gamma - r(\gamma) I)$ we deduce from Proposition~\ref{firstorder}  that $P_\gamma |h| = r(\gamma) |h|$. Then $|h|(0)=0$, the positivity of $p(\cdot)$ and finally the continuity of $|h|$ show that  $h=0$.
\end{proof}
\begin{alem} \label{lem-val-prop-simple2}
Let $h\in\cC_V$ with $|h|>0$ and $\lambda\in\mathbb C$ be such that
$|\lambda|=1$ and $P\frac{h}{|h|}=\lambda \frac{h}{|h|}$ in $\mathbb L^1(\pi)$. Then $\lambda=1$.
\end{alem}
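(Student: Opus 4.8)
The statement asserts that $\lambda = 1$ is the only modulus-one eigenvalue for the action of $P$ on unimodular ratios $h/|h|$ with $h \in \cC_V$. The plan is to exploit the fact that the transition kernel $P$ of the linear autoregressive model has a strictly positive continuous density with respect to Lebesgue measure (hence with respect to $\pi$), as displayed in \eqref{P-AR}. Write $k := h/|h|$, so $|k| \equiv \mathbf 1_\R$ and $Pk = \lambda k$ in $\L^1(\pi)$. Taking moduli pointwise $\pi$-almost everywhere and using $|Pk| \le P|k| = P\mathbf 1_\X = \mathbf 1_\X$, we get $\mathbf 1_\X = |\lambda k| = |Pk| \le P|k| = \mathbf 1_\X$, so equality $|Pk(x)| = (P|k|)(x)$ holds for $\pi$-a.e.\ $x$. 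The first key step is to turn this equality in the triangle inequality for the integral $Pk(x) = \int_\R k(y)\, p(y-\alpha x)\, dy$ into a rigidity statement: since $p(\,\cdot\, - \alpha x) > 0$ everywhere, equality forces $y \mapsto k(y)$ to be $\pi$-a.e.\ equal to a single unit constant $c(x)$ (depending a priori on $x$) on the set where $p(y-\alpha x) > 0$, which is all of $\R$. Because this must hold for a set of $x$ of full $\pi$-measure and $k$ itself does not depend on $x$, we conclude $k$ is $\pi$-a.e.\ equal to a constant $c$ with $|c| = 1$.

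The second key step is then immediate: if $k = c$ ($\pi$-a.e.) with $|c|=1$, then $Pk = c = k$, and comparing with $Pk = \lambda k$ (and $k \not\equiv 0$) gives $\lambda = 1$. One should be slightly careful about the phrase ``in $\L^1(\pi)$'': all identities are understood modulo $\pi$-null sets, and since $\pi$ is equivalent to Lebesgue measure on $\R$ (its density being the stationary density of the AR chain, which is strictly positive — see \cite{diaco,duf}), ``$\pi$-a.e.'' and ``Lebesgue-a.e.'' coincide, so the pointwise positivity arguments go through. An alternative, cleaner route is to mimic the argument in Assertion~$(vi)$ of Lemma~\ref{prop1}: invoke Proposition~\ref{pro-reduc-BL}'s last hypothesis directly, or use the fact that $P$ itself is a Markov operator with a strictly positive kernel, so its peripheral spectrum on $\L^1(\pi)$ reduces to $\{1\}$ with the constants as the only eigenfunctions — this is a standard consequence of the strong maximum principle for positive kernels.

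The main obstacle is purely the measure-theoretic bookkeeping in the rigidity step: making precise that ``equality in the triangle inequality inside the integral, for $\pi$-a.e.\ $x$'' implies ``$k$ is $\pi$-a.e.\ constant'' requires choosing, say, two distinct values $x_1, x_2$ at which equality holds, obtaining two unit constants $c(x_1), c(x_2)$ such that $k = c(x_i)$ Lebesgue-a.e., and concluding $c(x_1) = c(x_2)$; then defining the common value $c$. This is routine but must be written carefully because $k$ is only an a.e.-defined function. Everything else — the estimate $|Pk| \le \mathbf 1_\X$, the positivity of $p$, and the final identification $\lambda = 1$ — is direct.
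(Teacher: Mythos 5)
Your proposal is correct, but it takes a genuinely different route from the paper's. The paper's proof is a one-liner: it observes that $k := h/|h|$ lies in $\cC_b \hookrightarrow \cB_V$ (since $h$ is continuous and $|h|>0$, so $k$ is continuous and bounded), and then simply invokes the $V$-geometric ergodicity of the linear AR chain (citing Meyn--Tweedie), which gives that $1$ is the only peripheral eigenvalue of $P$ on $\cB_V$ — equivalently, the iterates $P^n k = \lambda^n k$ must converge to $\pi(k)\,\mathbf 1_\X$, forcing $\lambda = 1$. Your argument instead works directly at the level of the kernel: from $|Pk| \le P|k| = \mathbf 1_\X$ you extract equality in the triangle inequality inside $\int_\R k(y)\,p(y-\alpha x)\,dy$ for $\pi$-a.e.\ $x$, and then the strict positivity of $p$ (plus equivalence of $\pi$ with Lebesgue) forces $k$ to be a.e.\ constant, whence $\lambda = 1$. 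The paper's route buys brevity at the price of invoking an external ergodicity result and the observation that $k\in\cB_V$; your route is self-contained, needs no reference to the weighted space $\cB_V$ at all, and transparently shows which structural feature of the model is actually doing the work (everywhere-positive transition density), but it requires the measure-theoretic bookkeeping you correctly flag — in particular, noting that a single $x$ at which equality holds already suffices to pin down the constant. Both arguments are complete and valid; if anything, your positivity argument is closer in spirit to the proof of Assertion~$(vi)$ of Lemma~\ref{prop1} (the Knudsen-gas analogue), where an explicit convexity/equality argument is used rather than a citation.
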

\begin{proof}{}
Observe that $\frac h{|h|}$ is in $\mathcal C_b$ so in $\cB_V$.
But it is known from \cite{MeynTweedie09} that $(X_n)_n$
is $V$-geometrically ergodic, so $\lambda$=1.
\end{proof}

%
\subsection{Proof of Theorem \ref{proprieteAR}}
To prove Assertion $a)$ of Theorem \ref{proprieteAR}, we apply Theorem~\ref
{cor-th1}. Let $\gamma_1$ be such that $0<\gamma_1<\theta_1$, with $\theta_1$ given in (\ref{theta-1}). Let $\varepsilon\in(0,1)$. Then, from the results of the previous subsections, the assumptions of  Theorem \ref{generalspectraltheorem1} hold with $J=(\gamma_1,\theta_1)$, $\cB_0=\cC_b$, $\cB_1=\cC_V$, $\delta_0= \varepsilon$, thus with
$J_0:=\{\gamma\in J : r(\gamma)>\varepsilon\}$. A first consequence is that $\theta_1=+\infty$ from Lemma \ref{rnonnul} and Remark~\ref{rem-C0}. Moreover, for every $\gamma\in J_0$, the map $\mu_{\gamma} : f\mapsto \mu(\kappa e^{-\gamma\xi}f)$ is in $(\cC_V)^*$ from $\mu(V) < \infty$, and we have for every $\gamma,\gamma'\in J_0$ and for every $f\in\cC_V$
$$\left|\mu_{\gamma}(f) - \mu_{\gamma'}(f)\right| \leq \|\kappa\|_\infty\|f\|_V\mu\left(\big|e^{-\gamma\xi} - e^{-\gamma\xi}\big|V\right)$$
so that the norm of $(\mu_{\gamma} - \mu_{\gamma'})$ in $(\cC_V)^*$ is less than $\|\kappa\|_\infty\, \mu(|e^{-\gamma\xi} - e^{-\gamma\xi}|V)$ which converges to $0$ as $\gamma'\r\gamma$ from Lebesgue's theorem. Finally note that the additional condition in Assertion~$(ii)$ of Proposition~\ref{pro-B-gamma} clearly holds from the form of $P$ (see~(\ref{P-AR})) and from the positivity of the density $p$.
Thus \eqref{def-B} holds.
Since $\gamma_1$ and $\varepsilon$ are arbitrarily small, we deduce from Theorem~\ref{cor-th1} that $(S_n,\kappa(X_n))_n$ is multiplicatively ergodic on $(0,+\infty)$ with $\rho_Y(\gamma)=r(\gamma)>0$
on $(0,+\infty)$. We have proved Assertion~$a)$ of Theorem \ref{proprieteAR}.

For Assertion~$b)$, observe that $\Leb(\xi=0)=0$ implies that $P_{\infty}=0$, in particular the spectral radius $r(\infty)$ of $P_{\infty}$ is zero.
Then Lemma~\ref{prop-1-AR} and
Theorem~\ref{generalspectraltheorem1}
give $\lim_{\gamma\rightarrow +\infty}r(\gamma)=r(\infty)=0$.
Consequently $\nu$ is finite and satisfies \eqref{P1bis}, and so \eqref{nuDEF}, with respect to $\P_\mu$, provided that $\mu$ is a probability distribution belonging to $\mathcal C_V^*$.

Finally, to
prove Part~(3) of Theorem \ref{proprieteAR}, we assume now that $\xi\in\cB_{V^{\frac 1{1+\tau}}}$ for some $\tau>0$
and that $[\xi=0]$ has Lebesgue measure 0. Consider any
$$0<a_0<a_1<a_1+\frac{1}{1+\tau}<a_2<a_3=1.$$
Let us prove that the additional assumptions of Theorem \ref{generalspectraltheorem2}
hold true with $\cB_i:=\mathcal \cC_{V^{a_i}}$
for $i\in\{0,1,2,3\}$. Let $i\in\{0,1,2\}$.
The fact that $(P_\gamma)_\gamma$ satisfies the conditions of Hypothesis \ref{hypKL}* on $(J,\cB_{i},\cB_{i+1})$  comes from Lemma \ref{lem-cont-P-gamma} and
Remark \ref{cor-QC-gammabis}.
The fact that Hypothesis \ref{hypcompl} is satisfied on $\cB_{i+1}$
follows from Proposition~\ref{pro-reduc-BL}: apply the results of Subsection~\ref{sec-hyp-val-peri}
with $V^{a_{i+1}}$ in place of $V$.
Observe that
$$\|\xi f\|_{\cB_2}=\sup\frac{\|\xi f\|}{V^{a_2}}
      \le\sup\frac {\|\xi \|}{V^{\frac 1{1+\tau}}}\, \sup\frac{\| f\|}{V^{a_1}}
      \le \|f\|_{\cB_1}\sup \frac{\|\xi \|}{V^{\frac 1{1+\tau}}}.$$
Hence we have proved that $f\mapsto \xi f$ is in $\cL(\cB_1,\cB_2)$.
The fact that $\gamma\mapsto P_\gamma$ is $C^1$ from $(0,+\infty)$
to $\cL(\cB_1,\cB_2)$ and that $P'_\gamma:=P_\gamma(-\xi f)$
comes from the proof of \cite[Lemma 10.4]{HerPen10}.
Finally, due to Proposition~\ref{pro-deri-stric-pos}, we have $r'(\nu)<0$.
We conclude by Theorem~\ref{cor-th2}.

\section{Proof of Theorems~\ref{generalspectraltheorem1} and \ref{generalspectraltheorem2}} \label{proofoperator}

Let us state the Keller-Liverani perturbation theorem.
\begin{atheo}[Keller-Liverani Perturbation Theorem \cite{KelLiv99,Liv03,Ferre}]
\label{thmkellerliverani}
Let $(\mathcal X_0,\|\cdot\|_{\cX_0})$ be a Banach space and $(\mathcal X_1,\|\cdot\|_{\mathcal X_1})$ be a normed space such that $\mathcal X_0\hookrightarrow \mathcal X_1$. Let $J\subset[-\infty,+\infty]$ be an interval and let $(Q(t))_{t\in J}$ be a family of operators. We assume that
\begin{itemize}
\item For every $t\in J$, $Q(t)\in \mathcal L(\mathcal X_0)\cap\mathcal L(\mathcal X_1)$,
\item $t\mapsto Q(t)$ is a continuous map from $J$ in $\mathcal L( \mathcal X_0,\mathcal X_1)$,
\item There exist $\delta_0>0$, $c_0,M_0>0$ such that for every $t\in J$
$$\forall f\in\mathcal X_0,\ \forall n\in \mathbb Z_+,
    \quad
       \|(Q(t))^n f\|_{\mathcal X_0}\le c_0\big(\delta_0^n\|f\|_{\cX_0}
           + M_0^n\Vert  f\Vert_{\cX_1}\big).$$
\end{itemize}
Let $t_0\in J$. Then, for every $\varepsilon>0$ and every $\delta>\delta_0$, there exists $I_0\subset J$ containing $t_0$ such that
$$\sup_{t\in I_0,\, z\in\mathcal D(\delta,\varepsilon)}\|(zI-Q(t))^{-1}\|_{\cX_0}<\infty,$$
with $\mathcal D(\delta,\varepsilon):=\{z\in \mathbb C,\
       d(z,\sigma(Q(t_0)_{|\cX_0}))>\varepsilon,\ |z|>\delta\}$.

Furthermore the map $t\mapsto(zI-Q(t))^{-1}$ from $J$ to
$\mathcal L(\mathcal X_0,\mathcal X_1)$ is continuous at $t_0$ in a uniform way with respect to $z\in\mathcal D(\delta,\varepsilon)$, i.e.
$$\lim_{t\rightarrow t_0,\, t\in J} \sup\left\{\|(zI-Q(t))^{-1}-(zI-Q(t_0))^{-1}\|_{\cX_0,\cX_1}\ :\ z\in \mathcal D(\delta,\varepsilon)\right\}=0.$$
In particular, $\limsup_{t\rightarrow t_0}
r((Q(t))_{|\mathcal X_0})\le \max(\delta_0,r((Q(t_0))_{|\mathcal X_0}))$.
Finally the map
$t\mapsto r((Q(t))_{|\mathcal X_0})$ is continuous
on $\{t\in J : r((Q(t))_{|\cX_0}) > \delta_0\ge r_{ess}((Q(t))_{|\cX_0})\}$.
\end{atheo}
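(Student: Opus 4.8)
This final statement is the Keller--Liverani perturbation theorem, quoted here from \cite{KelLiv99,Liv03,Ferre}, so it is cited rather than reproved; but I will indicate how its proof runs. The first thing to notice is \emph{why one cannot argue naively}: the obvious idea of writing $(zI-Q(t))^{-1}=(zI-Q(t_0))^{-1}\sum_{k\ge 0}\bigl((Q(t_0)-Q(t))(zI-Q(t_0))^{-1}\bigr)^{k}$ fails, because $Q(t)-Q(t_0)$ is small only as an element of $\cL(\cX_0,\cX_1)$, while $(zI-Q(t_0))^{-1}$ is controlled only on $\cX_0$; the composition $(Q(t_0)-Q(t))(zI-Q(t_0))^{-1}$ is then small in $\cL(\cX_0,\cX_1)$ but not in $\cL(\cX_0)$, and the Neumann series does not converge in $\cL(\cX_0)$. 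Circumventing this is the entire content of the theorem.

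I would begin with two preliminary reductions. First, the uniform Doeblin--Fortet inequality $\|Q(t)^{n}f\|_{\cX_0}\le c_0(\delta_0^{n}\|f\|_{\cX_0}+M_0^{n}\|f\|_{\cX_1})$ gives, via the Hennion/Nussbaum formula, $r_{ess}(Q(t)_{|\cX_0})\le\delta_0$ and $r(Q(t)_{|\cX_0})\le\max(\delta_0,M_0)$ uniformly in $t$; in particular every $z$ with $|z|>\delta_0$ outside $\sigma(Q(t_0)_{|\cX_0})$ lies in the resolvent set of $Q(t_0)$, and $\|(zI-Q(t_0))^{-1}\|_{\cX_0}$ is bounded on $\mathcal D(\delta,\varepsilon)$ (the region splits into a compact piece, where the resolvent is continuous, and a neighbourhood of $\infty$, where it decays like $1/|z|$). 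Second, an \emph{a priori gain-of-regularity estimate}: for $g=(zI-Q(t))^{-1}f$ one has the identity $g=\sum_{k=0}^{N-1}z^{-k-1}Q(t)^{k}f+z^{-N}Q(t)^{N}g$; bounding each term in $\|\cdot\|_{\cX_0}$ by Doeblin--Fortet and choosing $N=N(\delta)$ with $c_0(\delta_0/\delta)^{N}\le 1/2$ lets one absorb the $\|g\|_{\cX_0}$ term and obtain $\|g\|_{\cX_0}\le A\|f\|_{\cX_0}+B\|g\|_{\cX_1}$ with $A,B$ depending only on $\delta,c_0,\delta_0,M_0$, not on $t$. So it suffices to control the weaker norm $\|g\|_{\cX_1}$.

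The heart of the argument, and the step I expect to be the main obstacle, is the bootstrap that does this last control. One compares with $t_0$ through $g=(zI-Q(t_0))^{-1}f+(zI-Q(t_0))^{-1}(Q(t)-Q(t_0))g$: the first term has $\cX_0$-norm at most $K_0\|f\|_{\cX_0}$, and $(Q(t)-Q(t_0))g$ lies in $\cX_0$ but is small only in $\cX_1$, with $\|(Q(t)-Q(t_0))g\|_{\cX_1}\le\eta(t)\|g\|_{\cX_0}$ where $\eta(t):=|||Q(t)-Q(t_0)|||\to 0$. Since $(zI-Q(t_0))^{-1}$ need not be controlled on $\cX_1$, smallness cannot simply be propagated; Keller and Liverani instead iterate this comparison identity together with the a priori estimate above a bounded number $m=m(\delta)$ of times, at each stage trading a factor $\eta(t)$ against the uniform constants $A,B,K_0,N$ and re-estimating the remainder by the resolvent equation. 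Choosing $m$ large and then $t$ close enough to $t_0$ that $\eta(t)$ is small, one concludes that $(zI-Q(t))$ is invertible on $\cX_0$ for every $z\in\mathcal D(\delta,\varepsilon)$ and that $\sup_{t\in I_0,\,z\in\mathcal D(\delta,\varepsilon)}\|(zI-Q(t))^{-1}\|_{\cX_0}<\infty$. Keeping precise track of which space each quantity is small in, through this interlaced induction, is the delicate point.

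Finally I would read off the remaining assertions. Feeding the uniform resolvent bound back into the comparison identity yields $\|(zI-Q(t))^{-1}-(zI-Q(t_0))^{-1}\|_{\cX_0,\cX_1}\le C\,\eta(t)\to 0$ uniformly over $z\in\mathcal D(\delta,\varepsilon)$, which is the stated continuity of $t\mapsto(zI-Q(t))^{-1}$ in $\cL(\cX_0,\cX_1)$. Upper semicontinuity of the spectral radius follows at once: if $|z|>\max(\delta_0,r(Q(t_0)_{|\cX_0}))$, then for suitable $\delta\in(\delta_0,|z|)$ and small $\varepsilon$ one has $z\in\mathcal D(\delta,\varepsilon)$, so $z$ remains in the resolvent set of $Q(t)$ for $t$ near $t_0$, whence $\limsup_{t\to t_0}r(Q(t)_{|\cX_0})\le\max(\delta_0,r(Q(t_0)_{|\cX_0}))$. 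For the full continuity of $r$ on the set where $r(Q(t)_{|\cX_0})>\delta_0\ge r_{ess}(Q(t)_{|\cX_0})$, one adds lower semicontinuity: there the peripheral spectrum of $Q(t_0)$ consists of finitely many eigenvalues of finite multiplicity, and the spectral projection $\Pi(t)=\frac{1}{2\pi i}\oint_{\Gamma}(wI-Q(t))^{-1}\,dw$ along a contour $\Gamma$ enclosing it is well defined for $t$ near $t_0$ (integrand uniformly bounded by the previous step) and converges to $\Pi(t_0)$ in $\cL(\cX_0,\cX_1)$; a finite-dimensional reduction on the ranges then forces the peripheral eigenvalues of $Q(t)$ to cluster at those of $Q(t_0)$, giving $\liminf_{t\to t_0}r(Q(t)_{|\cX_0})\ge r(Q(t_0)_{|\cX_0})$ on that set, and combining the two semicontinuities gives continuity.
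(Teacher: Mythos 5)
The paper quotes this result verbatim from Keller--Liverani (with \cite{KelLiv99,Liv03,Ferre}) and does not reprove it, so there is no "paper's proof" to compare against; you correctly identify it as a cited theorem and handle it accordingly. Your accompanying sketch is a faithful outline of the Keller--Liverani argument (the a priori gain-of-regularity estimate, the bootstrap interlacing the $\cX_0$- and $\cX_1$-norms, and the two semicontinuities of $r$), and its unavoidable vagueness at the bootstrap step is fine given that the theorem is being quoted rather than proved.
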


We use the notations of Section \ref{nota} and identify $(X_n)_n$ with the canonical Markov chain. From now on, to simplify notations, we write $R_z(\gamma):= (zI-P_\gamma)^{-1}$ for the resolvent when it is well defined. Recall that $J_0:=\{t\in J : r(\gamma)>\delta_0\}$.

\subsection{Proof of Theorem~\ref{generalspectraltheorem1} under Hypothesis \ref{hypKL}} \label{ap-proof-general-th}
%
Here we assume that $(P_\gamma)_{\gamma}$ satisfies Hypothesis~\ref{hypKL} with $(J,\cB_0,\cB_1)$ and that Hypothesis~\ref{hypcompl} is fulfilled on $J_0$ with $\cB :=\cB_0$. The property (\ref{thmkellerliverani1}) and the continuity on $J_0$ of the function $\gamma\mapsto r(\gamma) := r((P_\gamma)_{|\cB_0})$ follow from Theorem \ref{thmkellerliverani}. Moreover we deduce from Hypothesis~\ref{hypcompl} on $J_0$ with $\cB :=\cB_0$ that (\ref{sup-vit-ponctuel}) holds. It remains to prove that, if $K$  is a compact subset of $J_0$, then the constants $\theta_\gamma$ and $M_\gamma$ and  in (\ref{sup-vit-ponctuel}) are uniformly bounded by some $\theta_K\in(0,1)$ and $M_K\in(0,+\infty)$, and that $\gamma\mapsto \Pi_\gamma$ is continuous from $J_0$ to $\mathcal L(\cB_0,\cB_1)$. To that effect we use below the spectral definition of $\Pi_\gamma$.

Let $\chi: J_0 \rightarrow (0,+\infty)$ be defined by
$\chi(\gamma) := \max\big(\delta_0,\lambda(\gamma))$,
where we have set $\lambda(\gamma) :=
\max\{|\lambda| : \lambda\in\sigma(P_{\gamma|\cB_0})\setminus\{r(\gamma)\}\}$.
Due to Theorem \ref{thmkellerliverani}, $\chi$ is continuous on $J_0$.
Let $K$ be a compact subset of $J_0$. We set $\theta := \max_{K} \frac{\chi}{r}$. Since $\chi(\gamma) < r(\gamma)$ for every $\gamma\in K$ and since $r(\cdot)$ and $\chi(\cdot)$ are continuous, we conclude that $\theta\in(0,1)$. Next we consider any $\eta>0$ such that $\theta + 2\eta <1$. Next let us construct the map
$\gamma\mapsto \Pi_\gamma$ from $K$ to $\cL(\cB_0)$. Let $\gamma_0\in K$. Since $r$ is continuous on $K$,
there exists $\varepsilon>0$ such that, for every $\gamma\in K$ such that $|\gamma-\gamma_0|\le\varepsilon$,
we have $|r(\gamma)-r(\gamma_0)|<\eta r(\gamma_0)$.
Let us write $K(\gamma_0)$ for the set of  $\gamma\in K$
such that $|\gamma-\gamma_0|\le\varepsilon$.
Observe that, for any $\gamma\in K(\gamma_0)$,
$$\chi(\gamma)\le\theta r(\gamma) < \theta(1+\eta)r(\gamma_0)
 < (\theta +\eta)r(\gamma_0) <(1 -\eta)r(\gamma_0)  $$
and so the eigenprojector $\Pi_\gamma$ on $\ker (P_\gamma-
r(\gamma)I)$ can be defined by
\begin{equation}\label{formulaCauchy1}
\Pi_\gamma = \frac{1}{2i\pi}\oint_{\Gamma_1(\gamma_0)} R_z(\gamma)\, dz,
\end{equation}
where $\Gamma_1(\gamma_0)$ is the oriented circle centered on $r(\gamma_0)$
with radius $\eta\, r(\gamma_0)$. Due to Theorem
\ref{thmkellerliverani}, $\gamma\mapsto\Pi_\gamma$ is well
defined from $K(\gamma_0)$ to $\cL(\cB_0)$ and is continuous
from $K(\gamma_0)$ to $\cL(\cB_0,\cB_1)$.

Now, for every $\gamma\in K$, we define the oriented
circle $ \Gamma_0(\gamma)  := \big\{z\in\C : |z| = (\theta + \eta)\, r(\gamma)\big\}$.
By definition of $\theta$, for every $\gamma\in K$, we have $\chi(\gamma)\leq \theta\,  r(\gamma)$ and so
$\chi(\gamma) < (\theta + \eta)\,  r(\gamma)< r(\gamma)$.
Hence, by definition of $\chi(\gamma)$,  $R_z(\gamma)$ is well-defined in $\cL(\cB_0)$ for every $\gamma\in K$ and $z\in\Gamma_0(\gamma)$. From spectral theory, it comes that
\begin{equation}\label{formulaCauchy}
N_\gamma^n:=P_\gamma^n - r(\gamma)^n\Pi_\gamma = \frac{1}{2i\pi}\oint_{\Gamma_0(\gamma)} z^n\, R_z(\gamma)\, dz
\end{equation}
and so
\begin{equation}\label{ineg-norme}
\|P_\gamma^n - r(\gamma)^n\Pi_\gamma\|_{\cB_0} \leq M_\gamma\, \big((\theta + \eta)\, r(\gamma)\big)^{n+1}\quad \text{with}\quad M_\gamma := \sup_{|z| = (\theta + \eta)\, r(\gamma)} \|R_z(\gamma)\|_{\cB_0}.
\end{equation}
We have to prove
that
\begin{equation} \label{sup-M}
M_K := \sup_{\gamma\in K} M_\gamma < \infty.
\end{equation}
Let $\gamma_0\in K$. Since $\gamma\mapsto r(\gamma)$ is continuous at $\gamma_0$, there exists $\alpha\equiv\alpha(\gamma_0)>0$ such that, for every $ \gamma\in K$ such that $|\gamma-\gamma_0| < \alpha$, we have
$$\frac{\theta + \frac{\eta}{2}}{\theta + \eta}\, r(\gamma_0) < r(\gamma) <
\frac{\theta + \frac{3\eta}{2}}{\theta + \eta}\, r(\gamma_0).$$
Set  $\delta := \frac{\eta}{2} \, r(\gamma_0)$.
If $|\gamma-\gamma_0| < \alpha$ and if $|z|=(\theta + \eta)\, r(\gamma)$, we obtain  since $\delta_0\leq\chi(\gamma_0) \leq \theta\,  r(\gamma_0)$ and $\theta + 2\eta <1$:
$$\delta_0+\delta \leq \chi(\gamma_0) +  \delta \leq \big(\theta + \frac{\eta}{2}\big) \, r(\gamma_0) < |z| < \big(\theta + \frac{3\eta}{2}\big) \, r(\gamma_0) <
r(\gamma_0)- \delta.$$
From the previous inequalities, let us just keep in mind that
$\chi(\gamma_0) +  \delta < |z| <  r(\gamma_0)- \delta$. Then, by definition of $\chi(\gamma_0)$, we conclude that every complex number $z$
such that $|z| = (\theta + \eta)\, r(\gamma)$ satisfies
$$|z| > \delta_0+ \delta\ \ \text{and}\ \ d\big(z,\sigma(Q(\gamma_0))\big) > \delta.$$
Hence, up to a change of $\alpha$, due to Theorem \ref{thmkellerliverani}, we obtain that
$$\sup_{\gamma>0\,:\,|\gamma-\gamma_0| < \alpha} M_\gamma = \sup\left\{ \|R_z(\gamma)\|_{\cB_0}\ :\ |\gamma-\gamma_0| < \alpha,\ |z| = (\theta + \eta)\, r(\gamma)\right\} < \infty.$$
By a standard compacity argument, we have proved
(\ref{sup-M}). Consequently, with $\theta_K := \theta + \eta$, we deduce from \eqref{ineg-norme} that
$$\|P_\gamma^n - r(\gamma)^n\Pi_\gamma\|_{\cB_0} \leq M_K\, \big(\theta_K\, r(\gamma)\big)^n$$
from which we derive \eqref{sup-vit}. The proof of Theorem~\ref{generalspectraltheorem1} under Hypothesis \ref{hypKL} is achieved.

\subsection{Proof of Theorem \ref{generalspectraltheorem2} under Hypothesis \ref{hypKL}}
First we prove the following lemma.
\begin{alem} \label{rad-egaux}
For every  $\gamma\in J_0$ and for $i=1,2$, the spectral radius of $P_{\gamma| \cB_i}$ is equal to $r(\gamma):=r\big(P_{\gamma| \cB_0}\big)$.
\end{alem}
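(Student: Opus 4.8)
The plan is to prove Lemma~\ref{rad-egaux} by exploiting the chain of continuous injections $\cB_0\hookrightarrow\cB_1\hookrightarrow\cB_2\hookrightarrow\cB_3$ together with the Doeblin--Fortet inequalities of Hypothesis~\ref{hypKL} and the quasi-compactness/peripheral-spectrum structure already granted by Hypothesis~\ref{hypcompl}. Fix $\gamma\in J_0$ and write $r_i(\gamma):=r\big(P_{\gamma|\cB_i}\big)$ for $i=0,1,2,3$. First I would observe that, because $\cB_i\hookrightarrow\cB_{i+1}$ and $P_\gamma$ acts continuously on each $\cB_i$, any eigenvalue of $P_{\gamma|\cB_i}$ with eigenvector $f\in\cB_i$ is also an eigenvalue of $P_{\gamma|\cB_{i+1}}$ (the eigenvector survives the injection, and $P_\gamma f$ is computed the same way on both spaces, being ultimately an integral against $P(x,dy)$). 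In particular the dominant eigenvalue $r(\gamma)=r_0(\gamma)$ of $P_{\gamma|\cB_0}$, which under Hypothesis~\ref{hypcompl} is a genuine eigenvalue with eigenfunction $\widehat\phi_\gamma\in\cB_0$ (Proposition~\ref{pro-phi-pi}), is an eigenvalue of $P_{\gamma|\cB_i}$ for every $i$; hence $r_i(\gamma)\ge r_0(\gamma)$ for all $i$.

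The reverse inequality $r_i(\gamma)\le r_0(\gamma)$ is the substantive half. Here I would use the Doeblin--Fortet bound (\ref{D-F-direct}) applied on each pair $(\cB_i,\cB_{i+1})$: for $f\in\cB_i$,
$$\|P_\gamma^n f\|_{\cB_i}\le c_0\big(\delta_0^n\|f\|_{\cB_i}+M^n\|f\|_{\cB_{i+1}}\big),$$
which by the standard Hennion/Nussbaum argument gives $r_{ess}\big(P_{\gamma|\cB_i}\big)\le\delta_0$. Since $\gamma\in J_0$ means $r(\gamma)=r_0(\gamma)>\delta_0$, and since $r_i(\gamma)\ge r_0(\gamma)>\delta_0\ge r_{ess}(P_{\gamma|\cB_i})$, the peripheral spectrum of $P_{\gamma|\cB_i}$ above $\delta_0$ consists of finitely many eigenvalues of finite multiplicity. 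Now I would argue that any such eigenvalue $\lambda$ of $P_{\gamma|\cB_i}$ with $|\lambda|>\delta_0$ is already an eigenvalue of $P_{\gamma|\cB_0}$: this is the delicate point. One way is downward bootstrapping — an eigenfunction $f\in\cB_i$ with $P_\gamma f=\lambda f$ and $|\lambda|>\delta_0$ satisfies $f=\lambda^{-n}P_\gamma^n f$, and iterating the Doeblin--Fortet inequality in the \emph{other} direction (from $\cB_{i}$ down to $\cB_{i-1}$, using that $P_\gamma$ also maps $\cB_{i-1}$ into itself and that the relevant spaces all sit inside $\L^1(\pi)$) forces $f\in\cB_{i-1}$, and then by induction $f\in\cB_0$. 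Consequently $\sigma\big(P_{\gamma|\cB_i}\big)\cap\{|z|>\delta_0\}\subset\sigma\big(P_{\gamma|\cB_0}\big)$, so $r_i(\gamma)=\max\big(\delta_0,\ \max\{|\lambda|:\lambda\in\sigma(P_{\gamma|\cB_0}),|\lambda|>\delta_0\}\big)$; combined with $r_i(\gamma)\ge r_0(\gamma)>\delta_0$ this yields $r_i(\gamma)=r_0(\gamma)$.

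The main obstacle I anticipate is precisely this spectral-inclusion step: making rigorous that peripheral eigenfunctions living a priori only in the larger space $\cB_i$ can be ``lifted'' back into $\cB_0$. In the reflexive / $\L^p$ setting one can instead pass to adjoints and use (\ref{D-F-dual})-type bounds, but under Hypothesis~\ref{hypKL} (as opposed to~\ref{hypKL}*) the clean route is the bootstrapping just sketched, relying on the fact that all $\cB_i$ embed in $\L^1(\pi)$ and that $P_\gamma$ is a bona fide integral kernel, so the value $P_\gamma^n f$ is unambiguous and the Doeblin--Fortet inequalities can be chained in either direction. An alternative, slightly softer argument avoids eigenfunctions entirely: apply the Keller--Liverani framework / Theorem~\ref{thmkellerliverani} not only to the pair $(\cB_0,\cB_3)$ but to each pair $(\cB_i,\cB_{i+1})$, obtaining that $r_i(\gamma)$ coincides with $\lim_n\big(\pi(|P_\gamma^n\mathbf 1_\X|)\big)^{1/n}$ or an analogous quasi-compactness-invariant quantity that manifestly does not depend on $i$ — this mirrors the statement of Corollary~\ref{cor-plus-esp}, which already gives $r(P_{\gamma|\cB_1})=r(P_{\gamma|\cB_2})$ when the Banach-lattice hypotheses hold, so in the present proof I would either invoke that corollary directly (the spaces $\cC_{V^{a}}$ and $\L^a(\pi)$ of our two examples are Banach lattices satisfying Hypotheses~\ref{(A)}--\ref{(B)}) or reproduce its short argument verbatim in the abstract setting. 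Either way the conclusion $r_1(\gamma)=r_2(\gamma)=r(\gamma)$ follows for all $\gamma\in J_0$.
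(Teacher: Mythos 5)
Your primary argument (the ``downward bootstrapping'' of peripheral eigenfunctions from $\cB_i$ into $\cB_{i-1}$) has a genuine gap at exactly the point you flag. The Doeblin--Fortet inequality \eqref{D-F-direct} for the pair $(\cB_{i-1},\cB_i)$ is a bound on $\|P_\gamma^n f\|_{\cB_{i-1}}$ for $f$ \emph{already} in $\cB_{i-1}$; it says nothing about $P_\gamma^n f$ when $f\in\cB_i\setminus\cB_{i-1}$, and there is no ``other direction'' in which to iterate it. More fundamentally, nothing in Hypothesis~\ref{hypKL} forces $P_\gamma$ to map $\cB_i$ into the smaller space $\cB_{i-1}$, so a peripheral eigenfunction living in $\cB_i$ need not descend to $\cB_0$. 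The entire point of the Keller--Liverani setup is to dispense with precisely this kind of a~priori regularity of the leading eigenfunction, so an argument that re-introduces it cannot be ``for free''.

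Your alternative argument, however, is exactly the paper's proof. The paper applies Theorem~\ref{generalspectraltheorem1} to each pair $(\cB_i,\cB_{i+1})$: the resulting estimate \eqref{sup-vit}, applied to $f=\mathbf 1_\X$ and paired against $\pi$ (recall $\cB_{i}\hookrightarrow\L^1(\pi)$), yields $\pi(P_\gamma^n\mathbf 1_\X)\sim c_i\, r_i(\gamma)^n$ with $c_i=\pi(\Pi_{\gamma,i}\mathbf 1_\X)>0$. Since the scalar sequence $\pi(P_\gamma^n\mathbf 1_\X)$ does not depend on $i$, the growth rates $r_i(\gamma)$ must coincide. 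This is the short proof the paper actually gives for Lemma~\ref{rad-egaux}, and it is the abstract version of the argument behind Corollary~\ref{cor-plus-esp} that you cite. So if you discard the bootstrapping sketch and commit to the scalar-sequence route, your proposal matches the paper's proof; as written, the first half would not survive and only the alternative is sound.
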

\begin{proof}
For $i=0,1,2$ set $r_i(\gamma):=r((P_\gamma)_{|\cB_i})$.
Due to Theorem \ref{generalspectraltheorem1} applied to $(P_\gamma,J,\cB_i,\cB_{i+1})$, there exists $c_i>0$
such that
$  \pi(P_\gamma^n\mathbf{1}_\X)\sim  c_i \, r_i(\gamma)^n $ as $n$ goes to infinity.
This proves the equality of the spectral radius.
\end{proof}
\begin{proof}[Proof of Theorem \ref{generalspectraltheorem2} under Hypothesis \ref{hypKL}]
We define $\chi_i$ as
$\chi$ in the proof of Theorem \ref{generalspectraltheorem1} for each $\cB_i$ ($i=0,1,2$). We define now $\chi:=\max(\chi_0,\chi_1,\chi_2)$. Let us prove the differentiability of $r$ and $\Pi$ on $J_0$. Let  $\gamma_0 \in J_0$. Let $\eta>0$ be such that
$r(\gamma_0)>\chi(\gamma_0)+2\eta$ and let $\varepsilon>0$
be such that for every $\gamma\in J_0$ satisfying $|\gamma-\gamma_0|<\varepsilon$, we have
$r(\gamma)>r(\gamma_0)-\eta>\chi(\gamma_0)+\eta>\chi(\gamma)$.
We set
$I_0 := J_0\cap (\gamma_0-\varepsilon,\gamma_0-\varepsilon)$ and
\begin{equation} \label{D0}
\cD_0:=\{z\in\C\, :\, \chi(\gamma_0)+\eta<|z|<r(\gamma_0)-\eta\} \cup \{z\in\C : |z-r(\gamma_0)| = \eta\}.
\end{equation}
Due to the hypotheses of Theorem \ref{generalspectraltheorem2} and to an easy adaptation of \cite[Lemma A.2]{HerPen10} (see Remark~\ref{KL-explication}), we obtain that, for every $z\in\cD_0$, the map $\gamma\mapsto R_z(\gamma)$ is $C^1$ from
$I_0$ to $\mathcal L(\cB_0,\cB_3)$ with $R'_z(\gamma)=R_{z}(\gamma)P'_\gamma R_{z}(\gamma)$
and
\begin{equation} \label{deri-unif}
\lim_{h\rightarrow 0}
    \sup_{z\in\cD_0} \frac{\|R_z(\gamma_0+h)-R_z(\gamma_0)-hR'_z(\gamma_0)\|_{{\cB}_0,{\cB}_3}}{|h|}=0.
\end{equation}
Moreover, for every $\gamma\in I_0$, we deduce from spectral theory that
$$ \Pi_\gamma=\frac 1{2i\pi }\oint_{\Gamma_1} R_z(\gamma)\, dz\quad\mbox{and}\quad
     N_\gamma=\frac 1{2i\pi}
  \oint_{\Gamma_0}z R_z(\gamma)\, dz,$$
where $\Gamma_1$ is the oriented circle centered at $r(\gamma_0)$ with radius $\eta$ and $\Gamma_0$ is the oriented circle centered at $0$ with some radius
$\vartheta_0$ satisfying $\chi(\gamma_0)+\eta<\vartheta_0<r(\gamma_0)-\eta$.
Thus $\gamma\mapsto\Pi_\gamma$ and $\gamma\mapsto N_\gamma$ are
$C^1$-smooth from $J_0$ to $\cL(\cB_0,\cB_3)$.
Since $1_\X\in\cB_0$ by hypothesis this implies the continuous differentiability of $\gamma\mapsto N_\gamma\mathbf 1_\X$ and of $\gamma\mapsto \Pi_\gamma\mathbf 1_\X$ from $J_0$ to $\cB_3$. Since
$r(\gamma)=\frac{(P_\gamma-N_\gamma)(\mathbf 1_\X)}{\Pi_\gamma(\mathbf 1_\X)}$ and $\gamma\mapsto P_\gamma\mathbf 1_\X$ is $C^1$ from $I_0$ to $\cB_3$ by hypothesis, we obtain the continuous differentiability of $r$ on
$I_0$.
\end{proof}

\begin{arem}[Proof of the differentiability of $\gamma\mapsto R_z(\gamma)$]
\label{KL-explication}
We adapt the arguments of \cite[Lemma A.2]{HerPen10}, writing
$$R_z(\gamma) = R_z(\gamma_0) \ +\  R_z(\gamma_0)\, [P_\gamma-P_{\gamma_0}]\, R_z(\gamma_0) \ + \  \vartheta_z(\gamma),$$
$$\text{with}\qquad \vartheta_z(\gamma) := R_z(\gamma_0)\, [P_\gamma - P_{\gamma_0}]\,R_z(\gamma_0)\, [P_{\gamma} - P_{\gamma_0}]\, R_z(\gamma).$$
Then
\begin{equation} \label{ineg-deri-res}
\frac{\Vert \vartheta_z(\gamma)\Vert_{{\cB}_0,{\cB}_3}}{|\gamma-\gamma_0|}
\le
\Vert R_z(\gamma_0) \Vert_{{\cB}_2}
\left\Vert\frac{P_\gamma - P_{\gamma_0}}{\gamma-\gamma_0}
\right\Vert_{{\cB}_1,{\cB}_2}
\Vert R_z(\gamma_0)\Vert_{{\cB}_1}
\Vert P_\gamma - P_{\gamma_0}\Vert_{{\cB}_0,{\cB}_1}
\Vert R_z(\gamma)\Vert_{{\cB}_0}.
\end{equation}
From the hypotheses of Theorem \ref{generalspectraltheorem2} and from the resolvent bounds derived from Theorem~\ref{thmkellerliverani}, the last term goes to 0, uniformly in $z\in\cD$, when $\gamma$ goes to $\gamma_0$.  Similarly we have:
\begin{eqnarray*}
&\ & \left\Vert R_z(\gamma_0)(P_\gamma - P_{\gamma_0})R_z(\gamma_0) - (\gamma-\gamma_0)R_z(\gamma_0) P'_{\gamma_0} R_z(\gamma_0)
\right\Vert_{{\cB}_0,{\cB}_3}   \\
&\ & \qquad \qquad \qquad \qquad \qquad \le \ M
 \Vert P_\gamma - P_{\gamma_0} - (\gamma-\gamma_0)P'_{\gamma_0}\Vert_{{\cB}_1,{\cB}_2} = \text{o}(\gamma-\gamma_0)
\end{eqnarray*}
when again the finite positive constant $M$ is derived from the resolvent bounds of Theorem~\ref{thmkellerliverani}. This shows that $R_z'(\gamma_0)=R_z(\gamma_0)P'_{\gamma_0}R_z(\gamma_0)$ in $\cL({\cB}_0,{\cB}_3)$. To prove that $\gamma\mapsto R_z'(\gamma)$ is continuous from $J_0$ to $\cL({\cB}_0,{\cB}_3)$ in a uniform way with respect to $z\in\cD$, observe that $\gamma\mapsto R_z(\gamma)$ is $\cC^0$ from $J_0$ to $\cL(\cB_0,\cB_1)$ (use Theorem~\ref{thmkellerliverani}), that $\gamma\mapsto P'_{\gamma}$ is $\cC^0$ (uniformly in $z\in\cD$) from $J_0$ to $\cL(\cB_1,\cB_2)$ by hypothesis, and finally that $\gamma\mapsto R_z(\gamma)$ is $\cC^0$ (uniformly in $z\in\cD$) from $J_0$ to $\cL(\cB_2,\cB_3)$ (again use Theorem~\ref{thmkellerliverani}).
Observe that (\ref{ineg-deri-res}) gives the differentiability at $\gamma_0$ of the map $\gamma\mapsto R_z(\gamma)$ considered from $J$ to $\cL(\cB_0,\cB_2)$. The additional space $\cB_3$ is only required to obtain the continuous differentiability.
\end{arem}
%
\subsection{Proof of Theorems~\ref{generalspectraltheorem1} and \ref{generalspectraltheorem2} under Hypothesis \ref{hypKL}*}

Here the Keller-Liverani perturbation theorem must be applied to the dual family $(P_\gamma^*)_\gamma$. Actually the hypotheses of Theorem~\ref{generalspectraltheorem1} are:
\begin{itemize}
\item $\cB_1^* \hookrightarrow \cB_0^*$,
\item For every $\gamma\in J$, $P_\gamma^*\in \mathcal L(\mathcal B_0^*)\cap\mathcal L(\mathcal B_1^*)$,
\item $\gamma\mapsto P_\gamma^*$ is a continuous map from $J$ in $\mathcal L( \mathcal B_1^*,\mathcal B_0^*)$,
\item There exist $\delta_0,c_0,M_0>0$ such that, for all $\gamma\in J$,  $r_{ess}\big((P_{\gamma})^*_{|\cB_1^*}\big)\le\delta_0$
and
$$\forall n\geq 1,\ \forall f^*\in\cB_1^*,\quad
\|(P_\gamma^*)^n f^*\|_{\cB_1^*}\le c_0(\delta_0^n\| f^*\|_{\cB_1^*} + M^n\| f^*\|_{\cB_0^*})
.$$
\item Hypothesis \ref{hypcompl} holds on $(
J_0
,\cB_1)$.
\end{itemize}

\begin{proof}[Proof of Theorem \ref{generalspectraltheorem1} under Hypothesis \ref{hypKL}*]
Under these assumptions it follows from Theorem~\ref{thmkellerliverani} applied to $(P_\gamma^*)_{\gamma\in J}$ with respect to $(\mathcal B_1^*,\mathcal B_0^*)$ that, for every $\varepsilon>0$ and every $\delta>\delta_0$, the map $t\mapsto(zI-P_\gamma^*)^{-1}$ is well defined from $J_0$ to $\mathcal L(\mathcal B_1^*)$, provided that $z\in\mathcal D(\delta,\varepsilon)$ with
$$\mathcal D(\delta,\varepsilon):=\{z\in \mathbb C,\
       d(z,\sigma\big((P_{\gamma_0}^*)_{|\cB_2^*})\big)>\varepsilon,\ |z|>\delta\} = \{z\in \mathbb C,\ d(z,\sigma\big((P_{\gamma_0})_{|\cB_2})\big)>\varepsilon,\ |z|>\delta\}.$$
In addition, the map $t\mapsto(zI-P_\gamma^*)^{-1}$, considered from $J_0$ to $\mathcal L(\mathcal B_1^*,\mathcal B_0^*)$, is continuous at every $\gamma_0\in J_0$ in a uniform way with respect to $z\in\mathcal D(\delta,\varepsilon)$. By duality this implies that $t\mapsto(zI-P_\gamma)^{-1}$ is well defined from $J_0$ to $\mathcal L(\mathcal B_1)$. Moreover, when this map is considered from $J_0$ to
$\mathcal L(\mathcal B_0,\mathcal B_1)$, it is continuous at $\gamma_0$ in a uniform way with respect to $z\in\mathcal D(\delta,\varepsilon)$.
Finally Hypothesis \ref{hypcompl} on $(J_0,\cB_1)$ enables us to identify the spectral elements associated with $r(\gamma) := r\big((P_\gamma)_{|\cB_1}\big)$. Consequently one can prove as
in Subsection~\ref{ap-proof-general-th}
that there exists a map $\gamma\mapsto \Pi_\gamma$ from $J_0$ to $\mathcal L(\cB_1)$, which is continuous from  $J_0$ to $\mathcal L(\cB_0,\cB_1)$, such that (\ref{sup-vit}) holds with $\cB:=\cB_1$.
\end{proof}

\begin{proof}[Proof of Theorem \ref{generalspectraltheorem2} under Hypothesis~\ref{hypKL}*] When Theorem \ref{generalspectraltheorem2} is stated with Hypothesis~\ref{hypKL}*, then Theorem \ref{generalspectraltheorem1} applies on $(\cB_0,\cB_1)$, $(\cB_1,\cB_2)$ and $(\cB_2,\cB_3)$ (with Hypothesis~\ref{hypKL}* in each case). Thus, for every $\gamma\in J_0$,  the spectral radius $r_i(\gamma):=r((P_\gamma)_{|\cB_i})$ are equal for $i=1,2,3$ (See the proof of Lemma~\ref{rad-egaux}). Observe that,
from our hypotheses,
Hypothesis \ref{hypcompl} holds on $(J_0,\cB_i)$ for $i=1,2,3$.
Since $P_\gamma^*$ on $\cB_i^*$ inherits the spectral properties of $P_\gamma$ on $\cB_i$, we can prove as above that, for every $\gamma_0\in J_0$ and for every $\varepsilon>0$ and $\delta>\delta_0$, the map $\gamma\mapsto(zI-P_\gamma^*)^{-1}$ is well defined from some subinterval $I_0$ of $J_0$ containing $\gamma_0$ into $\mathcal L(\mathcal B_3^*)$, provided that $z\in\cD_0$ where the set $\cD_0$ is defined in (\ref{D0}). In addition, by applying Remark~\ref{KL-explication} with the adjoint operators $(P_\gamma^*)_\gamma$ and the spaces $\cB_3^*\hookrightarrow\cB_2^*\hookrightarrow\cB_1^*
\hookrightarrow\cB_0^*$, we can prove that the map $\gamma\mapsto(zI-P_\gamma^*)^{-1}$, considered from $J_0$ to $\mathcal L(\mathcal B_3^*,\mathcal B_0^*)$, is $\cC^1$ in a uniform way with respect to $z\in\cD_0$. By duality, this gives (\ref{deri-unif}). We conclude the differentiability of $\gamma\mapsto
\Pi_{\gamma}^*$ from $J_0$ to $\mathcal L(\mathcal B_3^*,\mathcal B_1^*)$ and so  the differentiability of $\gamma\mapsto
\Pi_{\gamma}$ from $J_0$ to $\mathcal L(\mathcal B_1,\mathcal B_3)$.
\end{proof}

\subsection{Complements on the derivative of $r(\cdot)$}
Let us first prove the following.
%

%
\begin{alem}\label{deriveenegative}
Let
$J_1=(a,b)\subset[0,+\infty)$
and let $ \cB_1\hookrightarrow \cB_2$ be two Banach spaces
such that
$f\mapsto \xi f\in\cL(\cB_1,\cB_2)$. Assume that, for every $\gamma\in J_1$, $P_\gamma\in\cL(\cB_1)\cap\cL(\cB_2)$ and that there exist elements $\phi_\gamma\in\cB_1$ and $\pi_\gamma\in \cB^*_2$ such that
$P_\gamma\phi_\gamma=r(\gamma)\phi_\gamma$
and $P^*_\gamma\pi_\gamma=r(\gamma)\pi_\gamma$. Moreover assume that
$\gamma\mapsto P_\gamma$   and  $\gamma\mapsto r(\gamma)$ are differentiable from $J_1$ to $\cL(\cB_1,\cB_2)$ and to $\C$  respectively, with
respective derivatives at $\gamma\in J_1$ given by $P_{\gamma}' : f\mapsto P_{\gamma}(-\xi f)$ and $r'(\gamma)$.
Finally assume that $\gamma\mapsto\phi_\gamma$ is continuous from $J_1$ to $\cB_1$ and differentiable from $J_1$ to $\cB_2$ with derivative $\gamma\mapsto\phi'_{\gamma}$.

Then we have for every $\gamma\in J_1$: $\ r'(\gamma)\pi_{\gamma} \left( \phi_{\gamma}\right)=-r(\gamma)\pi_{\gamma} \big( \xi \phi_{\gamma}\big)$. In particular,
if $r(\gamma)>0$,
$\pi_{\gamma}(\phi_{\gamma})\ge 0$
and
$\pi_{\gamma}\big( \xi \phi_{\gamma}\big)>0$,
then $r'(\gamma)<0$.
\end{alem}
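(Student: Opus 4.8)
The plan is to differentiate the eigenvalue relation $P_\gamma\phi_\gamma = r(\gamma)\phi_\gamma$ with respect to $\gamma$ and then pair the resulting identity with the left eigenvector $\pi_\gamma$. First I would fix $\gamma\in J_1$ and observe that, thanks to the hypotheses, both sides of $P_\gamma\phi_\gamma=r(\gamma)\phi_\gamma$ are differentiable as maps from $J_1$ to $\cB_2$: the map $\gamma\mapsto \phi_\gamma$ is differentiable into $\cB_2$ by assumption, and $\gamma\mapsto P_\gamma$ is differentiable into $\cL(\cB_1,\cB_2)$, so $\gamma\mapsto P_\gamma\phi_\gamma$ is differentiable into $\cB_2$ by the product rule (using that $\phi_\gamma$ is continuous into $\cB_1$ to control the term where $P_\gamma$ stays fixed and the term where it is differentiated). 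Differentiating both sides in $\cB_2$ gives
$$P'_\gamma\phi_\gamma + P_\gamma\phi'_\gamma = r'(\gamma)\phi_\gamma + r(\gamma)\phi'_\gamma,$$
and since $P'_\gamma f = P_\gamma(-\xi f)$ this reads $P_\gamma(-\xi\phi_\gamma) + P_\gamma\phi'_\gamma = r'(\gamma)\phi_\gamma + r(\gamma)\phi'_\gamma$.

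Next I would apply the functional $\pi_\gamma\in\cB_2^*$ to this identity in $\cB_2$. Using $P_\gamma^*\pi_\gamma = r(\gamma)\pi_\gamma$, the two terms involving $\phi'_\gamma$ satisfy $\pi_\gamma(P_\gamma\phi'_\gamma) = (P_\gamma^*\pi_\gamma)(\phi'_\gamma) = r(\gamma)\pi_\gamma(\phi'_\gamma)$, which exactly cancels the term $r(\gamma)\pi_\gamma(\phi'_\gamma)$ coming from the right-hand side. (One must check $\phi'_\gamma\in\cB_2$, which holds since $\gamma\mapsto\phi_\gamma$ is differentiable into $\cB_2$; also $\xi\phi_\gamma\in\cB_2$ since $f\mapsto\xi f\in\cL(\cB_1,\cB_2)$ and $\phi_\gamma\in\cB_1$, and $P_\gamma(\xi\phi_\gamma)\in\cB_2$ since $P_\gamma\in\cL(\cB_2)$.) What remains is
$$\pi_\gamma\big(P_\gamma(-\xi\phi_\gamma)\big) = r'(\gamma)\,\pi_\gamma(\phi_\gamma),$$
and applying $P_\gamma^*\pi_\gamma = r(\gamma)\pi_\gamma$ once more to the left side gives $-r(\gamma)\,\pi_\gamma(\xi\phi_\gamma) = r'(\gamma)\,\pi_\gamma(\phi_\gamma)$, which is the claimed identity $r'(\gamma)\pi_\gamma(\phi_\gamma) = -r(\gamma)\pi_\gamma(\xi\phi_\gamma)$.

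Finally, for the last assertion, I would simply read off the sign: if $r(\gamma)>0$, $\pi_\gamma(\phi_\gamma)\ge 0$, and $\pi_\gamma(\xi\phi_\gamma)>0$, then the right side $-r(\gamma)\pi_\gamma(\xi\phi_\gamma)$ is strictly negative, while the left side is $r'(\gamma)$ times a nonnegative number; for these to be equal we need $\pi_\gamma(\phi_\gamma)>0$ (it cannot vanish, else the left side would be $0$) and then $r'(\gamma) = -r(\gamma)\pi_\gamma(\xi\phi_\gamma)/\pi_\gamma(\phi_\gamma) < 0$. I expect the only delicate point to be the careful justification of the product rule and of the interchange of the continuous linear functional $\pi_\gamma$ with the $\cB_2$-derivative — i.e. making sure all the objects $\phi'_\gamma$, $\xi\phi_\gamma$, $P_\gamma\phi'_\gamma$ genuinely live in $\cB_2$ so that $\pi_\gamma$ can be applied and commutes with differentiation; this is exactly where the chain of spaces $\cB_1\hookrightarrow\cB_2$ and the hypothesis $f\mapsto\xi f\in\cL(\cB_1,\cB_2)$ are used, and is otherwise routine.
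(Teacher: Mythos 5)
Your proof is correct and takes essentially the same route as the paper: differentiate $P_\gamma\phi_\gamma=r(\gamma)\phi_\gamma$ in $\cB_2$, apply $\pi_\gamma$, and use $P_\gamma^*\pi_\gamma=r(\gamma)\pi_\gamma$ both to cancel the $\phi'_\gamma$ terms and to evaluate $\pi_\gamma\big(P_\gamma(-\xi\phi_\gamma)\big)$. The paper organizes the product-rule step via the decomposition $P_\gamma\phi_\gamma-P_{\gamma_0}\phi_{\gamma_0}=P_{\gamma_0}(\phi_\gamma-\phi_{\gamma_0})+(P_\gamma-P_{\gamma_0})\phi_{\gamma}$, which is exactly the splitting you allude to when invoking continuity of $\gamma\mapsto\phi_\gamma$ into $\cB_1$ (and it avoids any need for continuity of $\gamma\mapsto P_\gamma$ in $\cL(\cB_2)$).
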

\begin{proof}
Let $\gamma,\gamma_0\in J_1$.
We have $P_\gamma\phi_\gamma = r(\gamma)\phi_\gamma$ in $\cB_2$.
From $P_\gamma\phi_\gamma-
P_{\gamma_0}\phi_{\gamma_0}=P_{\gamma_0}(\phi_\gamma-\phi_{\gamma_0})+
(P_\gamma-P_{\gamma_0})(\phi_{\gamma})$, we
obtain that
$$  P_{\gamma_0}(\phi'_{\gamma_0}) +
P_{\gamma_0}(-\xi \phi_{\gamma_0})
=r(\gamma_0)\phi'_{\gamma_0}+r'(\gamma_0)\phi_{\gamma_0}\ \ \ \mbox{in}\ \cB_2.$$
We conclude by
composing by $\pi_{\gamma_0}$
and using the fact that $\pi_{\gamma_0}\circ P_{\gamma_0}=r(\gamma_0)\pi_{\gamma_0}$.
\end{proof}
\begin{apro} \label{ap-pro-ap-deriv}
Assume that the assumptions of Theorem~\ref{generalspectraltheorem2} hold. For every $\gamma\in J_0$, set $\phi_\gamma :=\Pi_\gamma\mathbf 1_\X$ and $\pi_\gamma := \Pi^*_\gamma\pi$. Then the assumptions of Lemma~\ref{deriveenegative} hold with $J_1=J_0$ and with respect to the spaces $\cB_1\hookrightarrow\cB_2$ (resp.~$\cB_1\hookrightarrow\cB_3$) when the assumptions of Theorem~\ref{generalspectraltheorem2} hold with Hypothesis~\ref{hypKL} (resp.~with Hypothesis~\ref{hypKL}*). Consequently, for every $\gamma\in J_0$,
$$\pi(\Pi_{\gamma}(\xi\, \Pi_{\gamma} 1_\X)) > 0\ \Longrightarrow\, r'(\gamma) < 0.$$
\end{apro}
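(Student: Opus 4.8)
The plan is to apply Lemma~\ref{deriveenegative} with the explicit eigen-elements furnished by the spectral theory of Theorems~\ref{generalspectraltheorem1} and \ref{generalspectraltheorem2}. For a fixed $\gamma\in J_0$, set $\phi_\gamma := \Pi_\gamma\mathbf 1_\X$ and $\pi_\gamma := \Pi_\gamma^*\pi$. First I would check that these are genuine eigen-elements: by Hypothesis~\ref{hypcompl} and Proposition~\ref{pro-phi-pi}, $\Pi_\gamma$ is the rank-one eigenprojector associated with $r(\gamma)$, so $\Pi_\gamma = \widehat\pi_\gamma(\cdot)\, \widehat\phi_\gamma$; hence $\phi_\gamma = \widehat\pi_\gamma(\mathbf 1_\X)\,\widehat\phi_\gamma$ is a scalar multiple of $\widehat\phi_\gamma$ and thus satisfies $P_\gamma\phi_\gamma = r(\gamma)\phi_\gamma$, and similarly $\pi_\gamma = \pi(\widehat\phi_\gamma)\,\widehat\pi_\gamma$ satisfies $P_\gamma^*\pi_\gamma = r(\gamma)\pi_\gamma$. (One must note $\phi_\gamma\ne 0$ and $\pi_\gamma\ne 0$: this follows from Hypothesis~\ref{hypcompl}, which forces $\widehat\pi_\gamma(\mathbf 1_\X)\,\pi(\widehat\phi_\gamma)\ne 0$ since $\mathbf 1_\X\in\cB_0$ and $r(\gamma)>\delta_0\ge 0$; more directly one can invoke Corollary~\ref{cor-plus-esp} and the positivity in Proposition~\ref{pro-phi-pi} when $\cB$ satisfies Hypotheses~\ref{(A)}--\ref{(B)}.)

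Next I would verify the remaining hypotheses of Lemma~\ref{deriveenegative} with $J_1 = J_0$. The inclusion $\cB_1\hookrightarrow\cB_2$ (resp.\ $\cB_1\hookrightarrow\cB_3$ under Hypothesis~\ref{hypKL}*) and the boundedness $f\mapsto\xi f$ in $\cL(\cB_1,\cB_2)$ are part of the standing assumptions of Theorem~\ref{generalspectraltheorem2}. The differentiability of $\gamma\mapsto P_\gamma$ from $J$ to $\cL(\cB_1,\cB_2)$ with derivative $P'_\gamma f = P_\gamma(-\xi f)$ is assumed, and the $C^1$-smoothness of $\gamma\mapsto r(\gamma)$ and $\gamma\mapsto\Pi_\gamma$ (hence of $\gamma\mapsto\phi_\gamma = \Pi_\gamma\mathbf 1_\X$ into $\cB_0\subset\cB_1$, continuously, and into $\cB_3$ — or $\cB_2$ under Hypothesis~\ref{hypKL} — with a well-defined derivative $\phi'_\gamma$) is precisely the content of Theorem~\ref{generalspectraltheorem2}. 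Thus all hypotheses of Lemma~\ref{deriveenegative} are met.

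The lemma then yields, for every $\gamma\in J_0$,
\begin{equation*}
r'(\gamma)\,\pi_\gamma(\phi_\gamma) = -\,r(\gamma)\,\pi_\gamma(\xi\,\phi_\gamma).
\end{equation*}
Now $\pi_\gamma(\phi_\gamma) = \pi\big(\Pi_\gamma(\Pi_\gamma\mathbf 1_\X)\big) = \pi(\Pi_\gamma\mathbf 1_\X)$ since $\Pi_\gamma$ is a projector, and this equals $\widehat\pi_\gamma(\mathbf 1_\X)\,\pi(\widehat\phi_\gamma)$, which is strictly positive (hence $\ge 0$, as required) by Hypothesis~\ref{hypcompl}; alternatively, by Proposition~\ref{pro-phi-pi}, $\widehat\phi_\gamma\ge 0$ and $\widehat\pi_\gamma\ge 0$ with $\widehat\pi_\gamma(\widehat\phi_\gamma)=1$, so $\pi_\gamma(\phi_\gamma)>0$. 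Likewise $\pi_\gamma(\xi\,\phi_\gamma) = \pi\big(\Pi_\gamma(\xi\,\Pi_\gamma\mathbf 1_\X)\big)$, which is $>0$ exactly under the hypothesis of the proposition. Since $r(\gamma)>\delta_0\ge 0$, Lemma~\ref{deriveenegative} gives $r'(\gamma)<0$. This completes the proof.

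I do not expect a genuine obstacle here: the statement is essentially a packaging of Lemma~\ref{deriveenegative} once one has identified the right eigen-elements and recalled the smoothness/positivity already established. The only point demanding a little care is the bookkeeping of which Banach space each differentiability statement lives in (the asymmetry between the Hypothesis~\ref{hypKL} case, where $\gamma\mapsto\phi_\gamma$ is differentiable into $\cB_2$, and the Hypothesis~\ref{hypKL}* case, where it is differentiable into $\cB_3$), and checking that $\xi\,\phi_\gamma$ indeed lies in the space $\cB_2$ on which $\pi_\gamma$ acts — which is guaranteed by $\phi_\gamma=\Pi_\gamma\mathbf 1_\X\in\cB_1$ together with $f\mapsto\xi f\in\cL(\cB_1,\cB_2)$.
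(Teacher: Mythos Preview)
Your proposal is correct and follows the same route as the paper: check that $\phi_\gamma=\Pi_\gamma\mathbf 1_\X$ and $\pi_\gamma=\Pi_\gamma^*\pi$ satisfy the hypotheses of Lemma~\ref{deriveenegative} using the regularity of $r(\cdot)$ and $\Pi_\cdot$ supplied by Theorem~\ref{generalspectraltheorem2} (with the correct bookkeeping $\cB_1\hookrightarrow\cB_2$ versus $\cB_1\hookrightarrow\cB_3$ in the two cases), then rewrite $\pi_\gamma(\phi_\gamma)=\pi(\Pi_\gamma\mathbf 1_\X)$ and $\pi_\gamma(\xi\phi_\gamma)=\pi(\Pi_\gamma(\xi\,\Pi_\gamma\mathbf 1_\X))$. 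One small caveat: your justification that $\pi_\gamma(\phi_\gamma)\ge 0$ via Hypothesis~\ref{hypcompl} or Proposition~\ref{pro-phi-pi} is not quite self-contained under the bare assumptions of Theorem~\ref{generalspectraltheorem2} (the latter proposition needs Hypothesis~\ref{(A)}); the cleanest fix is to note that $\Pi_\gamma\mathbf 1_\X=\lim_n r(\gamma)^{-n}P_\gamma^n\mathbf 1_\X\ge 0$ in $\L^1(\pi)$ by (\ref{Pi-gamma-lim}) and nonnegativity of $P_\gamma$, which is all Lemma~\ref{deriveenegative} requires.
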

\begin{proof}[Proof of  Proposition~\ref{ap-pro-ap-deriv} under Hypothesis \ref{hypKL}]
We have
$\pi_\gamma\in \cB^*_2$ since $\pi\in\cB_2^*$ and $\Pi^*_\gamma$ is well defined in $\cL(\cB_2^*)$. Moreover $\phi_\gamma\in\cB_1$ since $\mathbf 1_\X\in\cB_1$ and
$\Pi_\gamma\in \cL(\cB_1)$,
and $\gamma\mapsto \phi_\gamma$ is continuous from $J$ to $\cB_1$ by Theorem~\ref{generalspectraltheorem1}. Finally $\gamma\mapsto \phi_\gamma$ is differentiable from $J$ to $\cB_2$ (see the end of Remark~\ref{KL-explication}).
We have proved that the assumptions of Lemma~\ref{deriveenegative} hold as stated under Hypothesis~\ref{hypKL}.
Finally, since $r(\gamma) > 0$ when $\gamma\in J_0$, the desired implication in Proposition~\ref{ap-pro-ap-deriv} follows from the conclusion of Lemma~\ref{deriveenegative} because $\pi_{\gamma}(\phi_{\gamma}) = \pi(\phi_{\gamma})$ and $\pi_{\gamma}(\xi\, \phi_{\gamma}) = \pi(\Pi_{\gamma}(\xi\, \Pi_{\gamma} 1_\X))$.
\end{proof}
\begin{proof}[Proof of  Proposition~\ref{ap-pro-ap-deriv} under Hypothesis \ref{hypKL}*]
Note that $\pi_\gamma:=\Pi^*_\gamma\pi\in \cB^*_3$ since $\pi\in\cB_3^*$ and $\Pi^*_\gamma$ is well defined in $\cL(\cB_3^*)$. The function $\gamma\mapsto P_\gamma$ is differentiable from $J_0$ to $\cL(\cB_1,\cB_2)$, thus from $J_0$ to
$\cL(\cB_1,\cB_3)$. We have $\phi_\gamma:=\Pi_\gamma\mathbf 1_\X\in\cB_1$ since $1_\X\in\cB_1$ and $\Pi_\gamma$ is well defined in $\cL(\cB_1)$. Moreover $\gamma\mapsto \phi_\gamma$ is continuous from $J$ to $\cB_1$ since $\Pi_\gamma$ is well defined in $\cL(\cB_1)$, continuous from $J_0$ to $\cL(\cB_0,\cB_1)$, and $\mathbf 1_\X\in\cB_0$. Finally
$\gamma\mapsto \phi_\gamma$ is differentiable from $J$ to $\cB_3$ since $\Pi_\gamma$ is well defined in $\cL(\cB_3)$ and differentiable from $J_0$ to $\cL(\cB_1,\cB_3)$ and $\mathbf 1_\X\in\cB_1$.
\end{proof}

\section{Proof of Proposition~\ref{pro-reduc-BL}} \label{proof-reduc-BL}

%

Proposition~\ref{pro-reduc-BL} directly follows from the following statement.

\begin{apro} \label{firstorder}
Let $\mathcal B$ be a non null complex Banach lattice of functions
 $f:\X\rightarrow \C$ (or of classes of such functions
modulo $\pi$).
Let $Q$ be a (nonnull) nonnegative quasicompact operator on $\mathcal B$ such that $r(Q)\ne 0$ and such that
for every nonnull nonnegative $f\in\mathcal B$ and for every nonnull nonnegative $\psi\in\mathcal B ^*\cap\ker(Q^*-r(Q)I)$, we have $Qf>0$
and $\psi(Qf)> 0$.
Then
\begin{enumerate}[(a)]
	\item $r(Q)$ is a first order pole of $Q$, and there exists a positive $\phi\in\cB$ and a  positive $\psi\in\cB ^*$ such that
\begin{equation} \label{psi-phi}
\psi(\phi)=1, \qquad Q \phi = r(Q) \phi \qquad\text{and}\qquad Q^*\psi = r(Q) \psi.
\end{equation}
\item Let $\lambda\in\C$ and $h\in\cB$ such that $|\lambda|=r(Q)$ and $Q h = \lambda h$. Then $Q |h| = r(Q) |h|$ in $\cB$.
\item If moreover $Q$ is of the form $Q=P(\kappa e^{-\gamma\xi}\cdot)$ where $P$ is the operator associated with
a Markov kernel, if $1_{\X}\in \cB\hookrightarrow \L^1(\pi)$, if $\ker(Q-r(Q)I)=\C\cdot \phi$ and
if 1 is the only complex number $\lambda$ of modulus 1 such that
 $P(h/|h|)=\lambda h/|h|$ in $\L^1(\pi)$ for some $h\in\cB$ with $|h|>0$, then $r(Q)$ is the only eigenvalue of modulus
$r(Q)$ of $Q$.
\end{enumerate}
\end{apro}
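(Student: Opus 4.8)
The plan is to prove the three assertions of Proposition~\ref{firstorder} in order, since each builds on the previous one, and then to observe that Proposition~\ref{pro-reduc-BL} is an immediate consequence.

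\textbf{Assertion $(a)$.} I would invoke the Krein--Rutman-type theory available for quasi-compact nonnegative operators on a complex Banach lattice. Since $Q$ is quasi-compact with $r(Q)>0$, the peripheral spectrum is finite and consists of poles of the resolvent, each a finitely generated eigenvalue; in particular $r(Q)$ is a pole of some order $m\ge 1$ of the resolvent of $Q$, and the associated spectral projector $\Pi$ is nonnegative (this is the classical fact that the spectral projection at $r(Q)$ of a nonnegative operator on a Banach lattice is nonnegative — see Schaefer's Banach lattice theory). The leading Laurent coefficient $N$ at $r(Q)$ (nilpotent of order $m$) then maps nonnegative elements to nonnegative elements, and there is a nonnull nonnegative $\phi$ with $Q\phi=r(Q)\phi$ (ranging over the eigenspace inside the range of $\Pi$) and similarly a nonnull nonnegative $\psi\in\cB^*\cap\ker(Q^*-r(Q)I)$ by the dual statement. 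The positivity hypothesis then upgrades: since $Q\phi>0$ and $Q\phi=r(Q)\phi$ with $r(Q)>0$, we get $\phi>0$; likewise $\psi(Q\phi)>0$ forces $\psi(\phi)>0$, so after rescaling $\psi(\phi)=1$. It remains to see that the pole is simple ($m=1$): if not, there is $f$ with $(Q-r(Q)I)f=r(Q)\phi$ (a Jordan chain), and applying $\psi$ gives $0=\psi((Q-r(Q)I)f)=r(Q)\psi(\phi)=r(Q)\neq 0$, a contradiction. So $r(Q)$ is a first-order pole.

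\textbf{Assertion $(b)$.} Let $|\lambda|=r(Q)$ and $Qh=\lambda h$. Taking moduli and using that $Q$ is nonnegative and that $\cB$ is a Banach lattice, $r(Q)|h|=|\lambda h|=|Qh|\le Q|h|$. Set $u:=Q|h|-r(Q)|h|\ge 0$. Then by $r(Q)$-invariance considerations, $\psi(u)=\psi(Q|h|)-r(Q)\psi(|h|)=r(Q)\psi(|h|)-r(Q)\psi(|h|)=0$ (using $Q^*\psi=r(Q)\psi$). Now if $u\neq 0$, then $u$ is nonnull nonnegative, and the hypothesis gives $\psi(Qu)>0$; but $\psi(Qu)=r(Q)\psi(u)=0$, a contradiction. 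Hence $u=0$, i.e. $Q|h|=r(Q)|h|$ in $\cB$.

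\textbf{Assertion $(c)$.} Suppose $\lambda$ is an eigenvalue of $Q$ with $|\lambda|=r(Q)$ and eigenfunction $h\in\cB$, $h\neq 0$. By $(b)$, $Q|h|=r(Q)|h|$, so $|h|\in\ker(Q-r(Q)I)=\C\cdot\phi$; since $\phi>0$ we get $|h|>0$ modulo $\pi$ (after rescaling, $|h|=\phi$). Now write $h=\phi\cdot e^{i\vartheta}$ for a measurable phase $\vartheta$, i.e. $h/|h|$ is unimodular. From $Qh=\lambda h$ and $Q=P(\kappa e^{-\gamma\xi}\cdot)$, I would compare $Q(h)$ with $(\lambda/r(Q))\,r(Q)\,(h/|h|)$ appropriately; more precisely, the standard argument: $|\lambda|\,\phi = |\lambda h| = |Qh| = |P(\kappa e^{-\gamma\xi}h)|\le P(\kappa e^{-\gamma\xi}|h|)=Q|h|=r(Q)\phi$, and equality holds $\pi$-a.s., which by the strict positivity of the integrand forces the ``phase'' $\kappa e^{-\gamma\xi}h$ to have $\pi$-a.s. constant argument along the fibers of $P(x,\cdot)$; dividing through, one obtains $P(h/|h|)=(\lambda/r(Q))\,(h/|h|)$ in $\L^1(\pi)$, with $|\lambda/r(Q)|=1$ and $|h/|h||=1>0$. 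By the hypothesis on $P$, the only such unimodular eigenvalue is $1$, hence $\lambda/r(Q)=1$, i.e. $\lambda=r(Q)$. This shows $r(Q)$ is the only eigenvalue of modulus $r(Q)$.

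Finally, Proposition~\ref{pro-reduc-BL} follows: its hypotheses $(i)$ (quasi-compactness, $r(\gamma)>0$) and $(ii)$ ($\ker(P_\gamma-r(\gamma)I)$ one-dimensional) together with Hypothesis~\ref{(B)} (the positivity hypothesis of the present proposition, applied to $Q=P_\gamma$) and the assumption on $P$ give exactly the hypotheses of Proposition~\ref{firstorder}$(a)$ and $(c)$; part $(a)$ yields that $r(\gamma)$ is a first-order pole with $\dim\ker(P_\gamma-r(\gamma)I)=\dim\ker(P_\gamma-r(\gamma)I)^2=1$, and part $(c)$ yields that $r(\gamma)$ is the only eigenvalue of modulus $r(\gamma)$, which is precisely Hypothesis~\ref{hypcompl}.

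The main obstacle I anticipate is the passage, in $(c)$, from the equality case $|P(\kappa e^{-\gamma\xi}h)| = P(\kappa e^{-\gamma\xi}|h|)$ $\pi$-a.s. to the clean statement $P(h/|h|)=(\lambda/r(Q))(h/|h|)$ in $\L^1(\pi)$. This requires a careful disintegration argument: for $\pi$-a.e. $x$, the function $y\mapsto \kappa(y)e^{-\gamma\xi(y)}(h/|h|)(y)$ must be $P(x,dy)$-a.s. equal to a single unimodular constant (depending on $x$), which one identifies as $(h/|h|)(x)$ times $\lambda/r(Q)$ up to rearrangement; this is where strict positivity $P_\gamma\phi>0$ and $\kappa\ge 2$, $e^{-\gamma\xi}>0$ are essential, and one must be slightly careful because we only have $\L^1(\pi)$-identities rather than pointwise ones. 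A secondary, more routine point is making sure the Banach-lattice spectral theory (nonnegativity of the peripheral spectral projector, poles of finite order) is correctly cited; all of this is standard but must be stated at the right level of generality since $\cB$ is only assumed to be a complex Banach lattice of ($\pi$-classes of) functions.
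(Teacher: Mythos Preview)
Your overall strategy matches the paper's closely, and parts $(b)$ and $(c)$ are essentially identical to the paper's argument (the paper carries out exactly the disintegration you anticipate, working on a set $E$ of full $\pi$-measure and using the convexity/equality argument to get $r(Q)P(h/|h|)=\lambda\,h/|h|$ in $\L^1(\pi)$).

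There is, however, a genuine gap in your proof of $(a)$. Your simplicity argument reads: ``if the pole has order $m\ge 2$, there is $f$ with $(Q-r(Q)I)f=r(Q)\phi$''. But $m\ge 2$ only guarantees that \emph{some} nonzero eigenvector lies in the range of $(Q-r(Q)I)$; it does not guarantee that your particular $\phi$ does, unless you already know $\ker(Q-r(Q)I)=\C\cdot\phi$ --- and that one-dimensionality is \emph{not} assumed in part $(a)$ (it only enters in $(c)$). The paper avoids this by constructing $\phi$ and $\psi$ specifically from the leading Laurent coefficient $A_{-q}$: one shows $A_{-q}=\lim_n (r_n-r(Q))^q\sum_{k\ge 0}r_n^{-(k+1)}Q^k\ge 0$, picks nonnegative $h_0,\psi_0$ with $\phi:=A_{-q}h_0\neq 0$ and $\psi_1:=A_{-q}^*\psi_0\neq 0$, and then uses your positivity hypothesis to get $\psi_1(\phi)>0$. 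If $q\ge 2$ one has $A_{-q}^2=0$ (since $A_{-q}=(Q-r(Q)I)^{q-1}A_{-1}$), whence $\psi_1(\phi)=\psi_0(A_{-q}^2h_0)=0$, a contradiction. Your argument becomes correct once you build $\phi$ this way (so that $\phi$ automatically lies in the range of $(Q-r(Q)I)^{q-1}$ when $q\ge 2$); as written it is incomplete.

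A minor related point: you assert ``in particular $r(Q)$ is a pole'' as part of the Krein--Rutman package. That is fine as a citation, but note the paper gives a short self-contained proof (via Banach--Steinhaus and the lattice inequality $|(\lambda_nI-Q)^{-1}f|\le (r_nI-Q)^{-1}|f|$) that $r(Q)\in\sigma(Q)$, precisely because it does not want to rely on external Banach-lattice spectral theory at that level of generality.
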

\begin{proof}
The fact that $r(Q)$ is a finite pole of $Q$ is classical for a nonnegative quasi-compact operator $Q$ on a Banach lattice. Let us just remember the main arguments. From quasi-compactness we know that there exists a finite pole $\lambda\in\sigma(Q)$ such that $|\lambda|=r(Q)$. Thus, setting $\lambda_n:=\lambda(1+1/n)$ for any $n\geq 1$, we deduce from $\lambda\in\sigma(Q)$ that
$$\lim_n \|(\lambda_nI - Q)^{-1}\|_\cB = +\infty.$$
Since $\cB$ is a Banach lattice, we deduce from the Banach-Steinhaus theorem that there exists a nonnegative and nonnull element $f\in\cB$ such that
$$\lim_n \|(\lambda_nI - Q)^{-1}f\|_\cB = +\infty.$$
Next define $r_n :=r(Q)(1 +1/n)$ and observe that
$$\big|(\lambda_nI - Q)^{-1}f\big| = \big|\sum_{k\geq 0}  \lambda_n^{-(k+1)}\, Q^{\, k}f\big| \leq \sum_{k\geq 0}  r_n^{-(k+1)}\, Q^{\, k}f.$$
Since $\cB$ is a Banach lattice, the last inequality is true in norm, that is
$$\big\|(\lambda_nI - Q)^{-1}f\big\| \leq \big\|\sum_{k\geq 0}  r_n^{-(k+1)}\, Q^{\, k}f\big\|$$
from which we deduce that $\lim_n \|(r_nI - Q)^{-1}\|_\cB = +\infty$, thus $r(Q)\in\sigma(Q)$. Finally $r(Q)$ is a finite pole of $Q$ from quasi-compactness.

Let $q$ denote the order of the pole $r(Q)$, namely $r(Q)$ is a pole of order $q$ of the resolvent function $z\mapsto (zI-Q)^{-1}$. Then there exists $\rho>0$ such that $(zI-Q)^{-1}$ admits the following Laurent series provided that $|z-r(Q)| < \rho$ and $z\neq r(Q)$:
$$(zI-Q)^{-1} = \sum_{k=-q}^{+\infty} (z-r(Q))^{k} A_k,$$
where $A_k$ are bounded linear operators on $\cB$. By quasi-compactness, $A_{-1}$ is a projection onto the finite subspace $\ker(Q-r(Q) I)^q$. Moreover we know that
\begin{equation} \label{A-q-res}
A_{-q} = \big(Q-r(Q) I\big)^{q-1}\circ A_{-1} =  A_{-1}\circ\big(Q-r(Q) I\big)^{q-1}.
\end{equation}
and that, setting $r_n :=r(Q)(1 +1/n)$,
\begin{eqnarray}
A_{-q}  &=& \lim_{n\r+\infty} \big(r_n-r(Q)\big)^{q}\big(r_nI-Q\big)^{-1} \nonumber \\
&=&  \lim_{n\r+\infty} \big(r_n-r(Q)\big)^{q}\sum_{k\geq 0}  r_n^{-(k+1)}\, Q^{\, k}. \label{neuman}
\end{eqnarray}
Since $Q$ is a nonnull nonnegative operator on $\cB$, so is $A_{-q}$. Since $A_{-q}\neq0$,
we take a nonnegative $h_0\in\mathcal B$ such that $\phi:= A_{-q} h_0\ne 0$ in $\cB$.
We have $(Q-r(Q)I)A_{-q}=0$, so
$r(Q) \phi = Q\phi$. Similarly there exists a nonnegative $\psi_0\in\cB^*$ such that
$\psi_1 := A_{-q}^*\psi_0$ is a nonzero and nonnegative element of
$\ker(Q^*-r(Q)I)$, where $A_{-q}^*$ is the adjoint operator of $A_{-q}$.  Note that $\psi_1(\phi)=\psi_1(Q\phi)/r(Q)>0$ from our hypotheses, so that $\phi$ and $\psi:=\psi_1/\psi_1(\phi)$ satisfy (\ref{psi-phi}).
To conclude the proof of Assertion~$(a)$, let us prove by reductio ad absurdum that $q=1$. Assume that $q\geq 2$. Then  $A_{-q}^{\ 2}=0$ from (\ref{A-q-res}) and $A_{-1}(\cB) = \ker(Q-r(Q) I)^q$, so that $\psi_1(\phi)=(A^*_{-q}\psi_0)(A_{-q}h_0)=\psi_0(A_{-q}^2h_0)=0$. This contradicts the above fact.

To prove $(b)$, recall that, from our hypotheses, we have $\psi(g)=\psi(Qg)/r(Q)>0$ for every nonnull nonnegative $g\in\cB$.
Let $\lambda\in\C$ and $h\in\cB$ such that $|\lambda|=r(Q)$ and $Q h = \lambda h$.
The positivity of $Q$ gives $|\lambda h| = r(Q) |h| = |Q h| \leq  Q |h|$,
thus $g_0:=Q |h|-r(Q) |h| \geq 0$. From $\psi(g_0)=0$, it follows that $g_0=0$, that is: $Q |h| = r(Q) |h|$ in $\cB$.

Now let us prove Assertion~$(c)$ of Proposition~\ref{firstorder}. Recall that the above nonnull nonnegative function $\phi\in\cB$ is such that $Q\phi = r(Q)\phi$. From our hypotheses we deduce that $\phi>0$ (modulo $\pi$). Let $\lambda\in\C$ and $h\in\cB$ be such that $|\lambda|=r(Q)$, $h\neq 0$   and
$Q h = \lambda h$.
Due to the previous point and to our assumptions, we obtain that
$Q |h| = r(Q)\, |h|$ and $|h| = \beta \phi$ for some $\beta>0$.
In particular $h\neq 0\ \pi-$ a.s..
One may assume that $\beta=1$ for the sake of simplicity.
Let $A=\{x\in\X : |h(x)| = \phi(x) >0\}$ and
$$B=\{x\in\X : (P_\gamma \phi)(x) = r(Q)\phi(x)\},\quad C=\{x\in\X : (P_\gamma h)(x) = \lambda h(x)\}.$$
Let $A^c = \X\setminus A$. It follows from $\pi(A^c)=0$ and from the invariance of $\pi$ that
$$ \pi\left(P\big(1_{A^c}(\phi+|h|)\, \kappa e^{-\gamma\xi}\big)\right) = \pi\left(1_{A^c}\big(\phi+|h|\big)\,\kappa e^{-\gamma\xi}\right) = 0.$$
Let $D:= \{x\in\X : \big(P (1_{A^c}\, (\phi+|h|)\,\kappa e^{-\gamma\xi})\big)(x) = 0\}$. Then we have  $\pi(D)= 1$. Now define $E = A\cap B\cap C \cap D$. Then $\pi(E)= 1$, and we obtain that
\begin{subequations}
\begin{gather}
\forall x\in E,\quad |h(x)| = \phi(x)>0 \label{E1} \\
\forall x\in E,\quad
\lambda\, h(x) = \big(P(1_{A} h\, \kappa e^{-\gamma\xi})\big)(x) = \int_A h(y)\, \kappa(y)e^{-\gamma\xi(y)}\, P(x,dy) \label{E2} \\
\forall x\in E,\quad
r(Q)\, \phi(x) = \big(P(1_{A} \phi\, \kappa e^{-\gamma\xi})\big)(x) = \int_A \phi(y)\,\kappa(y)e^{-\gamma\xi(y)}\, P(x,dy). \label{E3}
\end{gather}
\end{subequations}
Let $x\in E$ and define the probability measure:
$\eta_x(dy) := (r(Q)\, \phi(x))^{-1}\phi(y)\, \kappa(y)e^{-\gamma\xi(y)}\, P(x,dy)$.
We have
$$\int_A \frac{r(Q)\, \phi(x)\, h(y)}{\lambda\, \phi(y)\, h(x)}\ \eta_x(dy) = 1.
$$
Since $|h(x)|=\phi(x)$ and $|h| = \phi$ on $A$, the previous integrand is of modulus one. Then a standard convexity argument ensures that the following equality holds for $P(x,\cdot)-$almost every $y\in \X$:
$$r(Q)\,  \phi(x)\, h(y) = \lambda\, \phi(y)\,  h(x).$$
This implies that $r(Q)P\frac{h}{|h|}=\lambda \frac{h} {|h|}$ everywhere on $E$, thus $r(Q)P\frac{h}{|h|}=\lambda \frac{h} {|h|}$ in $\L^1(\pi)$. So $\lambda=r(Q)$ from the hypothesis of Assertion~$(c)$ of Proposition~\ref{firstorder}.
\end{proof}
%

\section{A counter-example} \label{counterexample}
Assume that $(\X,d)$ is a metric space equipped with its Borel $\sigma$-algebra.
Let $\mathcal \cL^\infty$ denote the set of bounded functions $f:\mathbb X\rightarrow\mathbb C$, endowed with the
supremum norm.
\begin{apro}
Assume that $P$ is a Markov kernel satisfying the following condition : there exists $S\in(0,+\infty)$ such that, for every $x\in\X$, the support of $P(x,dy)$ is contained in the ball $B(x,S)$ centered at $x$ with radius $S$. Assume that $\kappa\equiv 2$ and that $\xi(y)\r0$ when $d(y,x_0)\r +\infty$, where $x_0$ is some fixed point in $\X$.
Then, for every  $\gamma\in[0,+\infty)$, the kernel $P_\gamma :=
2
P(e^{-\gamma\xi}\cdot)$ continuously acts on $\cL^\infty$ and its spectral radius $r(\gamma)=r((P_\gamma)_{|\cL^\infty})$ satisfies the following
$$\forall \gamma\in[0,+\infty),\quad r(\gamma) = 2.$$
\end{apro}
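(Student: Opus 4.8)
The plan is to establish $r(\gamma)\le 2$ and $r(\gamma)\ge 2$ separately. The upper bound is immediate: since $\xi\ge 0$ and $\gamma\ge 0$ we have $0\le e^{-\gamma\xi}\le 1$, so, $P$ being Markov, $|P_\gamma f|=2\,|P(e^{-\gamma\xi}f)|\le 2\,P(|f|)\le 2\,\|f\|_\infty\,\mathbf 1_\X$ for every $f\in\cL^\infty$; thus $P_\gamma$ continuously acts on $\cL^\infty$ with $\|P_\gamma\|_{\cL^\infty}\le 2$, whence $r(\gamma)=\lim_n\|P_\gamma^n\|_{\cL^\infty}^{1/n}\le 2$.

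For the lower bound I would compute $P_\gamma^n\mathbf 1_\X$ along the canonical chain. Denoting by $(X_n)_n$ the Markov chain with $X_0=x$ under $\PP_x$, an easy induction on $n$ (using the Markov property) gives
\[
\forall n\ge 1,\ \forall x\in\X,\qquad (P_\gamma^n\mathbf 1_\X)(x)=2^n\,\E_x\!\Big[e^{-\gamma\sum_{j=1}^n\xi(X_j)}\Big].
\]
Since $P_\gamma^n\mathbf 1_\X\ge 0$ we get $\|P_\gamma^n\|_{\cL^\infty}\ge\|P_\gamma^n\mathbf 1_\X\|_\infty=2^n\,\sup_{x\in\X}\E_x[e^{-\gamma\sum_{j=1}^n\xi(X_j)}]$, and, the reverse inequality being trivial, it suffices to prove that this supremum equals $1$ for every fixed $n\ge 1$. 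Here the support condition enters: since the support of $P(X_{j-1},dy)$ lies in $B(X_{j-1},S)$, we have $d(X_j,X_{j-1})\le S$ a.s., hence $d(X_j,x)\le jS$ almost surely under $\PP_x$ for all $j\ge 1$. Now fix $n\ge 1$ and $\eps>0$; by the hypothesis $\xi(y)\to 0$ as $d(y,x_0)\to+\infty$ there is $R_\eps>0$ with $\xi(y)<\eps$ whenever $d(y,x_0)>R_\eps$. As $\X$ is unbounded (which is implicit in that hypothesis), pick $x\in\X$ with $d(x,x_0)>R_\eps+nS$; then for $1\le j\le n$ we have $d(X_j,x_0)\ge d(x,x_0)-d(X_j,x)\ge d(x,x_0)-jS>R_\eps$ a.s., so $\sum_{j=1}^n\xi(X_j)<n\eps$ a.s., and therefore $\E_x[e^{-\gamma\sum_{j=1}^n\xi(X_j)}]\ge e^{-\gamma n\eps}$. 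Letting $\eps\downarrow 0$ gives $\sup_{x\in\X}\E_x[e^{-\gamma\sum_{j=1}^n\xi(X_j)}]=1$, hence $\|P_\gamma^n\|_{\cL^\infty}\ge 2^n$ for all $n$ and $r(\gamma)\ge 2$. Combined with the upper bound, $r(\gamma)=2$ for every $\gamma\in[0,+\infty)$.

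I do not expect a genuine obstacle here. The single point that needs care is the order of the quantifiers: the state $x$ is chosen only after both $n$ and $\eps$ have been fixed, so no uniformity over $n$ or diagonal argument is involved. The only tacit assumption is that the metric space $\X$ is unbounded, without which the hypothesis $\xi(y)\to 0$ as $d(y,x_0)\to+\infty$ is vacuous and the statement would have to be reinterpreted.
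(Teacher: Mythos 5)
Your proof is correct and follows essentially the same route as the paper: obtain $r(\gamma)\le 2$ from $P_\gamma\le 2P$, then exploit the bounded step size to choose $x$ far enough from $x_0$ that $\xi$ stays below $\eps$ along the first $n$ steps of the chain, forcing $\|P_\gamma^n\|_{\cL^\infty}\ge 2^n e^{-\gamma n\eps}$ and letting $\eps\to 0$. The only cosmetic difference is that you apply $P_\gamma^n$ to $\mathbf 1_\X$ and phrase the lower bound probabilistically via the Laplace functional, whereas the paper applies $P_\gamma^n$ to $\mathbf 1_{[\xi\le\beta]}$ and runs the same geometric estimate directly on the iterated integrals.
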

\begin{proof}
We clearly have $r(\gamma) \leq 2$ since $P_\gamma\leq P$  and $P$ is Markov.
For any $\beta>0$, we obtain with $f = \mathbf{1}_{[\xi \leq \beta]}$
$$\forall x\in\X,\quad (P_\gamma f)(x) =
2
\int_{[\xi \leq \beta]} e^{-\gamma\xi(y)}\, P(x,dy) \geq
2
e^{-\gamma\beta}\, P\big(x,[\xi \leq \beta]\big).$$
The set $[\xi \leq \beta]$ contains $\X\setminus B(x_0,R)$ for some $R>0$ since $\xi(y)\r0$ when $d(y,x_0)\r +\infty$. Thus, for $d(x,x_0)$ sufficiently large ($d(x,x_0)>R+S$), we
have $P\big(x,[\xi \leq \beta]\big) = 1$, so that
$\|P_\gamma\|_{\cL^\infty} \geq \|P_\gamma f\|_{\cL^\infty} \geq
2
e^{-\gamma\beta}.$
This gives $\|P_\gamma\|_{\cL^\infty} =
2$
when $\beta \r 0$. Similarly we obtain with $f = \mathbf{1}_{[\xi \leq \beta]}$, that,
$\forall x\in\X\setminus B(x_0,R+2S)$,
\begin{eqnarray*}
\quad (P_\gamma^2 f)(x) &=&
4
\int e^{-\gamma(\xi(y)+\xi(z))}\, \mathbf{1}_{[\xi \leq \beta]}(z)\, P(y,dz)\, P(x,dy) \\
&\geq&
4
e^{-\gamma\beta}\, \int_{\X\setminus B(x_0,R+S)} e^{-\gamma\xi(y)}\, P\big(y,[\xi \leq \beta]\big) P(x,dy)
\geq
4
e^{-2\gamma\beta}
\end{eqnarray*}
and so
$\forall \beta>0,\quad \|P_\gamma^2\|_{\cL^\infty}
\geq \|P_\gamma^2 f\|_{\cL^\infty} \geq
4
e^{-2\gamma\beta}.$
Again this provides $\|P_\gamma^2\|_{\cL^\infty} = 4$ since $\beta$ can be taken arbitrarily small. Similarly we can prove that $\|P_\gamma^n\|_{\cL^\infty} = 2^n$ for every $n\geq 1$, thus $ r(\gamma) = 2$.
\end{proof}
\section*{Acknowledgements}
We wish to thank Bernard Ycart for interesting
discussions.


\end{document}